\documentclass[a4paper]{scrartcl}

\usepackage{lmodern}

\usepackage[T1]{fontenc}
\usepackage[utf8]{inputenc}

\usepackage{amssymb}
\usepackage{amsmath}
\usepackage{amsthm}

\usepackage{hyperref}
\usepackage[capitalise]{cleveref}

\usepackage{xcolor}

\theoremstyle{plain}
\newtheorem{theorem}{Theorem}[section]
\newtheorem*{theorem*}{Theorem}
\newtheorem{proposition}[theorem]{Proposition}
\newtheorem{lemma}[theorem]{Lemma}
\newtheorem{corollary}[theorem]{Corollary}

\theoremstyle{definition}
\newtheorem{definition}[theorem]{Definition}
\newtheorem{example}[theorem]{Example}
\newtheorem*{convention}{Convention}
\newtheorem{conjecture}[theorem]{Conjecture}

\theoremstyle{remark}
\newtheorem{remark}[theorem]{Remark}
\newtheorem{question}[theorem]{Question}

\newcommand{\Aut}{\mathrm{Aut}}
\newcommand{\Isom}{\mathrm{Isom}}

\newcommand{\Stab}{\mathrm{Stab}}
\newcommand{\chara}{\mathrm{char}}
\newcommand{\id}{\mathrm{id}}

\newcommand{\bbR}{\mathbb{R}}
\newcommand{\bbH}{\mathbb{H}}

\newcommand{\calP}{\mathcal{P}}

\newcommand{\calD}{\mathcal{D}}
\newcommand{\calA}{\mathcal{A}}
\newcommand{\calG}{\mathcal{G}}
\newcommand{\calF}{\mathcal{F}}

\newcommand{\Cen}{\mathrm{Cen}}
\newcommand{\Inv}{\mathrm{Inv}}

\newcommand{\MR}{\mathrm{MR}}
\newcommand{\Mdeg}{\mathrm{MD}}
\newcommand{\fix}{\mathrm{fix}}

\newcommand{\perf}{\mathrm{perf}}
\newcommand{\inv}{{-1}}
\newcommand{\half}{{1/2}}
\newcommand{\ihalf}{{-1/2}}
\newcommand{\squared}{{\cdot 2}}

\newcommand{\PSL}{\mathrm{PSL}}

\renewcommand{\phi}{\varphi}

\newcommand\subsetsim{\mathrel{\substack{
  \textstyle\subset\\[-0.2ex]\textstyle\sim}}}

\begin{document}

\title{Mock hyperbolic reflection spaces and Frobenius groups of finite Morley rank}
\author{Tim Clausen \quad and \quad Katrin Tent}

\maketitle

\begin{abstract}
 We define the notion of mock hyperbolic reflection spaces and use it to study Frobenius groups, in particular in the context of groups of finite Morley rank including the so-called \emph{bad groups}. We show that connected Frobenius groups of finite Morley rank and odd type with nilpotent complement split or interpret a bad field of characteristic zero. Furthermore, we show that mock hyperbolic reflection spaces of finite Morley rank satisfy certain rank inequalities, implying in particular that any connected Frobenius group of odd type and Morley rank at most ten either splits or is a simple non-split sharply $2$-transitive group of characteristic $\neq 2$ of Morley rank $8$ or $10$.
\end{abstract}

\section{Introduction}
This paper contributes to the study of groups acting on geometries arising naturally from conjugacy classes of involutions. We define the notion of a mock hyperbolic reflection space and use it to study certain Frobenius groups. Such an approach to the classification of groups and their underlying geometries based on involutions was developed by Bachmann~\cite{bachmann}.
Mock hyperbolic reflection spaces generalize real hyperbolic spaces and their definition is motivated by the geometry arising from the involutions in certain non-split sharply $2$-transitive groups. 

The points of such a mock hyperbolic space are given by a conjugacy class of involutions and we view the  conjugation action by an involution in the space as a point-reflection. More precisely,
a conjugacy class of involutions in a group forms a \emph{mock hyperbolic reflection space} if it admits the structure of a linear space such that three axioms are satisfied:
three points are collinear if and only if the product of their point-reflections is a point-reflection,
for any two points there is a unique midpoint, i.e. a unique point reflectiong one point to the other, and
given two distinct lines there is at most one point reflecting one line to the other.

We will consider in particular  mock hyperbolic reflection spaces arising from Frobenius groups of finite Morley rank. 
One of the main open problems about groups of finite Morley rank is the \emph{Algebraicity Conjecture}, which states that any infinite simple group of finite Morley rank should be an algebraic group over an algebraically closed field. While the conjecture was proved by Alt{\i}nel, Borovik, and Cherlin \cite{abc} in the characteristic $2$ setting, it is still wide open in general and in particular in the situation of small (Tits) rank. The conjecture would in fact imply that any sharply $2$-transitive group of finite Morley rank and, more generally, any Frobenius group of finite Morley rank splits. 

A Frobenius group is a group $G$ together with a proper nontrivial malnormal subgroup H, i.e. a subgroup $H$ such that $H \cap H^g = \{ 1 \}$ for all $g \in G \setminus H$. (Note that if $G$ is a \emph{bad group} of finite Morley rank  with Borel subgroup $B$ then $B< G$ is a Frobenius group.) A classical result due to Frobenius states that finite Frobenius groups split, i.e. they can be written as a semidirect product of a normal subgroup and the subgroup H. In the setting of finite groups the methods used by Frobenius play an important role in the classification of CA-groups, CN-groups, and groups of odd order.  For groups of finite Morley rank, all of the corresponding classification problems are still wide open.

Sharply $2$-transitive groups of finite Morley rank came to renewed attention since recently the first sharply $2$-transitive groups without non-trivial abelian normal subgroup were constructed in characteristic $2$ in \cite{rips-segev-tent} (see also \cite{TentZiegler}) and in characteristic $0$ in \cite{rips-tent}. However, as we show below, these groups do not have finite Morley rank. We also show that specific non-split sharply $2$-transitive groups of finite Morley rank would indeed be direct counterexamples to the Algebraicity Conjecture.

We prove the following splitting criteria for groups with an associated mock hyperbolic reflection space:

\begin{theorem}\label{thm:proj plane splits} If $G$ is a group with an associated mock hyperbolic reflection space $Q$, then the following are equivalent:
\begin{enumerate}
\item $G\cong A\rtimes \Cen(q)$ for some abelian normal subgroup $A$ and any $q \in Q$;
\item $Q$ is a (possibly degenerate) projective plane;
\item $Q$ consists of a single line.
\end{enumerate}\end{theorem}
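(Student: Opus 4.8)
The plan is to establish the two equivalences $(1)\Leftrightarrow(3)$ and $(2)\Leftrightarrow(3)$, observing first that $(3)\Rightarrow(2)$ is immediate: a single line vacuously has the property that any two of its lines meet, so it is a degenerate projective plane. Throughout I identify the point-reflection at $p\in Q$ with conjugation $\rho_p\colon x\mapsto pxp$, and I use freely that $\rho_p$ is a collineation of the linear space (collinearity being group-theoretic by the first axiom) and that $Q$ is a single conjugacy class, so $G$ acts transitively on the points.

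For $(1)\Rightarrow(3)$ I would fix $q$ and exploit $G=A\rtimes\Cen(q)$ to coordinatise. Since $\Cen(q)$ fixes $q$, we get $Q=\{aqa^{-1}:a\in A\}$; writing $\sigma$ for conjugation by $q$, each point is $aqa^{-1}=c\,q$ with $c=a\sigma(a)^{-1}\in A$, and $(cq)^2=1$ gives $\sigma(c)=c^{-1}$. Thus $Q=\{cq:c\in A,\ \sigma(c)=c^{-1}\}$. For three points $c_iq$ one computes $(c_1q)(c_2q)(c_3q)=c_1c_2^{-1}c_3\,q$, and since $A$ is abelian and $\sigma$ inverts each $c_i$ one checks $\sigma(c_1c_2^{-1}c_3)=(c_1c_2^{-1}c_3)^{-1}$, i.e. $c_1c_2^{-1}c_3\,q\in Q$. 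Hence \emph{every} triple is collinear, all points lie on one line, and $(3)$ holds. Conversely, for $(3)\Rightarrow(1)$ I set $A=qQ=\{qp:p\in Q\}$. The single-line hypothesis says $prs\in Q$ for all $p,r,s$, which makes $A$ a subgroup via $(qp)(qr)=q(pqr)$; it is abelian because $pqr=(pqr)^{-1}=rqp$, and normal because conjugating $qp$ by any $g$ yields $(gqg^{-1})(gpg^{-1})$, again of the form $q\cdot(\text{point})$. Finally $A$ acts regularly on $Q$: transitivity holds because $pqp$ runs over all of $Q$ as $p$ ranges over the solutions of $pqp=s$ (unique by the midpoint axiom), and $\Stab_A(q)=1$ again by uniqueness of midpoints. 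A transitive action with a regular normal subgroup splits as $A\rtimes\Stab(q)=A\rtimes\Cen(q)$, giving $(1)$.

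The substantial implication is $(2)\Rightarrow(3)$, and here I expect the real difficulty. Assume $Q$ is a possibly degenerate projective plane but, for contradiction, not a single line, so some point $p$ lies off some line $\ell$. The engine is a hyperbolic-type parallelism: if $p\notin\ell$ and $\rho_p(\ell)\neq\ell$, then $\ell$ and $\rho_p(\ell)$ are \emph{disjoint}, since their intersection is $\rho_p$-invariant and a single common point of two distinct lines would then be a fixed point of $\rho_p$, forcing it to equal $p\notin\ell$. As a projective plane has no disjoint lines, this forces $\rho_p(\ell)=\ell$ for every line avoiding $p$. Feeding this back, any point $x\neq p$ lying on two distinct $p$-avoiding lines would satisfy $\rho_p(x)\in\ell_1\cap\ell_2=\{x\}$ and hence be a fixed point of $\rho_p$ other than $p$. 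In a genuine projective plane every $x\neq p$ lies on at least two such lines (at most one line through $x$ passes through $p$), giving the contradiction; the remaining small degenerate configurations, namely near-pencils and the three-point triangle, are eliminated in the same way by placing $p$ on the long line, the finitely many tiny cases being checked by hand. The first axiom could alternatively be brought in through the uniqueness-of-line-reflections axiom, exhibiting two distinct common reflections of two meeting lines, but the disjointness argument seems cleaner.

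The crux, and the step I expect to be most delicate, is the input that $\rho_p$ has $p$ as its \emph{only} fixed point, equivalently that $Q$ contains no two distinct commuting involutions. This does not follow from the ambient group theory: two commuting conjugate reflections in a dihedral group would violate it. It fails there precisely because that configuration has \emph{no} midpoint, so the property is exactly what the midpoint axiom secures. I would therefore isolate ``distinct points of $Q$ do not commute'' as a lemma derived from uniqueness of midpoints and invoke it here. With that lemma in place the parallelism argument closes the cycle $(1)\Leftrightarrow(3)$, $(3)\Rightarrow(2)$, $(2)\Rightarrow(3)$, and the theorem follows.
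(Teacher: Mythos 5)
Your proposal is correct in substance but takes a genuinely different route from the paper for the key implication $(2)\Rightarrow(3)$. The paper proves the theorem as \cref{prop:BHNeumann} by cycling through several auxiliary group-theoretic conditions: a projective plane forces $Q^{\cdot 2}$ to be a subgroup (\cref{lem:intersections}, resting on \cref{lem:basic_properties}~e)), this subgroup is uniquely $2$-divisible because it is partitioned by the abelian groups $\lambda^{\cdot 2}$, and then B.~H.~Neumann's theorem (\cref{prop:neumann_abelian}) on fixed-point-free involutory automorphisms makes $Q^{\cdot 2}$ abelian, whence $iQ$ is an abelian normal complement. Your $(2)\Rightarrow(3)$ is instead purely incidence-geometric and rests on exactly the two facts you isolate: distinct points of $Q$ do not commute (this is in effect \cref{lem:translations have no fixed point}~b), a consequence of the midpoint axiom, just as you predicted), and a line meeting its image under a point-reflection is fixed by that reflection (\cref{line_lemma}~a)). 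Your $(3)\Leftrightarrow(1)$ is likewise a more elementary direct verification than the paper's chain, deriving commutativity of $qQ$ from $pqr=rqp$ rather than from Neumann's theorem; the one wrinkle in $(1)\Rightarrow(3)$ is that $\{cq : c\in A,\ \sigma(c)=c^{-1}\}$ may properly contain $Q$, so you should work with the subgroup $B=\{a\sigma(a)^{-1}: a\in A\}$, for which $Q=\{cq : c\in B\}$ holds exactly and the closure computation still goes through.

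Two remarks on $(2)\Rightarrow(3)$. First, the degenerate configurations are not all disposed of ``in the same way'': in the three-point triangle no point lies on two lines avoiding $p$, so the two-pencil argument does not apply and a separate computation is needed (for instance, $\rho_a$ must swap $b$ and $c$, so $ca=bc$ and hence $cab=c^{b}\in Q$, which places $c$ on $\ell_{ab}$ after all); as written this case is a small but genuine gap. Second, the entire taxonomy of degenerate planes can be bypassed: for \emph{any} point $p$ and \emph{any} line $\ell$ the lines $\ell$ and $\ell^{p}$ are both lines of the plane and therefore meet, so \cref{line_lemma}~a) gives $p\in\ell$ directly. Thus every point lies on every line, and since two points determine a unique line there can be only one. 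This removes the need for your parallelism detour and for the case analysis altogether.
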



We show a rank inequality for mock hyperbolic reflection spaces in groups of finite Morley rank: 
if $Q$ is a mock hyperbolic reflection space of Morley rank $n$ such that lines are infinite and of Morley rank $k$, then $n \leq 2k$ implies that $Q$ consists of a single line (and hence $n=k$). If $n = 2k+1$, then there exists a normal subgroup similar to the one in the above theorem (see \cref{thm:n=2k+1}). 

We then consider mock hyperbolic reflection spaces arising from Frobenius groups.
A connected Frobenius group $G$ of finite Morley rank with Frobenius complement $H$ falls into one of three classes: it is either degenerate, of odd or of even type depending on whether or not $G$ and $H$ contain involutions (see \cref{sec:frobenius_groups}). A connected Frobenius group is of odd type if and only if the Frobenius complement contains an involution. In particular, every sharply $2$-transitive groups of finite Morley rank and characteristic different from $2$ is a Frobenius group of odd type.
We show:

\begin{theorem}\label{cor:rank 6} Let $H<G$ be a connected Frobenius group of finite Morley rank and odd type.
\begin{enumerate}
\item The  involutions $J$ in $G$ form a mock hyperbolic reflection space and all lines are infinite. 
\item If a generic pair of involutions is contained in a line of Morley rank $k$ and  $\MR(J) \leq 2k+1$, then $H<G$ splits.
\item 
If $G$ does not split and  a generic pair of involutions is contained in a line of Morley rank $1$, then $G$ is a simple sharply $2$-transitive group of characteristic $\neq 2$ and hence a direct counterexample to the Algebraicity Conjecture.
\item If $\MR(G) \leq 10$, then either $G$ splits or $G$ is a simple non-split sharply $2$ transitive group of characteristic $\neq 2$ and $\MR(G)$ is either $8$ or $10$.
\end{enumerate}
\end{theorem}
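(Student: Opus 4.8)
The plan is to establish the four parts in turn, using the splitting criterion \cref{thm:proj plane splits} and the rank inequalities of \cref{thm:n=2k+1} as black boxes. For part (a), I would first show that the involutions of $G$ form a single conjugacy class $J$, using connectedness together with the Frobenius property, and then equip $J$ with the reflection-space structure in which the point-reflection at an involution is conjugation. Since $G$ is connected of odd type the complement $H$ contains an involution $q$, and malnormality gives $\Cen(q)\le H$: if $g$ centralises $q$ then $q\in H\cap H^{g}$. The line through distinct involutions $i,j$ I would define from the dihedral pair $\langle i,j\rangle$, as the set of involutions inverting $ij$. Verifying the axioms then amounts to: (i) collinearity $\Leftrightarrow$ the product of the three point-reflections is again a point-reflection, which is Bachmann-style bookkeeping inside dihedral subgroups; (ii) existence and uniqueness of the midpoint, i.e.\ of the involution conjugating $i$ to $j$, where existence is exactly where odd type enters and uniqueness comes from malnormality; and (iii) the at-most-one-reflection axiom for two lines, again reduced to malnormality. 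Infinitude of lines follows from connectedness: each line is a definable set carrying a transitive action of a connected definable group, so it cannot be finite.

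For part (b) I would use the dichotomy in \cref{thm:n=2k+1}. Writing $n=\MR(J)$, if $n\le 2k$ then $Q$ is a single line and \cref{thm:proj plane splits} yields $G\cong A\rtimes\Cen(q)$ with $A$ abelian normal; if $n=2k+1$ the same theorem supplies the analogous normal subgroup. In both cases I would identify this with a Frobenius splitting: $A\cap H=1$, since a nontrivial element of $A\cap H$ would centralise the abelian kernel $A$ yet lie in the complement, contradicting fixed-point-freeness of the Frobenius action, and then $G=AH$ by a rank count, so $H<G$ splits. For part (c), take $k=1$ and $G$ non-split; by part (b) we must be outside both regimes above, so $n\ge 2k+2=4$, and the rank-one lines give $Q$ the structure of an affine line, from which I would read off that $G$ acts sharply $2$-transitively. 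Simplicity is forced because a proper nontrivial normal subgroup would, via the mock hyperbolic structure, push $Q$ into the projective-plane/single-line case of \cref{thm:proj plane splits} and hence make $G$ split; the resulting non-split simple sharply $2$-transitive group of characteristic $\ne 2$ then contradicts the Algebraicity Conjecture.

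For part (d), assume $G$ does not split, so $Q$ is a proper linear space and the rank inequalities force $n\ge 2k+2$. The plan is to combine this with $\MR(G)=\MR(J)+\MR(\Cen(q))=n+c$ and with the bound coming from the action of $\Cen(q)$ on the pencil of lines through $q$ and on a single such line: the stabiliser of a line acts on it with trivial point-stabilisers, so its rank is at most $k$, while the pencil has rank $n-k$. Feeding these relations into $\MR(G)\le 10$ first bounds $k$, and after excluding $k\ge 2$ one is left with rank-one lines, so part (c) applies and $G$ is a simple non-split sharply $2$-transitive group of characteristic $\ne 2$. A final analysis of the admissible values of $n$ and $c$, using parity and the structure of $\Cen(q)$, then rules out $\MR(G)=7$ and $\MR(G)=9$ and leaves $\MR(G)\in\{8,10\}$.

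The hard part will be part (d): extracting the precise rank inequalities among $n$, $k$ and $c$ and using them to eliminate the boundary cases $k\ge 2$ and $\MR(G)\in\{7,9\}$. This is where the genericity of the $\Cen(q)$-action on pencils and lines, the fixed-point-freeness of the Frobenius action, and the mock hyperbolic axioms must be combined most carefully. The other delicate point is the existence half of the midpoint axiom in part (a), which is precisely where odd type is indispensable.
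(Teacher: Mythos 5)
Your overall architecture matches the paper's (same definition of lines via $\ell_{ij}=\{k\in J:(ij)^k=(ij)^\inv\}$, same appeal to the rank inequality and the splitting criterion), but several of the steps you defer are exactly where the content lies, and in part (b) your route breaks down. In the case $n=2k+1$ the normal subgroup $N$ supplied by \cref{thm:rank_ineq} is \emph{not} an abelian complement: it has Morley rank $2n-k$ and satisfies $\MR(N\cap\Cen(i))=n-k>0$, so $N\cap H$ is infinite and your step ``$A\cap H=1$, hence $G=AH$ splits'' fails. What the paper actually does is derive a contradiction: $N\cap\Cen(i)<N$ is again a Frobenius group, so $\bigcup_{g}(N\cap\Cen(i))^g$ is generic in $N$, while $N\approx S$ consists generically of translations, which are fixed-point free; hence the case $n=2k+1$ with more than one line cannot occur at all, and splitting comes only from the single-line case via \cref{prop:BHNeumann}. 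A second gap sits in (c) and (d): the inequalities you propose (line stabiliser of rank at most $k$, pencil of rank $n-k$) only bound $\MR(G)$ from \emph{above} by $2n$, whereas the argument needs the \emph{lower} bound $\MR(G)>\MR(S)=2n-k$, which comes from the fact that $\bigcup_g H^g$ is generic in the connected group $G$ and disjoint from the fixed-point-free translations. Combined with $n\geq 2k+2$ this gives $\MR(G)\geq 3k+5$, which is precisely what eliminates $k\geq 2$ and $\MR(G)=7$; without it your part (d) does not close.

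Two smaller points. In (a), axiom c) does not ``reduce to malnormality'': the proof that $N_G(\Cen(ij))\cap J^{\cdot 2}=\Cen(ij)$ uses the conjugacy of complements in solvable groups of finite Morley rank (Theorem 9.11 of Borovik--Nesin) applied to $\Cen(ij)\rtimes A$, and the infinitude of lines is the nontrivial fact that $\Cen(ij)$ is infinite, not a formal consequence of connectedness of some acting group. In (c), sharp $2$-transitivity is not read off from ``an affine line structure'' but from the rank computation: $\Cen(i)$ acts freely on $J\setminus\{i\}$, so $\MR(\Cen(i))\leq n$, while $\MR(G)>\MR(S)=2n-1$ forces $\MR(\Cen(i))=n$ and hence regularity. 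Finally, simplicity cannot be obtained by feeding an arbitrary proper normal subgroup into \cref{prop:BHNeumann}, which requires an \emph{abelian} one; the paper instead proves $\MR(Q^{\cdot 3})>\MR(Q^{\cdot 2})$ and invokes Zilber indecomposability (\cref{cor:simplicity}).
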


For nilpotent Frobenius complements we show the following splitting criteria:

\begin{theorem}\label{thm:frobenius split or bad field} If $H<G$ is a connected Frobenius group of finite Morley rank and odd type with nilpotent complement $H$, then either of the following conditions implies that $H<G$ splits:
\begin{itemize}
\item $H$ is a minimal group, 
\item the lines in the associated mock hyperbolic reflection space have Morley rank $1$,
\item $G$ does not interpret a bad field of characteristic $0$.
\end{itemize}
\end{theorem}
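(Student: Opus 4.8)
The plan is to establish the single dichotomy underlying all three criteria: a connected Frobenius group $G$ of finite Morley rank and odd type with nilpotent complement $H$ either splits over $H$ or interprets a bad field of characteristic $0$. Each listed hypothesis will then be shown to exclude the bad-field alternative, forcing splitting. Throughout, fix an involution $i \in H$, which exists because $G$ is of odd type. The basic Frobenius observation is that $\Cen_G(i) \le H$: if $g$ centralises $i$ then $i \in H \cap H^g$, so malnormality of $H$ gives $g \in H$; in particular $\Cen_G(i)$ is nilpotent. By \cref{cor:rank 6} the involutions form a mock hyperbolic reflection space $Q$ with all lines infinite; let $\ell$ be a generic line through $i$, of Morley rank $k$, and set $n = \MR(J)$. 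If $n \le 2k+1$ then $G$ splits by \cref{cor:rank 6}, so I may assume $n \ge 2k+2$ and must produce a bad field.

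The core construction realises $\ell$ as an affine line over an interpretable field. Using the mock hyperbolic axioms (unique midpoints, and at most one point reflecting one line onto another), the products of the point reflections along $\ell$ form a definable connected abelian translation group $V$ of Morley rank $k$ acting regularly on $\ell$, with $i$ playing the role of $0 \in V$, and the point stabiliser at $i$ acts freely on $V \setminus \{0\}$. Here nilpotency of $H$ enters: inside $\Cen_G(i) \le H$ I extract a definable connected infinite abelian subgroup $A$ (a central torus of the nilpotent group, equivalently a group of dilations of $\ell$ fixing $i$) that normalises $V$ and acts on it faithfully and freely on $V \setminus \{0\}$. After passing to a minimal $A$-invariant subgroup of $V$ if necessary, Zilber's Field Theorem applies to the action $A \curvearrowright V$ and interprets a field $K$ with $V \cong K^+$ and $A \hookrightarrow K^\times$.

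I then read off the characteristic and the badness. The reflection in $i$ acts on $V \cong K^+$ as multiplication by $-1$; as this is a nontrivial involution, $\chara K \neq 2$, and Wagner's theorem excluding bad fields of positive characteristic forces $\chara K = 0$. To detect badness I compare $A$ with $K^\times$. If $A = K^\times$, then $\Cen_G(i)$ is transitive on $\ell \setminus \{i\}$; translating this back through the geometry makes $Q$ a projective plane, so $G$ splits by \cref{thm:proj plane splits}, contradicting $n \ge 2k+2$. Hence $A$ is a proper infinite definable subgroup of $K^\times$, so $(K,A)$ is a bad field of characteristic $0$ interpreted in $G$. This proves the dichotomy, and bullet three is then immediate: if $G$ does not interpret such a field, it must split.

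The remaining two criteria follow by excluding the bad field geometrically. If the lines have Morley rank $1$, then $k = 1$, so $V \cong K^+$ is strongly minimal; a strongly minimal field is algebraically closed and its multiplicative group has no proper infinite definable subgroup, so necessarily $A = K^\times$ and $G$ splits (in agreement with \cref{cor:rank 6}). For minimal $H$ one has $\Cen_G(i) = H$ and hence $A = H$; excluding the bad-field alternative then amounts to showing that the missing scalars $K^\times \setminus A$ cannot occur, and this is the delicate endgame, requiring the rank inequalities for mock hyperbolic reflection spaces together with \cref{thm:n=2k+1} rather than any soft model-theoretic fact — indeed bad fields whose distinguished subgroup is itself a minimal (even strongly minimal) group do exist in general, so the exclusion must exploit the specific geometry. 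This, together with the construction of the abelian pair $(A,V)$ carrying a faithful, free, $A$-minimal action in Step two — the point at which nilpotency of $H$ is indispensable — and the precise matching of the field-theoretic equality $A = K^\times$ with the projective-plane condition of \cref{thm:proj plane splits}, so that the rank gap $n - (2k+1)$ is accounted for exactly by the properness of $A$ in $K^\times$, is where I expect the main difficulty to lie.
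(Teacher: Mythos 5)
Your proposal assembles several of the right ingredients (Zilber's Field Theorem applied to the action of a piece of $H=\Cen(i)$ on a line, the relevance of the characteristic, nilpotency supplying a central torus), but it has gaps at the two load-bearing points. First, you never produce a \emph{single} infinite definable subgroup of $\Cen(i)$ that normalizes generically many lines through $i$: you assert that a central torus of the nilpotent group $H$ ``normalises $V$'', but centrality in $H$ gives no control over the action on the set of lines through $i$, and normalizing one line proves nothing. The paper's route is: a minimal $A \leq N_H(\lambda)$ embeds into $K^*$ via Zilber; under each of your hypotheses $A$ is forced to be a \emph{good torus} (in positive characteristic by Wagner's theorem that $K^*$ is a good torus --- note Wagner does \emph{not} exclude positive-characteristic bad fields, so your step ``Wagner \dots forces $\chara(K)=0$'' is false as stated; in characteristic $0$ because a non-bad $K^*$ is also a good torus); hence $A$ lies in the unique maximal good torus $T$ of $H$, which is central, and the \emph{rigidity} of good tori (a definable family of definable subgroups of $T$ is finite) yields one infinite subgroup of $T$ normalizing generically many $\lambda \in \Lambda_i$. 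Second, your splitting mechanism is unjustified: transitivity of $A=K^*$ on $V\setminus\{0\}$ does not make $Q$ a projective plane, and \cref{thm:proj plane splits} is not the tool used. The paper instead proves \cref{prop:splitting}: if an infinite definable solvable normal subgroup $A \trianglelefteq \Cen(i)$ normalizes all lines in $\Lambda_i$, then $A^G \approx A^*iJ \cup \{1\}$ almost equals a normal subgroup of rank $\MR(J)+\MR(A)$ containing $J^{\cdot 2}$, whence $n-k \leq \MR(A) \leq k$, contradicting $n>2k$ (\cref{prop:ineq}) unless $Q$ is a single line and $G$ splits.

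You also have the relative difficulty of the three bullets inverted. The minimal-$H$ case, which you defer as ``the delicate endgame,'' is in fact the immediate one: $N_H(\lambda)$ is infinite for every line (\cref{normalizer_index}), so minimality gives $N_H(\lambda)=H$ for all $\lambda$, $H$ is abelian (minimal connected groups are abelian), and \cref{prop:splitting} applies directly with $A=H$ --- no field enters at all. The genuinely delicate point is the one you skipped: converting ``each line is normalized by \emph{some} infinite good torus'' into ``\emph{one} infinite central subgroup normalizes generically many lines,'' and then running the rank-counting argument of \cref{prop:splitting} rather than any projective-plane criterion.
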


If $G$ is a uniquely $2$-divisible Frobenius group, then $G$ does not contain involutions. However if the complement $H$ is abelian, then we can use a construction from the theory of K-loops to extend $G$ to a group containing involutions and if $H<G$ is full, i.e.,  if $G = \bigcup_{g \in G} H^g$, then the involutions in this extended group will again form a mock hyperbolic reflection space (see \cref{sec:frobenius_groups}). 

This construction allows us to use mock hyperbolic reflection spaces to study Frobenius groups of finite Morley rank and degenerate type. This class contains potential bad groups. Fr\'econ showed that bad groups of Morley rank $3$ do not exist \cite{frecon}. Subsequently Wagner~\cite{wagner} used Fr\'econ's methods to show more generally that if $H<G$ is a simple \emph{full} Frobenius group of Morley rank $n$ with abelian Frobenius complement $H$ of Morley rank $k$, then $n>2k+1$. Note that the existence of full Frobenius groups was claimed by Ivanov and Olshanski, but to the authors best knowledge no published proof exists (see also \cite{Jaligot} Fact 3.1).

If $H<G$ is a not necessarily full or simple Frobenius group of finite Morley rank and degenerate type, we obtain a weaker version of mock hyperbolic reflection spaces which  still allows us to extend Fr\'econ's and Wagner's results:

\begin{theorem}
If  $H<G$ is a connected Frobenius group of Morley rank $n$  and degenerate type with  abelian Frobenius complement  $H$ of Morley rank $k$, then  $n\geq 2k+1$.

If $n=2k+1$, then $G$ splits as $G = N \rtimes H$ for some definable connected normal subgroup~$N$.
Moreover, if $N$ is solvable, then there is an interpretable field $K$ of characteristic $\neq 2$ such that $G = K_+ \rtimes H$, $H \leq K^*$, and $H$ acts on $K_+$ by multiplication.
\end{theorem}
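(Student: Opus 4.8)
The plan is to reduce everything to the geometry of \cref{thm:n=2k+1} by first manufacturing involutions. Since $H$ is abelian and $G$ is of degenerate type, $G$ is uniquely $2$\nobreakdash-divisible and the K\nobreakdash-loop construction of \cref{sec:frobenius_groups} embeds $G$ into a group $\hat G$ of finite Morley rank in which the point stabiliser picks up a point reflection; the resulting conjugacy class $Q$ of involutions carries a weak mock hyperbolic reflection space, with $Q$ in definable bijection with the coset space $\Omega = G/H$ and with lines coming from the conjugates of $H$. The first task is the bookkeeping that translates the invariants of $Q$ into those of $(G,H)$: one checks $\MR(Q)=\MR(\Omega)=n-k$, that the lines are infinite (each contains a coset of the infinite group $H$), and that a generic line has Morley rank $k=\MR(H)$. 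I expect that verifying that the three incidence axioms survive in the weak form needed for the rank estimate — collinearity read off from products of reflections, existence of midpoints, and the rigidity clause for pairs of lines — will require care precisely because $G$ is allowed to be neither full nor simple.

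Granting this dictionary, the inequality $n\ge 2k+1$ is the translation of the rank dichotomy in \cref{thm:n=2k+1}. If $Q$ is not a single line, that result gives the strict inequality $\MR(Q)\ge 2k+1$ in the mock invariants, which after the substitution $\MR(Q)=n-k$ and line rank $k$ yields $n\ge 2k+1$ with room to spare. If $Q$ is a single line, then by \cref{thm:proj plane splits} the group is of affine type $K_+\rtimes H$ with $H\le K^*$ acting by multiplication; here the key point is that $G$, and hence $K_+$, contains no involutions, so $K$ has characteristic $\neq 2$ and $-1\in K^*$ is an involution absent from $H$. Thus $H$ is a proper subgroup of $K^*$, $\MR(H)=k<\MR(K)=n-k$, and again $n\ge 2k+1$.

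For the equality case $n=2k+1$ I would run the rank computation at its boundary. The family $\{H^g\}$ is parametrised by $G/N_G(H)=G/H$ of rank $n-k$, and by malnormality distinct conjugates meet only in $1$, so the union $\bigcup_g (H^g\setminus\{1\})$ is generic in $G$, and its complement $N:=\{1\}\cup\bigl(G\setminus\bigcup_g H^g\bigr)$ is the natural candidate for a Frobenius kernel. At the boundary $n=2k+1$ the tightness of the estimate — this is exactly the structural half of \cref{thm:n=2k+1}, in the spirit of the normal subgroup produced in \cref{thm:proj plane splits} — forces $N$ to be a definable connected subgroup, and it is normal because it is the set of elements lying in no conjugate of $H$, a union of conjugacy classes. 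Since $N\cap H=\{1\}$ by construction and $G$ is connected, one gets $NH=G$ and hence the splitting $G=N\rtimes H$, whence $\MR(N)=n-k=k+1$.

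The main obstacle is the final, solvable, case. Here $H$ acts on $N$ by conjugation, and malnormality makes the action \emph{free}: if $m\in N\setminus\{1\}$ centralises $h\in H$, then $h\in H\cap H^{m}=\{1\}$. With $\MR(N)=k+1$ and $H$ connected of rank $k$, every $H$\nobreakdash-orbit on $N\setminus\{1\}$ has rank $k$, so any infinite definable connected $H$\nobreakdash-invariant proper subgroup of $N$ has rank exactly $k$. I would take a minimal such subgroup inside the last nontrivial term of the derived series; it is abelian, and $H$ acts on it freely and minimally, so Zilber's Field Theorem interprets a field $K$ with this subgroup equal to $K_+$ and $H\hookrightarrow K^*$ acting by multiplication. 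The delicate part — and the crux of the whole theorem, since Thompson's theorem on nilpotence of Frobenius kernels is not available in finite Morley rank — is to upgrade this from a proper subgroup to all of $N$: the sharp rank bound $\MR(N)=k+1$ together with the free action must be used to show that $N$ is itself abelian and $H$\nobreakdash-minimal, ruling out a nonabelian solvable $N$ and collapsing the derived series. Once $N=K_+$, the absence of involutions in $G\supseteq K_+$ forces $\chara K\neq 2$, and $H\le K^*$ is proper (again because $-1\notin H$), giving $G=K_+\rtimes H$ as claimed.
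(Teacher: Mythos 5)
Your overall strategy is the paper's: use unique $2$-divisibility to build the K-loop $L=(G,\otimes)$ and the quasidirect product $\calG = L\rtimes_Q\calA$, read the statement off the rank inequality for the resulting (generic) mock hyperbolic reflection space, and finish the solvable case with Zilber's Field Theorem. But your dictionary is wrong at the first step: in that construction the involutions are $J = L\times\{\epsilon\}$, in definable bijection with $G$ itself, not with $G/H$, so $\MR(J)=n$ (the lines are the conjugates of $H\times\{\epsilon\}$, of rank $k$). With your count $\MR(Q)=n-k$ the inequality $\MR(Q)\geq 2k+1$ of \cref{thm:generic_mock hyperbolic reflection} would give $n\geq 3k+1$, which is false --- the equality case $n=2k+1$ is realized by $K_+\rtimes H$ --- whereas the correct count gives exactly $n\geq 2k+1$ with no room to spare. (Your cross-check also fails: the conjugates of $H$ form a family of rank $n-k$, while a generic mock hyperbolic space with $\MR(Q)=n-k$ and lines of rank $k$ would have $2n-4k$ lines.) Relatedly, the single-line case is not handled via \cref{thm:proj plane splits}; a single line would force $L=H$, i.e.\ $G=H$, which is excluded.

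Two further steps are asserted rather than proved, and both are the actual content. First, in the equality case you take $N=\{1\}\cup(G\setminus\bigcup_g H^g)$ and say tightness ``forces'' it to be a definable connected subgroup. That set is normal and definable, but showing the putative Frobenius kernel is a \emph{subgroup} is exactly the open problem one is trying to circumvent; the paper instead obtains $N$ from the almost-stabilizer of the translation set produced by \cref{prop:normal_subgroup} (applied inside $\calG$, then intersected with $\Cen(\iota)\supseteq 1\times G$ to land in $G$ with $\MR(N)=n-k=k+1$), and then checks $N\cap H=1$ (by the rank count $\MR(\bigcup_g(N\cap H)^g)=k+1+\MR(N\cap H)$ plus connectedness of complements) and that $NH$ is generic. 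Second, in the solvable case you explicitly leave open the upgrade from a proper $H$-minimal subgroup to all of $N$. The paper closes this as follows: a solvable kernel of a split Frobenius group of finite Morley rank is nilpotent (Theorem 11.59 of Borovik--Nesin), hence $\Cen(u)\leq N$ for $u\in N\setminus\{1\}$; since $G$ has no involutions, $u^G$ cannot be generic in $N$, so $\MR(u^G)\leq k$ and $\MR(\Cen(u))\geq k+1$, whence $\Cen(u)=N$ and $N$ is abelian; and a proper infinite $H$-invariant subgroup would have rank exactly $k$ with a generic $H$-orbit, which again manufactures an involution. Without the nilpotence input and the ``generic conjugacy class yields an involution'' trick, your sketch does not close.
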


\section{Mock hyperbolic reflection spaces}

We now introduce the notion of mock hyperbolic reflection spaces, which will be central to our work.
The motivating example for our construction comes from sharply $2$-transitive groups in characteristic different from $2$ (see Section~\ref{sec:sharply_2-transitive}) in which the involutions have a rich geometric structure, which is reflected in the following definition.

Let $G$ be a group and $Q\subset G$ a conjugacy class  of involutions in $G$, and let $\Lambda\subset \calP(Q)$ be a $G$-invariant family of subsets of $Q$ such that each $\lambda \in \Lambda$ contains at least two elements.
We view involutions in $Q$ as points and elements of $\Lambda$ as lines so that the conjugation action of $Q$ on itself  corresponds to  point reflections.

For  involutions $i \neq j\in Q$ we write
\[ \ell_{ij} = \{ k \in Q : ij \in kQ \} \]
and we say that the line $\ell_{ij}$ \emph{exists in $\Lambda$} if $\ell_{ij}\in\Lambda$.

\begin{definition}\label{prop:partial_mock hyperbolic reflection} Let $G$ be a group, $Q\subset G$ a conjugacy class of involutions in $G$, and let $\Lambda\subset \calP(Q)$ be $G$ invariant such that each $\lambda \in \Lambda$ contains at least two elements.
The pair $(Q,\Lambda)$ is a \emph{partial mock hyperbolic reflection space} if the following conditions are satisfied:
\begin{enumerate}
\item For all  $\lambda \in \Lambda$ and $i\neq j\in\lambda$, we have
\[ \lambda = \ell_{ij}=\{ k \in Q : ij \in kQ \}. \]
 In particular, any two points are contained in at most one line.
\item Midpoints exist and are unique, i.e. given $i, j$ in $Q$ there is a unique $k \in Q$ such that $i^k = j$.
\item Given two distinct lines there is at most one point reflecting one line to the other. In other words, if $\lambda^i = \lambda^j$ for $i \neq j$ in $Q$, then $\lambda^i = \lambda = \lambda^j$.
\end{enumerate}
We say that  $(Q,\Lambda)$ is a  \emph{mock hyperbolic reflection space} if it satisfies a) -- c) and furthermore $\ell_{ij} \in\Lambda$ for all $i\neq j\in Q$.
\end{definition}

Given  a group $G$ and a conjugacy class $Q$ of involutions in $G$, in light of \cref{prop:partial_mock hyperbolic reflection} and in a slight abuse of notation, we  say that
$Q$ forms a {mock hyperbolic reflection space} if $(G, \{ \ell_{ij} : i \neq j \in Q \})$ is a mock hyperbolic reflection space. 

For a group $G$ and subset $A\subset G$ we write \[A^{\cdot n}=\{a_1\cdot\ldots\cdot a_n\mid  a_1,\ldots, a_n\in A\}\subseteq G.\]

\begin{remark}\label{rem:def} Let $(Q,\Lambda)$ be a partial mock hyperbolic reflection space.
We say that involutions $i, j, k\in Q$ are \emph{collinear} if there is  some $\lambda\in\Lambda$ with $i, j, k\in\lambda$.
\begin{itemize}
\item 
We will see below that if a) and b) hold, then c) is equivalent to either of the following conditions:
\begin{enumerate}
\item[c')] If $\lambda^i = \lambda^j$ for $i \neq j$ in $Q$ and $\lambda\in \Lambda$, then $i, j\in \lambda$.

\item[c'')] For every line $\lambda \in \Lambda$ we have $N_G(\lambda) \cap Q^{\cdot 2} = \lambda^{\cdot 2}$.
\end{enumerate}
\item 
If $(Q,\Lambda)$ is a mock hyperbolic reflection space, then a) is equivalent to:
\[ i, j, k\in Q \text{ are collinear if and only if } ijk\in Q. \]
\end{itemize}
\end{remark}

\begin{example}
Let $\bbH^n$ be the $n$-dimensional real hyperbolic space. 
Then $\Isom(\bbH^n)$, the group of all isometries of $\bbH^n$, 
contains the point-reflections as a conjugacy class $Q$ of involutions. $Q$ can be identified with $\bbH^n$ and hence $Q$ forms a mock hyperbolic reflection space. In case $n=2$ the simple group $\PSL_2(\bbR)$ consists of all orientation preserving isometries of $\bbH^2$. $\PSL_2(\bbR)$ is generated by the point-reflections and the point-reflections are the only involutions. 
\end{example}

\begin{example} \label{example:abelian} 
Let $A$ be a uniquely $2$-divisible abelian group and let $\epsilon \in \Aut(A)$ be given by $\epsilon(x) = x^\inv$. Put $G = A \rtimes \langle \epsilon \rangle$. Then the set of involutions in $G$ is given by $Q = A \times \{ \epsilon \}$ and $Q$ forms a mock hyperbolic reflection space consisting of a single line.
\end{example}

Other examples arise from sharply $2$-transitive groups (\cref{sec:sharply_2-transitive}) or can be constructed from a class of uniquely $2$-divisible Frobenius groups (\cref{sec:frobenius_groups}).

\begin{lemma}\label{lem:translations have no fixed point}
Let $G$ be a group and $Q$ a conjugacy class of involutions in $G$ such that $Q$ acts regularly on itself by conjugation, i.e. $Q$ satisfies condition b) in \cref{prop:partial_mock hyperbolic reflection}. Then the following holds for any $i \in Q$:
\begin{enumerate}
\item $iQ$ is uniquely $2$-divisible.
\item $Q^{\cdot 2} \cap \Cen(i) = \{1\}$.
\item \label{prop:decomposition}  $G = (iQ) \Cen(i)$ and every $g\in G$ can be written uniquely as $g=ijh$ with $j\in Q$ and $h\in\Cen(i)$.
\end{enumerate}
\end{lemma}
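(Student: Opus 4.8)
The plan rests on two elementary identities valid for any $i, k \in Q$, which I would record first. Since $k$ is an involution, $i^k = kik$, so $(ik)^2 = ikik = i\cdot i^k$; as $i^k \in Q$, this shows that squaring maps $iQ$ into itself and that the squaring map $iQ \to iQ$ is precisely the map induced by $k \mapsto i^k$. Second, conjugation by $i$ inverts every element of $iQ$: for $u = ij$ one computes $iui = i(ij)i = ji = (ij)^{-1} = u^{-1}$. These two facts carry essentially the whole argument.

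For part (a), regularity (condition b) says exactly that $k \mapsto i^k$ is a bijection $Q \to Q$: for each target there is a unique $k$ with $i^k$ equal to it. Combined with the identity $(ik)^2 = i \cdot i^k$, this shows the squaring map is a bijection $iQ \to iQ$, i.e. every element of $iQ$ has a unique square root in $iQ$, which is the asserted unique $2$-divisibility.

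For part (b), I would first observe $Q^{\cdot 2} = (iQ)^{-1}(iQ)$, since for $j,k \in Q$ we have $jk = (ji)(ik) = (ij)^{-1}(ik)$. So write a given $g \in Q^{\cdot 2}$ as $g = u^{-1}v$ with $u,v \in iQ$. Using that conjugation by $i$ inverts $u$ and $v$, one gets $igi = uv^{-1}$, so $g \in \Cen(i)$ forces $u^{-1}v = uv^{-1}$, that is $u^2 = v^2$. Since $u^2, v^2 \in iQ$ by the first identity, the unique $2$-divisibility from part (a) yields $u = v$, hence $g = 1$. This is the step where the earlier parts are genuinely used and is, I expect, the conceptual heart of the lemma.

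For part (c), existence is orbit--stabilizer: $G$ acts transitively on $Q$ by conjugation with $\Stab_G(i) = \Cen(i)$, so every coset of $\Cen(i)$ is of the form $\{g : i^g = q\}$ for some $q \in Q$. The point is that regularity provides a representative of this coset lying in $Q$ itself, namely the unique $k \in Q$ with $i^k = q$; this gives $G = Q\,\Cen(i)$, and decomposing $ig$ then rewrites any $g$ as $ijh$ with $j \in Q$, $h \in \Cen(i)$, so $G = (iQ)\Cen(i)$. Uniqueness follows from (b): if $ijh = ij'h'$ then $(j')^{-1}j = h'h^{-1}$ lies in $Q^{\cdot 2} \cap \Cen(i) = \{1\}$, forcing $j = j'$ and then $h = h'$. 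The only genuinely delicate point is locating the coset representative inside $Q$, which is exactly what regularity buys us; the rest is bookkeeping of the two-sided decomposition.
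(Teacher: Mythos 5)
Your proof is correct and follows essentially the same route as the paper: part (a) is the observation that $(ik)^2 = i\cdot i^k$ turns condition b) into unique $2$-divisibility of $iQ$, part (c) uses regularity to place a coset representative of $\Cen(i)$ inside $Q$, and your part (b), though phrased via the inversion action of $i$ on $iQ$ and cancellation of square roots rather than the paper's direct ``$i^a=i^b$ forces $a=b$,'' rests on exactly the same uniqueness clause of condition b). No gaps.
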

\begin{proof}
a) Fix $ia \in iQ$. We have to show that there is a unique $b \in Q$ such that
\[ ia = (ib)^2 = ibib = ii^b. \]
This is exactly condition b) in \cref{prop:partial_mock hyperbolic reflection}.

b) Suppose $a$ and $b$ are involutions in $Q$ such that $i^{ab}=i$. Then $i^a = i^b$ and hence $a = b$ by the uniqueness in condition b). Hence $ab=1$.

c) Let $g \in G$ and set $k = i^{g^\inv}$. Then there is a unique $j \in Q$ such that $k^{ij} = i$. Now put $h = jig$. Then $g = ijh$ and we have
\[ k^{ij} = i = k^g = k^{ijh} = i^h \]
and therefore $h \in \Cen(i)$.
This shows  existence of such a decomposition, uniqueness follows from part b).
\end{proof}

In accordance with the terminology from real hyperbolic spaces or from sharply $2$-transitive groups we call elements of the set 
\[S = \{ \sigma \in Q^{\cdot 2} \setminus \{ 1 \} : \ell_\sigma \text{ exists in } \Lambda \} \cup \{ 1 \} \]
\emph{translations}. Then Part b) of \cref{lem:basic_properties} implies that translations have no fixed points (in their action on $Q$).

B. H. Neumann showed that a uniquely $2$-divisible group admitting a fixed point free involutionary automorphism must be abelian \cite{neumann}. More generally, uniquely $2$-divisible groups with involutionary automorphisms can be decomposed as follows:

\begin{proposition}[Exercise 14 on p. 73 of \cite{borovik-nesin}] \label{prop:involutionary_decomposition} \label{prop:neumann_abelian}
Let $G$ be a uniquely $2$-divisible group and let $\alpha \in \Aut(G)$ be an involutionary automorphism.
Define the sets $\Inv(\alpha) = \{ g \in G : g^\alpha = g^\inv \}$ and $\Cen(\alpha)=\{ g \in G : g^\alpha = g \}$. 

Then $G = \Inv(\alpha) \Cen(\alpha)$, and for every $g \in G$ there are unique $a \in \Inv(\alpha), b \in \Cen(\alpha)$ such that $g = ab$. In particular, if $\alpha$ has no fixed points, then $G$ is abelian and $\alpha$ acts by inversion.
\end{proposition}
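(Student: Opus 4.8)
The plan is to reduce the whole statement to the uniqueness of square roots, which is precisely what unique $2$-divisibility provides. First I would determine what the factor $a \in \Inv(\alpha)$ is forced to be: if $g = ab$ with $a^\alpha = a^\inv$ and $b^\alpha = b$, then applying $\alpha$ yields $g^\alpha = a^\inv b$, whence $g(g^\alpha)^\inv = ab\,b^\inv a = a^2$. So any admissible $a$ must satisfy $a^2 = g(g^\alpha)^\inv$; since square roots are unique, $a$ is determined by $g$, and then so is $b = a^\inv g$. This already gives the uniqueness half of the statement.

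For existence I would reverse this computation and \emph{define} $a := (g(g^\alpha)^\inv)^\half$ together with $b := a^\inv g$, and then check that they lie in the right sets. The step I expect to be the crux --- and the only place where unique $2$-divisibility is genuinely needed --- is verifying $a \in \Inv(\alpha)$. Applying $\alpha$ to the defining relation $a^2 = g(g^\alpha)^\inv$ and using $\alpha^2 = \id$ gives $(a^\alpha)^2 = (a^2)^\alpha = g^\alpha g^\inv = (g(g^\alpha)^\inv)^\inv = (a^\inv)^2$, so $a^\alpha$ and $a^\inv$ are square roots of the same element and must therefore coincide. Without uniqueness of square roots one could extract a square root but not guarantee it is $\alpha$-inverted, so this is the heart of the argument. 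Granting $a^\alpha = a^\inv$, the membership $b \in \Cen(\alpha)$ is then immediate: $b^\alpha = (a^\alpha)^\inv g^\alpha = a g^\alpha$, which equals $a^\inv g = b$ exactly because $a^2 = g(g^\alpha)^\inv$.

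Finally, for the ``in particular'' clause I would note that fixed-point freeness means $\Cen(\alpha) = \{1\}$, so in every decomposition $g = ab$ the factor $b$ is trivial; hence $g^\alpha = g^\inv$ for all $g \in G$, i.e. $\alpha$ acts by inversion. As inversion is then an automorphism, comparing $(gh)^\inv = (gh)^\alpha = g^\alpha h^\alpha = g^\inv h^\inv$ with the identity $(gh)^\inv = h^\inv g^\inv$ shows that $G$ is abelian.
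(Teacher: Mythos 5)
Your proof is correct and is the standard argument for this classical fact; the paper itself gives no proof, simply citing it as an exercise in Borovik--Nesin. Your reduction to uniqueness of square roots --- forcing $a^2 = g(g^\alpha)^\inv$ for uniqueness, then defining $a$ as that square root and using injectivity of squaring to get $a^\alpha = a^\inv$ --- is exactly the intended solution.
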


\begin{lemma} \label{lem:basic_properties}
Suppose $(Q,\Lambda)$ satisfies conditions a) and b) in \cref{prop:partial_mock hyperbolic reflection}. Let $\lambda$ be a line in $\Lambda$. 
\begin{enumerate}
\item $N_G(\lambda) \cap Q = \lambda$.
\item If $i \in \lambda$, then $\lambda^{\cdot 2} = i\lambda$.
\item If $a,b$ are distinct involutions in $Q$ such that $ab \in \lambda^{\cdot 2}$, then $a,b \in \lambda$.
\item $\lambda^{\cdot 2}$ a uniquely $2$-divisible abelian group.
\item If $i,j, k\in Q$ are such that $\ell_{ij}$ and $\ell_{jk}$ exist in $\Lambda$, then 
$\ell_{ij}^{\cdot 2}\cdot \ell_{jk}^{\cdot 2}=\ell_{ij}\cdot \ell_{jk}\subseteq Q^{\cdot 2}$.
\item $N_G(\lambda) = N_G(\lambda^\squared)$.
\end{enumerate}
\end{lemma}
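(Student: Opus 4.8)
The plan is to prove the two inclusions separately; the first is routine and the second carries all of the content.

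For $N_G(\lambda)\subseteq N_G(\lambda^{\cdot 2})$ I would simply observe that conjugation is an automorphism of $G$, so $(\lambda^{\cdot 2})^g=(\lambda^g)^{\cdot 2}$ for every $g\in G$. If $g$ fixes $\lambda$ setwise, this equals $\lambda^{\cdot 2}$, whence $g\in N_G(\lambda^{\cdot 2})$. For the reverse inclusion I would first reduce to the claim that $N_G(\lambda^{\cdot 2})\cap Q=\lambda$. Granting this, any $g\in N_G(\lambda^{\cdot 2})$ normalizes the subgroup $N_G(\lambda^{\cdot 2})$ and fixes the conjugacy class $Q$ setwise, so it permutes $N_G(\lambda^{\cdot 2})\cap Q=\lambda$ by conjugation; hence $\lambda^g=\lambda$ and $g\in N_G(\lambda)$. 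The inclusion $\lambda\subseteq N_G(\lambda^{\cdot 2})\cap Q$ is easy: by part b), every $i\in\lambda$ satisfies $\lambda=i\lambda^{\cdot 2}$, from which a direct computation shows $i$ inverts $\lambda^{\cdot 2}$ and in particular normalizes it.

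The heart of the matter is therefore the opposite inclusion $N_G(\lambda^{\cdot 2})\cap Q\subseteq\lambda$. Let $k\in Q$ normalize $A:=\lambda^{\cdot 2}$; by part d) conjugation by $k$ is then an involutory automorphism of the uniquely $2$-divisible abelian group $A$. I would first show this automorphism is fixed-point-free: if $a\in A$ with $a^k=a$, then, since $\lambda\subseteq Q$ gives $A\subseteq Q^{\cdot 2}$, we have $a\in Q^{\cdot 2}\cap\Cen(k)=\{1\}$ by \cref{lem:translations have no fixed point}. Hence \cref{prop:involutionary_decomposition} forces $k$ to invert $A$. Now comes the key point: choose any $a\in A\setminus\{1\}$ (which exists because $\lambda$ has at least two points) and let $a^{1/2}\in A$ be its unique square root. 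Because $k$ inverts $A$, one computes $k^{a^{1/2}}=a^{-1/2}ka^{1/2}=a^{-1}k=ka$, so that $ka$ is a conjugate of $k$ and thus lies in $Q$. Since $k\cdot(ka)=a\in\lambda^{\cdot 2}$ and $k\neq ka$, part c) yields $k,ka\in\lambda$; in particular $k\in\lambda$, completing the claim.

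The main obstacle is precisely this last step, namely recovering the line $\lambda$ from the abelian group $\lambda^{\cdot 2}$. The naive approach of trying to show directly that $ik\in\lambda^{\cdot 2}$ for a fixed $i\in\lambda$ only produces the weaker statement that $ik$ centralizes $\lambda^{\cdot 2}$, which is not enough to conclude without already knowing the answer. The decisive observation that breaks this circularity is that the involutions $ka$ with $a\in A$ are all conjugate to $k$ via the square roots living inside $A$; this is what allows part c) to be invoked and pins $k$ down onto $\lambda$. Once $N_G(\lambda^{\cdot 2})\cap Q=\lambda$ is established, the conjugation–permutation argument above delivers the equality $N_G(\lambda)=N_G(\lambda^\squared)$.
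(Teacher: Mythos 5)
Your proposal addresses only part~f) of the lemma. The statement you were asked to prove has six parts, and your argument freely invokes parts~b), c) and~d) as established facts (as well as \cref{lem:translations have no fixed point}). Those earlier parts are where most of the paper's work lies --- in particular d), that $\lambda^{\cdot 2}$ is a uniquely $2$-divisible abelian group, itself requires the Neumann decomposition \cref{prop:neumann_abelian} and is not free. So as a proof of the lemma as stated there is a genuine gap: five of the six assertions are simply assumed. If the intended target was part~f) alone, this objection falls away, but you should then say explicitly that you are taking a)--e) as given.

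As an argument for part~f) itself, what you wrote is correct, and it is a genuinely different (and longer) route than the paper's. The paper deduces f) from c) in two lines: for $g\in N_G(\lambda^{\cdot 2})$ and $i\neq j\in\lambda$ one has $i^gj^g=(ij)^g\in\lambda^{\cdot 2}$, so c) applied to the distinct involutions $i^g,j^g$ gives $i^g,j^g\in\lambda$ and hence $\lambda^g=\lambda$. You instead prove the stronger intermediate statement $N_G(\lambda^{\cdot 2})\cap Q=\lambda$ and then recover f) by a conjugation--permutation argument; the key computation, that a $k\in Q$ normalizing $A=\lambda^{\cdot 2}$ must invert $A$ (fixed-point-freeness via $Q^{\cdot 2}\cap\Cen(k)=\{1\}$ plus \cref{prop:involutionary_decomposition}) and that $ka=k^{a^{1/2}}\in Q$ for $a\in A$, so that c) applies to the pair $k, ka$, is valid. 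Each step checks out: $ka$ is indeed an involution conjugate to $k$, $k\cdot(ka)=a\in\lambda^{\cdot 2}$, and $k\neq ka$ since $a\neq 1$. What your detour buys is the identity $N_G(\lambda^{\cdot 2})\cap Q=\lambda$ proved without first knowing f); in the paper this is an immediate consequence of f) together with part~a), so nothing new is gained, and the direct application of c) to $i^g,j^g$ is the shorter path. There is no circularity in your use of b), c), d), so modulo the incompleteness noted above the argument stands.
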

\begin{proof}
a) We first show $\lambda \subseteq N_G(\lambda)$: If $\lambda = \ell_{ij}$, then $j^i = iji$ and hence $ij \in j^iQ$. Therefore $j^i \in \lambda$.

Now assume $k \in N_G(\lambda) \cap Q$ and $\lambda = \ell_{ij}$.
We may assume $k \neq i$. Then $i \neq i^k \in \lambda$ and hence $\lambda = \ell_{i^ki}$.
Now $i^ki = kk^i$ hence $i^ki \in kQ$ and therefore $k \in \lambda$.

b) Fix $a \neq b$ in $\lambda$. Then $ab \in iQ$ and hence $ab = ij$ for some $j \in Q$. It remains to show $j \in \lambda$: We have $ab = ij \in Qj = jQ$ and hence $j \in \lambda$.

c) Suppose $ab = ij$ and $\lambda = \ell_{ij}$. Then $(ij)^a = (ij)^\inv = ji$ and therefore $\lambda^a = \lambda$ and hence $a \in N_G(\lambda) \cap Q = \lambda$.  Now $a\lambda = \lambda^{\cdot 2}$ and hence $b \in \lambda$.

d) We first show that $\lambda^{\cdot 2}=i\lambda$ is uniquely $2$-divisible. Since we know that  $iQ$ is uniquely $2$-divisible, it remains to show that $i\lambda$ is $2$-divisible. Fix $ia \in i\lambda$, say $ia = (ib)^2$ for some $b \in Q$.
Then $ia = ii^b$, hence $a = i^b$ and therefore $b \in N_G(\lambda) \cap Q = \lambda$.

It remains to show that $\lambda^{\cdot 2} = i\lambda$ is an abelian group. Note that $i\lambda = \lambda i$ and hence $\lambda^{\cdot 2}$ is closed under multiplication and taking inverses. Therefore $\lambda^{\cdot 2}$ is a uniquely $2$-divisible group. Moreover, $i$ acts on $\lambda^{\cdot 2}$ as an involutionary automorphism without fixed points. Now \cref{prop:neumann_abelian} implies that $\lambda^{\cdot 2}$ is abelian. 

e) Since $j$ normalizes $\ell_{ij}$, we have $\ell_{ij}^{\cdot 2}=\ell_{ij}j$ by b), and hence the claim follows.

f) The inclusion $N_G(\lambda) \subseteq N_G(\lambda^\squared)$ is obvious. Therefore we only need to show $N_G(\lambda^\squared) \subseteq N_G(\lambda)$. Take $g \in N_G( \lambda^\squared ) \setminus \{ 1 \}$ and fix $i \neq j \in \lambda$. Then $ij \in \lambda^\squared$ and hence $i^g j^g \in \lambda^\squared$. Therefore $i^g, j^g \in \lambda$ by c) and thus $\lambda^g = \lambda$.
\end{proof}

\begin{lemma} \label{lem:mock hyperbolic reflection_equivalences}
Suppose $(Q,\Lambda)$ satisfies a) and b) in \cref{prop:partial_mock hyperbolic reflection}. Then the following are equivalent:
\begin{enumerate}
\item $(Q,\Lambda)$ is a partial mock hyperbolic reflection space.
\item Every line $\lambda \in \Lambda$ satisfies $N_G(\lambda) \cap Q^{\cdot 2} = \lambda^{\cdot 2}$.
\end{enumerate}
\end{lemma}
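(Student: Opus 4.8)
The plan is to prove the equivalence by rephrasing condition c) of \cref{prop:partial_mock hyperbolic reflection} as a statement about products living in $N_G(\lambda)$. The bridge between the two formulations is the elementary observation that, since every element of $Q$ is an involution, for $i \neq j \in Q$ one has $\lambda^i = \lambda^j$ if and only if $\lambda^{ij^{-1}} = \lambda^{ij} = \lambda$, that is, $ij \in N_G(\lambda)$; moreover $ij \in Q^{\cdot 2}$ always, and $ij \neq 1$ precisely because $i \neq j$. Thus a pair of distinct points reflecting $\lambda$ to a common line corresponds exactly to a nontrivial element of $N_G(\lambda) \cap Q^{\cdot 2}$ written as a product of two distinct involutions, and this is what lets me pass between the geometric statement c) and the algebraic identity $N_G(\lambda) \cap Q^{\cdot 2} = \lambda^{\cdot 2}$.

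Before treating either direction I would record that the inclusion $\lambda^{\cdot 2} \subseteq N_G(\lambda) \cap Q^{\cdot 2}$ holds unconditionally: by \cref{lem:basic_properties}(a) we have $\lambda \subseteq N_G(\lambda)$, so $\lambda^{\cdot 2} \subseteq N_G(\lambda)$, and $\lambda^{\cdot 2} \subseteq Q^{\cdot 2}$ is immediate. Hence the second condition is really the assertion that the reverse inclusion $N_G(\lambda) \cap Q^{\cdot 2} \subseteq \lambda^{\cdot 2}$ holds, and all the content of both directions lies in controlling this set.

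For the first condition $\Rightarrow$ second condition: I would take $\sigma = ab \in N_G(\lambda) \cap Q^{\cdot 2}$ with $a,b \in Q$. If $a = b$ then $\sigma = 1 \in \lambda^{\cdot 2}$; otherwise $a \neq b$, and membership in $N_G(\lambda)$ gives $\lambda^a = \lambda^b$ by the bridge observation. Condition c) then forces $\lambda^a = \lambda = \lambda^b$, so $a,b \in N_G(\lambda) \cap Q = \lambda$ by \cref{lem:basic_properties}(a), whence $\sigma \in \lambda^{\cdot 2}$. For the converse direction: assuming the identity $N_G(\lambda) \cap Q^{\cdot 2} = \lambda^{\cdot 2}$, suppose $\lambda^i = \lambda^j$ with $i \neq j$ in $Q$. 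The bridge observation gives $ij \in N_G(\lambda) \cap Q^{\cdot 2} = \lambda^{\cdot 2}$, and since $i \neq j$, \cref{lem:basic_properties}(c) applied to $ij \in \lambda^{\cdot 2}$ yields $i,j \in \lambda$; then $\lambda^i = \lambda = \lambda^j$, which is exactly c).

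I do not expect a serious obstacle here, since once the bridge observation is in place both directions are short. The only points requiring genuine care are the self-inverse bookkeeping in establishing $\lambda^i = \lambda^j \Leftrightarrow ij \in N_G(\lambda)$ and that $ij \neq 1$ exactly when $i \neq j$, together with the fact that \cref{lem:basic_properties}(c) has the standing hypothesis that the two involutions be \emph{distinct}. This is why in the first direction the case $a = b$ must be split off separately and why, in the converse, the assumption $i \neq j$ is precisely what makes $ij$ a nontrivial element eligible for \cref{lem:basic_properties}(c).
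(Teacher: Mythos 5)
Your proof is correct and follows essentially the same route as the paper: both directions are handled by translating $\lambda^i=\lambda^j$ into $ij\in N_G(\lambda)\cap Q^{\cdot 2}$ and then invoking condition c) together with \cref{lem:basic_properties} (a) and (c). Your write-up is in fact slightly more careful than the paper's, since you make the bridge observation explicit, record the unconditional inclusion $\lambda^{\cdot 2}\subseteq N_G(\lambda)\cap Q^{\cdot 2}$, and split off the case $a=b$ which the paper leaves implicit.
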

\begin{proof}
Suppose $(Q,\Lambda)$ forms a partial mock hyperbolic reflection space and fix $ij \in N_G(\lambda) \cap Q^{\cdot 2}$ and assume $i \neq j \in Q$. Then $\lambda^i = \lambda = \lambda^j$ and hence $i,j \in N_G(\lambda) \cap Q = \lambda$. Therefore $N_G(\lambda) \cap Q^{\cdot 2} = \lambda^{\cdot 2}$.

Conversely, assume $N_G(\lambda) \cap Q^{\cdot 2} = \lambda^{\cdot 2}$ and $\lambda^i = \lambda^j$ for $i \neq j \in Q$. Then $ij \in \lambda^{\cdot 2}$ and hence $i,j \in \lambda$ by \cref{lem:basic_properties} c). This shows $\lambda^i = \lambda = \lambda^j$.
\end{proof}

\begin{proposition}\label{prop:partition}
Let $G$ be a group, $Q$ a conjugacy class of involutions in $G$ and
suppose $(Q,\Lambda)$ is a partial mock hyperbolic reflection space. 
Then the following holds:
\begin{enumerate}
\item If $\lambda = \ell_{ij}\in\Lambda$, then $\lambda^\squared = iQ \cap jQ = \Cen(ij) \cap Q^{\cdot 2}$.
\item The set $S\setminus\{1\} = \{ ij\in Q^{\cdot 2} \setminus \{ 1 \} : \ell_{ij} \text{ exists in } \Lambda \}$ is partitioned by the family $\{ \lambda^{\cdot 2} \setminus \{ 1 \} : \lambda \in \Lambda \}$.
\end{enumerate}

\end{proposition}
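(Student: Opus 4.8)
The plan is to prove part a) by a chain of set equalities, and then part b) by a short partition argument; the single delicate point will be the reverse inclusion $\Cen(ij)\cap Q^{\cdot 2}\subseteq \lambda^{\cdot 2}$ in part a), which is where axiom c) is forced into play.

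For part a) I would first establish $\lambda^{\cdot 2}=iQ\cap jQ$. Since $i,j\in\lambda=\ell_{ij}$, \cref{lem:basic_properties} b) gives $\lambda^{\cdot 2}=i\lambda=j\lambda$, and as $\lambda\subseteq Q$ this immediately yields $\lambda^{\cdot 2}\subseteq iQ\cap jQ$. For the reverse inclusion, take $x=ik=jl$ with $k,l\in Q$; multiplying $ik=jl$ on the left by $j$ gives $jik=l\in Q$, so $jik$ is an involution and its inverse $kij=(jik)^{-1}=jik$ also lies in $Q$. Hence $ij=k(kij)\in kQ$, so $k\in\ell_{ij}=\lambda$ and therefore $x=ik\in i\lambda=\lambda^{\cdot 2}$, proving $\lambda^{\cdot 2}=iQ\cap jQ$.

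Next I would show $\lambda^{\cdot 2}=\Cen(ij)\cap Q^{\cdot 2}$. The inclusion $\lambda^{\cdot 2}\subseteq\Cen(ij)\cap Q^{\cdot 2}$ is immediate: $\lambda^{\cdot 2}\subseteq Q^{\cdot 2}$ by definition, and since $\lambda^{\cdot 2}$ is an abelian group containing $ij$ by \cref{lem:basic_properties} d), every one of its elements commutes with $ij$. The reverse inclusion is the main obstacle, since a priori there could be many products of two involutions centralizing the translation $ij$. Given $y\in\Cen(ij)\cap Q^{\cdot 2}$, the idea is to show that $y$ must actually normalize the line $\lambda$: because $y$ centralizes $ij$ we have $ij=(ij)^y\in(\lambda^{\cdot 2})^y=(\lambda^y)^{\cdot 2}$, and $\lambda^y\in\Lambda$ by $G$-invariance of $\Lambda$. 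Applying \cref{lem:basic_properties} c) to the line $\lambda^y$ and the distinct involutions $i,j$ with $ij\in(\lambda^y)^{\cdot 2}$ gives $i,j\in\lambda^y$; but also $i,j\in\lambda$, so the uniqueness of the line through two points (\cref{prop:partial_mock hyperbolic reflection} a)) forces $\lambda^y=\lambda$. Thus $y\in N_G(\lambda)\cap Q^{\cdot 2}$, which equals $\lambda^{\cdot 2}$ by \cref{lem:mock hyperbolic reflection_equivalences}; this is exactly the step that uses axiom c). Combining the two displayed equalities completes part a).

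Finally, for part b) I would check the two defining properties of a partition directly. Every $\sigma=ij\in S\setminus\{1\}$ has $\ell_{ij}\in\Lambda$ and $i\neq j$, so $\sigma\in\ell_{ij}^{\cdot 2}\setminus\{1\}$, giving coverage; conversely each block $\lambda^{\cdot 2}\setminus\{1\}$ sits inside $S\setminus\{1\}$, since a nontrivial element $ab$ of it satisfies $a\neq b\in\lambda$ and $\ell_{ab}=\lambda\in\Lambda$ by axiom a). For disjointness, if $\sigma\neq 1$ lies in $\lambda_1^{\cdot 2}\cap\lambda_2^{\cdot 2}$, write $\sigma=ab$ with $a\neq b\in\lambda_1$; then $ab\in\lambda_2^{\cdot 2}$ and \cref{lem:basic_properties} c) give $a,b\in\lambda_2$, so $\lambda_1$ and $\lambda_2$ share two distinct points and hence coincide. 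I expect the normalizer argument in part a) — ruling out that a product of two involutions centralizing $ij$ escapes $\lambda^{\cdot 2}$ — to be the only genuinely delicate point; everything else is bookkeeping with \cref{lem:basic_properties}.
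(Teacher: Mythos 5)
Your proposal is correct and follows essentially the same route as the paper: both prove a) via the two set equalities $\lambda^{\cdot 2}=iQ\cap jQ$ and $\lambda^{\cdot 2}=\Cen(ij)\cap Q^{\cdot 2}$, with the second reduced to the normalizer identity $N_G(\lambda)\cap Q^{\cdot 2}=\lambda^{\cdot 2}$ from \cref{lem:mock hyperbolic reflection_equivalences}, and then deduce b). The only (harmless) differences are cosmetic: the paper gets $iQ\cap jQ\subseteq\lambda^{\cdot 2}$ by writing $ia=jb$, noting $ab=ij\in\lambda^{\cdot 2}$ and invoking \cref{lem:basic_properties} c), and it observes directly that $\Cen(ij)$ normalizes $\ell_{ij}$ rather than routing through \cref{lem:basic_properties} c) and uniqueness of lines, while b) is dismissed as an immediate consequence of a) instead of being checked block by block.
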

\begin{proof}
Clearly, 2. follows from 1. In order to prove 1., we first show $\lambda^{\cdot 2} = iQ \cap jQ$. Fix $ia = jb \in iQ \cap jQ$. Then $ab = ij \in \lambda^{\cdot 2}$ and hence $a,b \in \lambda$ by \cref{lem:basic_properties} c). This shows $iQ \cap jQ \subseteq \lambda^{\cdot 2}$. Moreover, we have $\lambda^{\cdot 2} = i\lambda = j\lambda$ and hence $\lambda^{\cdot 2} \subseteq iQ \cap jQ$. Thus $\lambda^{\cdot 2} = iQ \cap jQ$.

$\lambda^{\cdot 2}$ is an abelian group and contains $ij$. Therefore $\lambda^{\cdot 2} \subseteq \Cen(ij) \cap Q^{\cdot 2}$. An element $g \in \Cen(ij)$ normalizes $\lambda = \ell_{ij} = \ell_{ji}$ and hence normalizes $\lambda^{\cdot 2}$. Therefore $\Cen(ij) \cap Q^{\cdot 2} \subseteq N_G(\lambda^{\cdot 2}) \cap Q^{\cdot 2} = \lambda^{\cdot 2}$ and hence $\Cen(ij) \cap Q^{\cdot 2} = \lambda^{\cdot 2}$.

\end{proof}

If $i$ is an involution in $Q$, then we set $\Lambda_i = \{ \lambda \in \Lambda : i \in \lambda \}$ to be the set of all lines that contain $i$.
\begin{proposition} \label{line_lemma}
Suppose $(Q,\Lambda)$ forms a partial mock hyperbolic reflection space.
\begin{enumerate}
\item Suppose $\lambda \cap \lambda^j \neq \emptyset$ for a line $\lambda$ and an involution $j$ in $Q$. Then $j \in \lambda$ and therefore $\lambda = \lambda^j$.
\item $G$ acts transitiviely on $\Lambda$ if and only if $\Cen(i)$ acts transitively on $\Lambda_i$ for each $i \in Q$.
\end{enumerate}
\end{proposition}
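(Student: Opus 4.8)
The plan is to extract a single geometric fact that powers both parts: the midpoint of two distinct collinear points again lies on their line. I would first establish this as a claim: if $a \neq b$ lie on a line $\lambda$, then the unique $m \in Q$ with $a^m = b$, which exists by condition b) of \cref{prop:partial_mock hyperbolic reflection}, satisfies $m \in \lambda$. By \cref{lem:basic_properties}, the set $\lambda^{\cdot 2}$ is a uniquely $2$-divisible abelian group and $\lambda^{\cdot 2} = a\lambda$; since $a,b \in \lambda$ we have $ab \in \lambda^{\cdot 2}$, so it has a unique square root $c \in \lambda^{\cdot 2}$. Setting $m = ac$ and writing $c = ay$ with $y \in \lambda$ gives $m = a^2 y = y \in \lambda$, while a direct computation using only $a^2 = 1$ gives $a^m = mam = (ac)a(ac) = ac^2 = a(ab) = b$. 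By the uniqueness of midpoints this $m$ is the midpoint, which proves the claim.

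For part 1, I would fix some $k \in \lambda \cap \lambda^j$, so that both $k$ and $k^j$ lie on $\lambda$. If $k^j \neq k$, then $j$ solves $k^m = k^j$ and is therefore, by uniqueness, the midpoint of the distinct collinear pair $k, k^j$; the claim then gives $j \in \lambda$. If instead $k^j = k$, then $j$ and $k$ commute, so $jk \in \Cen(k) \cap Q^{\cdot 2}$, which equals $\{1\}$ by \cref{lem:translations have no fixed point}; hence $j = k \in \lambda$. In either case $j \in \lambda \subseteq N_G(\lambda)$ by \cref{lem:basic_properties}, and therefore $\lambda^j = \lambda$.

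For part 2, the backward direction is the easier one: given lines $\lambda, \mu$ and points $i \in \lambda$, $i' \in \mu$, the fact that $Q$ is a single conjugacy class yields $g_0$ with $i^{g_0} = i'$, and by $G$-invariance of $\Lambda$ both $\lambda^{g_0}$ and $\mu$ then lie in $\Lambda_{i'}$; transitivity of $\Cen(i')$ on $\Lambda_{i'}$ produces $h \in \Cen(i')$ with $\lambda^{g_0 h} = \mu$. For the forward direction, fix $i$ and $\lambda, \mu \in \Lambda_i$, and pick $g \in G$ with $\lambda^g = \mu$. Then $i$ and $i^g$ both lie on $\mu$; if they are equal then $g \in \Cen(i)$ already, and otherwise the midpoint claim places the midpoint $m$ of $i, i^g$ on $\mu$. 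A short computation shows $gm \in \Cen(i)$, since $i^{gm} = (i^g)^m = (i^m)^m = i$, and $\lambda^{gm} = \mu^m = \mu$ because $m \in \mu \subseteq N_G(\mu)$; hence $\Cen(i)$ is transitive on $\Lambda_i$.

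The one genuinely substantive step is the midpoint claim; once it is available, both parts reduce to short conjugacy manipulations. The main points to watch are the correct identification of $j$ (respectively of the adjusting involution $m$) as the \emph{unique} midpoint, and the verification that the two middle involutions cancel in the computation $mam = ac^2$, which is exactly what forces the midpoint back onto the line.
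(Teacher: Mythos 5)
Your proposal is correct, and it reaches the result by a somewhat different route than the paper. You organize everything around one auxiliary fact — the midpoint of two distinct points of a line lies on that line — which you prove via the unique $2$-divisibility of $\lambda^{\cdot 2}=a\lambda$ from \cref{lem:basic_properties}, and then both parts become short conjugation computations. The paper instead proves part a) by reducing to the case $\lambda\neq\lambda^j$, where condition a) of \cref{prop:partial_mock hyperbolic reflection} forces $\lambda\cap\lambda^j=\{i\}$ to be a single point; since $i^j$ also lies in this intersection, $i^j=i$ and hence $j=i$ by uniqueness of midpoints. For the forward direction of part b) the paper writes $g=ijh$ with $j\in Q$, $h\in\Cen(i)$ using \cref{prop:decomposition} and then applies part a) to the factor $j$, whereas you correct $g$ on the right by the midpoint $m$ of $i$ and $i^g$ and verify directly that $gm\in\Cen(i)$ and $\lambda^{gm}=\mu$. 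Both routes are of comparable length; yours treats the cases $\lambda=\lambda^j$ and $\lambda\neq\lambda^j$ uniformly and avoids the decomposition lemma, while the paper's layout makes part a) carry the weight in part b). One remark: your midpoint claim admits a shorter proof in the spirit of the paper's definitions — from $a^m=b$ one gets $mab=bmb=m^b\in Q$, hence $ab\in mQ$ and $m\in\ell_{ab}=\lambda$ directly from condition a) — so the detour through square roots in $\lambda^{\cdot 2}$, though correct, is not needed.
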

\begin{proof}
a) Suppose $\{i\} = \lambda \cap \lambda^j$. Then $i = i^j$and therefore $j = i \in \lambda$.

b) If $\Cen(i)$ acts transitively on $\Lambda_i$, then $G$ is transitive on $\Lambda$ because all involutions in $Q$ are conjugate.

Now assume $G$ acts transitively on $\Lambda$ and suppose $i \in \lambda \cap \lambda^g$ for some $g \in G$. By \cref{prop:decomposition} $g$ can be written as $g = ijh$ for some $j \in Q$ and $h \in \Cen(i)$.
Note that $\lambda^g = \lambda^{ijh} = \lambda^{jh}$ because $i$ is contained in $\lambda$.

Since $h \in \Cen(i)$ this implies that $i$ must be contained in $\lambda^j$ and hence \[ i \in \lambda \cap \lambda^j. \] Therefore a) implies that $j$ must be contained in $\lambda$ and hence $\lambda = \lambda^j$. Hence $\lambda^g = \lambda^h$. Since $g$ was arbitrary, this shows that $\Cen(i)$ acts transitively on $\Lambda_i$.
\end{proof}

\subsection{The geometry of a mock hyperbolic reflection space}

Recall that a mock hyperbolic reflection space is a partial hyperbolic space such that any two points are contained in a line. 
As a first step we show that the geometry of a mock hyperbolic reflection space cannot contain a proper projective plane:

\begin{lemma}\label{lem:intersections}
Suppose that $(Q,\Lambda)$ is a mock hyperbolic reflection space in a group $G$ and $X\subseteq Q$ is a projective plane, i.e. \begin{enumerate}
    \item for all $\ i \neq j \in X$ the line $\ell_{ij}$ is contained in $X$, and
    \item if $\lambda$ and $\delta$ are lines contained in $X$ then $\lambda \cap \delta \neq \emptyset$,
  \end{enumerate}
 Then $X^{\cdot 2}$ is a subgroup of $G$.
\end{lemma}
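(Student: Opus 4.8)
The plan is to check directly that $X^{\cdot 2}=\{ab : a,b\in X\}$ is a subgroup by verifying nonemptiness, closure under inversion, and closure under multiplication. Since $X$ consists of involutions, two of these are immediate: for any $a\in X$ we have $1=aa\in X^{\cdot 2}$, and for $a,b\in X$ the inverse $(ab)^{-1}=ba$ again lies in $X^{\cdot 2}$. So the whole statement reduces to showing that $X^{\cdot 2}$ is closed under products, i.e. that $abcd\in X^{\cdot 2}$ for all $a,b,c,d\in X$.

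First I would dispose of the degenerate cases: if $a=b$ then $ab=1$ and $abcd=cd\in X^{\cdot 2}$, and symmetrically if $c=d$. So assume $a\neq b$ and $c\neq d$. By the first defining property of the projective plane $X$ the lines $\ell_{ab}$ and $\ell_{cd}$ both lie in $X$, and by the second they intersect; fix a common point $p\in\ell_{ab}\cap\ell_{cd}$. The idea is then to route both products through $p$ so that the two reflections at $p$ cancel. Concretely, I would invoke \cref{lem:basic_properties}: since $p\in\ell_{ab}$, part b) gives $\ell_{ab}^{\cdot 2}=p\,\ell_{ab}$, and because $\ell_{ab}^{\cdot 2}$ is an abelian group (part d)) closed under inversion, the identity $(p\,\ell_{ab})^{-1}=\ell_{ab}\,p$ shows $\ell_{ab}^{\cdot 2}=p\,\ell_{ab}=\ell_{ab}\,p$; likewise $\ell_{cd}^{\cdot 2}=p\,\ell_{cd}$. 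As $a,b\in\ell_{ab}$ we have $ab\in\ell_{ab}^{\cdot 2}=\ell_{ab}\,p$, so $ab=a'p$ for some $a'\in\ell_{ab}\subseteq X$; similarly $cd=pb'$ for some $b'\in\ell_{cd}\subseteq X$. Then
\[ abcd=(a'p)(pb')=a'\,p^2\,b'=a'b'\in X^{\cdot 2}, \]
since $p$ is an involution, which establishes closure and hence that $X^{\cdot 2}$ is a subgroup.

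The main obstacle, and the step most easily overlooked, is the asymmetric choice of decomposition: writing $ab$ as a \emph{right} multiple of $p$ but $cd$ as a \emph{left} multiple of $p$, so that the adjacent copies of $p$ annihilate. Everything else is routine bookkeeping with involutions. It is worth emphasizing where the hypotheses enter: the projective-plane axioms are used exactly once, to produce the common point $p$ through which the cancellation is arranged, while \cref{lem:basic_properties} supplies the two-sided coset description of $\ell^{\cdot 2}$ that makes the reflections at $p$ cancel. No appeal to condition c) of \cref{prop:partial_mock hyperbolic reflection} is needed beyond what is already built into \cref{lem:basic_properties}.
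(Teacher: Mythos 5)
Your proof is correct and is essentially the paper's argument: the paper simply cites \cref{lem:basic_properties} e), whose content is exactly the cancellation $\ell_{ij}^{\cdot 2}\cdot\ell_{jk}^{\cdot 2}=\ell_{ij}\cdot\ell_{jk}$ through a common point, and you have inlined that derivation (writing $ab=a'p$ and $cd=pb'$ so that $p^2=1$ cancels). The only difference is that you re-prove the two-sided coset identity rather than quoting the lemma, and you spell out the routine checks of identity and inverses.
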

\begin{proof}
	This follows at once from \cref{lem:basic_properties} e).
\end{proof}

\begin{lemma}
Suppose $Q$ forms a mock hyperbolic reflection space in a group $G$ and
let $H \subseteq Q^{\cdot 2}$ be a subgroup of $G$ which is uniquely 2-divisible and normalized by an involution $i \in Q$. Then $H \subseteq \Cen(\sigma)$ for some $\sigma \in Q^{\cdot 2} \setminus \{ 1 \}$.
\end{lemma}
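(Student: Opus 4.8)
The plan is to reduce the whole statement to the single assertion that $H$ is abelian. Once that is known, I may assume $H \neq \{1\}$ (otherwise there is nothing to prove, as $Q^{\cdot 2}\setminus\{1\}$ is nonempty) and simply choose any $\sigma \in H \setminus \{1\}$. Since $H \subseteq Q^{\cdot 2}$ we have $\sigma \in Q^{\cdot 2}\setminus\{1\}$, and since $H$ is abelian with $\sigma \in H$ we get $H \subseteq \Cen(\sigma)$, which is exactly what is claimed. So everything comes down to proving commutativity of $H$.

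To prove that $H$ is abelian, I would exploit the normalizing involution $i$. Conjugation by $i$ restricts to an automorphism $\alpha$ of $H$ with $\alpha(h) = h^i$, and $\alpha^2 = \id$ because $i^2 = 1$, so $\alpha$ is an involutionary automorphism of $H$ (or the identity). Its fixed-point set is $\Cen(\alpha) = H \cap \Cen(i)$, and the key observation is that this is trivial: since $H \subseteq Q^{\cdot 2}$, we have $H \cap \Cen(i) \subseteq Q^{\cdot 2} \cap \Cen(i) = \{1\}$ by \cref{lem:translations have no fixed point}~b). Thus $\alpha$ is a fixed-point-free involutionary automorphism of the uniquely $2$-divisible group $H$, and \cref{prop:neumann_abelian} immediately yields that $H$ is abelian (and moreover that $i$ acts on $H$ by inversion). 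Combined with the first paragraph, this settles the statement.

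The only genuinely nontrivial input is the fixed-point-freeness of $\alpha$, that is, the identity $Q^{\cdot 2}\cap\Cen(i)=\{1\}$ from \cref{lem:translations have no fixed point}, together with Neumann's decomposition \cref{prop:neumann_abelian}; after that the conclusion is purely formal, so I do not anticipate a real obstacle. I would also point out the sharper geometric reading this argument gives for free: the chosen $\sigma \in H\setminus\{1\}$ is a translation whose line $\lambda = \ell_\sigma$ satisfies $\Cen(\sigma)\cap Q^{\cdot 2}=\lambda^{\cdot 2}$ by \cref{prop:partition}, and since $H \subseteq \Cen(\sigma)\cap Q^{\cdot 2}$ we in fact obtain $H \subseteq \lambda^{\cdot 2}$. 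In other words, $H$ sits inside the abelian translation group $\lambda^{\cdot 2}$ of a single line, which is presumably the form in which the lemma will be used later.
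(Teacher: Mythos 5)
Your proof is correct and is essentially the paper's own argument: both apply \cref{prop:neumann_abelian} to the involutionary automorphism induced by $i$ on the uniquely $2$-divisible group $H$, with fixed-point-freeness coming from $H\cap\Cen(i)\subseteq Q^{\cdot 2}\cap\Cen(i)=\{1\}$, and then conclude that the abelian group $H$ lies in $\Cen(\sigma)$ for any $\sigma\in H\setminus\{1\}$. You merely spell out the justifications (and the trivial case $H=\{1\}$) that the paper leaves implicit.
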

\begin{proof}
	Since $H$ is uniquely 2-divisible and $i$ acts as an involutionary automorphism without fixed points, \cref{prop:neumann_abelian} implies that $H$ is abelian and hence must be contained in the centralizer of some translation.
\end{proof}

\begin{proposition}\label{prop:no proj plane} If $Q$ is a mock hyperbolic reflection space in a group $G$, then it does not contain a proper projective plane. I.e. if $X \subseteq Q$ is a projective plane, 
  then $X$ contains at most one line.
\end{proposition}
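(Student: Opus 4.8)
The plan is to argue via the group $X^{\cdot 2}$ supplied by \cref{lem:intersections}, showing that a projective plane $X\subseteq Q$ is forced to lie inside a single line. First I fix a point $i\in X$. Since $X$ is closed under lines, every $j\in X$ lies on $\ell_{ij}\subseteq X$, so $X=\bigcup_{j\in X}\ell_{ij}$ is a union of lines through $i$. By \cref{lem:basic_properties} a) each such line is fixed setwise by $i$, so $i$ maps $X$ into itself and therefore normalizes the subgroup $X^{\cdot 2}$, acting on it as an involutory automorphism.

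The key step is to show that $X^{\cdot 2}$ is uniquely $2$-divisible. Every nontrivial element of $X^{\cdot 2}$ is a product $x_1x_2$ of two \emph{distinct} points of $X$, hence lies in $\ell_{x_1x_2}^{\cdot 2}$, a uniquely $2$-divisible abelian group by \cref{lem:basic_properties} d) which is contained in $X^{\cdot 2}$ because $\ell_{x_1x_2}\subseteq X$; this already produces a square root inside $X^{\cdot 2}$. For uniqueness, suppose $h_1,h_2\in X^{\cdot 2}$ satisfy $h_1^2=h_2^2=g\neq 1$. Writing each $h_k$ as a product of two (necessarily distinct) points, I obtain lines $\mu_k\subseteq X$ with $h_k\in\mu_k^{\cdot 2}$ and hence $g=h_k^2\in\mu_k^{\cdot 2}\setminus\{1\}$; since $g$ is a translation lying in two of the blocks, the partition in \cref{prop:partition} forces $\mu_1^{\cdot 2}=\mu_2^{\cdot 2}$, and unique $2$-divisibility of this common group gives $h_1=h_2$.

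With unique $2$-divisibility established, the lemma immediately preceding this proposition applies to $X^{\cdot 2}$, which is a uniquely $2$-divisible subgroup normalized by the involution $i$: it yields $X^{\cdot 2}\subseteq\Cen(\sigma)$ for some translation $\sigma\in Q^{\cdot 2}\setminus\{1\}$. Writing $\sigma=ab$ with $a\neq b$ on its line $\lambda=\ell_{ab}$, \cref{prop:partition} a) gives $\Cen(\sigma)\cap Q^{\cdot 2}=\lambda^{\cdot 2}$, so $X^{\cdot 2}\subseteq\lambda^{\cdot 2}$. Finally, for any distinct $x_1,x_2\in X$ we have $x_1x_2\in\lambda^{\cdot 2}$, so \cref{lem:basic_properties} c) gives $x_1,x_2\in\lambda$. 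Hence $X\subseteq\lambda$, and any line of $X$, being contained in $\lambda$ and sharing at least two points with it, equals $\lambda$ by condition a); thus $X$ contains at most one line.

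I expect the unique $2$-divisibility of $X^{\cdot 2}$ to be the main obstacle, since a priori the group $X^{\cdot 2}$ mixes all the lines through $i$. The observation that makes it tractable is that, by the very definition of $A^{\cdot 2}$, every element is literally a product of just \emph{two} points and therefore sits on a single line $\ell_{x_1x_2}^{\cdot 2}$; the partition of translations in \cref{prop:partition} then pins down square roots line by line.
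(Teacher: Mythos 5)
Your proof is correct and follows essentially the same route as the paper: $X^{\cdot 2}$ is a subgroup by \cref{lem:intersections}, it is uniquely $2$-divisible because it is partitioned (away from $1$) by the groups $\lambda^{\cdot 2}$ for lines $\lambda\subseteq X$, and the preceding lemma then places it inside $\Cen(\sigma)$, forcing $X\subseteq\ell_\sigma$. The only difference is that you spell out the uniqueness of square roots via \cref{prop:partition}, a detail the paper compresses into the phrase ``since it is a union of centralizers of translations.''
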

\begin{proof}
 By \cref{lem:intersections} the set $X^{\cdot 2}$ is  a subgroup of $G$. Moreover, $X^{\cdot 2}$ is uniquely 2-divisible since it is a union of centralizers of translations.
  Each $j \in X$ acts on $X^{\cdot 2}$ as an involutionary automorphism without fixed points. By the previous lemma $X^{\cdot 2} \leq \Cen(\sigma)$ and hence $X \subseteq \ell_\sigma$.
\end{proof}

\begin{theorem}\label{prop:BHNeumann}
Suppose $Q$ forms a mock hyperbolic reflection space in a group $G$. Then the following are equivalent:
\begin{enumerate}
\item $\Lambda$ consists of a single line;
\item $Q$ is a projective plane;
\item $G$ has an abelian normal subgroup $A \not \subseteq \bigcap_{i \in Q} \Cen(i)$;
\item $Q^{\cdot 2} = iQ$ for any involution $i \in Q$;
\item $iQ$ is commutative for any involution $i \in Q$;
\item $iQ$ is a subgroup of $G$ for any involution $i \in Q$;
\item $Q^{\cdot 2}$ is a subgroup of $G$;
\item $iQ$ is an abelian normal subgroup of $G$ and $G$ splits as $G = iQ \rtimes \Cen(i)$ for any involution $i \in Q$.
\end{enumerate}
\end{theorem}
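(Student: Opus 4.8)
The plan is to prove that all eight conditions are equivalent to (a), which I would use as the hub: condition (a) says that all of $Q$ is a single line, and each geometric or group-theoretic condition amounts to collapsing the partition of $Q^{\cdot 2}\setminus\{1\}$ into the abelian blocks $\lambda^{\cdot 2}\setminus\{1\}$ (from \cref{prop:partition}) down to a single block. The three structural inputs I would lean on throughout are: \cref{prop:partition}, giving $\ell_\sigma^{\cdot 2}=\Cen(\sigma)\cap Q^{\cdot 2}$ and the partition of the nontrivial translations; \cref{prop:no proj plane}, saying a projective plane inside $Q$ contains at most one line; and \cref{prop:neumann_abelian}, turning a uniquely $2$-divisible group with a fixed-point-free involution into an abelian one on which the involution acts by inversion.

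First I would settle $(a)\Leftrightarrow(b)$: if $Q$ is a projective plane then \cref{prop:no proj plane} applied with $X=Q$ forces at most one line, and conversely a single line is trivially a degenerate projective plane since both plane axioms hold vacuously. The reverse implications from (a) to the group conditions are then immediate. A single line $\lambda=Q$ has, by \cref{lem:basic_properties} d), the set $iQ=\lambda^{\cdot 2}=Q^{\cdot 2}$ equal to a uniquely $2$-divisible abelian group, yielding (d), (e), (f), (g) at once. Since $\lambda$ is the only line, $G=N_G(\lambda)=N_G(\lambda^{\cdot 2})$ by \cref{lem:basic_properties} f), so $Q^{\cdot 2}\trianglelefteq G$, and the decomposition $G=(iQ)\Cen(i)$ together with $iQ\cap\Cen(i)=Q^{\cdot 2}\cap\Cen(i)=\{1\}$ from \cref{lem:translations have no fixed point} gives the splitting in (h).

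For the forward implications I would exploit the recurring mechanism that two distinct lines through a common involution $i$ give blocks $\lambda^{\cdot 2},\mu^{\cdot 2}$ meeting only in $1$, whereas two commuting nontrivial translations lie in a common block by $\Cen(\sigma)\cap Q^{\cdot 2}=\ell_\sigma^{\cdot 2}$. For (e), commutativity of $iQ$ would force such commuting translations from two distinct lines through $i$ into the same block, a contradiction; so there is a unique line through $i$, and as every $\ell_{ij}$ passes through $i$ this already forces $Q=\ell_{ij}$. For (f), the collinearity criterion from \cref{rem:def} ($a,i,b$ collinear iff $aib\in Q$) combined with closure of $iQ$ under products makes every triple through $i$ collinear, again collapsing to one line. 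For (g), the partition shows $Q^{\cdot 2}$ is uniquely $2$-divisible (a square root of a translation stays in its block), so \cref{prop:neumann_abelian} with the fixed-point-free action of $i$ (\cref{lem:translations have no fixed point}) makes $Q^{\cdot 2}$ abelian, reducing to (e). For (d), the equality $Q^{\cdot 2}=iQ$ forces every block, hence every line, to contain $i$, so any two lines meet at $i$ and $Q$ is a projective plane, i.e. (b).

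The main obstacle is (c), the Neumann-type criterion. For $(c)\Rightarrow(a)$ I would choose $i\in Q$ and $a\in A$ with $i^a\neq i$ (possible since $A\not\subseteq\bigcap_{i\in Q}\Cen(i)$) and set $\sigma=i\,i^a$; a short computation gives $\sigma=(a^i)^{-1}a\in A\cap Q^{\cdot 2}\setminus\{1\}$, the containment in $A$ using that $A$ is normal. As $A$ is abelian, $A\subseteq\Cen(\sigma)$, so \cref{prop:partition} yields $A\cap Q^{\cdot 2}\subseteq\ell_\sigma^{\cdot 2}$. Normality of $A$ is what propagates this across all of $Q$: for arbitrary $k=i^g$ the conjugate $\sigma^g$ lies in $A\cap Q^{\cdot 2}\subseteq\ell_\sigma^{\cdot 2}$ and also in $(\ell_\sigma^g)^{\cdot 2}$, so the partition forces $\ell_\sigma^g=\ell_\sigma$, and since $i\in\ell_\sigma$ we get $k=i^g\in\ell_\sigma$, whence $Q=\ell_\sigma$. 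Conversely $(a)\Rightarrow(c)$ is witnessed by $A=Q^{\cdot 2}$, abelian and normal as above and non-central because each $i$ inverts $Q^{\cdot 2}$ without fixed points. I expect $(c)\Rightarrow(a)$ to be the one delicate step—verifying $\sigma\in A$ and tracking which block each conjugate $\sigma^g$ lands in—while every other implication reduces mechanically to \cref{prop:partition} and the two decomposition results.
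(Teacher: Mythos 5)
Your proposal is correct and follows essentially the same route as the paper: the same three inputs (the partition of $Q^{\cdot 2}\setminus\{1\}$ into the abelian blocks $\lambda^{\cdot 2}\setminus\{1\}$ from \cref{prop:partition}, Neumann's theorem \cref{prop:neumann_abelian}, and the exclusion of proper projective planes) drive every implication, and your argument for the delicate step (c)$\Rightarrow$(a) is the paper's argument almost verbatim. The only differences are organizational --- the paper proves the single cycle d)$\iff$a)$\Rightarrow$b)$\Rightarrow$g)$\Rightarrow$e)$\Rightarrow$f)$\Rightarrow$h)$\Rightarrow$c)$\Rightarrow$a) whereas you use (a) as a hub, rerouting a few arrows (e.g.\ (b)$\Rightarrow$(a) directly via \cref{prop:no proj plane} rather than (b)$\Rightarrow$(g) via \cref{lem:intersections}, and (f)$\Rightarrow$(a) via the collinearity criterion of \cref{rem:def} rather than (f)$\Rightarrow$(h) via Neumann) --- and these do not change the substance.
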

\begin{proof}
We show the following implications:
\[ \text{d)} \iff \text{a)} \implies \text{b)} \implies \text{g)} \implies \text{e)} \implies \text{f)} \implies \text{h)} \implies \text{c)} \implies \text{a)}. \]
To show a) $\iff$ d):
 Assume d) and fix a line $\lambda = \ell_{ij}$. Then 
\[ i\lambda = \lambda^{\cdot 2} = iQ \cap jQ = Q^{\cdot 2} = iQ \]
and hence $\lambda = Q$ is the only line.
Conversely, assume a) holds and $\lambda = Q$ is the unique line, then $Q^{\cdot 2} = \lambda^{\cdot 2} = i\lambda = iQ$ by \cref{lem:basic_properties}.

a) $\implies$ b) is trivial.

b) $\implies$ g) holds by \cref{lem:intersections}.

Now assume g) holds.  By \cref{prop:partition} $Q^\squared \setminus \{ 1 \}$ is partitioned by the family $\{ \lambda^\squared \setminus \{ 1 \} : \lambda \in \Lambda \}$. Each $\lambda^\squared$ is a uniquely $2$-divisible abelian group by \cref{lem:basic_properties}. Therefore $Q^\squared$ is uniquely $2$-divisible. If $i$ is any involution, then $i$ normalizes $Q^\squared$ and acts by conjugation as an involutionary automorphism without fixed points. Therefore $Q^\squared$ is an abelian group by \cref{prop:neumann_abelian}. In particular, $iQ \subseteq Q^\squared$ is commutative. This shows e).

Now assume e). $iQ$ is partitioned by $\{ \lambda^\squared : \lambda \in \Lambda, i \in \lambda \}$ and if $\lambda = \ell_{ij}$, then $\lambda^\squared = \Cen(ij) \cap Q^\squared$. Since $iQ$ is commutative this implies $iQ = \lambda^\squared$ for some line $\lambda$ and hence $iQ$ is a subgroup of $G$ by \cref{lem:basic_properties}. This shows f).

We next show f) $\implies$ h): $iQ$ is a uniquely $2$-divisible group and $i$ acts as an involutionary automorphism without fixed point. Therefore $iQ$ is an abelian subgroup of $G$ by \cref{prop:neumann_abelian}. Note that $N_G(iQ)$ contains $\Cen(i)$ and $iQ$. Therefore $G = iQ\Cen(i) = N_G(iQ)$ by \cref{prop:decomposition}. Hence $iQ$ is an abelian normal subgroup of $G$ and therefore $G = iQ \rtimes \Cen(i)$ by \cref{prop:decomposition}.

h) $\implies$ c) is obvious.

To see that c) implies a), let  $i \in Q$ and  $a \in A \setminus \Cen(i)$. Then
\[ 1 \neq a^\inv a^i = i^a i \in A \cap iQ. \]
In particular, $A \cap iQ$ is nontrivial. Now fix $\sigma \in (A \cap iQ) \setminus \{ 1 \}$ and set
$ \lambda = \ell_\sigma$. Then
\[ \Cen(\sigma) \cap Q^{\cdot 2} = \lambda^{\cdot 2} \]
and hence $A \cap Q^{\cdot 2} \subseteq \lambda^{\cdot 2}$. This implies that $\lambda^\squared$ is a normal subset of $G$ and hence $\lambda$ is a normal subset of $G$ by \cref{lem:basic_properties}. Therefore $\lambda = Q$ by \cref{lem:basic_properties}.
\end{proof}

\section{Sharply 2-transitive groups}\label{sec:sharply_2-transitive}

A permutation group $G$ acting on a set $X$, $|X| \geq 2$, is called \emph{sharply 2-transitive} if it acts regularly on pairs of distinct points, or, equivalently, if $G$ acts transitively on $X$ and for each $x\in X$ the point stabilizer $G_x$ acts regularly on $X \setminus \{x\}$. For two distinct elements $x,y \in X$ the unique $g \in G$ such that $(x,y)^g = (y,x)$ is an involution. Hence the set $J$ of involutions in $G$ is non-empty and forms a conjugacy class. 

The (permutation) characteristic of a group $G$ acting  sharply 2-transitively on a set $X$ is defined as follows: put $\chara(G) = 2$ if and only if involutions have no fixed points. If involutions have a (necessarily unique) fixed point,  the $G$-equivariant bijection $i \mapsto \fix(i)$ allows us to identify the given action of  $G$ on $X$ with the conjugation action of $G$ on $J$. Thus in this case, the set $S\setminus\{1\}$ of nontrivial translations also forms a single conjugacy class.  We put $\chara(G)=p$ (or $0$) if  translations have order $p$ (or infinite order, respectively).
For the standard examples of sharply 2-transitive groups, namely $K\rtimes K^*$ for some field $K$, this definition of characteristic agrees with the characteristic of the field $K$.

\begin{remark}\label{lem:basic} Let $G$ be a sharply 2-transitive group of characteristic $\chara(G) \neq 2$. Since $G$ acts sharply $2$-transitively by conjugation on the set $J$ of involutions in $G$, the following properties are easy to see:
\begin{enumerate}
  \item $\Cen(i)$ acts regularly on $J \setminus \{i\}$,
  \item the set $J$ acts regularly on itself by conjugation,  i.e. Condition b) of \cref{prop:partial_mock hyperbolic reflection} holds.
  \item $J^{\cdot 2} \cap \Cen(i) = \{1\}$ for all $i \in J$.
\end{enumerate}
In particular, a nontrivial translation does not have a fixed point.
\end{remark}

In order to define the lines for a mock hyperbolic reflection space on $J$, we need the following equivalent conditions to be satisfied:

\begin{proposition}\label{lem:geometry} If $\chara(G)\neq 2$, the following conditions are  equivalent:
\begin{enumerate}
\item Commuting is transitive on $J^{\cdot 2} \setminus \{1\}$.
\item $iJ \cap kJ$ is uniquely 2-divisible for all involutions $i\neq k\in J$.
\item $\Cen(ik) = iJ \cap kJ$ is abelian and is inverted by $k$ for all $i\neq k\in J$. 
\item The set $\{\Cen(\sigma) \setminus \{1\} : \sigma \in J^{\cdot 2} \setminus \{ 1 \} \}$ forms
a partition of $J^{\cdot 2} \setminus \{1\}$.
\end{enumerate}  
 
Furthermore, these conditions are satisfied in split sharply 2-transitive groups, whenever $\chara(G)=p\neq 0, 2$ or in case $G$ satisfies the descending chain condition for centralizers, so in particular if  $G$ has finite Morley rank. 
\end{proposition}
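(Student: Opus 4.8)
\section*{Proof proposal}

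The plan is to prove the equivalence of (a)--(d) by the cycle (a)$\Rightarrow$(b)$\Rightarrow$(c)$\Rightarrow$(d)$\Rightarrow$(a), resting on two preliminary observations that I would record first. Write $T = J^{\cdot 2}$ and fix $\sigma = ik \in T \setminus \{1\}$ with $i \neq k \in J$. \emph{First}, every nontrivial element of $\Cen(\sigma)$ is fixed-point-free: if $g \in \Cen(\sigma)$ had a fixed point, then, commuting with $\sigma$, it would fix a second, distinct point, forcing $g = 1$ by sharp $2$-transitivity. Since point stabilizers consist of elements with a fixed point, this gives $\Cen(\sigma) \cap \Cen(j) = \{1\}$ for every involution $j$, and in particular $\Cen(\sigma)$ has no $2$-torsion. \emph{Second}, for $\tau \in T \setminus \{1\}$ one has $\tau \in iJ$ iff $i\tau \in J$ iff $i$ inverts $\tau$; a short computation (using $k = i\sigma$) then yields the key identity $iJ \cap kJ = \{g \in \Cen(\sigma) : g^i = g^\inv\} = \Inv_{\Cen(\sigma)}(i)$, a set inverted by both $i$ and $k$. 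I will use freely that squaring is a bijection of $iJ$ (\cref{lem:translations have no fixed point}) and that no nontrivial translation is an involution (\cref{lem:basic}~c)).

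For (a)$\Rightarrow$(b), transitivity of commuting makes $\Inv_{\Cen(\sigma)}(i) = iJ \cap kJ$ a subgroup: if $\tau,\tau'$ lie in it they both lie in the commuting class of $\sigma$, hence commute, so $i$ inverts $\tau\tau'$, whence $\tau\tau' \in iJ \subseteq T$, and $k$ inverts it too. It is $2$-divisible because the $iJ$-square root $\rho$ of $\tau$ is a translation commuting with $\tau$, hence with $\sigma$ by transitivity, so $\rho \in \Inv_{\Cen(\sigma)}(i)$; uniqueness is inherited from $iJ$. For (b)$\Rightarrow$(c) I would use the map $\theta \colon \Cen(\sigma) \to \Inv_{\Cen(\sigma)}(i)$, $\theta(g) = g^\inv g^i$. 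It is injective, since $\theta(g) = \theta(h)$ forces $hg^\inv \in \Cen(\sigma) \cap \Cen(i) = \{1\}$, and it already surjects on $\Inv_{\Cen(\sigma)}(i)$, because $\theta(c^\inv) = c^2$ and $\Inv_{\Cen(\sigma)}(i) = iJ \cap kJ$ is uniquely $2$-divisible by (b); injectivity then forces $\Cen(\sigma) = \Inv_{\Cen(\sigma)}(i) = iJ \cap kJ$, which is abelian as $i$ inverts it. (Once $2$-divisibility is available one may instead invoke \cref{prop:neumann_abelian}.) For (c)$\Rightarrow$(d): $\Cen(\sigma) = iJ \cap kJ \subseteq T$ is abelian, so $\Cen(\tau) = \Cen(\sigma)$ for every $\tau \in \Cen(\sigma) \setminus \{1\}$, and the sets $\Cen(\sigma) \setminus \{1\}$ therefore partition $T \setminus \{1\}$. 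Finally (d)$\Rightarrow$(a) is immediate.

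For the supplementary claims I would verify (a) or (b) directly. In the split case $T$ coincides with the abelian regular normal subgroup (the nontrivial fixed-point-free elements), so commuting is trivially transitive and (a) holds. In characteristic $p \neq 0,2$ every translation has order $p$, so for $\tau \in iJ \cap kJ$ the element $\tau^{(p+1)/2} \in \langle \tau \rangle$ is a square root of $\tau$ inverted by both $i$ and $k$, hence lies in $iJ \cap kJ$; with injectivity of squaring on $iJ$ this gives (b).

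The remaining, and main, difficulty is the descending chain condition. Here I would again aim at (c) by showing that the connected centralizer $\Cen^\circ(\sigma)$ is uniquely $2$-divisible: by the first observation it is $2$-torsion-free and carries the involutory automorphism induced by $i$ with trivial fixed-point set, so once unique $2$-divisibility is in hand \cref{prop:neumann_abelian} makes it abelian and inverted, and (c) follows as above. Under finite Morley rank the unique $2$-divisibility is the standard fact that a connected $2$-torsion-free group of finite Morley rank is $2$-divisible; with only the chain condition on centralizers available, one must instead start from a $\Cen$-minimal translation and propagate abelianness of centralizers along the chain. I expect this last point---extracting enough divisibility and abelianness of the centralizers of translations from the chain condition alone, while controlling torsion in characteristic $0$---to be the genuine obstacle, whereas the purely combinatorial equivalences (a)--(d) are forced by the two preliminary observations together with the bijectivity of squaring on $iJ$.
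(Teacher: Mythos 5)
Your treatment of the equivalences (a)--(d) is correct and is in fact more self-contained than the paper's: the paper proves (a)$\Rightarrow$(b) directly (via the abelian group $A=\langle \Cen(\tau)\cap J^{\cdot 2}\rangle$ and injectivity of squaring, very close in spirit to your argument) but outsources (b)$\Rightarrow$(c) to Lemma~11.50(iv) of \cite{borovik-nesin}, whereas you reprove that step with the twisted-conjugation map $\theta(g)=g^\inv g^i$; your two preliminary observations and the identity $iJ\cap kJ=\Inv_{\Cen(\sigma)}(i)$ check out, and the injectivity/surjectivity count forcing $\Cen(\sigma)=iJ\cap kJ$ is sound. Your direct verifications of the split case (translations are fixed-point-free, hence land in the regular abelian normal subgroup) and of the case $\chara(G)=p\neq 0,2$ (the square root $\tau^{(p+1)/2}\in\langle\tau\rangle$) are also correct; the paper instead quotes \cref{thm:neumann} and Lemma~11.50 of \cite{borovik-nesin} for these.

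The genuine gap is the final clause: you do not prove that the conditions hold under the descending chain condition on centralizers, and you say so yourself. That clause is part of the statement, and it is the case the rest of the paper actually uses (finite Morley rank). The paper fills it only by citing Lemma~11.50 of \cite{borovik-nesin}, so you are not missing anything the authors spell out, but your sketch as written would not close it. Two remarks on your sketch. First, for the finite Morley rank subcase you do not need connectedness at all: $\Cen(\sigma)$ contains no involution (your first observation), hence is uniquely $2$-divisible by \cref{no-involutions}, and then \cref{prop:neumann_abelian} applied to the fixed-point-free involutory action of $i$ gives that $\Cen(\sigma)$ is abelian and inverted by $i$, which is exactly condition (c) via your key identity; passing to $\Cen^\circ(\sigma)$ only creates the extra problem of recovering the full centralizer. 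Second, for the general descending-chain-condition case the reduction to ``a $2$-torsion-free group with dcc on centralizers is $2$-divisible'' is not a standard fact and is where the real work of Lemma~11.50 of \cite{borovik-nesin} lives; as it stands this part of your proposal is an honest admission of incompleteness rather than a proof.
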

\begin{proof}
(a) $\Rightarrow$ (b):   Note that  since $(ij)^2=ii^j\in iJ$ every element of $iJ$ has a unique square-root in $iJ$.  Let $\tau \in iJ \cap kJ$. By assumption the group $A = \langle \Cen(\tau) \cap J^{\cdot 2} \rangle \leq \Cen(\tau)$ is abelian. Moreover, $A \cap J = \emptyset$ by Remark~\ref{lem:basic}. Hence the square-map is an injective group homomorphism from $A$ to $A$.
  
  There is $\sigma_i \in iJ$ such that $\sigma_i^2 = \tau$ and therefore $\sigma_i \in \Cen(\tau) \cap iJ$ because commuting is transitive. Similarly we  find $\sigma_k \in \Cen(\tau) \cap kJ$ such that $\sigma_k^2 = \tau$. Since the square-map is injective, it follows that $\sigma_i=\sigma_k \in iJ \cap kJ$. Therefore $iJ \cap kJ$ is uniquely 2-divisible. 

(b) $\Rightarrow$ (c) is contained in  Lemma 11.50 iv of \cite{borovik-nesin}.

(c) $\Rightarrow$ (d)  and  (d) $\Rightarrow$ (a) are obvious.

These conditions hold in split sharply 2-transitive groups by Theorem~\ref{thm:neumann}.  Furthermore, if $\chara(G)=p\neq 0, 2$ or if $G$ satisfies the descending chain condition for centralizers, these conditions hold by Lemma 11.50 of \cite{borovik-nesin}.
\end{proof}
 
The examples constructed in \cite{rips-segev-tent} (see also \cite{TentZiegler}) show that in characteristic 2 these conditions need not be satisfied. The non-split examples in characteristic 0 constructed in \cite{rips-tent} satisfy the assumptions and it is an open question whether non-split sharply 2-transitive groups exist in characteristic 0 which fail to satisfy these conditions. Note that Lemma 2.3 and 5.3 of \cite{rips-tent} imply that the maximal near-field in these examples is planar.

Assume now that the conditions of \cref{lem:geometry} are satisfied. Then for $i\neq j\in J$ we put
\[ \ell_{ij} = \{ k \in J : ij \in kJ \} \mbox{ and }
 \Lambda=\{\ell_{ij}\colon i\neq j\in J\}.\]
By \cref{lem:geometry}, $(J,\Lambda)$ satisfies Conditions a) and b) of \cref{prop:partial_mock hyperbolic reflection} and we  have
 \[ \ell_{ij} = \{ k \in J : ij \in kJ \} = i\ \Cen(ij)=\{k\in J: (ij)^k=ji\}. \]
 
 The point-line geometry $(J,\Lambda)$ is equivalent to the incidence geometry considered by Borovik and Nesin in Section 11.4 of \cite{borovik-nesin}.
 
 If $\lambda = \ell_{ij}$ is a line, then $N_G(\lambda) = N_G(\Cen(ij))$ is a split sharply $2$-transitive group
 \[ N_G(\lambda) = \Cen(ij) \rtimes N_{\Cen(i)}(\lambda) \]
 and corresponds to the maximal near-field. If the maximal near-field is planar, then
 \[ N_G(\lambda) = \Cen(ij) \cup \bigcup_{k \in \lambda} N_{\Cen(k)}(\lambda). \]

\begin{lemma} \label{line_lemma2} Assume that  $G$ is sharply 2-transitive, $\chara(G)\neq 2$ and  the conditions of \cref{lem:geometry} are satisfied. Assume moreover that the maximal near-field is planar. If $\lambda\in\Lambda$ and $i\neq j\in J$ such that $\lambda^i = \lambda^j$, then $i,j \in \lambda$ and so $\lambda^i = \lambda = \lambda^j$, and so Condition c) of \cref{prop:partial_mock hyperbolic reflection} holds.
\end{lemma}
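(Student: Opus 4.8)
The plan is to translate the hypothesis into membership in the normalizer $N_G(\lambda)$ and then to locate $ij$ inside the planar decomposition of that normalizer recorded just before the statement. Since $j$ is an involution, $\lambda^i = \lambda^j$ is equivalent to $\lambda^{ij} = \lambda$, i.e. $ij \in N_G(\lambda)$; moreover $ij \in J^{\cdot 2}$ and $ij \neq 1$ because $i \neq j$. Writing $\lambda = \ell_\sigma$ for a translation $\sigma$, planarity of the maximal near-field gives
\[ N_G(\lambda) = \Cen(\sigma) \cup \bigcup_{k \in \lambda} N_{\Cen(k)}(\lambda), \]
so $ij$ lies in at least one of these pieces.

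First I would discard the point-stabilizer pieces. Each $N_{\Cen(k)}(\lambda)$ is contained in $\Cen(k)$ for some $k \in \lambda \subseteq J$, so if $ij$ landed there we would have $ij \in \Cen(k) \cap J^{\cdot 2} = \{1\}$ by \cref{lem:basic} c), contradicting $i \neq j$. Hence necessarily $ij \in \Cen(\sigma)$.

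It then remains to identify $\Cen(\sigma)$ with $\lambda^{\cdot 2}$ and invoke the collinearity criterion. From $\ell_\sigma = a\Cen(\sigma)$ (the displayed identity after \cref{lem:geometry}) together with \cref{lem:basic_properties} b) one gets $\lambda^{\cdot 2} = a\lambda = \Cen(\sigma)$ for any $a \in \lambda$, so $ij \in \lambda^{\cdot 2}$. Since $(J,\Lambda)$ satisfies conditions a) and b) of \cref{prop:partial_mock hyperbolic reflection} by \cref{lem:geometry}, \cref{lem:basic_properties} c) applies to the distinct involutions $i \neq j$ with $ij \in \lambda^{\cdot 2}$ and yields $i, j \in \lambda$; in particular $\lambda^i = \lambda = \lambda^j$, which is Condition c).

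The crux is not the case split but securing its two inputs without circularity: the planar decomposition of $N_G(\lambda)$ (legitimate precisely because the near-field is assumed planar) and the fixed-point-freeness of translations in the form $\Cen(k) \cap J^{\cdot 2} = \{1\}$ from \cref{lem:basic}. The latter is exactly what renders the point-stabilizer pieces invisible to $J^{\cdot 2}$ and drives the argument; the rest is bookkeeping with identities already established for lines.
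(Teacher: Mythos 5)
Your proof is correct and follows essentially the same route as the paper's: translate $\lambda^i=\lambda^j$ into $ij\in N_G(\lambda)$, use planarity to force $ij\in\Cen(\sigma)=\lambda^{\cdot 2}$, and conclude $i,j\in\lambda$ via \cref{lem:basic_properties}~c). The only difference is presentational — you unpack the planarity step through the explicit union decomposition of $N_G(\lambda)$ and the identity $\Cen(k)\cap J^{\cdot 2}=\{1\}$, where the paper directly asserts $N_G(\Cen(\sigma))\cap J^{\cdot 2}=\Cen(\sigma)$; these amount to the same fact.
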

\begin{proof}
This is contained in the proof of Theorem 11.71 in \cite{borovik-nesin}.
Since our definition of lines is slightly different from the one given in \cite{borovik-nesin}, we include a proof. If $\lambda^i = \lambda^j$ then $ij \in N_G(\lambda)$ and hence $ij \in N_G(\lambda^{\cdot 2})$. Now $\lambda^{\cdot 2} = \Cen(\sigma)$ for some $\sigma \in J^{\cdot 2} \setminus \{ 1 \}$ such that $\lambda = 
	\ell_\sigma$. Fix $s \in \lambda$. The group $N_G(\Cen(\sigma)) = \Cen(\sigma) \rtimes N_{\Cen(s)}(\Cen(\sigma))$ is split sharply 2-transitive by Proposition 11.51 of \cite{borovik-nesin}.
	Since the maximal neqr-field is planar we have
	\[ ij \in N_G(\Cen(\sigma)) \cap J^{\cdot 2} = \Cen(\sigma) \]
	and therefore $i,j \in \ell_\sigma = \lambda$.
\end{proof}

\begin{corollary}\label{cor:mock hyperbolic reflection from 2-sharp}
Let $G$  be a sharply 2-transitive group. Then the set of involutions $J\subset G$ forms a mock hyperbolic reflection space in either of the following cases:
\begin{enumerate}
\item $G$ is a split sharply $2$-transitive group corresponding to a planar near-field of characteristic $\neq 2$;
\item  $\chara(G)=p\neq 0, 2$ and the maximal near-field is planar; or
\item $\chara(G) = 0$, $G$ satisfies the descending chain condition for centralizers, and the maximal near-field is planar.
\end{enumerate}
In particular, if $\chara(G) \neq 2$ and $G$ is of finite Morley rank, then the involutions in $G$ form a mock hyperbolic reflection space.
\end{corollary}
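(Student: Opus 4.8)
The plan is to prove \cref{cor:mock hyperbolic reflection from 2-sharp} by assembling the pieces that have already been established. For each of the three listed cases, the strategy is the same: verify Conditions a), b), and c) of \cref{prop:partial_mock hyperbolic reflection}, and then observe that since $\Lambda$ is defined as $\{\ell_{ij} : i \neq j \in J\}$, the requirement that $\ell_{ij} \in \Lambda$ for all $i \neq j$ is automatic, upgrading the partial mock hyperbolic reflection space to a genuine one.

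First I would dispose of Conditions a) and b) uniformly. In all three cases the hypothesis $\chara(G) \neq 2$ holds, so by \cref{lem:basic} the set $J$ acts regularly on itself by conjugation, which is precisely Condition b). For Condition a), the key point is that the equivalent conditions of \cref{lem:geometry} are satisfied in each case: case (i) is a split sharply $2$-transitive group of characteristic $\neq 2$ (so the conditions hold by the final clause of \cref{lem:geometry}, via \cref{thm:neumann}), while cases (ii) and (iii) explicitly assume either $\chara(G) = p \neq 0,2$ or the descending chain condition for centralizers, which are exactly the sufficient conditions named in \cref{lem:geometry}. Once \cref{lem:geometry} applies, the remark following it records that $\ell_{ij} = i\,\Cen(ij)$ and that $(J,\Lambda)$ satisfies Conditions a) and b), so these two axioms hold in all three cases.

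The remaining work is Condition c), and this is where the planarity hypothesis enters. In each of the three cases the maximal near-field is assumed planar (in case (i) the near-field itself is planar), so \cref{line_lemma2} applies directly: whenever $\lambda^i = \lambda^j$ for $i \neq j \in J$, we conclude $i, j \in \lambda$ and hence $\lambda^i = \lambda = \lambda^j$, which is exactly Condition c). Combining this with Conditions a) and b) shows $(J,\Lambda)$ is a partial mock hyperbolic reflection space, and the defining form of $\Lambda$ then makes it a mock hyperbolic reflection space. For the final ``in particular'' clause, I would note that a group of finite Morley rank satisfies the descending chain condition for centralizers, so by \cref{lem:geometry} the equivalent conditions hold; the only gap to close is planarity of the maximal near-field, which for groups of finite Morley rank follows because the relevant near-field is itself interpretable and of finite Morley rank, forcing it to be planar (this is the content one must cite from the structure theory of near-fields of finite Morley rank).

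The main obstacle is not in the logical assembly, which is essentially bookkeeping, but in justifying the planarity of the maximal near-field in the finite Morley rank case, since \cref{line_lemma2} takes planarity as a standing hypothesis rather than deriving it. The cleanest route is to argue that any near-field interpretable in a structure of finite Morley rank inherits finite Morley rank, and that near-fields of finite Morley rank are planar by the classification/structure results for such objects; alternatively one reduces to the fact that the descending chain condition on centralizers already yields the partition property in \cref{lem:geometry}(d), and then checks that the split sharply $2$-transitive group $N_G(\Cen(\sigma))$ attached to a line is itself of finite Morley rank, whose near-field is therefore planar. I expect this planarity step to require the only genuine external input; everything else is a direct appeal to the lemmas proved above.
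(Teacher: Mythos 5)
Your proposal is correct and follows exactly the assembly the paper intends: Conditions a) and b) of \cref{prop:partial_mock hyperbolic reflection} come from \cref{lem:basic} together with \cref{lem:geometry}, Condition c) comes from \cref{line_lemma2} under the planarity hypothesis, and the defining choice of $\Lambda$ makes the partial space a genuine mock hyperbolic reflection space. You are also right that the only substantive point in the final clause is planarity of the maximal near-field in the finite Morley rank case; the standard justification is that the split sharply $2$-transitive group $N_G(\Cen(\sigma))$ is interpretable of finite Morley rank and hence of the form $K_+\rtimes K^*$ for an algebraically closed field $K$ (Borovik--Nesin, Theorem 11.65), which is in particular planar.
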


In the case of sharply 2-transitive groups, \cref{prop:no proj plane} reduces to the following well-known result of B. H. Neumann~\cite{neumann}:

\begin{theorem}\label{thm:neumann}
  A sharply 2-transitive group $G$ splits if and only if the set of translations $J^{\cdot 2}$ is a subgroup of $G$ (and in that case, $J^{\cdot 2}$ must in fact be abelian).
\end{theorem}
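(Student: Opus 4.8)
The plan is to reduce the statement to the two lemmas on translations proved above, working throughout under the standing identification of the permutation action of $G$ on $X$ with its conjugation action on $J$, so that the point stabilizer $G_x$ becomes $\Cen(i)$ for the unique involution $i$ with $\fix(i)=x$. Under this identification ``$G$ splits'' means that $G$ has a regular normal subgroup $N$ with $G=N\rtimes\Cen(i)$, and I will show this is equivalent to $G=iJ\rtimes\Cen(i)$, i.e.\ to $J^{\cdot 2}$ being a subgroup. Everything below uses $\chara(G)\neq 2$, which via \cref{lem:basic} guarantees that $J$ acts regularly on itself by conjugation and hence makes \cref{lem:translations have no fixed point} available.

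For the direction assuming $A:=J^{\cdot 2}$ is a subgroup, I would first note that $A$ is normal, being conjugation-invariant as $J$ is a conjugacy class. By \cref{lem:translations have no fixed point}(b) we have $A\cap\Cen(i)=\{1\}$, and by \cref{lem:translations have no fixed point}(c) we have $G=(iJ)\Cen(i)\subseteq A\,\Cen(i)$; together these give $G=A\rtimes\Cen(i)$, which is exactly the desired splitting (the normal complement $A$ to the point stabilizer is automatically regular). To see that $A$ is abelian I would compare the two left transversals $iJ\subseteq A$ of $\Cen(i)$ in $G$: since both $iJ$ (by \cref{lem:translations have no fixed point}(c)) and $A$ (just shown) are left transversals and $iJ\subseteq A$, they coincide, so $A=iJ$ is uniquely $2$-divisible by \cref{lem:translations have no fixed point}(a); as $i$ acts on $A$ as a fixed-point-free involutory automorphism, \cref{prop:neumann_abelian} forces $A$ to be abelian.

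For the converse, suppose $G=N\rtimes\Cen(i)$ with $N$ regular normal. Since $N$ acts transitively by conjugation on $J$, every involution has the form $i^{n}$ with $n\in N$, so $J=i^{N}$. A short computation using the normality of $N$ then shows that any product $i^{n_1}i^{n_2}$ lies in $N$, whence $J^{\cdot 2}\subseteq N$; the same computation gives $iJ\subseteq N$. As $iJ$ and $N$ are once more two left transversals of $\Cen(i)$ with $iJ\subseteq N$, they are equal, so $N=iJ\subseteq J^{\cdot 2}\subseteq N$ and therefore $J^{\cdot 2}=N$ is a subgroup, abelian again by \cref{prop:neumann_abelian}.

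The main obstacle is that the paper works with infinite groups, so the classical finite argument---which counts fixed-point-free elements to see that they fill out $N$---is unavailable. I avoid counting entirely by replacing it with the observation that a subset of a left transversal which is itself a left transversal must equal it; this is what pins down $iJ=N=J^{\cdot 2}$ in both directions. Conceptually this is just the sharply $2$-transitive shadow of \cref{prop:BHNeumann}, whose equivalences specialize to ``$G$ splits $\iff$ $Q^{\cdot 2}$ is a subgroup $\iff$ $\Lambda$ consists of a single line,'' so that \cref{prop:no proj plane} is the geometric generalization of Neumann's theorem.
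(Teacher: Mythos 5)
The paper gives no proof of \cref{thm:neumann}: it is quoted as a classical result of B.~H.~Neumann and presented as the specialization to sharply $2$-transitive groups of \cref{prop:no proj plane}, or more precisely of the equivalence of g) and h) in \cref{prop:BHNeumann} applied to the mock hyperbolic reflection space on $J$. Your argument is therefore a genuinely different route and, for $\chara(G)\neq 2$, it is correct and more elementary: both directions reduce to \cref{lem:translations have no fixed point} together with the observation that a left transversal of $\Cen(i)$ contained in another left transversal must coincide with it, which pins down $J^{\cdot 2}=iJ=N$; abelianness then follows from \cref{prop:neumann_abelian} exactly as in the paper's general theory. What your route buys is independence from the hypotheses needed to make $J$ a mock hyperbolic reflection space in the first place (\cref{cor:mock hyperbolic reflection from 2-sharp} asks for planarity of the maximal near-field, positive characteristic, or chain conditions on centralizers), so it genuinely proves the statement for every sharply $2$-transitive group of characteristic $\neq 2$. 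The one point to flag is that the statement as printed does not exclude $\chara(G)=2$, where your standing identification of $X$ with $J$ breaks down because involutions are fixed-point-free there; Neumann's original theorem covers that case by working directly with the set of fixed-point-free elements rather than with the conjugation action on $J$, so if characteristic $2$ is meant to be included you would need a separate (though still short) argument for it. Since the paper's own reduction via \cref{prop:no proj plane} is subject to the same restriction, this is a caveat rather than a defect of your proof.
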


\section{Uniquely 2-divisible Frobenius groups} \label{sec:frobenius_groups}
In this section we will construct (partial) mock hyperbolic reflection spaces from uniquely $2$-divisible Frobenius groups with abelian Frobenius complement. This construction makes use of K-loops and quasidirect products.

\subsection{K-Loops and quasidirect products}
K-loops are non-associative generalizations of abelian groups. They are also known as Bruck loops and gyrocommutative gyrogroups.
We mostly follow Kiechle's book \cite{kiechle}.

\begin{definition}
A groupoid $(L, \cdot, 1)$ is a \emph{K-loop} if
\begin{enumerate}
\item it is a loop, i.e. the equations
\[ ax = b \quad \text{and} \quad xa = b \]
have unique solutions for all $a,b \in L$,
\item it satisfies the Bol condition, i.e.
\[ a(b \cdot ac ) = (a \cdot ba)c \]
for all $a,b,c \in L$, and
\item is satisfies the automorphic inverse property, i.e. all elements of $L$ have inverses and we have
\[(ab)^\inv = a^\inv b^\inv \]
for all $a,b \in L$.
\end{enumerate}
\end{definition}
Given $a \in L$ let $\lambda_a : L \rightarrow L$ be defined by $\lambda_a(x) = ax$. Given $a,b \in L$ we define the precession map
\[ \delta_{a,b} = \lambda_{ab}^\inv \lambda_a \lambda_b. \]
These maps are characterized by
\[ a\cdot bx = ab \cdot \delta_{a,b}(x) \quad \text{for all } x \in L. \]
If $L$ is a K-loop, then the precession maps are automorphisms and we set
\[ \calD = \calD(L) = \langle \delta_{a,b} : a,b \in L \rangle \leq \Aut(L). \]
 
 The following identities will be used in this section:
 \begin{proposition}
 Let $L$ be a K-loop, $a,b \in L$, and $\alpha \in \Aut(L)$. Then the following identities hold:
 \begin{enumerate}
 \item $\alpha^\inv \delta_{a,b} \alpha = \delta_{ \alpha^\inv (a), \alpha^\inv (b)}$,
 \item $\delta_{a, a^\inv} = \id$,
 \item $\delta_{a, ba} = \delta_{a,b}$,
 \item $\delta_{a,b} = \delta_{b,a}^\inv$,
 \item $\delta_{a,b} = \delta_{a^\inv, b^\inv}$.
 \end{enumerate}
 \end{proposition}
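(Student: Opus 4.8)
The plan is to derive all five identities from the characterizing property $a\cdot bx = ab\cdot\delta_{a,b}(x)$ of the precession maps, together with the defining formula $\delta_{a,b}=\lambda_{ab}^\inv\lambda_a\lambda_b$ and the three K-loop axioms. The simplest is (a): since $\alpha$ is an automorphism, $\alpha(cx)=\alpha(c)\alpha(x)$ means $\lambda_{\alpha(c)}\circ\alpha=\alpha\circ\lambda_c$, which rearranges to $\alpha^\inv\lambda_c\alpha=\lambda_{\alpha^\inv(c)}$. Substituting this for each of the three factors in $\delta_{a,b}=\lambda_{ab}^\inv\lambda_a\lambda_b$ and using $\alpha^\inv(ab)=\alpha^\inv(a)\alpha^\inv(b)$ yields (a). For (b) I would use that left Bol loops have the left inverse property $a^\inv(ax)=x$ (hence also $a(a^\inv x)=x$, as inverses are two-sided); evaluating the characterizing relation at $b=a^\inv$ gives $a(a^\inv x)=(aa^\inv)\delta_{a,a^\inv}(x)=\delta_{a,a^\inv}(x)$, so $\delta_{a,a^\inv}=\id$.

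For (e) the idea is to apply the automorphic inverse property to the characterizing relation: taking inverses of both sides of $a\cdot bx=ab\cdot\delta_{a,b}(x)$ and using $(uv)^\inv=u^\inv v^\inv$ together with the fact that the automorphism $\delta_{a,b}$ commutes with inversion gives $a^\inv(b^\inv x^\inv)=(a^\inv b^\inv)\delta_{a,b}(x^\inv)$; comparing this with the characterizing relation for the pair $(a^\inv,b^\inv)$ yields $\delta_{a,b}=\delta_{a^\inv,b^\inv}$. Identities (c) and (d) I would handle together. A direct computation from the Bol condition $a(b\cdot ac)=(a\cdot ba)c$ shows $\delta_{a,ba}=\delta_{b,a}^\inv$: rewriting $(ba)z=b\big(a\,\delta_{b,a}^\inv(z)\big)$ via the characterizing relation for $(b,a)$ and then applying the Bol identity gives $a\big((ba)z\big)=(a(ba))\,\delta_{b,a}^\inv(z)$, which by definition of $\delta_{a,ba}$ is the claim. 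In particular (c) and (d) are equivalent, so it suffices to prove one of them.

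The main obstacle is (d), the symmetry $\delta_{a,b}=\delta_{b,a}^\inv$, which is precisely where the automorphic inverse property (beyond the mere Bol property) is needed, as it encodes the gyrocommutativity of K-loops. I would first establish gyrocommutativity $ba=\delta_{b,a}(ab)$ from the automorphic inverse property, and then verify $\delta_{a,b}^\inv=\delta_{b,a}$ by computing both sides explicitly: using the left inverse property one gets $\delta_{a,b}^\inv(x)=b^\inv\big(a^\inv((ab)x)\big)$, while the automorphic inverse property gives $\delta_{b,a}(x)=(ab)^\inv\big(b(ax)\big)$; feeding the characterizing relation for $(b^\inv,a^\inv)$, identity (e), and gyrocommutativity into the first expression transforms it into the second. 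Once (d) is proved, (c) follows from the equivalence noted above. Throughout, the only genuinely nontrivial inputs are the left inverse property and gyrocommutativity, both standard consequences of the K-loop axioms; the remainder is bookkeeping with the characterizing relation.
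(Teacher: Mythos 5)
Your proposal is correct, but it is a genuinely different route from the paper's, which proves nothing itself and simply cites Kiechle (2.4 for (a), 6.1 for (b), 6.4 for (c), Theorem 3.7 for (d) and (e)). Your self-contained derivation is sound: (a) is the formal conjugation $\alpha^{-1}\lambda_c\alpha=\lambda_{\alpha^{-1}(c)}$ applied to all three factors of $\delta_{a,b}=\lambda_{ab}^{-1}\lambda_a\lambda_b$; (b) follows from the left inverse property of left Bol loops; your Bol-only computation $\delta_{a,ba}=\delta_{b,a}^{-1}$ correctly reduces (c) to (d); (e) follows by inverting the characterizing relation via the automorphic inverse property; and (d) does go through as sketched --- from $\delta_{a,b}^{-1}(x)=b^{-1}\bigl(a^{-1}((ab)x)\bigr)$ one applies the characterizing relation for $(b^{-1},a^{-1})$, then (e) and AIP to get $(ba)^{-1}\,\delta_{b,a}\bigl((ab)x\bigr)$, then distributes $\delta_{b,a}$ over the product and uses gyrocommutativity $\delta_{b,a}(ab)=ba$ and LIP to land on $\delta_{b,a}(x)$. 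Two small points to tidy up: your displayed formula $\delta_{b,a}(x)=(ab)^{-1}\bigl(b(ax)\bigr)$ should read $(ba)^{-1}\bigl(b(ax)\bigr)$, since $\lambda_{ba}^{-1}=\lambda_{(ba)^{-1}}$ and $(ba)^{-1}=b^{-1}a^{-1}\neq(ab)^{-1}$ in general (this does not affect the argument, which never actually needs that expression); and you should flag explicitly that both (e) and the final step of (d) use the fact that precession maps of a K-loop are automorphisms --- the paper states this as background immediately before the proposition, so it is legitimate to invoke, but it is a nontrivial input rather than bookkeeping. What your approach buys is a proof readable from the axioms alone; what the paper's citations buy is brevity and the reassurance that the automorphism property is established independently of the five identities.
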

 \begin{proof}
 The first identity follows from \cite[2.4]{kiechle}. The second identity is a consequence of \cite[6.1]{kiechle}. Theorem 6.4 of \cite{kiechle} shows the third identity. The remaining two identities are part of \cite[Theorem 3.7]{kiechle}.
 \end{proof}
 
 \begin{definition} \label{def:twisted_subgroup}
 Let $G$ be a group. A subset $L \subseteq G$ is a twisted subgroup of $G$ if and only if $1 \in L$, $L^\inv \subseteq L$, and $aLa \subseteq L$ for all $a \in L$.
 \end{definition}
 Note that twisted subgroups are closed under the square-map. A twisted subgroup is \emph{uniquely $2$-divisible} if the square map is bijective.
 
\begin{proposition}[Theorem 6.14 of \cite{kiechle}] \label{prop:twisted_subgroup}
Let $G$ be a group with a uniquely $2$-divisible twisted subgroup $L \subseteq G$. Then
 \[ a \otimes b = a^\half b a^\half \]
 makes L into a K-loop $(L,\otimes, 1)$ and integer powers of elements in $L$ agree in $G$ and $(L,\otimes)$. Moreover, given $a,b \in L$ the precession map $\delta_{a,b}$ is given by conjugation with
 \[ d_{a,b} = b^\half a^\half (a^\half b a^\half)^\ihalf. \]
 \end{proposition}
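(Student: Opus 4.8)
The plan is to verify the K-loop axioms directly and then identify the precession maps, throughout exploiting that unique $2$-divisibility equips $L$ with a single-valued square root $a\mapsto a^\half$ which, by the twisted subgroup property, stays inside $L$: indeed $a^\half\in L$ and $a^\half L a^\half\subseteq L$, so $a\otimes b=a^\half b a^\half$ lands in $L$. First I would record the bookkeeping facts that $a^\ihalf:=(a^\half)^\inv=(a^\inv)^\half\in L$ (the two coincide by uniqueness of square roots, since each squares to $a^\inv$), that all dyadic powers of a fixed $a$ lie in $L$ and commute in $G$ (each commutes with $a$, hence with one another), and that $1$ is a two-sided identity because $1^\half=1$. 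The loop-inverse is then immediately the $G$-inverse, as $a\otimes a^\inv=a^\half a^\inv a^\half=1=a^\ihalf a a^\ihalf=a^\inv\otimes a$.

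For the loop property I would treat the two divisions separately. Left division is trivial: $a\otimes x=b$ has the unique solution $x=a^\ihalf b a^\ihalf\in L$. Right division, i.e. solving $x\otimes a=b$, is the one genuinely delicate point, and I expect it to be the main obstacle. Writing $y=x^\half$, the equation becomes $yay=b$, and the key observation is the identity
\[ a^\half (yay) a^\half = (a^\half y a^\half)^2, \]
which holds because $(a^\half y a^\half)^2=a^\half y a y a^\half$. Multiplying $yay=b$ on both sides by $a^\half$ thus turns it into $(a^\half y a^\half)^2=a^\half b a^\half$. Since $a^\half b a^\half\in L$ and the square map is a bijection of $L$, this has the unique solution $a^\half y a^\half=(a^\half b a^\half)^\half$, whence $y=a^\ihalf(a^\half b a^\half)^\half a^\ihalf\in L$ and $x=y^2$ is the unique preimage. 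This reduces right division to the existence and uniqueness of square roots in $L$.

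The Bol condition and the automorphic inverse property are then short computations in the same spirit. Expanding the left-hand side gives $a\otimes(b\otimes(a\otimes c))=(a^\half b^\half a^\half)\,c\,(a^\half b^\half a^\half)$; for the right-hand side I would set $w=a\otimes(b\otimes a)=a^\half b^\half a b^\half a^\half$ and note that $(a^\half b^\half a^\half)^2=w$ with $a^\half b^\half a^\half\in L$, so $w^\half=a^\half b^\half a^\half$ and $w\otimes c=(a^\half b^\half a^\half)\,c\,(a^\half b^\half a^\half)$ coincides with the left-hand side. The automorphic inverse property is immediate from $(a\otimes b)^\inv=(a^\half b a^\half)^\inv=a^\ihalf b^\inv a^\ihalf=a^\inv\otimes b^\inv$, so $(L,\otimes,1)$ is a K-loop. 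That integer powers agree follows by induction using that dyadic powers of $a$ commute: $a^{\otimes(n+1)}=a^{\otimes n}\otimes a=a^n\otimes a=(a^n)^\half a (a^n)^\half=a^{n/2}a a^{n/2}=a^{n+1}$.

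Finally, for the precession maps I would compute $\delta_{a,b}=\lambda_{a\otimes b}^\inv\lambda_a\lambda_b$ from $\lambda_c(x)=c^\half x c^\half$, hence $\lambda_c^\inv(x)=c^\ihalf x c^\ihalf$. With $c=a\otimes b=a^\half b a^\half$ this yields
\[ \delta_{a,b}(x) = c^\ihalf a^\half b^\half\, x\, b^\half a^\half c^\ihalf. \]
Setting $d_{a,b}=b^\half a^\half c^\ihalf=b^\half a^\half(a^\half b a^\half)^\ihalf$, I would check that $(c^\ihalf a^\half b^\half)(b^\half a^\half c^\ihalf)=c^\ihalf(a^\half b a^\half)c^\ihalf=c^\ihalf c c^\ihalf=1$, so $c^\ihalf a^\half b^\half=d_{a,b}^\inv$ and therefore $\delta_{a,b}(x)=d_{a,b}^\inv x\,d_{a,b}$, i.e. $\delta_{a,b}$ is conjugation by $d_{a,b}$, as claimed. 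Apart from the right-division step, the only things demanding care are keeping every half-power inside $L$ (guaranteed by the twisted subgroup property) and handling the commuting dyadic powers of a single element consistently.
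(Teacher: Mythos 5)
The paper does not actually prove this statement: it is quoted from Kiechle's book (Theorem~6.14 there) and used as a black box, so any complete argument you give is necessarily a different route. Your direct verification is correct. The closure, identity and inverse checks are routine; you correctly isolate right division as the only delicate loop axiom and resolve it with the identity $a^\half(yay)a^\half=(a^\half y a^\half)^2$ together with the facts that $y\mapsto a^\half y a^\half$ and the square map are bijections of $L$ (the first because $a^\ihalf\in L$ gives surjectivity, the second by unique $2$-divisibility). The Bol identity via $w^\half=a^\half b^\half a^\half$, the automorphic inverse property via $a^\ihalf=(a^\inv)^\half$, and the computation $(c^\ihalf a^\half b^\half)\,d_{a,b}=c^\ihalf c\, c^\ihalf=1$ identifying $\delta_{a,b}$ as conjugation by $d_{a,b}$ are all right. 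The one blemish is the aside that the half-integer powers of $a$ commute ``because each commutes with $a$'': that inference is invalid in general (two elements centralizing $a$ need not centralize each other), but the conclusion holds for the correct reason, namely that every power $a^{n/2}$ you use is an integer power of the single element $a^\half$. With that justification substituted, your argument is a complete, self-contained proof of the cited result, which is arguably a service to the reader since the paper leaves it entirely to the reference.
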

 
 \begin{proposition}[Theorem 2.13 of \cite{kiechle}] \label{prop:quasidirect_product}
 Let $L$ be a K-loop and let $\calA \leq \Aut(L)$ a group of automorphisms such that $\calD(L) \subseteq \calA$. Then
 \begin{enumerate}
 \item The quasidirect product $L \rtimes_Q \calA$ given by the set $L \times \calA$ together with the multiplication
 \[ (a,\alpha)(b,\beta) = (a \cdot \alpha(b), \delta_{a, \alpha(b)} \alpha \beta) \]
 forms a group with neutral element $(1, \id)$. Inverses are given by
 \[ (a,\alpha)^\inv = ( \alpha^\inv(a^\inv), \alpha^\inv ). \]
 \item $L \rtimes_Q \calA$ acts faithfully and transitively on $L$ by
 \[ (a,\alpha)(x) = a \alpha(x) \quad \text{for all } (a,\alpha) \in L \rtimes_Q \calA, x \in L. \]
 \end{enumerate}
 \end{proposition}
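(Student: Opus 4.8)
The plan is to verify both assertions by direct computation from a single structural relation together with the five listed precession identities; essentially everything reduces to one cocycle identity. Writing $\lambda_a\colon L\to L$ for left translation $\lambda_a(x)=ax$, the definition of $\delta_{a,b}$ gives the fundamental relation $\lambda_a\lambda_b=\lambda_{ab}\,\delta_{a,b}$, equivalently $a\cdot bx=ab\cdot\delta_{a,b}(x)$, and since every $\phi\in\Aut(L)$ is multiplicative one also has $\phi\lambda_a=\lambda_{\phi(a)}\phi$. In the form I will use most, identity a) reads $\alpha\,\delta_{a,b}=\delta_{\alpha(a),\alpha(b)}\,\alpha$, allowing automorphisms to be pushed past precession maps.

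The heart of the argument, and the step I expect to be the main obstacle, is the cocycle identity
\[ \delta_{p,qr}\,\delta_{q,r}=\delta_{pq,\,\delta_{p,q}(r)}\,\delta_{p,q}\qquad(p,q,r\in L). \]
I would prove it by evaluating the composite bijection $\lambda_p\lambda_q\lambda_r$ of $L$ in two ways using only the fundamental relation and $\phi\lambda_a=\lambda_{\phi(a)}\phi$. Right-associating yields $\lambda_{p(qr)}\,\delta_{p,qr}\,\delta_{q,r}$, while left-associating and pushing $\delta_{p,q}$ past $\lambda_r$ yields $\lambda_{(pq)\delta_{p,q}(r)}\,\delta_{pq,\delta_{p,q}(r)}\,\delta_{p,q}$. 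The fundamental relation with $x=r$ gives $p(qr)=(pq)\delta_{p,q}(r)$, so the two leading translations coincide; cancelling the bijection $\lambda_{p(qr)}$ leaves exactly the cocycle identity.

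Granting this, associativity of $L\rtimes_Q\calA$ becomes bookkeeping. Expanding $[(a,\alpha)(b,\beta)](c,\gamma)$ and $(a,\alpha)[(b,\beta)(c,\gamma)]$, the first components agree since $a\cdot\bigl(\alpha(b)\cdot\alpha\beta(c)\bigr)=(a\,\alpha(b))\cdot\delta_{a,\alpha(b)}(\alpha\beta(c))$ by the fundamental relation; for the second components one cancels $\gamma$ and then $\alpha\beta$ on the right, rewrites $\alpha\,\delta_{b,\beta(c)}=\delta_{\alpha(b),\alpha\beta(c)}\,\alpha$, and is left with precisely the cocycle identity for $p=a$, $q=\alpha(b)$, $r=\alpha\beta(c)$. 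I then check that $(1,\id)$ is neutral using $\delta_{1,b}=\delta_{a,1}=\id$ (immediate from $\lambda_1=\id$), and that $(\alpha^\inv(a^\inv),\alpha^\inv)$ is a two-sided inverse: the first components collapse to $a\cdot a^\inv=1$ and $\alpha^\inv(a^\inv\cdot a)=1$, while the second components vanish because $\delta_{a,a^\inv}=\id$ by b) and $\delta_{a^\inv,a}=\delta_{a,a^\inv}^\inv=\id$ by d), transported by $\alpha^\inv$ via a).

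Finally, for the action I would observe that the first-component computation above is exactly the relation $\bigl((a,\alpha)(b,\beta)\bigr)(x)=(a,\alpha)\bigl((b,\beta)(x)\bigr)$, with $(1,\id)$ fixing every $x$, so $(a,\alpha)\cdot x=a\,\alpha(x)$ is an action. Transitivity is immediate because the elements $(a,\id)$ already act as $x\mapsto ax$, which is onto by the loop axiom, and faithfulness follows by evaluating a trivially acting $(a,\alpha)$ at $x=1$ to force $a=1$ and hence $\alpha=\id$. Thus the only genuine content is the cocycle identity, and once it is established the rest is routine verification.
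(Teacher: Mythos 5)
Your proof is correct. The paper itself offers no argument here --- it imports the statement wholesale as Theorem~2.13 of Kiechle's book --- so there is no ``paper's proof'' to compare against; what you have written is a sound, self-contained verification, and your identification of the cocycle identity $\delta_{p,qr}\,\delta_{q,r}=\delta_{pq,\delta_{p,q}(r)}\,\delta_{p,q}$ as the only non-routine ingredient is exactly right. Your derivation of it by associating $\lambda_p\lambda_q\lambda_r$ in two ways is clean, and it legitimately uses that precession maps of a K-loop are automorphisms (needed to push $\delta_{p,q}$ past $\lambda_r$), which the paper records just before the five identities. The downstream bookkeeping for associativity, the neutral element, the two-sided inverse (via identities~a), b), d)), and the action all check out; faithfulness by evaluating at $1$ and transitivity via the loop axiom are fine. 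Two very minor points you could make explicit: the hypothesis $\calD(L)\subseteq\calA$ enters precisely to guarantee that the second component $\delta_{a,\alpha(b)}\alpha\beta$ lands back in $\calA$, i.e.\ that the multiplication is well defined on $L\times\calA$ at all; and the computation $\alpha^\inv(a^\inv)\cdot\alpha^\inv(a)=1$ in the left-inverse check uses that inverses in a K-loop are two-sided (which holds, since the automorphic inverse property presupposes genuine inverses and Bol loops have coinciding left and right inverses), though once associativity and the right inverse are established the left inverse is automatic anyway.
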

  
 \subsection{Mock hyperbolic reflection spaces from uniquely 2-divisible Frobenius groups}
Let $H<G$ be a uniquely 2-divisible Frobenius group with abelian complement $H$. 

We set $L$ to be the K-loop $L = (G, \otimes)$ where $\otimes$ is defined by
\[ a \otimes b = a^\half b a^\half. \]
Set $\calA = G \times \langle \epsilon \rangle < \Aut(L)$ where $\epsilon$ inverts all elements of $L$. Put $\calG = L \rtimes_Q \calA$. Let $J$ be the set of all involutions in $\calG$ and put $\iota = (1,\epsilon) \in J$.

\begin{lemma} \label{lem:quasidirect_involutions}
\begin{enumerate}
\item $J = L \times \{ \epsilon \}$.
\item $\Cen(\iota) = 1 \times \calA$.
\item for all $i,j \in J$ there is a unique $k \in J$ such that $j = i^k$.
\end{enumerate}
\end{lemma}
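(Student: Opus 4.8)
The plan is to work entirely inside the quasidirect product $\calG = L \rtimes_Q \calA$, using the explicit multiplication and inversion formulas and the faithful transitive action on $L$ from the quasidirect product proposition, together with the precession-map identities. With these tools parts (a) and (b) become direct computations, and part (c) will be reduced to extracting square roots in the uniquely $2$-divisible group $G$.

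For (a), I would first compute $(a,\alpha)^2 = (a \otimes \alpha(a),\, \delta_{a,\alpha(a)}\,\alpha^2)$. Setting this equal to $(1,\id)$, the first coordinate $a \otimes \alpha(a) = a^\half \alpha(a) a^\half = 1$ forces $\alpha(a) = a^\inv$, and then the identity $\delta_{a,a^\inv} = \id$ collapses the second-coordinate condition to $\alpha^2 = \id$. Since $\calA = G \times \langle\epsilon\rangle$ and $G$ is uniquely $2$-divisible (hence contains no involutions), the only elements of $\calA$ of order dividing $2$ are $\id$ and $\epsilon$. The case $\alpha = \id$ gives $a = a^\inv$, i.e.\ $a = 1$ by unique $2$-divisibility, which is the neutral element and not an involution; the case $\alpha = \epsilon$ imposes no condition on $a$, because $\epsilon(a) = a^\inv$ holds automatically. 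Hence $J = L \times \{\epsilon\}$. The one subtlety to check carefully here is that $\calA$ contributes no further involutions, which is exactly where unique $2$-divisibility of $G$ is used.

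For (b), I would expand both products using $\alpha(1) = 1$, the vanishing of the precession maps $\delta_{a,1}$ and $\delta_{1,x}$ (both reduce to $\id$ since $1$ is neutral for $\otimes$), and the centrality of $\epsilon$ in $\calA$. This yields $(a,\alpha)\,\iota = (a,\,\alpha\epsilon)$ and $\iota\,(a,\alpha) = (a^\inv,\,\epsilon\alpha)$. The second coordinates already agree because $\epsilon$ is central, so $(a,\alpha) \in \Cen(\iota)$ if and only if $a = a^\inv$, i.e.\ $a = 1$. Therefore $\Cen(\iota) = 1 \times \calA$.

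For (c), the key step is to pass to the faithful transitive action of $\calG$ on $L$ given by $(a,\alpha)\cdot x = a \otimes \alpha(x)$. A short computation shows that the involution $i = (a,\epsilon)$ acts by $x \mapsto a^\half x^\inv a^\half$, and that its only fixed point is $a^\half$: writing $z = a^\ihalf x$, the fixed-point equation reduces to $z^2 = 1$, whence $z = 1$ by unique $2$-divisibility. Thus $\fix\colon J \to L$ is a $\calG$-equivariant bijection, and the conjugation action of $J$ on itself is transported to the action of $J$ on $L$. Putting $p = \fix(i)$, $q = \fix(j)$, $m = \fix(k)$, the requirement $i^k = j$ becomes $m p^\inv m = q$ (using $k = k^\inv$ and equivariance), and it remains to show this has a unique solution $m \in L$. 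Multiplying on both sides by $p^\ihalf$ turns it into $u^2 = p^\ihalf q\, p^\ihalf$ with $u = p^\ihalf m\, p^\ihalf$, and unique $2$-divisibility of $G$ produces a unique $u$, hence a unique $m = p^\half u\, p^\half$. This last reduction, and the preliminary identification of $\fix$ as an equivariant bijection, is the main obstacle; parts (a) and (b) are essentially bookkeeping with the quasidirect product formulas.
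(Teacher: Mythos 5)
Your proof is correct. Parts (a) and (b) coincide with the paper's own computations, with the small extra (and correct) observation that unique $2$-divisibility of $G$ is what makes $\epsilon$ the only involution in $\calA$ -- the paper merely asserts this. Part (c) is where you take a genuinely different route. The paper stays inside the quasidirect product: it computes $(b,\epsilon)(a,\epsilon)(b,\epsilon) = \bigl((b\otimes a^\ihalf)^2,\epsilon\bigr)$ directly from the multiplication formula, using the automorphic inverse property and the precession-map identities to collapse the second coordinate, and then finishes by quoting the loop axiom that $x \otimes a^\ihalf = c^\half$ has a unique solution $x$. You instead transport the problem through the fixed-point map: $(a,\epsilon)$ acts on $L$ by $x \mapsto a^\half x^\inv a^\half$ with unique fixed point $a^\half$, so $\fix\colon J \to L$ is an equivariant bijection, and $i^k=j$ becomes $mp^\inv m = q$ in the underlying group $G$, solved uniquely by extracting the square root of $p^\ihalf q\,p^\ihalf$. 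Both arguments are complete. The paper's version is shorter once the K-loop identities are in hand but leans on them heavily; yours avoids them entirely for part (c) at the cost of first establishing the equivariant bijection, and it has the conceptual benefit of exhibiting the unique $k$ as the midpoint of $\fix(i)$ and $\fix(j)$, namely $\fix(k)=p\otimes(p^\inv\otimes q)^\half$, which is exactly the geometric content of condition b) of \cref{prop:partial_mock hyperbolic reflection}. The two computations are of course consistent: with $m=b^\half$ and $p=a^\half$ one has $mp^\inv m = b\otimes a^\ihalf$.
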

\begin{proof}
a) Fix $(a, \alpha) \in \calG$ such that $(a,\alpha)^2 = (1,\id)$. Note that
\[ (a,\alpha)(a, \alpha) = (a \otimes \alpha(a), \delta_{a, \alpha(a)} \alpha^2 ). \]
Now $a \otimes \alpha(a) = 1$ implies $\alpha(a) = a^\inv$ and therefore $\delta_{a,\alpha(a)} = \id$.
Hence we must have $\alpha^2 = \id$.

If $\alpha = \id$, then $a\otimes \alpha(a) = a^2$, therefore $a^2 = 1$ and thus $a = 1$. In that case $(a,\alpha) = (1,\id)$ is the neutral element in $\calG$.

This shows $J = L \times \{ \epsilon \}$ because $\epsilon$ is the only involution in $\calA$.

b) Fix $(a,\alpha) \in \Cen( \iota )$. We have
\begin{align*}
(a, \alpha) (1, \epsilon) &= (a, \alpha \epsilon) \quad \text{and} \\
(1, \epsilon)(a, \alpha) &= (a^\inv , \epsilon \alpha ).
\end{align*}
Hence $(a,\alpha) \in \Cen( \iota)$ if and only if $a = a^\inv$ if and only if $a = 1$.

c) Take involutions $(a,\epsilon),(b,\epsilon),(c,\epsilon) \in J = L \times \{ \epsilon \}$. Then
\begin{align*}
(b,\epsilon)(a,\epsilon)(b,\epsilon) &= (b,\epsilon)(a \otimes b^\inv, \delta_{a,b^\inv}) \\
&= (b \otimes (a^\inv \otimes b), \delta_{b, a^\inv \otimes b } \delta_{a, b^\inv} \epsilon ) \\
&= ((b \otimes a^\ihalf)^2, \epsilon ).
\end{align*}
Hence we have $(a,\epsilon)^{(b,\epsilon)} = (c, \epsilon)$ if and only if $b \otimes a^\ihalf = c^\half$. The loop conditions ensure that for all $a,c \in L$ there is a unique $b$ satisfying this equation.
\end{proof}

Now set $\lambda_0 = H \times \{\epsilon \} \subseteq J$ and put $\Lambda = \{ \lambda_0^g : g \in \calG \}$.
We view elements of $\Lambda$ as lines and we view involutions as points. Note that $\Lambda$ is $\calG$-invariant and all lines are conjugate.

The following will be shown in this section:
\begin{theorem} \label{thm:frobenius_mock hyperbolic reflection}
\begin{enumerate}
\item $(J,\Lambda)$ is a partial mock hyperbolic reflection space in $\calG$. 
\item If $G$ is full, i.e., if $G = \bigcup_{g \in G}H^g$,
then $(J,\Lambda)$ is a mock hyperbolic reflection space.
\item Suppose $i,j,k \in J$ are pairwise distinct such that the lines $\ell_{ij}$ and $\ell_{ik}$ exist and assume that $i,j,k$ are not collinear. Then $\Cen_\calG(i,j,k) = 1$. In particular, $\calG$ acts faithfully on $J$. 
\end{enumerate}
\end{theorem}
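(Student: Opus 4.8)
The plan is to prove the three parts of \cref{thm:frobenius_mock hyperbolic reflection} by systematically verifying the axioms of \cref{prop:partial_mock hyperbolic reflection} for the pair $(J,\Lambda)$, relying heavily on the loop-theoretic description of $\calG = L \rtimes_Q \calA$ and on \cref{lem:quasidirect_involutions}. The key structural fact already in hand is that $J = L \times \{\epsilon\}$, that midpoints exist and are unique (this is exactly part c) of \cref{lem:quasidirect_involutions}, giving axiom b) for free), and that $\Cen(\iota) = 1 \times \calA$. The whole strategy is to reduce statements about arbitrary lines to statements about the distinguished line $\lambda_0 = H \times \{\epsilon\}$ using $\calG$-transitivity on $\Lambda$, and to reduce statements about arbitrary points to the base involution $\iota = (1,\epsilon)$.

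For part a), I would first identify the line $\lambda_0$ intrinsically: the claim to establish is that $\lambda_0 = \ell_{ij}$ for any two distinct $i,j \in \lambda_0$, which amounts to showing $H \times \{\epsilon\} = \{k \in J : ij \in kJ\}$. Since $i,j \in H \times \{\epsilon\}$ with $H$ abelian, the product $ij$ lands in a translation controlled by $H$, and malnormality of $H$ in $G$ should force the solution set to stay inside $H \times \{\epsilon\}$. This is where the Frobenius (malnormal) hypothesis enters decisively: the condition $H \cap H^g = \{1\}$ for $g \notin H$ is what prevents a point off $\lambda_0$ from being collinear with two points on it. Having verified axiom a) for $\lambda_0$, conjugation invariance propagates it to all of $\Lambda$, and axiom c) I would verify in the form c'') from \cref{rem:def}, namely $N_\calG(\lambda_0) \cap J^{\cdot 2} = \lambda_0^{\cdot 2}$; again this is a normalizer computation that reduces, via \cref{lem:basic_properties} f), to understanding $N_\calG(\lambda_0^{\cdot 2})$, and malnormality of $H$ should pin the normalizer down.

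For part b), the extra hypothesis that $G$ is full, $G = \bigcup_{g\in G} H^g$, is precisely what upgrades a \emph{partial} mock hyperbolic reflection space to a genuine one: I must show $\ell_{ij} \in \Lambda$ for \emph{all} $i \neq j \in J$, i.e.\ that every pair of points actually lies on a line of $\Lambda$. The computation in part c) of \cref{lem:quasidirect_involutions} shows the product of two involutions is $((b \otimes a^{\ihalf})^2, \id)$-type data, so the translation $ij$ corresponds to an element of $L = G$; fullness guarantees this element lies in some conjugate $H^g$, which translates back into $ij \in (\lambda_0^{g'})^{\cdot 2}$ for an appropriate $g'$, so that the line through $i$ and $j$ exists in $\Lambda$. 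The main work is bookkeeping the correspondence between the loop multiplication $\otimes$ and the ambient group multiplication, using \cref{prop:twisted_subgroup}.

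For part c), I would argue that a triple $i,j,k$ with $\ell_{ij},\ell_{ik}$ existing but $i,j,k$ non-collinear has trivial simultaneous centralizer. After conjugating so that $i = \iota$, \cref{lem:quasidirect_involutions} b) gives $\Cen(\iota) = 1 \times \calA$, so any $g \in \Cen_\calG(i,j,k)$ has the form $(1,\alpha)$ and must additionally fix $j = (b,\epsilon)$ and $k = (c,\epsilon)$ under conjugation; centralizing $(b,\epsilon)$ forces $\alpha$ to fix $b$ (and similarly $c$) in a suitable sense, and the non-collinearity of $i,j,k$ means $b$ and $c$ are ``independent'' enough that the only automorphism in $\calA = G \times \langle\epsilon\rangle$ fixing both is the identity. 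Faithfulness of the $\calG$-action on $J$ then follows because the kernel of the action would centralize every involution, hence in particular a non-collinear triple. I expect the main obstacle throughout to be the first reduction in part a): extracting collinearity/line-membership information from the loop-product formula $(a,\epsilon)^{(b,\epsilon)} = ((b\otimes a^{\ihalf})^2,\epsilon)$ and converting the abstract malnormality condition on $H \leq G$ into the precise statement that $\ell_{ij}$ cannot escape $\lambda_0$; the precession maps $\delta_{a,b}$ make the multiplication non-associative-looking and care is needed to keep the $\calD(L) \subseteq \calA$ condition available so that all the identities of the preceding propositions apply.
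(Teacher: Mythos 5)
Your proposal is correct and follows essentially the same route as the paper: axiom b) is read off from \cref{lem:quasidirect_involutions}, axiom a) is verified on the base line $\lambda_0 = H\times\{\epsilon\}$ via the loop formulas together with the abelianness and malnormality of $H$ and then propagated by conjugation, axiom c) is checked in the normalizer form $N_\calG(\lambda_0)\cap J^{\squared}=\lambda_0^{\squared}$ via \cref{lem:mock hyperbolic reflection_equivalences}, fullness yields part b), and part c) reduces to $\Cen(\iota)=1\times\calA$ plus the Frobenius condition. The only places the paper is more explicit than your sketch are the classification of lines through $\iota$ as $H^g\times\{\epsilon\}$ (which gives uniqueness of lines through two points) and the final step of part c), where non-collinearity is converted into $\Cen_G(a)\cap\Cen_G(b)\leq H^g\cap H^{g'}=1$; both are routine given your setup.
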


\begin{lemma}
  Let $\lambda$ be a line containing $\iota$. Then $\lambda$ is of the form
  \[ \lambda = H^g \times \{ \epsilon \} \]
  for some $g \in G$.
\end{lemma}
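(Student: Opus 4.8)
The plan is to show that every line through $\iota$ is a $\Cen(\iota)$-conjugate of the base line $\lambda_0 = H\times\{\epsilon\}$, and then to compute such conjugates explicitly. Throughout I would use \cref{lem:quasidirect_involutions}: $J = L\times\{\epsilon\}$, $\Cen(\iota) = 1\times\calA$, and the reflection formula $(a,\epsilon)^{(b,\epsilon)} = ((b\otimes a^\ihalf)^2,\epsilon)$; in particular, taking $a=1$, one has $\iota^{(b,\epsilon)} = (b^2,\epsilon)$.

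First I would compute the conjugation action of an element $(1,\alpha)\in\Cen(\iota)$ on a point $(b,\epsilon)\in J$. Using the product and inverse formulas of \cref{prop:quasidirect_product} together with $1\otimes y = y$ and $\delta_{1,y}=\id$, and using that $\epsilon$ is central in $\calA = G\times\langle\epsilon\rangle$ so that $\alpha^\inv\epsilon\alpha = \epsilon$, I expect the clean formula $(b,\epsilon)^{(1,\alpha)} = (\alpha^\inv(b),\epsilon)$. Hence $\lambda_0^{(1,\alpha)} = \alpha^\inv(H)\times\{\epsilon\}$. Since $\alpha$ is conjugation by some element of $G$ possibly composed with the inversion $\epsilon$, and $\epsilon$ maps the subgroup $H$ onto $H^\inv = H$, the set $\alpha^\inv(H)$ is a $G$-conjugate $H^g$. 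Thus every line of the form $\lambda_0^c$ with $c\in\Cen(\iota)$ already has the required shape $H^g\times\{\epsilon\}$.

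The main work, and the step I expect to be the obstacle, is to show that $N_\calG(\lambda_0)$ acts transitively on the points of $\lambda_0$. The decisive input is that $H$ is uniquely $2$-divisible: if $g\in G$ satisfies $g^2 = h\in H\setminus\{1\}$, then $g$ centralizes $h$, and malnormality of $H$ forces $\Cen_G(h)\le H$, so $g\in H$ (the case $h=1$ being trivial). Consequently, for $h\in H$ the involution $k_h := (h^\half,\epsilon)$ lies in $\lambda_0$ and satisfies $\iota^{k_h} = (h,\epsilon)$. To see $k_h\in N_\calG(\lambda_0)$, I would evaluate $(h',\epsilon)^{k_h}$ for $h'\in H$ via the reflection formula; because $H$ is abelian the operation $\otimes$ restricts to ordinary multiplication on $H$, so the formula collapses to $(h(h')^\inv,\epsilon)$, which again lies in $\lambda_0$. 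As $h'\mapsto h(h')^\inv$ is a bijection of $H$, this gives $\lambda_0^{k_h}=\lambda_0$.

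Finally, given a line $\lambda = \lambda_0^x$ with $\iota\in\lambda$, I would write $\iota^{x^\inv} = (h,\epsilon)\in\lambda_0$ for some $h\in H$. Then $\iota^{k_h} = (h,\epsilon) = \iota^{x^\inv}$ yields $k_h x\in\Cen(\iota)$; setting $c = k_h x$ and using $k_h^2 = 1$ gives $x = k_h c$, so $\lambda = \lambda_0^x = (\lambda_0^{k_h})^c = \lambda_0^c$, which has the form $H^g\times\{\epsilon\}$ by the first step. The one point requiring genuine care is the verification that $k_h$ normalizes $\lambda_0$; everything else is formal manipulation in the quasidirect product.
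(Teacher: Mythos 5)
Your argument is correct. It takes a mildly different route from the paper's: the paper conjugates $\lambda_0$ by an arbitrary $(a,\alpha)\in\calG$ in one direct computation, expands the first coordinate as $\alpha^\inv(a^\inv\otimes(c\otimes a^\inv))$ for $c\in H$, and reads off from the membership $\iota\in\lambda$ that $a$ must be a square root of an element of $H$, hence lies in $H$ (so the first coordinates sweep out $\alpha^\inv(H)$ and the precession maps cancel since $H$ is abelian). You instead factor the conjugator as $x=k_h c$ with $k_h=(h^\half,\epsilon)\in\lambda_0$ a reflection normalizing $\lambda_0$ and $c\in\Cen(\iota)=1\times\calA$, reducing the problem to the two easy special cases $\lambda_0^{k_h}=\lambda_0$ and $\lambda_0^{(1,\alpha)}=\alpha^\inv(H)\times\{\epsilon\}$. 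The computational content is essentially the same (both hinge on $H$ being uniquely $2$-divisible and abelian, and on $\epsilon$ commuting with the $G$-part of $\calA$), but your decomposition makes the structure of the argument more transparent and in effect re-proves the relevant instance of the decomposition $\calG=(\iota J)\Cen(\iota)$ from \cref{lem:translations have no fixed point}; the paper's version is shorter because it never needs to verify that $k_h$ normalizes $\lambda_0$, only that the conjugates of $\lambda_0$ land where they should. All the steps you flag as delicate (unique $2$-divisibility of $H$ via malnormality, the reflection formula $(a,\epsilon)^{(b,\epsilon)}=((b\otimes a^\ihalf)^2,\epsilon)$, and centrality of $\epsilon$ in $\calA$) check out.
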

\begin{proof}
We have $\lambda = \lambda_0^g$ for some $g = (a,\alpha) \in \calG$. Hence elements of $\lambda$ are of the form
\begin{align*}
 & (\alpha^\inv(a^\inv), \alpha^\inv)(c,\epsilon)(a,\alpha) \\
 = & (\alpha^\inv(a^\inv), \; \alpha^\inv)( c \otimes a^\inv, \; \delta_{c,a^\inv} \epsilon \alpha ) \\
 = & ( \alpha^\inv(a^\inv) \otimes \alpha^\inv(c \otimes a^\inv), \; \delta_{\alpha^\inv(a^\inv),\alpha^\inv(c \otimes a^\inv) } \delta_{c,a^\inv} \alpha \epsilon) \\
 = & ( \alpha^\inv( a^\inv \otimes ( c \otimes a^\inv ) ) , \; \alpha^\inv \delta_{a^\inv, c \otimes a^\inv} \delta_{c, a^\inv} \alpha \epsilon )
\end{align*}
for some $c \in H$.

Note that $a^\inv \otimes ( c \otimes a^\inv) = (a \otimes c^\half )^2$. We assume $\iota \in \lambda$. Hence
\[ 1 = a \otimes c^\half \]
for some $c \in H$ and hence $a = c^{-1/2} \in H$. This implies $\lambda = (\alpha^\inv(H), \epsilon) \subseteq J$.
\end{proof}

\begin{corollary} \label{cor:frobenius_lines}
Any two distinct points are contained in at most one line.
\end{corollary}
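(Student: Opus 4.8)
The plan is to reduce, via the transitivity of $\calG$ on $J$, to the case where one of the two shared points is $\iota$, and then to invoke the malnormality of $H$ in $G$ to force the two lines to coincide. Concretely, suppose two distinct points $i \neq j$ both lie on lines $\lambda, \mu \in \Lambda$; the goal is to show $\lambda = \mu$. By \cref{lem:quasidirect_involutions} c) the set $J$ acts regularly on itself by conjugation, so in particular $\calG$ acts transitively on $J$. Choose $g \in \calG$ with $i^g = \iota$. Since $\Lambda$ is $\calG$-invariant, I would replace $\lambda, \mu, j$ by $\lambda^g, \mu^g, j^g$; this preserves the hypothesis, with $j^g \neq \iota$ a second common point, and $\lambda^g = \mu^g$ implies $\lambda = \mu$ after translating back by $g^{-1}$. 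Hence it suffices to treat two lines both containing $\iota$.

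Next I would apply the preceding lemma, which says that any line containing $\iota$ has the form $H^a \times \{\epsilon\}$ for some $a \in G$. So write $\lambda = H^a \times \{\epsilon\}$ and $\mu = H^b \times \{\epsilon\}$ with $a,b \in G$. Recalling that $J = L \times \{\epsilon\}$, the second common point is $j = (c,\epsilon)$ for some $c \in L \setminus \{1\}$, and $j \in \lambda \cap \mu$ forces a nontrivial element $c \in H^a \cap H^b$.

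The crucial step is then the malnormality argument. Conjugating the set $H^a \cap H^b$ by $b^{-1}$ identifies it with $H^{ab^{\inv}} \cap H$, which is therefore nontrivial as well. Since $H$ is malnormal in $G$, i.e.\ $H \cap H^k = \{1\}$ for every $k \in G \setminus H$, a nontrivial intersection forces $ab^{\inv} \in H$. As $H$ is a subgroup, $ab^{\inv} \in H$ yields $H^{ab^{\inv}} = H$, and conjugating back gives $H^a = H^b$, that is $\lambda = \mu$, as required.

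I expect the Frobenius input itself to be immediate once the lines are put in the normal form supplied by the preceding lemma; the only real care needed is bookkeeping around the conjugation conventions, so that $H^a \cap H^b$ genuinely transports to $H^{ab^{\inv}} \cap H$, and confirming that the second shared point remains a distinct second point after the translation by $g$. Neither of these is a substantive obstacle, so the main content is simply the correct invocation of malnormality.
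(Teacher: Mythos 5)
Your proof is correct and follows exactly the intended route: the paper states this as an immediate corollary of the preceding lemma, and the implicit argument is precisely your reduction via transitivity to lines through $\iota$, followed by the normal form $H^g \times \{\epsilon\}$ and the malnormality of $H$ in $G$. Nothing to add.
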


\begin{lemma} \label{lem:correct_lines}
Fix distinct involutions $i,j \in J$ and suppose $\ell_{ij}$ exists in $\Lambda$. Then
\begin{align*}
\ell_{ij} = & \{ k \in J : ij \in kJ \} \\
 = & \{ k \in J : (ij)^k = (ij)^\inv \}.
 \end{align*}
\end{lemma}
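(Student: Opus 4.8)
The first equality is nothing but the definition of $\ell_{ij}$, so the whole content of the statement is the second equality, and the plan is to prove it by showing that for a fixed involution $k \in J$ one has $ij \in kJ$ if and only if $(ij)^k = (ij)^\inv$. The forward implication is a one-line calculation: writing $ij = km$ with $m \in J$ and using that both $k$ and $m$ are involutions, conjugation gives $(ij)^k = k(km)k = mk$, while $(ij)^\inv = (km)^\inv = mk$, so the two coincide. This direction needs no hypothesis beyond $k,m$ being involutions.

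The backward implication is where the actual work sits. Assuming $(ij)^k = (ij)^\inv$, I would set $m = k\cdot ij$, so that $km = ij$ holds automatically, and rewrite the hypothesis $k(ij)k = (ij)^\inv$ in the form $m = (ij)^\inv k$. Multiplying the two expressions for $m$ then yields $m^2 = k(ij)(ij)^\inv k = 1$, so $m$ is either trivial or an involution. Since by \cref{lem:quasidirect_involutions} the set $J$ consists of \emph{all} involutions of $\calG$, it suffices to rule out $m = 1$: once $m \neq 1$ we get $m \in J$ and hence $ij = km \in kJ$, placing $k$ in the set on the left.

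The main obstacle is therefore exactly the degenerate case $m = 1$, which would force $ij = k \in J$, i.e. $ij$ itself would be an involution; this occurs precisely when $i$ and $j$ commute. To exclude it I would invoke that $J$ acts regularly on itself by conjugation, which is condition b) established in \cref{lem:quasidirect_involutions}. By \cref{lem:translations have no fixed point}~b) this gives $J^{\cdot 2} \cap \Cen(i) = \{1\}$; since $i \neq j$ the element $ij$ lies in $J^{\cdot 2}\setminus\{1\}$ and thus cannot centralize $i$, so $i$ and $j$ do not commute and $ij$ is not an involution. This forces $m \neq 1$ and completes the equality. I would note in passing that this argument never uses the hypothesis that $\ell_{ij}$ exists in $\Lambda$, since condition b) and hence \cref{lem:translations have no fixed point} hold throughout $\calG$; that assumption is only relevant for $\ell_{ij}$ to be a genuine line in the surrounding development.
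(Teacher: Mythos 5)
Your proof of the second equality is correct: the forward direction is the standard involution computation, and you rule out the degenerate case $m=1$ properly by noting that $ij$ cannot itself be an involution, since otherwise it would be a nontrivial element of $J^{\cdot 2}\cap\Cen(\iota)$ for $\iota=i$, contradicting \cref{lem:translations have no fixed point}. But dismissing the first equality as ``nothing but the definition'' misses the actual content of the lemma. In this section $\Lambda$ is \emph{not} defined as the family of sets $\{k\in J: ij\in kJ\}$; it is defined as the orbit $\{\lambda_0^g : g\in\calG\}$ of the concrete set $\lambda_0 = H\times\{\epsilon\}$, and ``$\ell_{ij}$ exists in $\Lambda$'' here means that some conjugate of $\lambda_0$ contains both $i$ and $j$ (unique by \cref{cor:frobenius_lines}); the symbol $\ell_{ij}$ in the lemma denotes that conjugate. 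The first equality therefore asserts that this conjugate of $H\times\{\epsilon\}$ coincides with the abstractly defined set $\{k\in J: ij\in kJ\}$, which is exactly what is needed to verify condition a) of \cref{prop:partial_mock hyperbolic reflection} in the proof of \cref{thm:frobenius_mock hyperbolic reflection}. Under your reading the lemma is vacuous for that purpose, since the hypothesis would already assume what has to be proved.

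Concretely, the paper normalizes to $\ell_{ij} = H\times\{\epsilon\}$ and $ij=(c,1)$ with $c\in H\setminus\{1\}$, and proves the first equality by two inclusions. The inclusion $\subseteq$ is a quasidirect-product computation using that $H$ is abelian, showing $(c,1)^{(d,\epsilon)}=(c,1)^\inv$ for $d\in H$. The inclusion $\supseteq$ is the genuinely nontrivial step: writing $(c,1)=(a,\epsilon)(b,\epsilon)$ forces $a\otimes b^\inv=c$ and $\delta_{a,b^\inv}=\id$, and unwinding the formula for the precession map from \cref{prop:twisted_subgroup} eventually yields $a^\half\in\Cen_G(c)=H$, where the last identification uses the malnormality of the Frobenius complement. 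None of this appears in your argument, and your closing observation that the hypothesis ``$\ell_{ij}$ exists in $\Lambda$'' is never used is precisely the symptom: that hypothesis is what makes the first equality a substantive statement, and the Frobenius structure of $H<G$ is essential to proving it.
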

\begin{proof}
  We may assume that $\ell_{ij} = H \times \{ \epsilon \}$ and $ij = (c,1)$ for some $c \in H \setminus \{ 1 \}$. 
  The second equality is easy and therefore we only show the first equality.
  
  We first show $\ell_{ij} \subseteq \{ k \in J : ij \in kJ \}$:
  Take $d \in H \setminus \{ 1 \}$. Then
  \[ (d, \epsilon)(c, 1)(d,\epsilon) = (d, \epsilon) ( c \otimes d, \delta_{c,d} \epsilon ) = ( d \otimes (c \otimes d )^\inv , \delta_{d, (c \otimes d)^\inv} \delta_{c,d} ). \]
  The Frobenius complement $H$ is abelian and therefore
  \[ ( d \otimes (c \otimes d )^\inv , \delta_{d, (c \otimes d)^\inv} \delta_{c,d} ) = (c^\inv, 1). \]
  This shows $(c,1)^{(d,\epsilon)} = (c,1)^\inv$ and hence $\ell_{ij} \subseteq \{ k \in J : ij \in kJ \}$.
  
  We now show $\supseteq$ for the first equality:
   Suppose $(c,1) = (a, \epsilon)(b, \epsilon)$. We have to show that $a$ is an element of $H$. We have
  \[ (c,1) = (a, \epsilon)(b, \epsilon) = ( a \otimes  b^\inv , \delta_{a,b^\inv}) \]
  and hence $a^\half b^\inv a^\half = a \otimes b^\inv = c \in H$ and $\delta_{a,b^\inv} = \id$. By \cref{prop:twisted_subgroup} this implies
  \[ b^{-\half} a^\half (a \otimes b^\inv ) ^{-\half} = 1. \]
  Therefore $b^{-\half}a^\half = c^\half$ and since $c = a^\half b^\inv a^\half$, this implies $a^\half b^{-\half} = c^\half$.
  Hence
  \[ c^\half = b^{-\half}a^\half = (a^\half b^{-\half})^{a^\half} = (c^\half)^{a^\half} \]
  and therefore $a^\half \in \Cen(c) = H$.
\end{proof}

\begin{lemma}\label{lem:frob_normalizer}
  Suppose $(a,\alpha) \in N_\calG(\lambda_0)$. Then $a \in H$ and $\alpha$ normalizes $H$.
\end{lemma}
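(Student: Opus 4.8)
The plan is to exploit the conjugation formula already established for lines through $\iota$ and then descend from squares by unique $2$-divisibility. Fix $(a,\alpha)\in N_\calG(\lambda_0)$. Since $\lambda_0 = H\times\{\epsilon\}$, normalizing $\lambda_0$ forces $(a,\alpha)^\inv(h,\epsilon)(a,\alpha)\in\lambda_0$ for every $h\in H$. By the same computation as in the preceding lemma (taking $c=h$), the first coordinate of this conjugate equals $\alpha^\inv\bigl(a^\inv\otimes(h\otimes a^\inv)\bigr)$, and lying in $\lambda_0$ means this coordinate lies in $H$, i.e.
\[ a^\inv\otimes(h\otimes a^\inv)\in\alpha(H)\qquad\text{for all }h\in H. \]
Observe that $\alpha$ is conjugation by an element of $G$ possibly followed by the inversion $\epsilon$, and $H^\inv=H$, so $\alpha(H)$ is a conjugate of $H$; in particular it is again uniquely $2$-divisible.

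Next I would use the identity $a^\inv\otimes(h\otimes a^\inv)=(a^\inv\otimes h^\half)^2$, a direct $\otimes$-computation, to write the left-hand side as $w^2$ with $w=a^\inv\otimes h^\half=a^\ihalf h^\half a^\ihalf$. Since $\alpha(H)$ is uniquely $2$-divisible, $w^2$ has a square root inside $\alpha(H)$, and unique $2$-divisibility of $G$ identifies this root with $w$ itself; hence $w\in\alpha(H)$. Letting $u=h^\half$ range over $H$ gives $a^\ihalf u a^\ihalf\in\alpha(H)$ for all $u\in H$. Taking $u=1$ yields $a^\inv\in\alpha(H)$, and multiplying then gives $a^\ihalf u a^\half\in\alpha(H)$ for all $u\in H$, that is $H^{a^\half}=a^\ihalf H a^\half\subseteq\alpha(H)$.

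Finally I would invoke malnormality. Both $H^{a^\half}$ and $\alpha(H)$ are conjugates of $H$, and one conjugate of $H$ contained in another must equal it: after conjugating, the containment reads $H^g\subseteq H$ for a suitable $g$, and malnormality gives $H^g=H\cap H^g=\{1\}$ unless $g\in H$, so $g\in H$ and the two conjugates coincide. Thus $H^{a^\half}=\alpha(H)$. From $a^\inv\in\alpha(H)$ we get $a\in\alpha(H)=a^\ihalf H a^\half$, whence $a^\half a a^\ihalf=a\in H$; then $a^\half\in H$ and $\alpha(H)=a^\ihalf H a^\half=H$, so $\alpha$ normalizes $H$, as desired.

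The technical heart is the middle step: recognizing $\alpha(H)$ as a uniquely $2$-divisible conjugate of $H$ and using the square-root trick to pass from $w^2\in\alpha(H)$ to $w\in\alpha(H)$. Once the containment $H^{a^\half}\subseteq\alpha(H)$ of conjugates is in hand, malnormality of $H$ upgrades it to equality and the conclusion is immediate; the manipulations with the half-powers $a^{\pm\half}$ are routine given the $\otimes$-identities recalled earlier.
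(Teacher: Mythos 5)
Your proof is correct, but it takes a genuinely different route from the paper's. The paper exploits the single point $\iota=(1,\epsilon)\in\lambda_0$: since conjugation by $(a,\alpha)$ permutes $\lambda_0$, some $(c_0,\epsilon)$ is sent to $\iota$, which yields the one equation $1=a^\inv\otimes(c_0\otimes a^\inv)=(a^\inv\otimes c_0^\half)^2$; unique $2$-divisibility of $L$ then gives $a=c_0^\half\in H$ at once, and the condition $\alpha^\inv\bigl(a^\inv\otimes(c\otimes a^\inv)\bigr)\in H$ for all $c\in H$ then forces $\alpha$ to normalize $H$. You instead keep the whole family of conditions $a^\inv\otimes(h\otimes a^\inv)\in\alpha(H)$, extract square roots inside the uniquely $2$-divisible conjugate $\alpha(H)$, arrive at the containment of conjugates $H^{a^\half}\subseteq\alpha(H)$, and finish with malnormality. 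Both arguments are sound. The paper's is shorter and needs neither malnormality nor the explicit description of $\alpha$ as a conjugation possibly composed with $\epsilon$ (it only uses that $\alpha$ is an automorphism fixing $1$); your version spends malnormality here, but in exchange identifies $\alpha(H)$ explicitly as $H^{a^\half}$, anticipating the argument the paper gives next in \cref{prop:quasidirect_normalizer}. One small point worth making explicit in your write-up: the unique $2$-divisibility of $H$ (hence of $\alpha(H)$), which you use both to justify the square-root step and to let $u=h^\half$ range over all of $H$, itself follows from malnormality, since the square root of $h\neq 1$ lies in $\Cen_G(h)\leq H$.
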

\begin{proof}
Given $c \in H$ we have
\begin{align*}
  & (a,\alpha)^\inv (c, \epsilon) (a, \alpha) \\
  =& (\alpha^\inv (a^\inv), \alpha^\inv )( c\otimes a^\inv, \delta_{c,a^\inv}\epsilon \alpha) \\
  =& (\alpha^\inv(a^\inv) \otimes \alpha^\inv(c \otimes a^\inv), \delta_{\alpha^\inv(a^\inv), \alpha^\inv(c \otimes a^\inv)} \alpha^\inv \delta_{c,a^\inv} \alpha \epsilon ) \\
  =& (\alpha^\inv(a^\inv \otimes (c \otimes a^\inv)), \alpha^\inv \delta_{a^\inv, c\otimes a^\inv} \delta_{c,a^\inv}\alpha \epsilon ) \\
  =& (\alpha^\inv(a^\inv \otimes (c \otimes a^\inv)), \epsilon ) 
\end{align*}
We have $(1,\epsilon) \in \lambda_0$ and therefore
\[ 1 = a^\inv \otimes ( c_0 \otimes a^\inv ) \]
for some $c_0 \in H$.
Note that
\[ a^\inv \otimes ( c_0 \otimes a^\inv ) = (a^\ihalf c_0^\half a^\ihalf )^2 = (a^\inv \otimes c_0^\half)^2 \]
and therefore $1 = a^\inv \otimes c_0^\half$. This shows $a = c_0^\half \in H$.

Moreover, $\alpha^\inv(a^\inv \otimes (c \otimes a^\inv)) \in H$ for all $c \in H$ and hence $\alpha$ normalizes $H$.
\end{proof}

\begin{proposition}\label{prop:quasidirect_normalizer}
$N_G(\lambda_0)  \cap J^{\cdot 2} = \lambda_0^2$.
\end{proposition}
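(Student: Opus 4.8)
The statement concerns the normalizer in the quasidirect product $\calG$ (so $N_G$ should be read as $N_\calG$), and $\lambda_0^2$ means $\lambda_0^{\cdot 2}$. A preliminary computation, using that $H$ is abelian together with the formula for the precession generator in \cref{prop:twisted_subgroup}, shows that for $c,d\in H$ the map $\delta_{c,d^\inv}$ is trivial (its generator $d^\ihalf c^\half(cd^\inv)^\ihalf$ collapses to $1$), so that in fact $\lambda_0^{\cdot 2}=H\times\{\id\}$. The plan is to prove the two inclusions separately. The inclusion $\lambda_0^{\cdot 2}\subseteq N_\calG(\lambda_0)\cap J^{\cdot 2}$ is the easy direction: each point reflection $(c,\epsilon)\in\lambda_0$ normalizes $\lambda_0$ (by the conjugation formula in \cref{lem:quasidirect_involutions} c), or by \cref{lem:basic_properties} a)), hence so does any product of two of them, and such a product plainly lies in $J^{\cdot 2}$. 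The content is the reverse inclusion.

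For the reverse inclusion I would take $\sigma\in N_\calG(\lambda_0)\cap J^{\cdot 2}$ and, using $J=L\times\{\epsilon\}$, write $\sigma=(a,\epsilon)(b,\epsilon)$. Expanding the quasidirect product gives $\sigma=(a\otimes b^\inv,\ \delta_{a,b^\inv})$. Feeding $\sigma$ into \cref{lem:frob_normalizer} then yields two facts at once: the first coordinate $h:=a\otimes b^\inv=a^\half b^\inv a^\half$ lies in $H$, and the automorphism $\delta_{a,b^\inv}$ normalizes $H$. By \cref{prop:twisted_subgroup} the precession $\delta_{a,b^\inv}$ is conjugation by $d:=d_{a,b^\inv}=b^\ihalf a^\half h^\ihalf$, so ``$\delta_{a,b^\inv}$ normalizes $H$'' says exactly that $d\in N_G(H)$. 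Here the Frobenius hypothesis enters: malnormality of the complement $H$ forces $N_G(H)=H$, so $d\in H$, and therefore $m:=b^\ihalf a^\half=d\,h^\half\in H$.

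It remains to convert $m\in H$ into $\sigma\in\lambda_0^{\cdot 2}$. Substituting $a^\half=b^\half m$ into $h=a^\half b^\inv a^\half$ gives $h=b^\half m b^\ihalf m$, hence $b^\half m b^\ihalf=h m^\inv\in H$, i.e. $m\in H\cap H^{b^\half}$. Now malnormality is used a second time: if $b^\half\notin H$ then $H\cap H^{b^\half}=\{1\}$, so $m=1$, meaning $a=b$ and $\sigma=1\in\lambda_0^{\cdot 2}$; otherwise $b^\half\in H$, so $b\in H$ and then $a^\half=b^\half m\in H$ gives $a\in H$, whence $(a,\epsilon),(b,\epsilon)\in\lambda_0$ and $\sigma\in\lambda_0^{\cdot 2}$. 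Either way $\sigma\in\lambda_0^{\cdot 2}$, which finishes the argument.

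The \emph{main obstacle} is the middle step: one must squeeze usable information out of the precession component $\delta_{a,b^\inv}$ rather than just the loop coordinate. This is precisely where \cref{lem:frob_normalizer}, combined with the identification of precessions as inner automorphisms via \cref{prop:twisted_subgroup}, is indispensable, and where the Frobenius structure is invoked twice — first through $N_G(H)=H$ to locate $d$ (and hence $m$) inside $H$, and then through the malnormality relation $H\cap H^{b^\half}=\{1\}$ to close the case distinction. The remaining manipulations are routine arithmetic in the twisted subgroup $L$.
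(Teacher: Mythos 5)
Your proof is correct and follows essentially the same route as the paper's: write $\sigma=(a,\epsilon)(b,\epsilon)=(a\otimes b^\inv,\delta_{a,b^\inv})$, apply \cref{lem:frob_normalizer} to get $a\otimes b^\inv\in H$ and $d_{a,b^\inv}\in N_G(H)=H$, deduce $b^\ihalf a^\half\in H\cap H^{b^\half}$, and finish with malnormality. The only differences are cosmetic (you make the easy inclusion and the degenerate case $a=b$ explicit, which the paper suppresses by fixing $a\neq b$).
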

\begin{proof}
Fix $a \neq b$ in $L$ such that $(a,\epsilon)(b,\epsilon) = (a \otimes b^\inv, \delta_{a,b^\inv}) \in N_\calG(\lambda_0)$.
By \cref{lem:frob_normalizer} we have $a \otimes b^\inv \in H$ and $\delta_{a,b^\inv}$ normalizes $H$.

By \cref{prop:twisted_subgroup} the latter is equivalent to
\[ b^\ihalf a^\half (a^\half b^\inv a^\half )^\ihalf \in H. \]
Since $a^\half b^\inv a^\half = a \otimes b^\inv \in H$, this implies $b^\half a^\half \in H$ 
and therefore $a^\half b^\ihalf = a^\half b^\inv a^\half (b^\ihalf a^\half)^\inv \in H$.

This shows
\[ b^\ihalf a^\half = (a^\half b^\ihalf)^{b^\half} \in H \cap H^{b^\half}. \]
Since $H$ is malnormal in $G$, this implies $b^\half \in H$ and $a^\half = (a^\half b^\ihalf)b^\half \in H$.
\end{proof}

\begin{proposition} \label{prop:frobenius_faithfully}
Suppose $i,j,k \in J$ are pairwise distinct such that the lines $\ell_{ij}$ and $\ell_{ik}$ exist in $\Lambda$ and assume that $i,j,k$ are not collinear. Then $\Cen(i,j,k) = 1$.
\end{proposition}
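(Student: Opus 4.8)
The plan is to first use conjugacy to normalize the configuration. By \cref{lem:quasidirect_involutions}(c) the set $J$ is a single $\calG$-conjugacy class, so after conjugating I may assume $i=\iota=(1,\epsilon)$; conjugation is an automorphism of $\calG$ preserving $\Lambda$, the existence of lines, non-collinearity, and the triviality of $\Cen_\calG(i,j,k)$, so nothing is lost. With $i=\iota$ we have $\Cen_\calG(\iota)=1\times\calA$ by \cref{lem:quasidirect_involutions}(b), so every element of $\Cen_\calG(i,j,k)$ has the form $(1,\beta)$ with $\beta\in\calA$. Since any two points lie on a unique line (\cref{cor:frobenius_lines}) and $\ell_{ij},\ell_{ik}$ both contain $\iota$, non-collinearity is precisely $\ell_{ij}\neq\ell_{ik}$. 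As all lines are conjugate, $\calG$ is transitive on $\Lambda$, so by \cref{line_lemma}(b) $\Cen_\calG(\iota)$ acts transitively on the lines through $\iota$; applying such an element I may further assume $\ell_{ij}=\lambda_0=H\times\{\epsilon\}$. By the description of lines through $\iota$ I then have $j=(b,\epsilon)$ and $k=(c,\epsilon)$ with $b\in H\setminus\{1\}$ and $c\in H^{g_0}\setminus\{1\}$, where $H^{g_0}\neq H$, i.e.\ $g_0\notin N_G(H)=H$.

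Next I would convert the centralizer condition into a fixed-point condition in $L$. A direct computation in $\calG$, using $\delta_{1,x}=\delta_{x,1}=\id$ and that $\epsilon$ is central in $\calA$, gives
\[ (1,\beta)(x,\epsilon)=(\beta(x),\beta\epsilon), \qquad (x,\epsilon)(1,\beta)=(x,\epsilon\beta), \]
so $(1,\beta)$ commutes with $(x,\epsilon)$ if and only if $\beta(x)=x$. Hence
\[ \Cen_\calG(i,j,k)=\{\,(1,\beta):\beta\in\calA,\ \beta(b)=b,\ \beta(c)=c\,\}, \]
and it remains to show that the only $\beta\in\calA=G\times\langle\epsilon\rangle$ fixing both $b$ and $c$ is the identity. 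Writing $\beta=\bar g\,\epsilon^{s}$ with $g\in G$ and $s\in\{0,1\}$, where $\bar g$ denotes conjugation and $\epsilon$ inversion, I would treat the two cases $s=0$ and $s=1$ separately.

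For $s=0$ we have $\beta(x)=x^{g}$, so the conditions read $g\in\Cen_G(b)\cap\Cen_G(c)$. Since $H$ is malnormal with $N_G(H)=H$, every nontrivial $b\in H$ has $\Cen_G(b)\subseteq H$: if $b^{g}=b$ then $b\in H\cap H^{g}$, forcing $g\in H$; likewise $\Cen_G(c)\subseteq H^{g_0}$. Hence $g\in H\cap H^{g_0}=\{1\}$ by malnormality and $g_0\notin H$, so $\beta=\id$. For $s=1$ we have $\beta(x)=(x^{g})^{\inv}$, so $\beta(b)=b$ gives $b^{g}=b^{\inv}$; then $b^{\inv}=b^{g}\in H^{g}$, whence $b\in H\cap H^{g}$ and malnormality again forces $g\in H$. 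As $H$ is abelian this yields $b^{g}=b$, so $b=b^{\inv}$, i.e.\ $b^{2}=1$, contradicting $b\neq1$ since $G$ is uniquely $2$-divisible (so $b^{2}=1$ implies $b=1$). Thus $s=1$ is impossible, and combining the cases gives $\Cen_\calG(i,j,k)=\{(1,\id)\}=1$.

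I expect the genuine content to be the bookkeeping rather than any deep obstacle: keeping the two centralizers inside the correct \emph{distinct} conjugates $H$ and $H^{g_0}$ of the complement is exactly where non-collinearity enters, and the orientation-reversing case $\beta=\bar g\epsilon$ is the one that looks most likely to cause trouble. In the end it collapses quickly, because malnormality pins $g$ inside the abelian group $H$ and then unique $2$-divisibility eliminates it using $b$ alone, so the main care needed is in the reductions of the first paragraph rather than in the final case analysis.
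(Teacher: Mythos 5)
Your proposal is correct and follows essentially the same route as the paper: reduce to $i=\iota$ so that $\Cen_\calG(i)=1\times\calA$, translate commutation with $j$ and $k$ into $\beta$ fixing $b$ and $c$, and use malnormality of $H$ to force the fixed-point set into $H\cap H^{g_0}=\{1\}$. The paper is terser (it asserts $\Cen_\calA(a)=1\times\Cen_G(a)$ and concludes "because $G$ is a Frobenius group"), whereas you spell out the $\beta=\bar g\epsilon$ case via unique $2$-divisibility and make explicit where non-collinearity enters; this is the same argument with the implicit steps filled in.
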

\begin{proof}
  Let $i = (1, \epsilon)$ and fix $j = (a, \epsilon) \in J \setminus \{ i \}$. We already know $\Cen(i) = 1 \times \calA$.
  Now fix $(1, \beta) \in \Cen(i) \cap \Cen(j)$. Then
  \[ (\beta(a), \epsilon \beta) = (1,\beta)(a,\epsilon) = (a,\epsilon)(1, \beta) = (a, \epsilon \beta ). \]
  Therefore $\beta \in \Cen_\calA(a) = 1 \times \Cen_G(a)$ and hence $\Cen(i,j) = 1 \times \Cen_G(a)$. This shows the claim because $G$ is a Frobenius group.
\end{proof}

\begin{proof}[Proof of \cref{thm:frobenius_mock hyperbolic reflection}]
We start by checking conditions a) and b) of \cref{prop:partial_mock hyperbolic reflection}. a) follows from \cref{cor:frobenius_lines} and \cref{lem:correct_lines}. b) is part c) of \cref{lem:quasidirect_involutions}.

Now \cref{prop:quasidirect_normalizer} and \cref{lem:mock hyperbolic reflection_equivalences} imply that $(J,\Lambda)$ is a partial mock hyperbolic reflection space.

If the Frobenius group is full, then it is clear from the definition of $\lambda_0$ that all lines exist and hence that $J$ forms a mock hyperbolic reflection space.

The final statement is \cref{prop:frobenius_faithfully}.
\end{proof}

\section{Mock hyperbolic reflection spaces in groups of finite Morley rank}

We now turn to the finite Morley rank setting. We refer the reader to \cite{borovik-nesin} \cite{poizat} for a general introduction to groups of finite Morley rank. 

\begin{convention} In the context of finite Morley rank we say that a definable property $P$ holds for  \emph{Morley rank $k$ many elements} if the set defined by $P$ has Morley rank $k$. In a slight abuse, we may also say that $P$ holds for
\emph{generically many} elements of a definable set $S$ if the set of elements in $S$ not satisfying $P$ has smaller Morley rank than $S$.
\end{convention}

We will repeatedly make use of the following:
\begin{proposition}[Exercise 11 and 12 on p. 72 of \cite{borovik-nesin}] \label{no-involutions}
If $G$ is a group of finite Morley rank and $G$ does not contain an involution, then $G$ is uniquely $2$-divisible.
\end{proposition}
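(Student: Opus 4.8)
The plan is to reduce everything to definable abelian subgroups and then exploit the fact that an injective definable endomorphism of a group of finite Morley rank is automatically surjective. Recall that unique $2$-divisibility means the squaring map $x \mapsto x^2$ is a bijection of $G$; I will establish surjectivity (existence of square roots) and injectivity (uniqueness) separately.

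First I would attach to each $g \in G$ the definable hull $d(\langle g \rangle)$, i.e. the smallest definable subgroup containing $g$, which exists by the descending chain condition on definable subgroups. Since $\langle g \rangle$ is abelian and the definable hull of an abelian subgroup of a group of finite Morley rank is again abelian, $A_g := d(\langle g \rangle)$ is a definable abelian group containing $g$, and it contains no involutions because $G$ does not.

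The key lemma is that any definable abelian group $A$ of finite Morley rank without involutions is uniquely $2$-divisible. Writing $A$ additively, the squaring map is the endomorphism $\mu \colon a \mapsto 2a$; its kernel consists of the elements of order dividing $2$, hence is trivial, so $\mu$ is injective. Then $\mu \colon A \to \mu(A)$ is a definable bijection onto the definable subgroup $\mu(A) \leq A$, so $\MR(\mu(A)) = \MR(A)$ and $\Mdeg(\mu(A)) = \Mdeg(A)$; a definable subgroup of full rank and full degree must be the whole group, whence $\mu$ is also surjective. Thus $\mu$ is a bijection and $A$ is uniquely $2$-divisible. (The same conclusion holds trivially when $g$ has finite order, which is then necessarily odd.)

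Existence of square roots in $G$ is now immediate: $g \in A_g$ and $A_g$ is uniquely $2$-divisible, so some $h \in A_g \subseteq G$ satisfies $h^2 = g$. The main obstacle is uniqueness, since two square roots of $g$ could a priori lie in different abelian subgroups. To handle this I would show that every $h \in G$ with $h^2 = g$ actually lies in $d(\langle g \rangle)$. Indeed, $h \in A_h := d(\langle h \rangle)$, which is abelian, uniquely $2$-divisible, and contains $g = h^2$; hence $C := d(\langle g \rangle) \subseteq A_h$ is a uniquely $2$-divisible subgroup containing $g$, so $g$ has a unique square root $h'$ in $C$. Since $h' \in C \subseteq A_h$ and $A_h$ has a unique square root of $g$, namely $h$, we get $h = h' \in C = d(\langle g \rangle)$. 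Therefore any two square roots of $g$ both lie in $d(\langle g \rangle)$, where square roots are unique, and so they coincide.
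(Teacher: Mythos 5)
Your proof is correct. The paper does not prove this statement at all --- it is quoted as a known fact with a citation to Exercises 11 and 12 on p.~72 of Borovik--Nesin --- and your argument is essentially the standard one behind that citation: reduce to the definable abelian hull $d(\langle g\rangle)$, get surjectivity of squaring there from injectivity via the rank-and-degree count, and get global uniqueness by showing every square root of $g$ is forced into $d(\langle g\rangle)$.
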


Now let $G$ be a group of finite Morley rank and let $Q$ be a conjugacy class of involutions such that $\Mdeg(Q)=1$. Moreover, we assume that $\Lambda \subseteq \calP(Q)$ is a $G$-invariant definable family of subsets of $Q$ such that each $\lambda \in \Lambda$ is of the form
\[ \lambda = \{ k \in Q : ij \in kQ \} \]
for any  $i\neq j \in \lambda$.
\begin{definition}
We call $(Q,\Lambda)$ a \emph{generic mock hyperbolic reflection space} if $(Q,\Lambda)$ is a partial mock hyperbolic reflection space and for each  $i\in Q$ the set \[\{j\in Q\colon \ell_{ij}\in\Lambda\}\] is generic in $Q$.
\end{definition}

\begin{remark}\label{rem: basic generic}
 Let $(Q,\Lambda)$ be a generic mock hyperbolic reflection space.
\begin{enumerate}
\item The condition in the above definition is equivalent to the statement that
\[ \{ (i,j) \in Q^2: i \neq j, \ell_{ij} \text{ exists in } \Lambda \} \subseteq Q \times Q \]
is a generic subset of $Q^2$.
\item Write
\[ \Lambda(k) = \{ \lambda \in \Lambda : \MR(\lambda) = k \}. \]
Fix $i \in Q$ and set $B_{(k)}(i) = \{ j \in Q \setminus \{ i \} : \ell_{ij} \in \Lambda(k) \}$.
Since $\Mdeg(Q) = 1$ there is exactly one $k \leq n$ such that $B_{(k)}(i)$ is a generic subset of $Q$.
In that case  $(Q,\Lambda(k))$ is a generic mock hyperbolic reflection space.  Hence we may \textbf{assume from now on that all lines in $\Lambda$ have the same Morley rank}.
\item If $(Q,\Lambda)$ is a generic mock hyperbolic reflection space of finite Morley rank in which all lines have Morley rank $k$, then we have $\MR(\Lambda)=2n-2k$ and $\Mdeg(\Lambda) = 1$ for $n=\MR(Q)$. The set of translations
\[ S = \{ \sigma \in Q^\squared \setminus \{ 1 \} : \ell_\sigma \text{ exists in } \Lambda \} \cup \{ 1 \} \]
has Morley rank $2n-k$ and Morley degree $1$.
\end{enumerate}
\end{remark}

If $X$ and $Y$ are definable sets, then we write $X \approx Y$ if $X$ and $Y$ coincide up to a set of smaller rank,
i.e. if the sets $X$, $Y$, and $X \cap Y$ all have the same Morley rank and Morley degree. This defines an equivalence relation on the family of definable sets. One important property of this equivalence relation is the following:
\begin{proposition}[Lemma 4.3 of \cite{wagner}]\label{stabilizer_lemma}
Let $G$ be a group acting definably on a set $X$ in an $\omega$-stable structure. Let $Y$ be a definable subset of $X$ such that $gY \approx Y$ for all $g \in G$. Then there is a $G$-invariant set $Z \subseteq X$ such that $Z \approx Y$.
\end{proposition}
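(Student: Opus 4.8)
The plan is to produce $Z$ by a generic averaging construction. Write $m = \MR(G)$ and $r = \MR(Y)$, and for $x \in X$ set $A_x = \{ g \in G : x \in gY \}$, a definable subset of $G$. I would then define
\[ Z = \{ x \in X : \MR(A_x) = m \}, \]
the set of points lying in generically many translates of $Y$. Since Morley rank is definable in definable families in an $\omega$-stable structure, $Z$ is definable.

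Next I would check that $Z$ is genuinely $G$-invariant, not merely invariant up to $\approx$, which is the whole point of the construction. For $h \in G$ one has $A_{hx} = \{ g : x \in h^\inv g Y \} = h A_x$, since the substitution $g \mapsto h^\inv g$ identifies the two sets; as left translation by $h$ is a definable bijection of $G$ it preserves Morley rank, so $\MR(A_{hx}) = \MR(A_x)$ and hence $x \in Z \iff hx \in Z$. It then remains to show $Z \approx Y$.

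For the last and main step I would double count the definable set
\[ V = \{ (x,g) \in X \times G : x \in gY \triangle Y \}. \]
Fibering $V$ over $G$, the fiber above $g$ is $gY \triangle Y$, which has rank $< r$ by the hypothesis $gY \approx Y$; additivity of Morley rank over definable fibrations then gives $\MR(V) \le m + (r-1) < m+r$. Fibering instead over $X$, the fiber above $x$ equals $A_x$ when $x \notin Y$ and $G \setminus A_x$ when $x \in Y$. Hence on $Z \setminus Y$ (where $\MR(A_x) = m$) and on $Y \setminus Z$ (where $\MR(A_x) < m$, so $\MR(G \setminus A_x) = m$) the $V$-fiber has full rank $m$; thus both sets lie in the definable set $P = \{ x : \MR(V_x) = m \}$. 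The fibration lower bound $m + \MR(P) \le \MR(V) < m + r$ forces $\MR(P) < r$, whence $\MR(Y \triangle Z) \le \MR(P) < r$. Since $\MR(Y) = r$ this yields $Z \approx Y$, matching degrees being automatic once the symmetric difference drops in rank.

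The main obstacle I anticipate is making the two fibration rank computations rigorous in the stated $\omega$-stable generality: one needs the definability of Morley rank in definable families (to see that $Z$ and $P$ are definable) together with the additive behaviour of Morley rank along definable fibrations in both the upper- and lower-bound directions, and the strictness of the bound "$\MR(V_g) < r$" on the $G$-fibers. In the finite Morley rank setting these facts are standard, but for ordinal-valued Morley rank the additivity and the passage to a strict inequality require the appropriate Lascar-type inequalities and some care at limit ranks.
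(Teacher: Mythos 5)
The paper offers no proof of this proposition --- it is quoted as Lemma 4.3 of Wagner's paper --- so there is no internal argument to compare against; I assess your proof on its own terms. Your construction is essentially the standard one, and in the finite Morley rank setting (the only setting in which this paper ever invokes the lemma) it is complete and correct: the identity $A_{hx}=hA_x$ gives genuine $G$-invariance of $Z$, and the two rank computations on $V$ correctly force $\MR(Y\mathbin{\triangle}Z)<\MR(Y)$, hence $Z\approx Y$.

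Two points need repair for the statement in its full $\omega$-stable generality. First, your justification for the definability of $Z$ is false as stated: Morley rank need \emph{not} be definable in definable families in an arbitrary $\omega$-stable structure. What saves the construction is that the sets $A_x$ are subsets of the definable group $G$: in a stable group, genericity is definable in definable families (a definable set is generic iff boundedly many translates cover the group, with the bound uniform in the family), and in an $\omega$-stable group a definable set is generic iff it has full Morley rank. So $Z=\{x: A_x \text{ generic in } G\}$ is definable, and this is exactly your $Z$. The auxiliary set $P$ raises the same definability worry, but it is an unnecessary detour: once $Z$ is definable so is $Y\mathbin{\triangle}Z$, and fibering $V$ over $Y\mathbin{\triangle}Z$ directly gives $m+\MR(Y\mathbin{\triangle}Z)\le\MR(V)$. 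Second, as you anticipate, the additivity of Morley rank along fibrations is genuinely problematic for ordinal-valued ranks (if $r$ is a limit ordinal the fibre ranks over $G$ need not be uniformly bounded below $r$, and the Lascar-type inequalities for Morley rank are weaker than for $U$-rank). A standard way to avoid the counting altogether is forking calculus: if $x$ is generic in $Y$ and $g\in G$ is generic and independent from $x$, then $x\in gY$ because $\MR(Y\setminus gY)<\MR(Y)$, whence $Y\subsetsim Z$; conversely, a point of $Z\setminus Y$ whose type has rank $\MR(Z\setminus Y)$ lies in $gY\setminus Y$ for some $g$ independent from it, which forces $\MR(Z\setminus Y)<\MR(Y)$. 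This yields $Z\approx Y$ with no rank additivity. For the applications in this paper your version, with the definability of $Z$ justified via genericity, suffices.
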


By \cref{prop:BHNeumann} a mock hyperbolic reflection space consists of one line if and only if the set of translations forms a normal subgroup.
For generic mock hyperbolic reflection spaces the following will be shown in this section:
\begin{theorem} \label{thm:generic_mock hyperbolic reflection} \label{thm:rank_ineq}
Suppose $(Q,\Lambda)$ is a generic mock hyperbolic reflection space such that $Q$ has Morley rank $\MR(Q) = n$.
Assume that $\Lambda$ consists of more than one line and that all lines $\lambda \in \Lambda$ are infinite and of Morley rank $\MR(\lambda) = k$. Then $n \geq 2k+1$.

If $n = 2k+1$, then the translations almost form a normal subgroup: $G$ has a definable connected normal subgroup $N$ of Morley rank $\MR(N) = 2n-k$ such that $N \approx S$. Moreover, $\MR(N \cap \Cen(i)) = n-k$ for any involution $i \in Q$.
\end{theorem}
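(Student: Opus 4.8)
The plan is to prove the rank inequality $n \geq 2k+1$ first, and then extract the normal subgroup in the boundary case $n = 2k+1$. The starting point is the rank computation already recorded in \cref{rem: basic generic}: the set of translations $S$ has $\MR(S) = 2n-k$ and $\Mdeg(S) = 1$, while the line family $\Lambda$ has $\MR(\Lambda) = 2n-2k$. By \cref{prop:partition}, $S \setminus \{1\}$ is partitioned by the sets $\lambda^{\cdot 2} \setminus \{1\}$ as $\lambda$ ranges over $\Lambda$, and each $\lambda^{\cdot 2}$ is a uniquely $2$-divisible abelian group of Morley rank $k$ (by \cref{lem:basic_properties} d), since $\lambda^{\cdot 2} = i\lambda$ has the same rank as $\lambda$). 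This partition is the key structural fact: it says $S$ is fibered over $\Lambda$ with fibers of rank $k$, consistent with $2n-k = (2n-2k) + k$.

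\textbf{Deriving the inequality.} To force $n \geq 2k+1$, I would argue by contradiction from $n \leq 2k$, or rather show that $n \leq 2k$ collapses the space to a single line, contradicting our hypothesis that $\Lambda$ has more than one line. First I would fix an involution $i \in Q$ and examine the set $iQ$, which is uniquely $2$-divisible of rank $n$ by \cref{lem:translations have no fixed point} a). The lines through $i$ contribute the abelian subgroups $\lambda^{\cdot 2} = \Cen(ij) \cap Q^{\cdot 2}$ (by \cref{prop:partition} a)), each of rank $k$, and these partition a generic part of $iQ$. The expected mechanism is a genericity/counting argument: two distinct generic lines $\lambda, \delta$ through $i$ give abelian groups meeting only in $\{1\}$, so the product set $\lambda^{\cdot 2} \cdot \delta^{\cdot 2}$ has rank $2k$ inside $Q^{\cdot 2}$; if $2k \geq n$ this product becomes generic in $iQ$ (rank $n$), and one then pushes toward showing $Q^{\cdot 2}$ is itself generically a group, invoking \cref{prop:BHNeumann} (equivalence of (g) with consisting of a single line). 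The delicate point is controlling the rank of the product of two abelian subgroups and showing the resulting generic subgroup is actually all of $Q^{\cdot 2}$ up to $\approx$; this is where \cref{stabilizer_lemma} enters, turning an approximate normality statement into a genuine $G$-invariant set.

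\textbf{The boundary case.} When $n = 2k+1$, the plan is to produce $N$ as follows. Consider $S$ (rank $2n-k$, degree $1$) and show that conjugation by any $g \in G$ satisfies $gSg^{-1} \approx S$ — this is immediate since $S$ is already $G$-invariant as a set, but the real content is that $S^{\cdot 2} \approx S$, i.e. $S$ is generically closed under multiplication. Here the rank arithmetic is decisive: a product of two generic translations lands in $Q^{\cdot 2}$ of rank $2n-k$, and when $n = 2k+1$ the failure of associativity/closure is confined to a set of strictly smaller rank. Once $S^{\cdot 2} \approx S$ and $S^{-1} = S$, I would invoke a standard finite-Morley-rank fact (Zilber-style, that a generically closed generic subset generates a definable subgroup of the same rank) together with \cref{stabilizer_lemma} to obtain a definable connected normal $N$ with $N \approx S$, hence $\MR(N) = 2n-k$. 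Finally, $\MR(N \cap \Cen(i)) = n-k$ follows from the decomposition $G = (iQ)\Cen(i)$ of \cref{lem:translations have no fixed point} c) together with $\MR(G) = \MR(iQ) + \MR(\Cen(i)) = n + \MR(\Cen(i))$ and a rank count of how $N$ meets the fibers of this decomposition.

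\textbf{Main obstacle.} I expect the hardest step to be the passage from approximate closure ($S^{\cdot 2} \approx S$) to a genuine subgroup of the correct rank, and in particular verifying that the rank defect in the product map is exactly accounted for in the critical case $n = 2k+1$ but becomes an obstruction when $n \leq 2k$. The combinatorics of the partition of $S$ into the abelian pieces $\lambda^{\cdot 2}$, combined with the condition c) of \cref{prop:partial_mock hyperbolic reflection} (at most one point reflects one line to another), must be used to bound the overlap in products of translations; keeping these rank estimates tight enough to distinguish $n = 2k+1$ from $n \leq 2k$ is the crux.
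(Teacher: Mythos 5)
There is a genuine gap, and it sits exactly at the two load-bearing steps. For the inequality $n\geq 2k+1$, your proposed mechanism is to take two lines $\lambda,\delta$ through a common point $i$ and observe that $\lambda^{\squared}\cdot\delta^{\squared}$ has rank $2k$. That rank computation is fine (the product map is injective since $\lambda^{\squared}\cap\delta^{\squared}=\{1\}$), but the product is \emph{not} contained in $iQ$: by \cref{lem:basic_properties} e) it lies in $Q^{\squared}$, which has rank $2n-k$. So a rank-$2k$ set is generic there only when $2k\geq 2n-k$, i.e.\ $n\leq 3k/2$; this argument cannot reach the threshold $n\leq 2k$, and the subsequent appeal to \cref{prop:BHNeumann} (which requires $Q^{\squared}$ to genuinely be a subgroup, not just "generically" one) does not close the gap. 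The paper's actual proof uses two ingredients absent from your plan: (i) the triple multiplication map $\mu:Q^3\to G$ restricted to \emph{perfect} triples, where \cref{lem:ineq_perfect} gives $\MR(X_\alpha)\geq 2k$ for the fiber-sets $X_\alpha$ (you get a two-parameter family of rewritings $ijk=i'j'k'$ by sliding along $\ell_{jk}$ and then along $\ell_{ij'}$), while counting fibers inside $\MR(Q^3)=3n$ gives $\MR(X_\alpha)\leq n-l$; and (ii) the exclusion of \emph{generic projective planes} (\cref{no-plane}, via \cref{line_counting}), which is what upgrades $n\geq 2k+l$ to the strict inequality $n>2k+l$ in \cref{prop:ineq}. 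Your proposal contains neither the triple-product fiber count nor the projective-plane obstruction.

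For the boundary case $n=2k+1$, your starting point --- that "a product of two generic translations lands in $Q^{\squared}$", so that $S^{\squared}\approx S$ can be read off from rank arithmetic --- is unjustified: a product of two translations is a product of four involutions, and closure of $Q^{\squared}$ under multiplication is essentially equivalent to the split/single-line situation you are trying to rule out (\cref{prop:BHNeumann} g)). It is true \emph{a posteriori} that $S^{\squared}\approx S$ once $N\approx S$ is known, but you cannot use it as the input. The paper instead takes $Y$ to be the degree-one component of $\mu(T_\perf)$ with $\mu(T_i)\approx Y$ for generically many $i$ (note $\mu(T_i)$ is essentially $iS$, a set of rank $2n-k$ precisely because $l=0$), and puts $N=\Stab^{\approx}(Y)$; the identity $ij\,\mu(T_j)\approx\mu(T_i)$ then forces $Q_Y^{\squared}\subseteq N$, which is what pins down $\MR(N)=2n-k$ and $N\approx S$ (\cref{prop:normal_subgroup}). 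Your concluding rank count $\MR(N\cap\Cen(i))=n-k$ via the decomposition $G=(iQ)\Cen(i)$ is the right idea and matches the paper, but it only becomes available after the construction of $N$ is done correctly.
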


For the remainder of this section we assume that $(Q,\Lambda)$ is a generic mock hyperbolic reflection space in a group of finite Morley rank $G$ such that $(Q,\Lambda)$ satisfies the assumptions in \cref{thm:generic_mock hyperbolic reflection}, i.e. $Q$ has Morley rank $\MR(Q) = n$ and Morley degree $\Mdeg(Q) = 1$, all lines $\lambda \in \Lambda$ are infinite and of Morley rank $k$, and $\Lambda$ consists of more than one line. In particular, $n > k \geq 1$.

Note that we do not state any assumption about the Morley degree of lines.

\subsection{Generic projective planes}

\begin{definition}\label{def:generic_projective_plane}
	A definable subset $X \subseteq Q$ is a \emph{generic projective plane} if
	\begin{enumerate}
		\item $\MR(X) = 2k$ and $\Mdeg(X) = 1$, and
		\item $\MR(\Lambda_X) = 2k$ and $\Mdeg(\Lambda_X) = 1$,
	\end{enumerate}
	where $\Lambda_X$ is the set of all lines $\lambda \subseteq Q$ such that $\MR(\lambda \cap X) =k$.
\end{definition}

The next lemma follows from easy counting arguments.
\begin{lemma} \label{lem:approx_plane}
  Let $X \subseteq Q$ be a definable set of Morley rank 2k and Morley degree 1. The following are equivalent:
  \begin{enumerate}
    \item $X$ is a generic projective plane,
    \item $\MR(\Lambda_X) \geq 2k$,
    \item The set of $x\in X$ such that $\MR( \{ \lambda \in \Lambda_X : x \in \lambda \}) = k$ is generic in $X$.
  \end{enumerate}
\end{lemma}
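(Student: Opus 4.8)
The plan is to prove all three equivalences by the usual rank-counting via incidence correspondences, the only genuine geometric input being that two distinct points lie on at most one line (axiom a), equivalently that two distinct lines of $\Lambda$ meet in at most one point. Throughout I would write $F(x) = \{\lambda \in \Lambda_X : x \in \lambda\}$ for the lines of $\Lambda_X$ through a point $x \in X$, and consider three definable incidence sets:
\[ I = \{(x,\lambda) : \lambda \in \Lambda_X,\ x \in \lambda \cap X\},\quad T = \{(x,y,\lambda) : \lambda \in \Lambda_X,\ x \neq y,\ x,y \in \lambda \cap X\}, \]
and, for fixed $x$, the set $J_x = \{(y,\lambda) : \lambda \in F(x),\ y \in (\lambda \cap X)\setminus\{x\}\}$. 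In each case the projection to $\Lambda_X$ (respectively to $F(x)$) has all fibres of constant rank — namely $k$, $2k$, and $k$ — directly from the defining condition $\MR(\lambda \cap X) = k$ of $\Lambda_X$, so rank additivity gives $\MR(I) = \MR(\Lambda_X) + k$, $\MR(T) = \MR(\Lambda_X) + 2k$, and $\MR(J_x) = \MR(F(x)) + k$.

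First I would establish two a-priori upper bounds. The projection $T \to X \times X$, $(x,y,\lambda) \mapsto (x,y)$, is injective, because $x \neq y$ together with $x, y \in \lambda$ forces $\lambda = \ell_{xy}$ by axiom a); hence $\MR(\Lambda_X) + 2k = \MR(T) \leq \MR(X^2) = 4k$, giving $\MR(\Lambda_X) \leq 2k$. Similarly, for fixed $x$ the projection $J_x \to X$, $(y,\lambda)\mapsto y$, is injective because distinct lines through $x$ meet only in $x$, so $y$ determines $\lambda = \ell_{xy}$; hence $\MR(F(x)) + k = \MR(J_x) \leq \MR(X) = 2k$, giving $\MR(F(x)) \leq k$ for every $x \in X$.

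With these bounds the equivalence of (b) and (c) falls out of additivity applied to $I$. Stratifying the projection $I \to X$ by the rank $d = \MR(F(x)) \leq k$ of the fibre gives $\MR(\Lambda_X) + k = \MR(I) = \max_{d \leq k}\big(\MR\{x \in X : \MR(F(x)) = d\} + d\big)$. If $\{x : \MR(F(x)) = k\}$ is generic in $X$ (condition (c)), the $d = k$ term equals $2k + k = 3k$, forcing $\MR(\Lambda_X) \geq 2k$ and hence $=2k$; conversely, if that set has rank $\leq 2k - 1$, every term is $\leq 3k - 1$, forcing $\MR(\Lambda_X) \leq 2k - 1$. Thus (c) holds if and only if $\MR(\Lambda_X) = 2k$, which, given the upper bound, is exactly the rank part of (b). Since (a) $\Rightarrow$ (b) is immediate and the rank and degree of $X$ are already hypotheses, it remains only to upgrade (b) to (a) by controlling the Morley degree of $\Lambda_X$.

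The degree statement is the one step requiring care, since we have no control over $\Mdeg$ of an individual line. Here I would exploit the injection $T \hookrightarrow X^2$ again: when $\MR(\Lambda_X) = 2k$ we get $\MR(T) = 4k = \MR(X^2)$, so the image of $T$ is generic in $X^2$, which has degree $1$; as $T$ is in definable bijection with its image, $\Mdeg(T) = 1$. On the other hand, decomposing $\Lambda_X$ into its $\Mdeg(\Lambda_X)$ rank-$2k$ generic components and pulling each back along the rank-$2k$-fibred projection $T \to \Lambda_X$ produces that many pairwise disjoint definable subsets of $T$, each of full rank $4k$; additivity of Morley degree over disjoint unions then yields $\Mdeg(\Lambda_X) \leq \Mdeg(T) = 1$. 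This gives $\Mdeg(\Lambda_X) = 1$ and completes (b) $\Rightarrow$ (a). I expect this degree bookkeeping to be the main obstacle; the rank inequalities themselves are routine once the partial-linear-space axiom is used to make the relevant projections injective.
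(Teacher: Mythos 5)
Your proof is correct, and it is exactly the kind of argument the paper has in mind: the paper gives no proof at all, saying only that the lemma ``follows from easy counting arguments,'' and your incidence-correspondence rank counts (using axiom a) to make the projections $T\to X\times X$ and $J_x\to X$ injective) are precisely those counting arguments, carried out in full. The one step the paper's remark glosses over --- upgrading $\MR(\Lambda_X)=2k$ to $\Mdeg(\Lambda_X)=1$ for (b)$\Rightarrow$(a) --- you handle correctly via $\Mdeg(T)\leq\Mdeg(X^2)=1$ and additivity of degree over the fibres of $T\to\Lambda_X$.
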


If $X$ is a definable set, then we say that a property $P$ holds for rank $k$ many $x \in X$ (resp. generically many $x \in X$) if the set $\{ x \in X : P(x) \text{ holds} \}$ has rank $k$ (resp. is generic in $X$). 

\begin{lemma} \label{lem:plane_normalizing}
Suppose $X \subseteq Q$ is a generic projective plane. Then set of $x\in X$ such that $X^x \approx X$  is generic in  $X$.
\end{lemma}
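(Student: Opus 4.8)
The plan is to reformulate the approximate invariance as a single rank equality, reduce it via midpoints to a statement that can be checked one line at a time, and then confront the fact that lines carry no Morley degree assumption.

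First I would note that since $X$ and every conjugate $X^x$ have Morley rank $2k$ and Morley degree $1$, and a full-rank subset of a degree-$1$ set is $\approx$ to it, the relation $X^x \approx X$ is equivalent to $\MR(X \cap X^x) = 2k$. Introducing the definable set $R = \{(x,y) \in X \times X : y^x \in X\}$, whose fibre over $x$ is exactly $X \cap X^x$, the assertion that $X^x \approx X$ for generically many $x$ becomes the single equality $\MR(R) = 4k$: if the good $x$ formed a non-generic set, the fibre bound $\MR(X \cap X^x) \le 2k$ would force $\MR(R) \le 4k-1$, and conversely $\MR(R)=4k$ forces the good $x$ to have rank $2k$.

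Next I would reparametrise $R$ by midpoints. Because $x$ is, by condition b) of \cref{prop:partial_mock hyperbolic reflection}, the unique involution with $y^x = z$, the map $(x,y) \mapsto (y, y^x)$ is a definable bijection from $R$ onto $\{(y,z) \in X \times X : m(y,z) \in X\}$, where $m(y,z)$ is the midpoint. Thus the lemma reduces to showing that the midpoint of a generic pair of points of $X$ again lies in $X$. To localise this to a line, I would first record the counting fact that a generic pair $(y,z) \in X \times X$ lies on a line $\ell_{yz} \in \Lambda_X$: by \cref{lem:approx_plane} a generic $y \in X$ lies on rank $k$ many lines of $\Lambda_X$, each meeting $X$ in rank $k$, so the incidence $(\lambda,z) \mapsto z$ shows that $\{z \in X : \ell_{yz} \in \Lambda_X\}$ has rank $2k$ and hence, $X$ having degree $1$, is generic in $X$. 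Since $m(y,z)$ always lies on $\ell_{yz}$ (clear in the abelian model of \cref{example:abelian}), it then suffices to prove that for a generic $\lambda \in \Lambda_X$ the rank $k$ set $\lambda \cap X$ is generically closed under taking midpoints.

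Inside a single line this is a group-theoretic statement: by \cref{lem:basic_properties} and \cref{example:abelian} the set $\lambda^{\cdot 2}$ is a uniquely $2$-divisible abelian group $A$ of rank $k$, fixing a base point identifies $\lambda$ with $A$, and under this identification the midpoint of the points corresponding to $b,c \in A$ is the unique square root of $bc$; the resulting map $A \times A \to A$ is dominant with fibres of rank $k$. If $\lambda \cap X$ were generic in $\lambda$, its preimage complement would have rank at most $2k-1$, so a generic pair in $(\lambda \cap X)^2$ would be sent into $\lambda \cap X$, giving the claim. The main obstacle is precisely the hypothesis, emphasised after \cref{thm:generic_mock hyperbolic reflection}, that lines carry no Morley degree assumption: when $A$ is disconnected the full-rank set $\lambda \cap X$ need not be degree-generic in $\lambda$, and averaging can leave a subset that misses cosets of $A^\circ$. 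I expect to overcome this by passing to the connected component, using that a coset $c\,A^\circ$ is exactly closed under averaging because $A^\circ$ is connected and uniquely $2$-divisible; concretely I would show that for generic $\lambda \in \Lambda_X$ the top-degree part of $\lambda \cap X$ is, up to lower rank, a union of such cosets. Controlling this distribution of $\lambda \cap X$ over the components of $A$ uniformly in $\lambda$, by combining the rank and degree-$1$ count of $\Lambda_X$ from \cref{def:generic_projective_plane} with a Fubini argument over the family $\Lambda_X$, is the delicate point of the proof.
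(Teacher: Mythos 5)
Your reductions are sound up to the last step: $X^x \approx X$ for generically many $x$ is indeed equivalent to $\MR(\{(y,z)\in X\times X : m(y,z)\in X\})=4k$, a generic pair of points of $X$ does lie on a line of $\Lambda_X$, and after identifying a line $\lambda$ with the uniquely $2$-divisible abelian group $A=\lambda^{\cdot 2}$ the midpoint is the unique square root of the product. The gap is exactly at the point you flag as delicate, and it is not a technicality that a Fubini count over $\Lambda_X$ will dispatch. Write $A^0$ for the connected component, $\lambda_c$ for the coset piece of $\lambda$ corresponding to $c\in A/A^0$, and $S_\lambda=\{c : \MR(\lambda_c\cap X)=k\}$. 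Since each $\lambda_c$ has degree $1$ and the midpoint map $\lambda_c\times\lambda_{c'}\to\lambda_{(cc')^{1/2}}$ is surjective with rank-$k$ fibres, your target statement --- that the midpoint of a generic pair of $(\lambda\cap X)^2$ lies in $X$ --- amounts precisely to $S_\lambda$ being closed under the averaging operation $(c,c')\mapsto (cc')^{1/2}$ on the finite odd-order group $A/A^0$. Nothing you have established rules out, say, $S_\lambda=\{0,1\}$ inside $A/A^0\cong\bbZ/3\bbZ$, where the average of $0$ and $1$ is $2\notin S_\lambda$; the degree-$1$ count of $\Lambda_X$ says nothing about how $X$ distributes over the cosets within a single line. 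In fact the only route I see to this closure property is through the lemma itself (of which it is a consequence), so as written the plan stalls at its crux.

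The way out is to notice that you never need all pairs, only the same-coset pairs, for which the problem evaporates because $(cc)^{1/2}=c$: the midpoint of two points of $\lambda_c$ lies again in $\lambda_c$, and the union $\bigcup_{\lambda\in\Lambda_X}\bigcup_{c\in S_\lambda}(\lambda_c\cap X)^2$ already has rank $2k+2k=4k$ with midpoints landing generically in $X$. This is what the paper's proof does, phrased with reflections rather than midpoints: for $x\in\lambda$ the piece $\lambda_x=x(\lambda^{\cdot 2})^0$ satisfies $(\lambda_x)^x=\lambda_x$, so whenever $\lambda_x\cap X\approx\lambda_x$ one gets $\MR((\lambda\cap X)^x\cap X)=k$ with no control over the other cosets; the paper then shows the set $X_\lambda$ of such $x$ has rank $k$ for every $\lambda\in\Lambda_X$, and a counting argument over the incidence $\{(x,\lambda): x\in X_\lambda\}$ (of rank $3k$, with each $x$ on at most rank $k$ many lines of $\Lambda_X$) gives rank-$k$ many such lines through a generic $x$, whence $\MR(X^x\cap X)=2k$. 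If you replace your final step by this same-coset restriction, your midpoint version of the argument goes through.
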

\begin{proof}
Let $\lambda \in \Lambda_X$ be a line. For $i \in \lambda$ set 
\[ \lambda_i = \{ j \in \lambda: ij \in (\lambda^{\cdot 2})^0 \} = i(\lambda^{\cdot 2})^0.\]
Then $\{ \lambda_i : i \in \lambda \}$ is a partition of $\lambda$ into set of rank $k$ and degree $1$.
Moreover, we have $(\lambda_i)^i = \lambda_i$ for all $i \in \lambda$. In particular, if $\lambda_i \cap X \approx \lambda_i$ and $i \in X$, then $(\lambda_i \cap X)^i \cap X \approx \lambda_i$.

Hence for all $\lambda \in \Lambda_X$ the set
\[ X_\lambda = \{ x \in \lambda \cap X : \MR((\lambda \cap X)^x \cap X) = k \} \]
has Morley rank $k$. Moreover, each $x \in X$ is contained in at most rank $k$ many lines in $\Lambda_X$ and hence is contained in at most rank $k$ many sets $X_\lambda$.

We have $\MR(\Lambda_X) = 2k$ and hence the set
\[ \{ (x,\lambda) \in X \times \Lambda_X : x \in X_\lambda \} \]
has Morley rank $3k$. Since $\MR(X) = 2k$ this implies that the set of $x \in X$ contained in rank $k$ many sets $X_\lambda$ is generic in $X$.

Now if $x \in X_\lambda$ for rank $k$ many $\lambda$, then
\[ X^x \cap X \supseteq ( \bigcup_{\lambda : x \in X_\lambda} \lambda \cap X)^x \cap X 
  = \bigcup_{\lambda : x \in X_\lambda} (\lambda \cap X)^x \cap X \]
must have Morley rank $2k$ and hence $X^x \approx X$.
\end{proof}

\begin{lemma}
If $X \subseteq Q$ is a generic projective plane and $Z \subseteq Q$ is a definable subset with $X \approx Z$, then $Z$ is a generic projective plane.
\end{lemma}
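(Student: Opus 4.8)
The plan is to verify the two conditions of \cref{def:generic_projective_plane} for $Z$. Since $X \approx Z$, the set $Z$ has the same Morley rank and degree as $X$, so $\MR(Z) = 2k$ and $\Mdeg(Z) = 1$; this gives the first condition for free. By the equivalence of (a) and (b) in \cref{lem:approx_plane} applied to $Z$, it then suffices to prove $\MR(\Lambda_Z) \geq 2k$, and I would do this by showing that all but a set of rank $< 2k$ of the lines already lying in $\Lambda_X$ also lie in $\Lambda_Z$.

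Write $W = X \setminus Z$. Because $X \approx Z$ we have $\MR(W) < 2k$, hence $\MR(W) \leq 2k - 1$. First I would observe that any line $\lambda \in \Lambda_X$ with $\MR(\lambda \cap W) < k$ automatically belongs to $\Lambda_Z$: from $\lambda \cap X = (\lambda \cap X \cap Z) \cup (\lambda \cap W)$ and $\MR(\lambda \cap X) = k$ one gets $\MR(\lambda \cap X \cap Z) = k$, and since $\lambda \cap X \cap Z \subseteq \lambda \cap Z \subseteq \lambda$ with all lines of rank $k$, this forces $\MR(\lambda \cap Z) = k$. Thus $\Lambda_Z \supseteq \Lambda_X \setminus \Lambda''$, where $\Lambda'' = \{ \lambda \in \Lambda_X : \MR(\lambda \cap W) = k \}$, and everything reduces to the single rank bound $\MR(\Lambda'') < 2k$.

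This rank bound is the main obstacle, since a naive incidence count of pairs $(\lambda, x)$ with $x \in \lambda \cap W$ only yields $\MR(\Lambda'') \leq 2k$, which is not strict enough. The fix is to parametrize the bad lines by pairs of their points lying in $W$: for $\lambda \in \Lambda''$ the intersection $\lambda \cap W$ has rank $k \geq 1$ and is therefore infinite, so it contains two distinct points, and by condition a) of \cref{prop:partial_mock hyperbolic reflection} any such pair $a \neq b$ recovers $\lambda = \ell_{ab}$. Hence $\Lambda''$ is the image of the definable set $\{ (a,b) \in W \times W : a \neq b \}$, of rank at most $2\MR(W) \leq 4k - 2$, under the map $(a,b) \mapsto \ell_{ab}$, whose fibre over a given $\lambda \in \Lambda''$ is exactly the set of pairs of distinct points of $\lambda \cap W$ and thus has rank $2\MR(\lambda \cap W) = 2k$. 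Additivity of Morley rank then gives $\MR(\Lambda'') + 2k \leq 4k - 2$, that is $\MR(\Lambda'') \leq 2k - 2 < 2k$. Consequently $\Lambda_X \setminus \Lambda''$ has rank $2k$ and is contained in $\Lambda_Z$, so $\MR(\Lambda_Z) \geq 2k$, and \cref{lem:approx_plane} shows that $Z$ is a generic projective plane.
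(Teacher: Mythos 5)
Your proof is correct, but it takes a genuinely different route from the paper's. The paper verifies criterion c) of \cref{lem:approx_plane} for $Z$: for a generic point $x$ of $X$ the pencil $\Lambda_x$ of lines of $\Lambda_X$ through $x$ has rank $k$, its union $B(x)$ satisfies $B(x)\approx X\approx Z$, and this forces rank-$k$ many lines of $\Lambda_x$ to lie in $\Lambda_Z$; since a generic point of $Z$ is such an $x$, criterion c) holds for $Z$. You instead verify criterion b), reducing everything to the single bound $\MR(\Lambda'')<2k$ for the set $\Lambda''$ of lines of $\Lambda_X$ that concentrate rank $k$ in $W=X\setminus Z$, which you obtain by fibering $\Lambda''$ over pairs of distinct points of $W$ and using that two points determine at most one line. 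Your double counting yields the sharper quantitative estimate $\MR(\Lambda'')\le 2\MR(W)-2k\le 2k-2$ and sidesteps the (slightly delicate) degree bookkeeping implicit in the paper's claim $B(x)\approx X$; the paper's argument is shorter. One phrasing to tighten: $\Lambda''$ is not literally the image of $\{(a,b)\in W\times W: a\ne b\}$ under $(a,b)\mapsto\ell_{ab}$ (that image contains every line meeting $W$ in at least two points, and $\ell_{ab}$ need not even exist in $\Lambda$ for every such pair); what your argument actually uses is that the preimage of $\Lambda''$ is a definable subset of $W\times W$ of rank at most $4k-2$ whose fibres over points of $\Lambda''$ have rank exactly $2k$ --- which is precisely what you compute --- so the bound, and the proof, stand.
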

\begin{proof}
	For $x \in X$ put $\Lambda_x = \{ \lambda \in \Lambda_X : x \in \lambda \}$. If $\MR( \Lambda_x ) = k$, then $B(x) = \bigcup \Lambda_x \approx X$. In particular, $B(x) \approx Z$ for a generic set of $x \in X \cap Z$. If $B(x) \approx Z$, then $\Lambda_x \cap \Lambda_Z$ must have Morley rank $k$. Hence it follows from \cref{lem:approx_plane} that $Z$ must be a generic projective plane.
\end{proof}

\begin{lemma} \label{line_counting}
	Let $H \leq G$ be a definable subgroup such that $\MR( H \cap Q) = 2k$ and $\Mdeg( H \cap Q )=1$. Then $\MR(\{ \lambda : \lambda \text{ is a line s.t. } \MR(\lambda \cap H \cap Q)=k \} ) < 2k$, i.e. $H \cap Q$ does not form a generic projective plane.
\end{lemma}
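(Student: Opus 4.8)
The claim is a purely dimension-theoretic incompatibility statement: if $H \le G$ is a definable subgroup whose intersection with $Q$ has Morley rank exactly $2k$ and degree $1$, then $H \cap Q$ cannot be a generic projective plane, i.e. the set of lines meeting $H \cap Q$ in rank $k$ must have Morley rank strictly less than $2k$. The plan is to argue by contradiction: assume that $\MR(\{\lambda : \MR(\lambda \cap H \cap Q) = k\}) \ge 2k$, so that by \cref{lem:approx_plane} the set $H \cap Q$ would be a generic projective plane, and then exploit the extra group structure on $H$ to derive a contradiction.

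**The key steps.**

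First I would pass to $X = H \cap Q$ and apply \cref{lem:plane_normalizing}: if $X$ is a generic projective plane, then the set of $x \in X$ with $X^x \approx X$ is generic in $X$. The crucial observation is that conjugation by an involution $x \in X \subseteq H$ preserves $H$ (since $x \in H$ and $H$ is a group), so $X^x = (H \cap Q)^x = H \cap Q^x = H \cap Q = X$ exactly, not merely up to approximation. Thus the approximate-normalization content of \cref{lem:plane_normalizing} is automatic here, and I would instead extract structural information in the spirit of \cref{prop:BHNeumann}. The natural object to consider is $X^{\cdot 2} = (H \cap Q)^{\cdot 2} \subseteq H \cap Q^{\cdot 2}$, which sits inside $H$. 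Following the argument pattern of \cref{lem:intersections} and \cref{prop:no proj plane}, the fact that any two lines inside a projective plane $X$ meet forces, via \cref{lem:basic_properties} e), that $X^{\cdot 2}$ is closed under multiplication, hence a uniquely $2$-divisible subgroup; then \cref{prop:neumann_abelian} applied to the action of any $i \in X$ by conjugation forces $X^{\cdot 2}$ to be abelian and contained in a single $\Cen(\sigma) \cap Q^{\cdot 2} = \lambda^{\cdot 2}$. This would collapse $X$ into a single line $\lambda$, so $\MR(X) = \MR(\lambda^{\cdot 2}) = k$, contradicting $\MR(X) = 2k$ since $k \ge 1$.

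**The rank bookkeeping and the main obstacle.**

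The heart of the contradiction is therefore to upgrade the \emph{generic} projective-plane hypothesis (rank $2k$ of planar lines) into the genuine pairwise-intersection property needed to invoke \cref{lem:intersections}. Since $X$ is only a \emph{generic} projective plane, I cannot assume every two lines in $\Lambda_X$ intersect; instead I would use a counting argument. The set of lines $\lambda$ with $\MR(\lambda \cap X) = k$ has rank $\ge 2k$ by assumption, and each such line contributes a rank-$k$ family of translations $\lambda^{\cdot 2}$ lying in $H \cap Q^{\cdot 2}$; by the partition result \cref{prop:partition} these translation-sets are disjoint, so the total rank of $\bigcup_{\lambda \in \Lambda_X} \lambda^{\cdot 2}$ would be $\ge 2k + k = 3k$, forcing $\MR(H \cap Q^{\cdot 2}) \ge 3k$. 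On the other hand $H \cap Q^{\cdot 2}$ embeds into $H$ via the decomposition $G = (iQ)\Cen(i)$ of \cref{prop:decomposition}, and one estimates $\MR(H)$ from $\MR(H \cap Q) = 2k$ together with $\MR(H \cap \Cen(i))$; the point is that within $H$ one cannot simultaneously have a rank-$2k$ set of involutions and a rank-$3k$ set of translations, because translations factor as products of two involutions and \cref{lem:translations have no fixed point} b) forces $Q^{\cdot 2} \cap \Cen(i) = \{1\}$. I expect the main obstacle to be making the rank arithmetic on $\MR(H)$ genuinely tight: one must carefully compare $\MR(H \cap Q)$, $\MR(H \cap Q^{\cdot 2})$, and $\MR(H \cap \Cen(i))$ using the fibers of the map $(i,j) \mapsto ij$ restricted to $H$, and ensure no rank is overcounted when lines in $\Lambda_X$ fail to be pairwise intersecting. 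Handling this fiber-dimension estimate correctly, rather than the group-theoretic collapse which is routine given the earlier lemmas, is where the real work lies.
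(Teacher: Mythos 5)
Your proposal does not close the argument; the intended contradiction is not one. You correctly observe that \cref{lem:intersections} is unavailable because a generic projective plane need not have pairwise-intersecting lines, and you fall back on a rank count: rank $2k$ many lines in $\Lambda_X$ with pairwise disjoint translation sets $\lambda^{\cdot 2}\setminus\{1\}$ (by \cref{prop:partition}) give $\MR(H\cap Q^{\cdot 2})\geq 3k$, which you then want to play off against $\MR(H\cap Q)=2k$. But no upper bound below $3k$ is available, and none can be: at this stage of the paper the inequality $n>2k$ has not yet been proved (it is a \emph{consequence} of \cref{no-plane}, hence of this very lemma, via \cref{prop:ineq}), so the configuration $H=G$ with $n=2k$ must still be refuted, and there $\MR(Q)=2k$ coexists with $\MR(S)=2n-k=3k$ without any clash of ranks. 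Indeed the lower bound $\MR\bigl((H\cap Q)^{\cdot 2}\bigr)\geq 3k$ already follows for \emph{any} $H$ with $\MR(H\cap Q)=2k$ from the fact that the fibres of the multiplication map over a translation $\sigma$ lie in $\ell_\sigma\times\ell_\sigma$ and so have rank at most $k$; it therefore carries no information about the projective-plane hypothesis and cannot be the source of a contradiction. The decomposition $G=(iQ)\Cen(i)$ of \cref{prop:decomposition} does not help either, since for $h\in H$ the factors $j$ and $c$ of $h=ijc$ need not lie in $H$, so $H$ itself does not decompose.

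The mechanism the paper actually uses is the one you touch on but then abandon: since $H$ is a group and $Q$ a conjugacy class, $Z=H\cap Q$ is closed under conjugation by its own elements, so for $\lambda\in\Lambda_Z$ and $i\in Z\setminus\lambda$ the reflected line $\lambda^i$ again lies in $\Lambda_Z$, is \emph{disjoint} from $\lambda$ by \cref{line_lemma}~a), and the map $i\mapsto\lambda^i$ is injective on $Z\setminus\lambda$ by condition~c) of the definition. This produces, for every $\lambda\in\Lambda_Z$, rank $2k$ many lines of $\Lambda_Z$ missing $\lambda$; as $\MR(\Lambda_Z)=2k$ and $\Mdeg(\Lambda_Z)=1$, generically many lines of $\Lambda_Z$ miss $\lambda$. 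A point--line incidence count (using $\MR(\Lambda_x)=k$ for generically many $x\in Z$, as in \cref{lem:approx_plane}) then exhibits a single line meeting rank $2k$ many lines of $\Lambda_Z$, contradicting degree one. So the contradiction lives in the incidence geometry and the Morley degree of the line space, not in a comparison of the ranks of involutions and translations inside $H$; to repair your proof you would need to replace the translation count by this disjoint-reflected-family argument.
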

\begin{proof}
	This is proved in the same way as Proposition 11.71 of \cite{borovik-nesin}.
	Put $Z = H \cap Q$ and set $\Lambda_Z = \{ \lambda \in \Lambda : \MR(\lambda \cap Z) = k \}$.
	
	Assume towards contradiction that $\MR(\Lambda_Z) \geq 2k$. Then $Z$ is a generic projective plane and hence $\MR(\Lambda_Z)=2k$ and $\Mdeg(\Lambda_Z) = 1$.
	
	Let $\lambda \in \Lambda_Z$ be a line. By \cref{line_lemma} the family $( \lambda^i : i \in Z \setminus \lambda )$ consists of Morley rank 2k many lines which do not intersect $\lambda$. Hence the set $\{ \delta \in \Lambda_Z : \lambda \cap \delta = \emptyset \} \subseteq \Lambda_Z$ is a generic subset of $\Lambda_Z$.
	
	We aim to find a line which intersects Morley rank 2k many lines contradicting $\Mdeg(\Lambda) = 1$. For $x \in Z$ set $\Lambda_x = \{ \lambda \in \Lambda_Z : x \in \lambda \}$ and set $B(x) = \bigcup \Lambda_x \cap Z \subseteq Z$. Note that
	$\MR(B(x)) = \MR(\Lambda_x)+ k $
	 and hence $\MR( \Lambda_x ) \leq k$ for all $x \in Z$.
	 Since each $\lambda \in \Lambda$ contains Morley rank $k$ many points, we must have $\MR(\Lambda_x) = k$ for a generic set of $x \in Z$.
	 
	 Fix $x_0 \in Z$ such that $\Lambda_{x_0}$ has Morley rank 2k. Then $B(x_0) \subseteq Z$ is generic and hence $\MR(\Lambda_x) = k$ for a generic set of $x \in B(x_0)$. Since $B(x_0) = \bigcup \Lambda_{x_0}$, we can find a line $\lambda \in \Lambda_{x_0}$ such that $\MR(\Lambda_x) = k$ for a generic set of $x \in \lambda$. But then $\lambda$ intersects Morley rank 2k many lines in $\Lambda_Z$.
\end{proof}

\begin{proposition} \label{no-plane}
$Q$ does not contain a generic projective plane $X$.
\end{proposition}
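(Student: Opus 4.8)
The plan is to argue by contradiction, assuming that $Q$ contains a generic projective plane $X$. The guiding idea is that in a genuine mock hyperbolic reflection space, \cref{prop:no proj plane} forbids proper projective planes, and the rank-theoretic machinery just developed (\cref{lem:plane_normalizing}, \cref{line_counting}) should let me promote the generic version of a projective plane into an honest algebraic object — a uniquely $2$-divisible abelian subgroup — to which the structural results apply and yield a contradiction with the standing assumption that $\Lambda$ contains more than one line.

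First I would use \cref{lem:plane_normalizing} to find that the set of $x\in X$ with $X^x\approx X$ is generic in $X$. Since the conjugation action is definable and $X^x\approx X$ holds generically, I would like to pass from this approximate invariance to an honest stabilizer statement. Concretely, I expect to consider the stabilizer of the $\approx$-class of $X^{\cdot 2}$ (or of the associated translation set $X^{\cdot 2}\setminus\{1\}\subseteq S$) under conjugation, and invoke \cref{stabilizer_lemma} to replace $X$, up to a set of smaller rank, by a genuinely $G$-invariant — or at least $\langle X\rangle$-invariant — set $Z$ with $Z\approx X$. The previous lemma guarantees that such a $Z$ is again a generic projective plane, so no generic information is lost in this replacement.

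The heart of the argument is then to build a subgroup out of the translations supported on $X$. I would form $Y=(X^{\cdot 2})$, or more precisely the connected component of the group generated by the centralizers $(\lambda^{\cdot 2})^0$ for the rank-$2k$ family of lines $\lambda\in\Lambda_X$; by \cref{lem:basic_properties} each $\lambda^{\cdot 2}$ is a uniquely $2$-divisible abelian group, and by \cref{prop:partition} these partition the translations meeting $X$. Using the generic normalization from \cref{lem:plane_normalizing} together with a standard finite-Morley-rank generation argument (a definable set that is generically closed under multiplication generates a definable subgroup of the same rank), I expect to produce a definable subgroup $H\leq G$ with $\MR(H\cap Q)=2k$ and $H\cap Q$ a generic projective plane. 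This is exactly the configuration ruled out by \cref{line_counting}, which asserts that $H\cap Q$ cannot form a generic projective plane — delivering the desired contradiction.

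The main obstacle I anticipate is the passage from the \emph{approximate} closure properties (invariance and multiplicative closure only up to sets of smaller rank) to an \emph{exact} definable subgroup of the correct rank $2k$. The approximate equalities $X^x\approx X$ and the partition of $S$ into abelian pieces give multiplicative closure only generically, so I must be careful that generating a subgroup does not inflate the rank beyond $2k$; controlling this requires that the translations attached to $X$ already nearly form a group, which is precisely where \cref{stabilizer_lemma} and the uniquely $2$-divisible abelian structure of each $\lambda^{\cdot 2}$ do the essential work. Once a genuine subgroup $H$ with $H\cap Q$ a generic projective plane is in hand, \cref{line_counting} closes the argument immediately.
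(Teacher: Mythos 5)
You have the right endpoints --- argue by contradiction, feed \cref{lem:plane_normalizing} in at the start and \cref{line_counting} in at the end --- but the bridge between them, the construction of a definable subgroup $H$ with $H\cap Q$ a generic projective plane of rank $2k$, is not actually built, and the route you propose for it does not work. You suggest generating a group from the translation sets $(\lambda^{\cdot 2})^0$ for $\lambda\in\Lambda_X$ (or from $X^{\cdot 2}$) and controlling its rank by ``a definable set generically closed under multiplication generates a definable subgroup of the same rank.'' That is not a correct general principle, and in this situation it is precisely what fails: for an honest projective plane, \cref{lem:intersections} makes $X^{\cdot 2}$ a group because any two lines meet and \cref{lem:basic_properties}~e) applies, but for a \emph{generic} projective plane two lines of $\Lambda_X$ need not meet inside $X$, so there is no reason the group $\langle (\lambda^{\cdot 2})^0 : \lambda\in\Lambda_X\rangle$ has rank $2k$ --- a priori it could be all of $G^0$. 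You flag this obstacle yourself, but your proposed remedies do not address it: \cref{stabilizer_lemma} only converts a set that is \emph{already} almost invariant under all of $G$ into a genuinely invariant one, and the abelianness of each individual $\lambda^{\cdot 2}$ says nothing about how the different $\lambda^{\cdot 2}$ interact.

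The missing idea is that no generation is needed. Take $H=N_G^{\approx}(X)=\{g\in G : X^g\approx X\}$, the approximate normalizer of $X$ itself under conjugation. This is automatically a definable subgroup of $G$ (since $\approx$ is an equivalence relation compatible with the action, and rank and degree are definable), and \cref{lem:plane_normalizing} says exactly that generically many $x\in X$ lie in $H$; hence $H\cap Q$ contains a generic subset of $X$, has rank $2k$ and degree $1$, and carries rank $2k$ many lines meeting it in rank $k$, contradicting \cref{line_counting} immediately. Your detour through the $\approx$-stabilizer of $X^{\cdot 2}$ and through \cref{stabilizer_lemma} circles around this object without landing on it: the stabilizer is not a device for replacing $X$ by an invariant set, it \emph{is} the subgroup you need to hand to \cref{line_counting}.
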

\begin{proof}
	Assume $X \subseteq Q$ is a generic projective plane and
	put \[H = N_G^\approx(X) = \{ g \in G: X^g \approx X \}.\] By \cref{lem:plane_normalizing} the set $X \cap H$ is generic in $X$. Hence $\MR(H \cap Q) = 2k$.	
	 This contradicts \cref{line_counting}.
\end{proof}

\subsection{A rank inequality and a normal subgroup}
A line $\lambda \in \Lambda$ is called \emph{complete for some} $i \in Q \setminus \lambda$ if the set
$\{ j \in \lambda : \ell_{ij}\in\Lambda \}  $
is a generic subset of $\lambda$.
\begin{definition} Let $(i,j,k)$ be a triple of pairwise distinct involutions in $Q$.
\begin{itemize}
\item $(i,j,k)$ is \emph{good} if $\ell_{ij}$, $\ell_{jk}$ exist and
 $\ell_{ij}$  is complete for $k$.
\item $(i,j,k)$ is \emph{perfect} if 
$\ell_{ij}, \ell_{jk}$ exist and  $\{j'\in \ell_{jk}\colon\ell_{ij'}\in\Lambda \mbox{ is complete for } k'=j'jk \}$  is generic  in $\ell_{jk}$.
\end{itemize}
\end{definition}

\begin{lemma} A generic triple $(i,j,k) \in Q^3$ is good. In particular, for any $i\in Q$ a generic element of $\{i\}\times Q^2$ is good.
\end{lemma}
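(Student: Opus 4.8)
The plan is to unwind the definition of \emph{good} into three definable conditions on the triple $(i,j,k)$, namely that $\ell_{ij}\in\Lambda$, that $\ell_{jk}\in\Lambda$, and that $\ell_{ij}$ is complete for $k$, and to show that each holds on a generic subset of $Q^3$. The first two are immediate from the defining property of a generic mock hyperbolic reflection space: by \cref{rem: basic generic} the set $D=\{(i,j)\in Q^2 : i\neq j,\ \ell_{ij}\in\Lambda\}$ is generic in $Q^2$, so both $\{(i,j,k):\ell_{ij}\in\Lambda\}$ and $\{(i,j,k):\ell_{jk}\in\Lambda\}$ are generic in $Q^3$; since $\Mdeg(Q^3)=1$, their intersection is again generic. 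Thus the whole difficulty is the completeness condition, which depends only on the line $\lambda=\ell_{ij}$ and the point $k$.

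The key step is to fix a line $\lambda\in\Lambda$ and prove that $\{c\in Q : \lambda \text{ is complete for } c\}$ is generic in $Q$. For this I would run a Fubini/double-counting argument on $R=\{(j',c)\in\lambda\times Q : \ell_{j'c}\in\Lambda\}$. For each $j'\in\lambda$ the fibre $\{c : \ell_{j'c}\in\Lambda\}$ is generic in $Q$ by the hypothesis on generic mock hyperbolic reflection spaces, so its complement has rank $<n$; since $\MR(\lambda)=k$ this gives $\MR((\lambda\times Q)\setminus R)\le k+n-1$. Now $\lambda$ fails to be complete for $c$ precisely when the $c$-fibre $\lambda\setminus\{j':\ell_{j'c}\in\Lambda\}$ of this complement has full rank $k$. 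Writing $\mathrm{Bad}=\{c:\lambda \text{ not complete for } c\}$ and computing $\MR((\lambda\times Q)\setminus R)$ by fibring over $c$ yields $\MR(\mathrm{Bad})+k\le k+n-1$, so $\MR(\mathrm{Bad})\le n-1$ and $\{c:\lambda\text{ complete for }c\}$ is generic. I should stress that this works without any assumption on $\Mdeg(\lambda)$, which is exactly why completeness is best phrased through the rank of the complement rather than through $\MR(\{j':\ell_{j'c}\in\Lambda\})$.

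I then transfer this to triples by fibring over $D$: the set $B=\{(i,j,k)\in D\times Q : \ell_{ij}\text{ not complete for }k\}$ has all fibres over $(i,j)\in D$ of rank $\le n-1$ by the previous step, and $\MR(D)=2n$, so $\MR(B)\le 3n-1$. Hence the triples with $\ell_{ij}$ existing and complete for $k$ form a generic subset of $Q^3$, and intersecting once more with the generic condition $\ell_{jk}\in\Lambda$ shows that the good triples are generic.

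Finally, for the ``in particular'' clause I would upgrade from ``generic $i$'' to ``every $i$''. By definability of Morley rank in families, the set $Y=\{i\in Q : \{i\}\times Q^2 \text{ meets the good set generically}\}$ is definable, and it is generic in $Q$ by Fubini applied to the generic set of good triples. The point is that $Y$ is $G$-invariant: since $Q$ is a conjugacy class, $\Lambda$ is $G$-invariant, and each $g\in G$ acts as a definable bijection preserving $\ell$ and hence completeness, so goodness is a $G$-invariant property of triples and $Y^g=Y$. As $G$ acts transitively on $Q$, a nonempty $G$-invariant subset must be all of $Q$, which gives the claim. The main obstacle throughout is the careful bookkeeping in the two Fubini computations in the absence of a degree hypothesis on lines; the transitivity upgrade in the last step is the other point that requires care.
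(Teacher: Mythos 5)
Your argument is correct and is essentially the same rank--counting proof as in the paper; the only organizational difference is that you fix a line $\lambda$ and show it is complete for generically many points $c$ (bounding $\MR(\{c : \lambda \text{ not complete for } c\})\le n-1$ via the incidence set $R\subseteq\lambda\times Q$), whereas the paper fixes $i$ and $k$ and shows that generically many lines of the pencil $\Lambda_i$ are complete for $k$ --- two dual Fubini computations on the same incidence relation. Your final conjugacy/transitivity step for the ``in particular'' clause is valid but not needed: since your bound on the bad set holds for \emph{every} line, fibring $\{i\}\times Q^2$ over $j\in B(i)$ already gives the conclusion for every fixed $i$ directly.
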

\begin{proof} 
Fix $i \in Q$ and put $B(i) = \{j\in Q\colon \ell_{ij}\in\Lambda\}$. Then $B(i)$ is a generic subset $Q$.
Now fix $k \in Q \setminus \{ i \}$. We aim to show that $(i,j,k)$ must be good for  generically many $j \in Q \setminus \{i,k\}$.

Note that $B(i)$ and $B(k)$ are generic subsets of $Q$. Therefore $B(i) \cap B(k)$ must be generic in $B(i)$ and $B(i) \setminus B(k)$ is not generic in $B(i)$. Note that
\begin{align*}
B(i) \cap B(k) &= \bigcup_{ \lambda \in \Lambda_i} (\lambda \cap B(k) ) \quad \text{and} \\
B(i) \setminus B(k) &= \bigcup_{ \lambda \in \Lambda_i} (\lambda \setminus B(k) ).
\end{align*}
Since $\MR(Q) = \MR( \Lambda_i) + k$ the set
\[ \{ \lambda \in \Lambda_i : \MR( \lambda \setminus B(k) ) < k \} \]
must be generic in $\Lambda_i$.

Hence $\lambda \cap B(k) \approx \lambda$ for generically many $\lambda \in \Lambda_i$. Moreover, if $\lambda \cap B(k) \approx \lambda$ for some $\lambda \in \Lambda_i$ and $j$ is contained in $\lambda \setminus \{i,k\}$, then $(i,j,k)$ is good.
The last sentence follows since all elements in $Q$ are conjugate.
\end{proof}

\begin{proposition}\label{prop:perfect}
A generic triple $(i,j,k) \in Q^3 $ is perfect and for any $i\in Q$ a generic element of $\{i\}\times Q^2$ is perfect.
\end{proposition}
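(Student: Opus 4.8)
The plan is to reduce perfection to the already-established genericity of goodness by a Fubini argument along the points of a line. The starting point is a reformulation. Fix a triple $(i,j,k)$ with $\ell_{ij}$ and $\ell_{jk}$ existing, write $\lambda=\ell_{jk}$ and $\sigma=jk\in\lambda^{\cdot 2}\setminus\{1\}$. For $j'\in\lambda$ the element $k':=j'jk=j'\sigma$ again lies in $\lambda$, since $\lambda=j'\lambda^{\cdot 2}$ by \cref{lem:basic_properties}, and $k'\neq j'$ because $\sigma\neq 1$; moreover $\ell_{j'k'}=\lambda$ by uniqueness of lines and $j'k'=j'(j'jk)=\sigma=jk$. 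Consequently the condition defining perfection, namely that $\ell_{ij'}$ exists and is complete for $k'$, is \emph{exactly} the statement that the triple $(i,j',k')$ is good (its second line $\ell_{j'k'}=\lambda$ exists automatically). Thus $(i,j,k)$ is perfect if and only if $\ell_{ij},\ell_{jk}$ exist and $\{j'\in\lambda:(i,j',k')\text{ is good}\}$ is generic in $\lambda$.

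Next I would set this up as a fibration. Fix $i$ and let $\Omega=\{(j,k)\in Q^2:\ell_{ij},\ell_{jk}\text{ exist}\}$, which is generic in $Q^2$ since $\{j:\ell_{ij}\in\Lambda\}$ is generic and line-existence is generic by \cref{rem: basic generic}. Over $\Omega$ form the incidence set $\tilde\Omega=\{(j,k,j'):(j,k)\in\Omega,\ j'\in\ell_{jk},\ j'\neq j'jk\}$, whose fibers over $\Omega$ are the lines $\ell_{jk}$, so $\MR(\tilde\Omega)=2n+k$. Inside it sits $\tilde\Omega_{\mathrm{good}}=\{(j,k,j')\in\tilde\Omega:(i,j',k')\text{ good}\}$ with $k'=j'jk$. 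By the reformulation, $(i,j,k)$ is perfect precisely when the fiber of $\tilde\Omega_{\mathrm{good}}$ over $(j,k)$ is generic in $\ell_{jk}$; so by additivity of Morley rank over the projection $\tilde\Omega\to\Omega$ it suffices to prove that $\tilde\Omega_{\mathrm{good}}$ is generic in $\tilde\Omega$, i.e. that its complement has rank $<2n+k$.

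The heart of the argument is then a change of parameters that eliminates the auxiliary point $j$. Consider $\Phi:\tilde\Omega\to Q^2$, $(j,k,j')\mapsto(j',k')$. Since $k'=j'jk$ forces $j'k'=jk$, the fiber of $\Phi$ over $(j',k')$ is contained in $\{j\in\ell_{j'k'}\}$ and hence has rank $\le k$. Crucially, goodness of $(i,j',k')$ depends only on $\Phi(j,k,j')$, so $\tilde\Omega_{\mathrm{good}}=\Phi^{-1}(\mathrm{Good}_i)$, where $\mathrm{Good}_i=\{(j',k'):(i,j',k')\text{ good}\}$. The preceding lemma gives that $\mathrm{Good}_i$ is generic in $Q^2$, so its complement has rank $<2n$; since $\Phi$ has fibers of rank $\le k$, the preimage of this complement has rank $<2n+k$. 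Hence $\tilde\Omega_{\mathrm{good}}$ is generic in $\tilde\Omega$, which by the previous paragraph yields that $\{(j,k):(i,j,k)\text{ perfect}\}$ is generic in $Q^2$ for every fixed $i$. As this holds for all $i$, additivity of rank over $Q^3\to Q$ shows the perfect triples are generic in $Q^3$.

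The main obstacle is the genericity bookkeeping rather than any algebra. The rank computations rest on additivity of Morley rank in the definable fibrations $\tilde\Omega\to\Omega$ and $\Phi$, and some care is needed because the lines are not assumed to have Morley degree $1$; throughout I read \emph{generic subset of $\lambda$} as \emph{having complement of rank $<k$ in $\lambda$}, which is the notion compatible both with the definition of completeness and with the fiber-dimension inequalities, and which sidesteps the degree issue entirely. One should also dispose of the routine point that the distinctness, non-collinearity, and line-existence conditions cut out only lower-rank sets, so that $\Omega$ and $\tilde\Omega$ really have the stated ranks and the good-locus genuinely captures the completeness requirement of perfection.
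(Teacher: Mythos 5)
Your argument is correct and is essentially the paper's: both rest on the observation that the inner condition in the definition of perfection depends only on the translation $\sigma=jk$ (so that perfection of $(i,j,k)$ reduces to goodness holding generically on the fiber $\{(j',k'): j'k'=\sigma\}$), combined with a Fubini/rank-additivity computation over fibers of rank $k$ fed by the genericity of good triples. The paper packages this as the partition of $U=\{(j,k): jk\in S\setminus\{1\}\}$ into the sets $U_\sigma$, whereas you route it through the incidence set $\tilde\Omega$ and the map $\Phi$, but the mathematical content is the same.
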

\begin{proof}
Since $Q$ is a generic mock hyperbolic reflection space, the set $U = \{ (j,k) : jk \in S \setminus \{ 1 \} \} \subseteq Q^2$ is generic in $Q^2$. For $\sigma \in S$ put $U_\sigma = \{ (j,k) : jk = \sigma \}$.
Then each $U_\sigma$ has Morley rank $k$ and $U$ is the disjoint union
\[ U = \bigcup_{\sigma \in S} U_\sigma \subseteq Q \times Q. \]
Now fix $i \in Q$. A generic triple in $\{i\} \times U$ is good  and we have $\Mdeg( \{i\} \times U) = 1$. 
Since $\Mdeg(S) = 1$ this implies that for  generically many $\sigma \in S$ the set
\[ \{ (i,r,s) \in \{i \} \times U_\sigma : (i,r,s) \text{ is good} \} \]
is a generic subset of $\{i\} \times U_\sigma$. 

Moreover, if a generic triple in $\{i\} \times U_\sigma$ is good, then a generic triple in $\{i\} \times U_\sigma$ must be perfect. This proves the lemma.
\end{proof}

Now let $\mu : Q^3  \rightarrow G$ be the multiplication map and put
\begin{align*}
T &= \{ (i,j,k) \in Q^3 : \ell_{jk} \text{ exists} \}, \quad \text{and} \\
T_\perf &= \{ (i,j,k) \in Q^3 : (i,j,k) \text{ is perfect} \}.
\end{align*}
Note that $T_\perf \subseteq T$. If $(Q,\Lambda)$ is a mock hyperbolic reflection space, i.e. if all lines exist, then $T_\perf$ consists of all triples of pairwise distinct involutions in $Q$.

\begin{lemma} $\MR(\mu(T_\perf)) \geq 2n-k$.
\end{lemma}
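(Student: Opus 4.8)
We want to show that the image of the multiplication map $\mu$ restricted to the perfect triples $T_\perf$ has Morley rank at least $2n-k$. The natural strategy is to exhibit a definable subset of $T_\perf$ of large rank on which $\mu$ is generically finite-to-one onto its image, so that the rank of the image equals the rank of the domain minus a bounded fibre contribution. Recall from \cref{rem: basic generic} that the set of translations $S = \{ \sigma \in Q^{\cdot 2} \setminus \{1\} : \ell_\sigma \text{ exists in } \Lambda \} \cup \{1\}$ has Morley rank $2n-k$, so $2n-k$ is exactly the rank we expect a generic product $jk$ (with $\ell_{jk}$ existing) to contribute, and the extra factor $i$ should not enlarge the image fibrewise.

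\medskip

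\textbf{Main steps.} First I would fix a point $i \in Q$ and consider the slice $T_\perf^i = \{ (j,k) : (i,j,k) \in T_\perf \}$, which by \cref{prop:perfect} is generic in the set $U = \{(j,k) : jk \in S \setminus \{1\}\}$ and hence has Morley rank $2n-k$ (since $U$ is generic in $Q^2$, of rank $2n$, but fibres over $S$ have rank $k$, giving $\MR(U) = (2n-k)+k = 2n$; wait — more directly, $\MR(S)=2n-k$ and each fibre $U_\sigma$ has rank $k$). The key is to analyse the map $(j,k) \mapsto \mu(i,j,k) = ijk$ on this slice. Writing $ijk = i(jk)$ and noting $jk = \sigma$ ranges over (generically many) $\sigma \in S$, I would argue that distinct values of $\sigma$ give distinct values of $i\sigma$ for fixed $i$, so that the product map essentially sees all of $S$ in the second coordinate. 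Since for each fixed $\sigma$ the fibre $U_\sigma = \{(j,k): jk=\sigma\}$ has rank $k$ and maps to the single element $i\sigma$, the image of the slice has rank exactly $\MR(S) = 2n-k$. Because this holds for the fibre over each fixed $i$, we obtain $\MR(\mu(T_\perf)) \geq 2n-k$.

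\medskip

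\textbf{The delicate point.} The genuine obstacle is controlling the fibres of $\mu$ and making the counting rigorous: I must ensure that on a generic part of $T_\perf$ the map collapses each $U_\sigma$ to a point (which is immediate, since $ijk = i\sigma$ depends only on $\sigma = jk$ and on $i$) but does \emph{not} collapse different $\sigma$'s, i.e. that $\sigma \mapsto i\sigma$ is injective on $S$ — this follows because left multiplication by $i$ is a bijection of $G$. The real content of ``perfect'' enters not in this lower bound but presumably in a companion upper-bound or normality argument later; here perfection is used merely to guarantee that $T_\perf$ meets each slice $\{i\} \times U_\sigma$ generically for generically many $\sigma$, so that $\mu(T_\perf)$ already contains $i\sigma$ for generically many $\sigma \in S$, whence $\MR(\mu(T_\perf)) \geq \MR(\{i\sigma : \sigma \in S\}) = \MR(S) = 2n-k$. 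I would therefore organise the proof as: (1) recall $\MR(S) = 2n-k$ from \cref{rem: basic generic}; (2) fix $i$ and use \cref{prop:perfect} to see that $\mu(T_\perf) \supseteq i\cdot S'$ for a generic $S' \subseteq S$; (3) conclude via injectivity of left translation by $i$ that $\MR(\mu(T_\perf)) \geq \MR(i S') = \MR(S') = 2n-k$. The one subtlety to verify carefully is that perfection of $(i,j,k)$ guarantees $\ell_{jk}$ exists, hence $jk \in S$, placing the image inside $i\cdot S$ as intended.
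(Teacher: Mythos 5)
Your argument is correct and is essentially the paper's own proof: fix $i\in Q$, observe that $ijk$ depends only on $\sigma=jk$, that the fibre $\{(j',k'): j'k'=\sigma\}$ has Morley rank $k$ when $\ell_{jk}$ exists (equivalently, that $\mu(T_\perf)$ contains $i\sigma$ for generically many $\sigma\in S$ with $\MR(S)=2n-k$), and conclude by fibre counting. The only slip is your initial assertion that the slice $T_\perf^i$ has rank $2n-k$ --- it has rank $2n$, as your own parenthetical correction notes --- but this does not affect the final three-step organisation of the argument.
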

\begin{proof}
For any $i\in Q$ the set $\{(j,k)\in Q^2\colon (i,j,k) \mbox{ is perfect}\}$ has Morley rank $2n$ by \cref{prop:perfect}. Clearly $ijk = ij'k'$ if and only if $jk = j'k'$. If $\ell_{jk}$ exists, the set $\{(j',k')\in Q^2\colon jk = j'k'\}$ has Morley rank $k$. Hence $\mu(T_\perf)$ has Morley rank at least $2n-k$.
\end{proof}

\begin{proposition} \label{prop:normal_subgroup}
Suppose $\MR(\mu(T_\perf)) = 2n-k$. Then $G$ has a definable connected normal subgroup $N$ of Morley rank $\MR(N) = 2n-k$ such that $N \approx S$. Moreover, $\MR(N \cap \Cen(i)) = n-k$ for any involution $i \in Q$.
\end{proposition}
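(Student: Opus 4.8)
The plan is to realize $N$ as the $\approx$-stabilizer of the set of translations. Recall from \cref{rem: basic generic} that $S$ has Morley rank $2n-k$ and Morley degree $1$, and observe that $S$ is invariant under conjugation: if $\sigma=ij$ with $\ell_{ij}\in\Lambda$, then $\sigma^g=i^gj^g$ and $\ell_{i^gj^g}=\ell_{ij}^g\in\Lambda$ by $G$-invariance of $\Lambda$. I would set
\[ N = \{\, g\in G : gS\approx S \,\}. \]
Since $\approx$ is preserved by the rank-preserving bijections given by left translation, $N$ is a definable subgroup, and by standard $\omega$-stable group theory (it is the stabilizer of the generic type of the degree-one set $S$) one has $\MR(N)\le \MR(S)=2n-k$. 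Because both $S$ and $\approx$ are conjugation-invariant, $N$ is normal: for $h\in G$ and $g\in N$ we have $(h^{-1}gh)S=h^{-1}g(hSh^{-1})h=h^{-1}gSh\approx h^{-1}Sh=S$. Thus everything reduces to the single statement that a generic $s\in S$ lies in $N$, i.e. that \emph{$sS\approx S$ for generic $s\in S$}.

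This generic closure is where the hypothesis $\MR(\mu(T_\perf))=2n-k$ enters, and it is the main obstacle. The slices $\{(j,k):(i,j,k)\text{ perfect}\}$ have rank $2n$ for each $i$, so $\MR(T_\perf)=3n$, and the map $(i,j,k)\mapsto ijk=i\,(jk)$ has image of rank $2n-k$ by hypothesis; since $ijk$ depends only on $i$ and on $\sigma=jk\in S$, and the fibre of $(j,k)\mapsto jk$ over a fixed translation has rank $k$, a fibre computation over $\mu(T_\perf)$ shows that for generic $g\in\mu(T_\perf)$ the set
\[ I_g=\{\, i\in Q : ig\in S \,\} \]
is generic in $Q$. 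The elementary identity $(i'i)(ig)=i'g$ then turns each pair $i,i'\in I_g$ into a witness $\sigma s=s'$ with $s=ig,\ s'=i'g\in S$ and $\sigma=i'i\in S$ (the last because $\ell_{i'i}$ exists for generic $i,i'$). The delicate point, which I expect to be the hardest step, is to promote this supply of witnesses to the full genericity $sS\approx S$: the naive fibre count only yields $\MR(\{(\sigma,s)\in S\times S:\sigma s\in S\})\ge 3n-k$, which falls short of the $4n-2k$ required, and closing this gap of rank $n-k$ is exactly what the \emph{perfectness} condition is designed to make possible, by guaranteeing that completeness persists generically across all representations $(j',k')$ of a translation and thereby preventing rank from being lost in the fibres. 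Equivalently, one must show that the translates $iS$ all coincide up to $\approx$, i.e. that $\mu(T_\perf)$ is, up to $\approx$, a single coset of $N$.

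Granting the generic closure, the existence statement follows quickly. Generic membership of $S$ in $N$ gives $\MR(N)\ge \MR(S\cap N)=2n-k$, whence $\MR(N)=2n-k$; since $S$ has degree $1$, $S\cap N$ is generic in both $S$ and $N$, so $N\approx S$ and moreover $\Mdeg(N)=1$, so $N$ is connected. Together with the normality established above, this yields the desired definable connected normal subgroup with $N\approx S$.

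Finally, for the centralizer statement I would apply orbit--stabilizer to the conjugation action of $N$ on $Q$: since $N\cap\Cen(i)=\Stab_N(i)$,
\[ \MR(N\cap\Cen(i))=\MR(N)-\MR(i^N)=(2n-k)-\MR(i^N). \]
As $i^N\subseteq Q$ we have $\MR(i^N)\le n$, giving $\MR(N\cap\Cen(i))\ge n-k$. For the reverse inequality it suffices to show that $i^N$ is generic in $Q$, equivalently that $N\,\Cen(i)$ is generic in $G$. This I would deduce from the decomposition $G=iQ\,\Cen(i)$ of \cref{prop:decomposition}, which identifies $G$ definably and bijectively with $iQ\times\Cen(i)$: since $iQ\cap S$ is generic in $iQ$ (because $\ell_{ij}$ exists for generic $j\in Q$) and $S\approx N$, the product $N\,\Cen(i)$ contains a generic subset of $G$, so $\MR(i^N)=n$ and hence $\MR(N\cap\Cen(i))=n-k$, as claimed. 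Note that $S\cap\Cen(i)\subseteq Q^{\cdot 2}\cap\Cen(i)=\{1\}$ by \cref{lem:translations have no fixed point}, so this rank is carried entirely by $N\setminus S$, which is consistent with $N\approx S$ since $n-k<2n-k$.
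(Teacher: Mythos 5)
The architecture you set up -- take $N$ to be an $\approx$-stabilizer, get the upper bound $\MR(N)\le 2n-k$ from \cref{stabilizer_lemma}, get normality from conjugation-invariance, and compute $\MR(N\cap\Cen(i))$ from the decomposition $G=iQ\,\Cen(i)$ -- is essentially the paper's. But the heart of the proposition is exactly the step you explicitly leave open: that generic translations lie in $N$, equivalently that the translates $iS$ generically coincide up to $\approx$. You correctly identify this as ``the main obstacle'' and then only gesture at how perfectness might close a rank gap of $n-k$ in a fibre count over $\{(\sigma,s)\in S\times S:\sigma s\in S\}$; that count is not how the hypothesis gets used, and no argument is actually given. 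So the proposal has a genuine gap at the one place where the hypothesis $\MR(\mu(T_\perf))=2n-k$ must do work.

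The paper closes this gap by a pigeonhole on Morley degree rather than by counting incidences. For each $i$, \cref{prop:perfect} makes $T_i=T_\perf\cap(\{i\}\times Q^2)$ a rank-$2n$, degree-$1$ set, so $\mu(T_i)=i\cdot\{jk:(i,j,k)\in T_\perf\}\approx iS$ has rank $2n-k$ and degree $1$. All of these sit inside $\mu(T_\perf)$, which by hypothesis has rank $2n-k$ and some finite degree $d$; hence each $\mu(T_i)$ is $\approx$ to one of the $d$ degree-one components $Y_1,\dots,Y_d$, and since $\Mdeg(Q)=1$ a single component $Y=Y_f$ satisfies $\mu(T_i)\approx Y$ for generically many $i$, say for $i\in Q_Y$. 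For $i\neq j\in Q_Y$ one then has $ij\,\mu(T_j)\approx ij\cdot jS=iS\approx\mu(T_i)$, whence $ijY\approx Y$ and $Q_Y^{\cdot2}\subseteq\Stab^\approx(Y)=:N$, forcing $\MR(N)=2n-k$ and $N\approx S$. (Working with $Y\approx iS$ rather than with $S$ itself is harmless, since your $N$ is normal and $\Stab^\approx(iS)=i\,\Stab^\approx(S)\,i^{-1}$.) Two smaller soft spots: your connectedness claim needs the identification $N=\Stab(Z)$ for some $Z\approx Y$ of degree $1$ (containing a generic degree-one subset does not by itself force $\Mdeg(N)=1$); and in the last paragraph, $S\approx N$ does \emph{not} directly give that $iQ\cap N$ is generic in $iQ$, because $\MR(iQ)=n<2n-k$, so the discrepancy $S\setminus N$ could a priori swallow $iQ\cap S$ -- one needs either the containment $Q_Y^{\cdot2}\subseteq N$ together with conjugacy of involutions, or a fibre count over $\{(i,\sigma):\sigma\in iQ\cap(S\setminus N)\}$, before running the orbit--stabilizer computation.
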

\begin{proof}
Set $d = \Mdeg(\mu(T_\perf))$ and write $\mu(T_\perf)$ as a disjoint union
\[ \mu(T_\perf) = Y_1 \cup \dots Y_d \]
where each $Y_r$ has rank $2n-k$ and degree $1$. Put $T_i = T_\perf \cap ( \{i\} \times Q \times Q)$. Then each $T_i$ has rank $2n$ and degree $1$.
Moreover, $\mu(T_i)$ has rank $2n-k$ and degree $1$. We can find $1 \leq f \leq d$ such that
\[ \mu(T_i) \approx Y_f \]
for generically many $i \in Q$. Put $Y = Y_f$ and set $N = \Stab^\approx(Y) = \{ g \in G : gY \approx Y \}$ and note that $N$ must be a normal subgroup of $G$ because $Y$ is $G$-normal up to $\approx$-equivalence.

Now by \cref{stabilizer_lemma} there is some $Z \approx Y$ such that $N \subseteq \Stab(Z)$. In particular, $N$ has rank $\leq 2n-k$.

Let $Q_Y = \{ i \in Q : \mu(T_i) \approx Y \}$. Given $i \neq j \in Q_Y$ we have
\[ ij\mu(T_j) \approx \mu(T_i) \]
and hence $ij \in N$. Hence $Q_Y^{\cdot 2} \subseteq N$ and thus $\MR(N) = 2n-k$. Therefore $N = \Stab(Z)$ is connected.
Since $Q_Y \approx Q$ this also implies $N \approx S$.

We now show $\MR(N \cap \Cen(i))= n-k$ for any involution $i \in Q$: Fix an involution $i \in Q$. If $i \in N$, then $iQ \subseteq N$ and hence $N = iQ (N \cap \Cen(i))$ by \cref{prop:decomposition} and therefore $\MR(N \cap \Cen(i)) = n-k$.

If $i \not \in N$, then note that $iQ \cap N$ must be a generic subset of $iQ$ and therefore the conjugacy class $i^N$ is generic in $Q$. This implies that $N \rtimes \langle i \rangle$ must contain $Q$ and hence is a normal subgroup of $G$. Now argue as in the first case.
\end{proof}

For $\alpha \in \mu(T)$ we set 
\[ X_\alpha = \{i \in Q : \exists (j,k) \in Q \times Q \text{ such that } (i,j,k) \in T \text{ and } ijk = \alpha \}. \]
Note that $\MR(\mu^\inv(\alpha) \cap T ) = \MR(X_\alpha) +k$.

If $A$ and $B$ are definable sets, then we write $A \subsetsim B$ if $A$ is almost contained in $B$, i.e. if $A \cap B \approx A$.
\begin{lemma} \label{lem:ineq_perfect}
Fix a triple $(i,j,k) \in T$.
\begin{enumerate}
\item If $(i,j,k)$ is good, then $\ell_{ij} \subsetsim X_{ijk}$.
\item If $(i,j,k)$ is perfect, then $\ell_{it} \subsetsim X_{ijk}$ for generically many $t \in \ell_{jk}$. In particular, $\MR(X_{ijk}) \geq 2k$. 
\end{enumerate}
\end{lemma}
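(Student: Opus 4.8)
The plan is to prove both parts by exhibiting, for generically many candidate points, an explicit decomposition of the fixed product $\alpha = ijk$ that realizes those points inside $X_\alpha$. For part a), I would set $\alpha = ijk$ and, for $i' \in \ell_{ij}$, put $j' = i'ij$. The first thing to check is that $j' \in \ell_{ij}$: since $i,j \in \ell_{ij}$ we have $ij \in \ell_{ij}^{\cdot 2}$, and by \cref{lem:basic_properties} the set $\ell_{ij}^{\cdot 2}$ is an abelian group with $\ell_{ij} = j\,\ell_{ij}^{\cdot 2}$, so right multiplication by $ij$ maps $\ell_{ij}$ into itself. By construction $i'j' = ij$, hence $i'j'k = ijk = \alpha$, so $(i',j',k) \in T$ and $i' \in X_\alpha$ as soon as the line $\ell_{j'k}$ exists (note $j' \neq k$ since $k \notin \ell_{ij}$ by completeness). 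Now $i' \mapsto j' = i'ij$ is right multiplication by the fixed element $ij$, hence a definable bijection of $\ell_{ij}$ onto itself, preserving Morley rank and degree. Since $\ell_{ij}$ is complete for $k$, the set $\{ m \in \ell_{ij} : \ell_{mk} \text{ exists} \}$ is generic in $\ell_{ij}$; pulling it back along this bijection shows that $\ell_{j'k}$ exists for generically many $i'$, each of which then lies in $X_\alpha$. This gives $\ell_{ij} \subsetsim X_{ijk}$.

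For the first assertion of part b), I would fix a perfect triple $(i,j,k)$, again write $\alpha = ijk$, and for $t \in \ell_{jk}$ set $k' = tjk$. As in part a) one checks $k' \in \ell_{jk}$, and since $tk' = jk$ we obtain $itk' = ijk = \alpha$. By the definition of perfect, for generically many $t \in \ell_{jk}$ the line $\ell_{it}$ exists and is complete for $k'$; combined with $\ell_{tk'} = \ell_{jk}$ (which exists, as $t \neq k'$ because $j \neq k$), this is exactly the statement that $(i,t,k')$ is good. Applying part a) to this good triple and using $itk' = \alpha$ yields $\ell_{it} \subsetsim X_{itk'} = X_{ijk}$ for generically many $t \in \ell_{jk}$.

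The remaining step, deducing $\MR(X_{ijk}) \geq 2k$, is the one I expect to be the main obstacle, because it requires that the lines $\ell_{it}$ sweep out a set of full rank $2k$ rather than collapsing onto a single line. First I would observe that $i \notin \ell_{jk}$: otherwise $\ell_{it} = \ell_{jk}$ for every $t \in \ell_{jk}$, but such a line can never be complete for a point $k' = tjk$ lying on it, contradicting perfectness. With $i \notin \ell_{jk}$, distinct $t, t' \in \ell_{jk}$ give distinct lines $\ell_{it} \neq \ell_{it'}$ meeting only in $i$, since a common further point would force $\ell_{it}$ to contain two points of $\ell_{jk}$ and hence to equal $\ell_{jk} \ni i$. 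I would then consider the incidence set $\{ (t,s) : t \in \ell_{jk} \text{ generic}, \ s \in \ell_{it} \}$, of Morley rank $k + k = 2k$, and project to the second coordinate: for $s \neq i$ the fibre is $\ell_{jk} \cap \ell_{is}$, a single point whenever $\ell_{is} \neq \ell_{jk}$, so the projection is generically finite-to-one. Hence $\bigcup_t \ell_{it}$ has Morley rank $2k$, and since each $\ell_{it} \subsetsim X_{ijk}$ this union is almost contained in $X_{ijk}$, giving $\MR(X_{ijk}) \geq 2k$.
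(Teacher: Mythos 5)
Your proof is correct and follows essentially the same route as the paper: part a) is the paper's argument with the roles of $i'$ and $j'$ interchanged (the paper parametrizes by those $j'\in\ell_{ij}$ for which $\ell_{j'k}$ exists and sets $i'$ by $ij=i'j'$, while you parametrize by $i'$ and pull the completeness condition back along the definable bijection $i'\mapsto j'=i'ij$), and your part b) is exactly the intended unwinding of the definition of a perfect triple. Your final paragraph supplies a careful justification of $\MR(X_{ijk})\geq 2k$ (via $i\notin\ell_{jk}$ and the fact that the lines $\ell_{it}$ pairwise meet only in $i$), which the paper leaves entirely implicit in the phrase ``in particular''; this is a worthwhile addition and is correct.
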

\begin{proof}
a) Since $(i,j,k)$ is good the line $\ell_{ij}$ is $k$-complete. Hence $\ell_{j'k}$ exists for generically many $j' \in \ell_{ij}$. Fix such an $j'$ and write $ij = i'j'$. Then $(i',j',k)$ is good and hence $i' \in X_{ijk}$.

b) follows immediately from a).
\end{proof}

\begin{lemma}
	Set $ l = \MR(\mu(T_\perf)) - (2n-k)$. Then $2k \leq X_\alpha \leq n-l$ for generically many $\alpha \in \mu(T_\perf)$. In particular, $n \geq 2k+l$.
\end{lemma}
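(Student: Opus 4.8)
The inequality should be read as $2k \le \MR(X_\alpha) \le n-l$, and the plan is to prove the lower and upper bounds separately and then read off $n \ge 2k+l$. The lower bound is essentially free: every $\alpha \in \mu(T_\perf)$ can be written as $\alpha = ijk$ for some perfect triple $(i,j,k)$, so \cref{lem:ineq_perfect} b) applies directly and gives $\MR(X_\alpha) \ge 2k$ for \emph{all} $\alpha \in \mu(T_\perf)$. The real work is the upper bound $\MR(X_\alpha) \le n-l$ for generically many $\alpha$; granting both, the final claim follows by picking any generic $\alpha$, for which then $2k \le \MR(X_\alpha) \le n-l$.

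For the upper bound I would double count on the incidence set
\[ I = \{ (\alpha, i) \in \mu(T_\perf) \times Q : i \in X_\alpha \}. \]
The first projection has fibre $X_\alpha$ over $\alpha$, and since $\MR(\mu(T_\perf)) = 2n-k+l$, controlling the generic fibre of this projection is exactly controlling the generic value of $\MR(X_\alpha)$. The point of introducing $I$ is to bound $\MR(I)$ from above through the \emph{other} projection: the fibre of the second projection over $i$ is $\{ \alpha \in \mu(T_\perf) : i \in X_\alpha \}$, and the key observation is that $i \in X_\alpha$ forces $\alpha = ijk$ with $\ell_{jk}$ existing, hence $\alpha \in i(S \setminus \{1\})$. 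Since $\MR(S) = 2n-k$ by \cref{rem: basic generic}, each such fibre has rank at most $2n-k$, so $\MR(I) \le \MR(Q) + (2n-k) = 3n-k$.

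I would then combine the two projections as a genericity argument rather than a bare equality. Suppose the set $\{ \alpha \in \mu(T_\perf) : \MR(X_\alpha) \ge n-l+1 \}$ were generic in $\mu(T_\perf)$, hence of rank $2n-k+l$. Restricting $I$ over this set would, by the lower-bound direction of rank additivity applied to the first projection, produce a definable set of rank at least $(2n-k+l)+(n-l+1) = 3n-k+1$, contradicting $\MR(I) \le 3n-k$. Therefore $\MR(X_\alpha) \le n-l$ for generically many $\alpha$, which together with the lower bound yields $2k \le n-l$ and hence $n \ge 2k+l$.

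The main obstacle I expect is not any single hard idea but the careful handling of Morley rank along the two fibrations of $I$: one must use additivity in the lower-bound direction for the first projection (over the putatively generic set of large fibres) and only the fibre-rank upper bound for the second projection, and phrase the conclusion as a partition of $\mu(T_\perf)$ by the value of $\MR(X_\alpha)$ so as to obtain a genuinely generic statement. The single geometric fact that makes the estimate tight is the rank count $\MR(S)=2n-k$ for the translation set, which is what upgrades the trivial containment $X_\alpha \subseteq Q$ into the sharper bound $n-l$.
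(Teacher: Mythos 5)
Your argument is correct and is essentially the paper's proof: the paper runs the same double count directly on the fibres $\mu^{-1}(\alpha)\cap T$ inside $T\subseteq Q^3$, using the identity $\MR(\mu^{-1}(\alpha)\cap T)=\MR(X_\alpha)+k$ together with $\MR(T)=3n$, whereas you first project out the rank-$k$ fibre $\{(j,k):jk=i\alpha\}$ and bound the resulting incidence set $I$ through its second projection via $\MR(S)=2n-k$ --- the same numbers in a slightly different package (and your phrase ``were generic'' should really be ``had full rank $2n-k+l$,'' which is all the contradiction needs). The lower bound via \cref{lem:ineq_perfect} b) and the concluding step $2k\leq n-l$, hence $n\geq 2k+l$, are exactly as in the paper.
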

\begin{proof} 
We have $\MR(\mu^\inv(\alpha) \cap T ) = \MR(X_\alpha ) + k$ and
\[ \bigcup_{\alpha \in \mu(T_\perf)}\mu^\inv(\alpha) \cap T \subseteq T \]
and $T$ has rank $3n$. Therefore a generic $\alpha \in \mu(T_\perf)$ must satisfy the inequality
\[ \MR(\mu(T_\perf)) + \MR(X_\alpha) + k \leq \MR(T) = 3n. \]
Moreover, we have $\MR(X_\alpha) \geq 2k$ by \cref{lem:ineq_perfect}. Hence
\[ 2k \leq \MR(X_\alpha) \leq \MR(T) - k - \MR(\mu(T_\perf)) = n-l \]
for generically many $\alpha \in \mu(T_\perf)$.
\end{proof}

\begin{proposition} \label{prop:ineq}
Set $ l = \MR(\mu(T_\perf)) - (2n-k)$. Then $n > 2k+l$. In particular, $n > 2k$.
\end{proposition}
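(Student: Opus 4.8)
The plan is to argue by contradiction. The preceding lemma already yields $n \geq 2k+l$, so it suffices to rule out the equality $n = 2k+l$. Under this assumption the double bound $2k \leq \MR(X_\alpha) \leq n-l = 2k$ from that lemma forces $\MR(X_\alpha) = 2k$ for generically many $\alpha \in \mu(T_\perf)$. I would then show that each such $X_\alpha$ essentially is a generic projective plane, contradicting \cref{no-plane}.

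First I would count the points of $X_\alpha$ realized by perfect triples. Since perfect triples are generic in $Q^3$ by \cref{prop:perfect}, we have $\MR(T_\perf) = 3n$, so for generic $\alpha$ the fibre $\mu^\inv(\alpha) \cap T_\perf$ has Morley rank $3n - \MR(\mu(T_\perf)) = n+k-l$. For a fixed first coordinate $i$ the equation $ijk = \alpha$ pins down $jk = i\alpha$ and hence confines $(j,k)$ to a set of rank at most $k$; projecting the fibre onto its first coordinate therefore shows that the set $X_\alpha^\perf$ of first coordinates of perfect triples with product $\alpha$ has rank at least $n-l = 2k$. As $X_\alpha^\perf \subseteq X_\alpha$ and $\MR(X_\alpha) = 2k$, the perfect points are generic in $X_\alpha$.

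Next I would produce the lines. For a generic $i \in X_\alpha^\perf$ the witnessing perfect triple $(i,j,k)$ is non-collinear (collinear triples are non-generic, having rank $2n+k < 3n$ since $k < n$), so $i \notin \ell_{jk}$ and by \cref{lem:ineq_perfect} the lines $\ell_{it}$, for generically many $t \in \ell_{jk}$, are almost contained in $X_\alpha$. These lines all pass through $i$ and are pairwise distinct, giving rank $k$ many lines through $i$ lying in $\Lambda_{X_\alpha}$. Counting incidences $(i,\lambda)$ with $i \in X_\alpha^\perf$, $i \in \lambda$, and $\lambda \subsetsim X_\alpha$ gives a set of rank $3k$; projecting to $\lambda$ with fibres of rank at most $k$ then yields $\MR(\Lambda_{X_\alpha}) \geq 2k$.

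Finally I would pass to Morley degree $1$: decomposing $X_\alpha$ into its finitely many degree $1$ components and assigning each of the rank $2k$ many lines above to a component on which its intersection attains rank $k$, a pigeonhole on rank selects one component $X$ with $\MR(X) = 2k$, $\Mdeg(X) = 1$, and $\MR(\Lambda_X) \geq 2k$. By \cref{lem:approx_plane} such an $X$ is a generic projective plane, contradicting \cref{no-plane}, and hence $n > 2k+l$. I expect this last bookkeeping to be the main obstacle: the sets $X_\alpha$ need not have Morley degree $1$, and the delicate point is ensuring that the lines witnessing the plane structure genuinely meet a single degree $1$ component in rank $k$, rather than being spread across several components.
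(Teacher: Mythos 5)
Your proposal is correct and follows essentially the same route as the paper: assume $n=2k+l$, deduce $\MR(X_\alpha)=2k$ for generic $\alpha$, use the rank-$3k$ fibre of perfect triples together with \cref{lem:ineq_perfect} to produce rank $2k$ many lines almost contained in $X_\alpha$, and conclude that a degree-one component of $X_\alpha$ is a generic projective plane, contradicting \cref{no-plane}. The only difference is presentational (you project the fibre over a single generic $\alpha$ where the paper counts over the whole set $M$ first), and your closing remarks on non-collinearity and on assigning the lines to a single degree-one component just make explicit bookkeeping that the paper leaves implicit.
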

\begin{proof}
Assume not. Then $n = 2k+l$ and $\MR(X_\alpha) = 2k$ for generically many $\alpha \in \mu(T_\perf)$.
Set $M = \{ \alpha \in \mu(T_\perf) : \MR(X_\alpha) = 2k \}$. This is a generic subset of $\mu(T_\perf)$.
We have

\[ \MR( \bigcup_{\alpha \in M} \mu^\inv(\alpha) \cap T ) = (2n-k+l) + 3k = 6k + 3l = 3n. \]
Hence $\bigcup_{\alpha \in M} \mu^\inv(\alpha) \cap T$ is a generic subset of $Q \times Q \times Q$.
Note that $\MR(M) = 3k+3l$. Thus we can find $\alpha \in M$ such that $\mu^\inv(\alpha) \cap T$ has rank $3k$ and contains rank $3k$ many perfect triples. Set $X = X_\alpha$ and $\Lambda_X = \{ \lambda \in \Lambda : \lambda \subsetsim X \}$.
Now \cref{lem:ineq_perfect} implies that for a generic $i \in X$ the set
\[ \{ \lambda \in \Lambda_X : i \in \lambda \} \]
has Morley rank $k$. Hence $\MR(\Lambda_X) = 2k$ and therefore a degree $1$ component of $X$ must be a generic projective plane. This contradicts \cref{no-plane}.
\end{proof}

\begin{proof}[Proof of \cref{thm:generic_mock hyperbolic reflection}]
Set $ l = \MR(\mu(T_\perf)) - (2n-k)$. By \cref{prop:ineq} we have $2k+1 = n > 2k+l$ and hence $l = 0$. Now \cref{prop:normal_subgroup} implies the theorem.
\end{proof}

\section{Frobenius groups of finite Morley rank}
We now consider Frobenius groups of finite Morley rank. If $G$ is a group of finite Morley rank and $H$ is a Frobenius complement in $G$, then $H$ is definable by Proposition 11.19 of \cite{borovik-nesin}. If $G$ splits as $G=N \rtimes H$, then $N$ is also definable by Proposition 11.23 of \cite{borovik-nesin}.

Epstein and Nesin showed that if $H<G$ is a Frobenius group of finite Morley rank and $H$ is finite, then $H<G$ splits (Theorem 11.25 of \cite{borovik-nesin}). As a consequence it suffices to consider connected Frobenius groups of finite Morley rank (Corollary 11.27 of \cite{borovik-nesin}).

Solvable Frobenius groups of finite Morley rank split and their structure is well understood \cite[Theorem 11.32]{borovik-nesin}.

\begin{lemma}
Let $H < G$ be a connected Frobenius group of finite Morley rank with Frobenius complement $H$
and let $X \subseteq H \setminus \{ 1 \}$ be a definable $H$-normal subset such that $\MR(X) = \MR(H)$.
Then $\bigcup_{b \in G}X^b \subseteq G$ is a generic subset of $G$.
\end{lemma}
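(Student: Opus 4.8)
The plan is to use the malnormality of $H$ to organize $Y := \bigcup_{b \in G} X^b$ as a (rank-)disjoint union of rank-$\MR(X)$ pieces indexed by a parameter space of rank $\MR(G) - \MR(H)$, and then read off the rank of $Y$ from a single incidence set. Write $n = \MR(G)$ and $k = \MR(H) = \MR(X)$. First I would record the standard consequence of the Frobenius condition that $N_G(H) = H$: if $g$ normalizes $H$ then $H^g = H$ meets $H$ nontrivially, so $g \in H$ by malnormality. Consequently $H^b = H^{b'}$ holds exactly when $Hb = Hb'$, so the conjugates of $H$ are in definable bijection with the right cosets $H\backslash G$, a parameter space of rank $n - k$. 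Since $X$ is $H$-normal we have $X^{hb} = X^b$ for all $h \in H$, so the conjugate $X^b \subseteq H^b \setminus \{1\}$ depends only on the coset $Hb$.

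The heart of the argument is that each element of $Y$ pins down a unique coset. Indeed, if $x \in X^b \cap X^{b'}$ then $x \in (H^b \cap H^{b'}) \setminus \{1\}$, and because distinct conjugates of $H$ intersect trivially this forces $H^b = H^{b'}$, i.e. $Hb = Hb'$. Thus for fixed $x \in Y$ the set $\{ b \in G : x \in X^b \}$ lies in a single coset $Hb_0$; and by $H$-normality of $X$ it is either all of $Hb_0$ (when $x \in X^{b_0}$) or empty, hence it is exactly $Hb_0$ and has rank $k$. I would then pass to the uniformly definable incidence set
\[ E = \{ (b,x) \in G \times G : x \in X^b \}, \]
which is definable because the family $(X^b)_{b \in G}$ is uniformly definable and $Y$ is its projection. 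Projecting $E$ to the first coordinate has image $G$ with every fibre $\{b\} \times X^b$ of rank $k$, so the fibre-dimension formula gives $\MR(E) = n + k$; projecting to the second coordinate has image $Y$ with every fibre of rank exactly $k$ by the previous paragraph, so $\MR(Y) = \MR(E) - k = n$. Hence $Y$ is generic in $G$.

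The only nontrivial input is the disjointness of the pieces $X^b$, which is precisely where the Frobenius (malnormality) hypothesis enters: without it one only obtains the bound $\MR(Y) \leq (n-k) + k = n$, and it is disjointness that forces the fibre over each $x$ to be a single coset of rank $k$, upgrading this to the equality $\MR(Y) = n$. I therefore expect this disjointness/unique-conjugate step to be the crux, with everything else being the fibre-dimension theorem for Morley rank; the remaining care is just to confirm that $E$ and $Y = \{ x : \exists b\, (x \in X^b) \}$ are genuinely definable and that the fibre ranks are constant (not merely generic) so that the rank additivity applies exactly.
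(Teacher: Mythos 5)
Your proof is correct and follows essentially the same route as the paper: the paper works with the map $\alpha\colon G\times X\to G$, $(b,x)\mapsto x^b$, which is your incidence set $E$ up to the definable bijection $(b,y)\mapsto(b,y^b)$, and in both cases the crux is that malnormality (via $N_G(H)=H$ and trivial intersection of distinct conjugates) forces each fibre over a point of the image to be a single coset of $H$, of rank $k$, so the image has rank $(n+k)-k=n$. No gaps.
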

\begin{proof}
Set $n = \MR(G)$ and $k = \MR(H)$.
Consider the map $\alpha: G \times X \rightarrow G, \; (b,x) \mapsto x^b$. 
If $x^b = y^c$ for $x,y \in X$, $b,c \in G$, then $bc^{-1}$ must be contained in $N_G(H) = H$. Therefore we have
\[ \alpha^{-1}(x^b) = \{ (c, x^{bc^{-1}}) \in G \times X : bc^{-1} \in H \}. \]
Hence all fibers of $\alpha$ have Morley rank $k$. This shows that $\alpha(G \times X) = \bigcup_{b \in G}X^b$ must have Morley rank $n$ and hence is a generic subset of $G$.
\end{proof}

Groups of finite Morley rank can be classified by the structure of their $2$-Sylow subgroups. In case of Frobenius groups this classification is simpler:

\begin{proposition}
Let $G$ be a connected Frobenius group of finite Morley rank with Frobenius complement $H$. Then $H$ is connected and $G$ lies in one of the following mutually exclusive cases:
\begin{enumerate}
\item  $H$ contains a unique involution, i.e. $G$ is of odd type;
\item  $G$ does not contain any involutions, i.e. $G$ is of degenerate type;
\item  $G \setminus ( \bigcup_{g \in G} H^g )$ contains involutions, i.e. $G$ is of even type.
\end{enumerate}
\end{proposition}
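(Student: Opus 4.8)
The plan is to extract from malnormality the two facts that drive the argument: $N_G(H)=H$ (otherwise $H\cap H^g=H\neq\{1\}$ for some $g\notin H$) and $\Cen_G(h)\subseteq H$ for every $h\in H\setminus\{1\}$ (if $x$ centralises $h$ then $h\in H\cap H^x$, so $x\in H$). For the connectedness of $H$, suppose $H\neq H^\circ$ and apply the preceding lemma to the two definable $H$-normal subsets $H^\circ\setminus\{1\}$ and $H\setminus H^\circ$ of $H\setminus\{1\}$, each of full Morley rank $\MR(H)$. This produces two generic subsets $A=\bigcup_{g\in G}(H^\circ\setminus\{1\})^g$ and $B=\bigcup_{g\in G}(H\setminus H^\circ)^g$ of $G$. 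They are disjoint: a common nontrivial element would lie in two conjugates of $H$ meeting nontrivially, hence in one conjugate $H^g=H^{g'}$ with $gg'^{-1}\in N_G(H)=H\subseteq N_G(H^\circ)$, so it would lie in $H^{\circ g}$ and in $H^g\setminus H^{\circ g}$ at once. As $G$ is connected any two generic subsets intersect, so $B=\emptyset$ and $H=H^\circ$.

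Next I would organise the trichotomy by the existence and location of involutions. If $G$ has no involution it is degenerate, and uniquely $2$-divisible by \cref{no-involutions}; this case is visibly disjoint from the other two, which require involutions. If $G$ has an involution, it is either conjugate into $H$ or lies in $G\setminus\bigcup_{g}H^g$, a conjugation-invariant set. The whole content then reduces to two claims: (A) if $H$ contains an involution it contains exactly one; and (B) if $H$ contains an involution then every involution of $G$ is conjugate into $H$. Granting these, cases 1 and 3 are mutually exclusive — in case 1, (B) confines all involutions to $\bigcup_g H^g$, leaving none outside, while an involution outside $\bigcup_g H^g$ forces $H$ to be involution-free by (B) — and exhaustiveness is then immediate.

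For (A) I would first record that once $i$ is the unique involution of the connected group $H$, each $i^h$ is again an involution of $H$, so $i^h=i$ and $i\in Z(H)$, giving $\Cen_G(i)=H$. To prove uniqueness, the instructive model is the split group $G=N\rtimes H$: here $H$ acts on $N$ without nontrivial fixed points, so any involution of $H$ inverts the abelian group $N$ by \cref{prop:neumann_abelian}, and two such involutions would have a product centralising $N$, hence trivial. In the split case the same inversion also yields (B). In general I would route the argument through the faithful transitive action of $G$ on $G/H$, in which every nontrivial element of $H$ has a unique fixed point; thus an involution is conjugate into $H$ exactly when it fixes a point, and one analyses the dihedral group $\langle i,j\rangle$ generated by two involutions, locating the involution of its finite cyclic part inside $\Cen_G(i)=H$.

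The hard part will be the non-split case, where there is no Frobenius kernel to invert and where two involutions may a priori generate an infinite dihedral group lying in a torsion-free divisible subgroup meeting $H$ trivially — precisely the configuration underlying even type. Excluding this once $H$ already contains an involution is the crux. I expect to settle it with the Sylow $2$-theory of connected groups of finite Morley rank (\cite{borovik-nesin}): the Frobenius condition should force the Sylow $2$-subgroup containing $i$ to carry a single involution (of Prüfer or quaternion type), so that by conjugacy of Sylow $2$-subgroups every involution of $G$ is conjugate to $i$, establishing (A) and (B) simultaneously.
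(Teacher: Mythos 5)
Your connectedness argument is exactly the paper's: apply the preceding genericity lemma to the two definable $H$-normal sets $H^0\setminus\{1\}$ and $H\setminus H^0$, each of full rank $\MR(H)$, and use connectedness of $G$ to conclude that the two resulting generic subsets cannot be disjoint; your verification of the disjointness via $N_G(H)=H$ is a correct filling-in of a detail the paper leaves implicit. Your reduction of the trichotomy to the two claims (A) (if $H$ contains an involution it contains exactly one) and (B) (if $H$ contains an involution then every involution of $G$ is conjugate into $H$) is also the right bookkeeping: together with connectedness these are the entire mathematical content of the proposition.

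The gap is that (A) and (B) are never actually proved. You establish them only in the split case, correctly identify the non-split case as the crux, and then write that you \emph{expect} to settle it with the Sylow $2$-theory. That expectation is not an argument: conjugacy of Sylow $2$-subgroups in groups of finite Morley rank is indeed available, but the assertion that the Frobenius condition forces the Sylow $2$-subgroup containing $i$ to have a unique involution is precisely what needs proof (one must, for instance, exclude a Klein four-group inside $H$, and nothing in your sketch does this). Likewise your dihedral analysis only treats the case where $ij$ has finite even order and is silent on odd or infinite order, i.e.\ on exactly the configuration you yourself flag as problematic. The paper does not prove these facts either --- it quotes them: (A) and (B) are Lemma 11.20 of \cite{borovik-nesin} (due to Delahan and Nesin), and the identification of the third case with even type is taken from the proof of Theorem 2 of \cite{altinel-berkman-wagner}. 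So your proposal agrees with the paper on the connectedness step and on the logical skeleton, but stalls exactly where the paper invokes the literature; to be complete it must either cite those results or reproduce the Delahan--Nesin argument in full.
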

\begin{proof}
We first show that $H$ must be connected: If $H$ is not connected, then $\bigcup_{g \in G}(H^0\setminus \{ 1 \})^g$ and $\bigcup_{g \in G}(H \setminus H^0)^g$ would be two disjoint generic subsets of $G$. This is impossible because $G$ is connected.

If $H$ contains an involution, then Delahan and Nesin showed that this involution must be unique and moreover all involutions in $G$ are conjugate and hence $G \setminus ( \bigcup_{g \in G} H^g )$ cannot contain any involution (Lemma 11.20 of \cite{borovik-nesin}). In particular, $G$ is of odd type because the connected subgroup $H$ contains a unique involution.

If $G \setminus ( \bigcup_{g \in G} C^g )$ contains an involution, then the proof of \cite[Theorem 2]{altinel-berkman-wagner} shows that $G$ is of even type.
\end{proof}

\begin{remark} \label{rem:even-type}
If $H<G$ is of even type, then Alt{\i}nel, Berkman, and Wagner showed in~\cite{altinel-berkman-wagner} that there is a definable normal subgroup $N$ such that $N \cap H = 1$ and $N$ contains all involutions of $G$. By Lemma 11.38 of \cite{borovik-nesin} either $G = N \rtimes H$ splits or $HN/N < G/N$ is a Frobenius group of finite Morley rank. 
Now if $HN/N < G/N$ splits, then it is easy to see that $H<G$ must split. Hence a non-split Frobenius group of minimal Morley rank cannot be of even type. Therefore to show that all Frobenius groups of finite Morley rank split, it suffices to consider Frobenius groups of odd and degenerate type.
\end{remark}

\subsection{Frobenius groups of odd type}
Let $H<G$ be a connected Frobenius group of finite Morley rank and odd type. Note that $G$ contains a single conjugacy class of involutions which we denote by $J$. Moreover, $J$ has Morley degree $1$.

\begin{proposition}[Proposition 11.18 of \cite{borovik-nesin}]
Let $H<G$ be a connected Frobenius group of finite Morley rank and odd type and $J$ its set of involutions.
If $a \in J^{\cdot 2} \setminus \{1\}$ and $i \in J$, then $\Cen(a) \cap \Cen(i) = \{ 1 \}$. 
\end{proposition}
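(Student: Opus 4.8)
The plan is to reduce the proposition to a single ``fixed-point-free'' statement about translations, namely that $J^{\cdot 2} \cap \Cen(i) = \{1\}$ for every $i \in J$, and to power everything by the malnormality of the Frobenius complement. First I would record the structural input from odd type. Since $H$ is connected and contains a unique involution (Lemma 11.20 of \cite{borovik-nesin}), that involution is central in $H$, and malnormality yields $\Cen(i) = H$ for the involution $i \in H$: indeed any $x$ centralizing $i$ puts $i \in H \cap H^x$, forcing $x \in H$. As all involutions are conjugate, $\Cen(i)$ is a conjugate of $H$ for every $i \in J$, with $i$ as its unique involution. From malnormality I would then use two standard facts freely: $N_G(\Cen(i)) = \Cen(i)$, and distinct conjugates of $H$ intersect trivially.

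A key consequence I would isolate first is that for $g \in \Cen(i) \setminus \{1\}$ one has $\Cen(g) \subseteq \Cen(i)$: any $x \in \Cen(g)$ satisfies $g \in \Cen(i) \cap \Cen(i)^x$, so $\Cen(i)^x = \Cen(i)$ and hence $x \in N_G(\Cen(i)) = \Cen(i)$. Granting the lemma $J^{\cdot 2} \cap \Cen(i) = \{1\}$, the proposition is then immediate: if $1 \neq g \in \Cen(a) \cap \Cen(i)$, then $\Cen(g) \subseteq \Cen(i)$ by the previous remark, and since $g$ centralizes $a$ we get $a \in \Cen(g) \subseteq \Cen(i)$, whence $a \in J^{\cdot 2} \cap \Cen(i) = \{1\}$, contradicting $a \neq 1$.

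So the real content, and the step I expect to be the main obstacle, is the lemma $J^{\cdot 2} \cap \Cen(i) = \{1\}$. Here I would argue by contradiction: suppose $a = i_1 i_2 \in \Cen(i)$ with $i_1 \neq i_2$ in $J$ and $a \neq 1$. Both $i_1$ and $i_2$ invert $a$ (for instance $i_1 a i_1 = i_2 i_1 = a^{-1}$), hence both normalize $\langle a \rangle$. Since $i_1 \neq i_2$, at least one of them, say $t$, differs from $i$; because $i$ is the unique involution of $\Cen(i)$, this forces $t \notin \Cen(i)$. Now $\langle a \rangle \subseteq \Cen(i)$ while $\langle a \rangle = \langle a \rangle^{t} \subseteq \Cen(i)^{t}$, so $\langle a \rangle \subseteq \Cen(i) \cap \Cen(i)^{t}$. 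Since $t \notin \Cen(i) = N_G(\Cen(i))$, the conjugates $\Cen(i)$ and $\Cen(i)^{t}$ are distinct and therefore meet trivially, forcing $a = 1$, a contradiction.

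The delicate points are purely the bookkeeping of which inverting involution avoids $\Cen(i)$ and the careful invocation of self-normalization together with the trivial-intersection property of distinct conjugates; no genericity or rank computation is required. Finite Morley rank enters only indirectly, through the odd-type structure theory that identifies $\Cen(i)$ with a conjugate of $H$ and guarantees the uniqueness and conjugacy of involutions. Once those inputs are in place, the argument is entirely group-theoretic.
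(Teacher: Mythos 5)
Your argument is correct. Note first that the paper does not actually prove this statement: it is imported wholesale as Proposition~11.18 of Borovik--Nesin, so there is no in-paper proof to compare against. What you have done is reconstruct that fact from first principles, and the reconstruction is sound: the reduction of the proposition to the lemma $J^{\cdot 2}\cap\Cen(i)=\{1\}$ via the malnormality chain ($1\neq g\in\Cen(i)$ implies $\Cen(g)\subseteq\Cen(i)$, using $N_G(\Cen(i))=\Cen(i)$ and trivial intersection of distinct conjugates) is exactly the standard ``CIT'' mechanism for Frobenius complements, and your proof of the lemma itself (both factors of $a=i_1i_2$ invert $a$, one of them is an involution distinct from $i$ and hence lies outside $\Cen(i)$, so $\langle a\rangle$ sits in two distinct conjugates of $H$ and must be trivial) is complete. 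The only external inputs you use --- uniqueness of the involution in $H$ and conjugacy of all involutions, i.e.\ the Delahan--Nesin lemma --- are exactly the ones the paper itself records in its classification of connected Frobenius groups into odd/degenerate/even type, so nothing is circular. This is in fact the same fixed-point-freeness of translations that reappears later in the paper (e.g.\ in \cref{lem:translations have no fixed point} and \cref{lem:involutions in Frob}), so your lemma is consistent with, and essentially re-derives, statements the authors prove elsewhere in the abstract mock-hyperbolic setting.
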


\begin{lemma}\label{lem:involutions in Frob}
Let $H<G$ be a connected Frobenius group of finite Morley rank and odd type and $J$ its set of involutions. Fix distinct involutions $i,j \in J$.
\begin{enumerate}
\item If $a \in iJ \setminus \{ 1 \}$, then $\Cen(a) \subseteq iJ$ is a uniquely $2$-divisible abelian group;
\item $iJ$ is uniquely $2$-divisible;
\item $J$ acts regularly on itself, i.e., given $i,j \in J$ there is a unique $k \in J$ such that $j = i^k$;
\item $iJ \cap jJ$ is uniquely $2$-divisible;
\item $\Cen(ij) = iJ \cap jJ$;
\item The family $\{ \Cen(a) \setminus \{1\} : a \in Q^{\cdot 2} \setminus \{ 1 \} \}$ forms a partition of $J^{\cdot 2} \setminus \{ 1 \}$.
\end{enumerate}
\end{lemma}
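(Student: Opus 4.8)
The plan is to make part (a) carry all the weight: once it is established, parts (b)--(f) follow by short formal arguments. The single most useful preliminary observation is that, for a fixed $i\in J$, the set $iJ$ is essentially the inversion set $\Inv(i)=\{g\in G: g^i=g^\inv\}$. Indeed, for $j\in J$ one computes $(ij)^i=i(ij)i=ji=(ij)^\inv$, so $iJ\subseteq\Inv(i)$; conversely, if $g^i=g^\inv$ then $igig=1$, so $ig\in J\cup\{1\}$, and hence $\Inv(i)=iJ\cup\{i\}$ with $i\notin iJ$. I will also record that $iJ$ is closed under the square map, since $(ij)^2=i\cdot i^j\in iJ$.

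For part (a), fix $a\in iJ\setminus\{1\}$; then $a\in J^{\cdot 2}\setminus\{1\}$ and $a^i=a^\inv$. First I would show $\Cen(a)$ contains no involution: an involution $t\in\Cen(a)$ would give $a\in\Cen(a)\cap\Cen(t)=\{1\}$ by Proposition~11.18 of \cite{borovik-nesin}, a contradiction. Hence $\Cen(a)$ is uniquely $2$-divisible by \cref{no-involutions}. Next, the relation $a^i=a^\inv$ yields by a direct computation that $a^{g^i}=a$ for every $g\in\Cen(a)$, so $i$ normalizes $\Cen(a)$ and acts on it as an involutory automorphism; this action is fixed-point free, since a fixed point would lie in $\Cen(i)\cap\Cen(a)=\{1\}$, again by Proposition~11.18. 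Now \cref{prop:neumann_abelian} forces $\Cen(a)$ to be abelian with $i$ acting by inversion, so $\Cen(a)\subseteq\Inv(i)$; as $\Cen(a)$ has no involutions, $i\notin\Cen(a)$, and therefore $\Cen(a)\subseteq iJ$. Taking $a=b$ for an arbitrary $b\in iJ\setminus\{1\}$ then shows, as a by-product, that $iJ$ contains no involution (since $b\in\Cen(b)$ and $\Cen(b)$ has no involution).

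With (a) in hand, part (b) is the statement that the square map $iJ\to iJ$ is a bijection. It is injective: if $x^2=y^2=a$ then $x,y\in\Cen(a)$, which is uniquely $2$-divisible when $a\neq 1$, while $a=1$ forces $x=y=1$ because $iJ$ has no involution; it is surjective because for $a\neq 1$ we have $a\in\Cen(a)\subseteq iJ$, so the square root of $a$ taken inside $\Cen(a)$ already lies in $iJ$. Part (c) is then a reformulation of (b): for $i\neq j$ the equation $i^k=j$ is equivalent to $(ik)^2=ij$ with $ik\in iJ$, so existence and uniqueness of the midpoint $k$ is exactly bijectivity of the square map; the degenerate case $i=j$ is handled by noting that $i^k=i$ forces $i$ and $k$ to commute, so if $k\neq i$ then $ik$ would be an involution in $iJ$, which is impossible. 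For part (e), the inclusion $\Cen(ij)\subseteq iJ\cap jJ$ comes from applying (a) to $ij\in iJ$ and to $(ij)^\inv=ji\in jJ$, while $iJ\cap jJ\subseteq\Cen(ij)$ follows because any $g\in iJ\cap jJ$ satisfies $g^i=g^j=g^\inv$, whence $g^{ij}=g$. Part (d) is then immediate from (e) together with (a).

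Finally, for part (f) (with $Q=J$): every $\Cen(a)\setminus\{1\}$ with $a\in J^{\cdot 2}\setminus\{1\}$ is contained in $J^{\cdot 2}\setminus\{1\}$ by (a), and the family covers $J^{\cdot 2}\setminus\{1\}$ since $b\in\Cen(b)$ for each such $b$; disjointness follows from abelianity, because if $1\neq b\in\Cen(a)\cap\Cen(a')$, then $b\in\Cen(a)$ with $\Cen(a)$ abelian gives $\Cen(a)\subseteq\Cen(b)$, and symmetrically $\Cen(b)\subseteq\Cen(a)$, so $\Cen(a)=\Cen(b)$, and likewise $\Cen(a')=\Cen(b)$. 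The main obstacle is part (a), and within it the verification needed to invoke \cref{prop:neumann_abelian}: one must establish simultaneously that $\Cen(a)$ is uniquely $2$-divisible, that $i$ normalizes it, and that $i$ acts without fixed points. All three rest on the Frobenius hypothesis, entering through Proposition~11.18 of \cite{borovik-nesin} (which supplies both $\Cen(a)\cap\Cen(i)=\{1\}$ and the absence of involutions in $\Cen(a)$) and through the fact that $a$ is inverted by $i$. Once (a) is secured the geometry of $iJ$ is completely controlled, and the remaining parts are essentially bookkeeping.
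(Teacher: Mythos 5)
Your proposal is correct and follows essentially the same route as the paper: part (a) is obtained from Proposition 11.18 of \cite{borovik-nesin} together with \cref{no-involutions} and \cref{prop:neumann_abelian}, and the remaining parts are then formal consequences. The only (harmless) deviations are cosmetic: you make the inclusion $\Cen(a)\subseteq iJ$ explicit via $\Inv(i)=iJ\cup\{i\}$, you prove (e) before (d) and deduce (d) from it, and your uniqueness argument for square roots in (b) runs through the unique $2$-divisibility of $\Cen(a)$ rather than the paper's direct computation with $\Cen(i)\cap J^{\cdot 2}$.
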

\begin{proof}
1) By the previous proposition we have $\Cen(a) \cap \Cen(i) = \{1\}$ for all involutions. In particular, $\Cen(a)$ does not contain an involution and hence is uniquely $2$-divisible by \cref{no-involutions}. Note that $i$ acts on $\Cen(a)$ as a fixed point free involutionary automorphism. Hence by \cref{prop:neumann_abelian} $\Cen(a)$ is abelian and inverted by $i$.

2) Fix $a = ip \in iQ \setminus \{ 1 \}$. Since $\Cen(a) \subseteq iJ$ is uniquely $2$-divisible we have $a = b^2$ for some $b = iq \in iQ$. If $a = c^2$ for another element $c = ir \in iJ$, then $ii^p = ii^r$ and hence $pr \in \Cen(i) \cap Q^{\cdot 2} = \{ 1 \}$. Thus $b=c$. 

3) Note that $j = i^p$ if and only if $ij = ii^p = (ip)^2$. Since $iJ$ is uniquely $2$-divisible, $p$ exists and is unique.

4) It suffices to show that $iJ \cap jJ$ is $2$-divisible. Given $a \in iJ \cap jJ$ we have $\Cen(a) \subseteq iJ \cap jJ$ and hence $a = b^2$ for some $b \in \Cen(a) \subseteq iJ \cap jJ$.

5) By 3) we have $\Cen(ij) \subseteq iJ \cap jJ$. Hence it remains to show that $iJ \cap jJ \subseteq \Cen(ij)$.
Given $a \in iJ \cap jJ$, $a$ is inverted by $i$ and $j$ and hence $a \in \Cen(ij)$.

6) Suppose $c \in \Cen(a) \cap \Cen(b)$ for some $c \neq 1$. Then $a,b \in \Cen(c)$ and hence $a \in \Cen(b)$ because $\Cen(c)$ is abelian. This implies $\Cen(b) \subseteq \Cen(a)$ because $\Cen(b)$ is abelian. Hence $\Cen(a) = \Cen(b)$ by symmetry. This implies 5).
\end{proof}

Given two distinct involutions $i \neq j$ in $J$ we define the line
\[ \ell_{ij} = \{ k \in Q : (ij)^k = (ij)^\inv \}. \]

\begin{lemma} \label{lem:odd_lines}
Let $H<G$ be a connected Frobenius group of finite Morley rank and odd type and $J$ its set of involutions.
 Let $i\neq j\in J$. Then $i\ell_{ij} = \Cen(ij)$.
\end{lemma}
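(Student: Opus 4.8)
The plan is to reduce the whole statement to a single equivalence: for $k \in J$, the element $ik$ commutes with $ij$ if and only if $k$ inverts $ij$, i.e. $k \in \ell_{ij}$. To see this, I would unwind the commutation $ik \cdot ij = ij \cdot ik$, cancel the leading $i$ to get $kij = jik$, and then right-multiply by $k$ (using $k^2 = 1$) to arrive at $(ij)^k = kijk = ji = (ij)^\inv$, which is exactly the defining condition for $k \in \ell_{ij}$. Thus the entire argument is a short manipulation of involutions, and the only conceptual input is the description of $\Cen(ij)$ provided by \cref{lem:involutions in Frob}.

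For the inclusion $i\ell_{ij} \subseteq \Cen(ij)$, I would take $k \in \ell_{ij}$, so that $(ij)^k = (ij)^\inv$, and run the computation above to conclude that $ik$ commutes with $ij$, hence $ik \in \Cen(ij)$. The degenerate cases are harmless: both $i$ and $j$ lie in $\ell_{ij}$ and yield $1 \in \Cen(ij)$, respectively $ij \in \Cen(ij)$.

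For the reverse inclusion $\Cen(ij) \subseteq i\ell_{ij}$, the crucial input is part 5 of \cref{lem:involutions in Frob}, namely $\Cen(ij) = iJ \cap jJ \subseteq iJ$. This guarantees that every $a \in \Cen(ij)$ can be written as $a = ik$ for some $k \in J$. Since $a = ik$ commutes with $ij$ by assumption, the equivalence above yields $k \in \ell_{ij}$, and therefore $a = ik \in i\ell_{ij}$.

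I do not expect a genuine obstacle here: the statement is essentially a bookkeeping identity, and the one nontrivial ingredient — the identification $\Cen(ij) = iJ \cap jJ$, which ensures that every element centralizing $ij$ is already of the form $ik$ with $k \in J$ — is supplied by the preceding lemma. The only point requiring care is to verify that each step of the chain ($ik$ commutes with $ij$ $\iff$ $kij = jik$ $\iff$ $(ij)^k = (ij)^\inv$) is genuinely reversible, so that both inclusions can be read off from the same computation.
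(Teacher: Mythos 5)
Your proof is correct and follows essentially the same route as the paper: the easy inclusion $i\ell_{ij} \subseteq \Cen(ij)$ by direct computation with involutions, and the reverse inclusion via the containment $\Cen(ij) \subseteq iJ$ from \cref{lem:involutions in Frob}, which lets you write any $\sigma \in \Cen(ij)$ as $\sigma = ik$ and then read off $(ij)^k = (ij)^{-1}$. The only cosmetic difference is that you unwind the commutation relation $ik\cdot ij = ij\cdot ik$ directly, whereas the paper computes $(ji)^k = (ij)^{ik} = ij$; these are the same calculation.
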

\begin{proof}
Clearly $i\ell_{ij} \subseteq \Cen(ij)$. On the other hand we have $\Cen(ij) \subseteq iJ$ by Part 6 of \ref{lem:involutions in Frob}. 
Given $\sigma = ik \in \Cen(ij)$, we have
\[ (ji)^k = (ij)^{ik} = (ij)^\sigma = ij. \]
Therefore $(ij)^k = (ij)^\inv$ and thus $k \in \ell_{ij}$.
\end{proof}

\begin{lemma}
Let $H<G$ be a connected Frobenius group of finite Morley rank and odd type and $J$ its set of involutions. Fix  $i\neq j\in J$ and
let $p,q \in \ell_{ij}$ be distinct involutions. Then $\ell_{pq} = \ell_{ij}$.
\end{lemma}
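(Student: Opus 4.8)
The plan is to reduce the equality $\ell_{pq}=\ell_{ij}$ to an equality of centralizers and then translate it back through the coset description of lines supplied by \cref{lem:odd_lines}.

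First I would unwind the hypothesis: since $p,q\in\ell_{ij}$, by definition both $p$ and $q$ invert $ij$, that is $(ij)^p=(ij)^q=(ij)^\inv$. Conjugating successively,
\[ (ij)^{pq} = \bigl((ij)^p\bigr)^q = \bigl((ij)^\inv\bigr)^q = \bigl((ij)^q\bigr)^\inv = \bigl((ij)^\inv\bigr)^\inv = ij, \]
so that $pq\in\Cen(ij)$. As $p\neq q$ are involutions we have $pq\neq 1$, hence $pq\in J^{\cdot 2}\setminus\{1\}$.

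The main step is then to show $\Cen(pq)=\Cen(ij)$, and for this I would invoke the partition statement in part 6 of \cref{lem:involutions in Frob}: the sets $\Cen(a)\setminus\{1\}$, as $a$ ranges over $J^{\cdot 2}\setminus\{1\}$, partition $J^{\cdot 2}\setminus\{1\}$. The element $pq$ lies in $\Cen(pq)\setminus\{1\}$ trivially, and it lies in $\Cen(ij)\setminus\{1\}$ by the previous paragraph. Being a common nontrivial element of two blocks of a partition, it forces those blocks — and hence the abelian groups $\Cen(pq)$ and $\Cen(ij)$ — to coincide.

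Finally I would feed this back into \cref{lem:odd_lines}, which gives $i\ell_{ij}=\Cen(ij)$ and $p\ell_{pq}=\Cen(pq)$, equivalently $\ell_{ij}=i\,\Cen(ij)$ and $\ell_{pq}=p\,\Cen(pq)$. Since $p\in\ell_{ij}=i\,\Cen(ij)$, I may write $p=ia$ with $a=ip\in\Cen(ij)=\Cen(pq)$, and then
\[ \ell_{pq} = p\,\Cen(pq) = ia\,\Cen(ij) = i\,\Cen(ij) = \ell_{ij}, \]
using that $a\in\Cen(ij)$ absorbs into the subgroup $\Cen(ij)$. I do not expect a serious obstacle: everything is already packaged in the earlier lemmas, and the only genuinely load-bearing input is the partition property of part 6 of \cref{lem:involutions in Frob}, which upgrades ``the two centralizers overlap nontrivially'' to ``the two centralizers are equal''. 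The only point requiring care is the bookkeeping of the coset identities $\ell_{ij}=i\,\Cen(ij)$ and $\ell_{pq}=p\,\Cen(pq)$ obtained from \cref{lem:odd_lines}.
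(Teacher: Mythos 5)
Your proof is correct and follows essentially the same route as the paper: derive $pq\in\Cen(ij)\setminus\{1\}$ from the definition of $\ell_{ij}$, use the partition property (part 6 of \cref{lem:involutions in Frob}) to get $\Cen(pq)=\Cen(ij)$, and then translate back via the coset identities $\ell_{ij}=i\,\Cen(ij)$ and $\ell_{pq}=p\,\Cen(pq)$ from \cref{lem:odd_lines}, using $ip\in\Cen(ij)$ to identify the cosets. The only difference is that you spell out the conjugation computation and the partition argument that the paper leaves implicit.
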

\begin{proof}
We have $pq \in \Cen(ij)$ and hence $\Cen(pq) = \Cen(ij)$. Moreover, $ip \in \Cen(ij) = \Cen(pq)$ and hence
\[ \ell_{pq} = p\Cen(pq) = i\Cen(ij) = \ell_{ij}. \]
\end{proof}

Hence the set $J$ together with the above notion of lines satisfies conditions a) and b) of \cref{prop:partial_mock hyperbolic reflection}.

\begin{lemma}
Let $H<G$ be a connected Frobenius group of finite Morley rank and odd type and $J$ its set of involutions.
  Let $i,j \in J$ be distinct involutions and let $T$ be a subgroup of $G$ such that $\Cen(ij) \leq T \leq N_G(\Cen(ij))$. Then $T= \Cen(ij)(T \cap \Cen(i))$.
\end{lemma}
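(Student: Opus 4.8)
The plan is to reduce everything to a single geometric fact about the line $\ell_{ij}$. Write $A = \Cen(ij)$. By \cref{lem:involutions in Frob} and \cref{lem:odd_lines} the group $A$ is abelian and uniquely $2$-divisible, it equals $i\ell_{ij}$, and $i$ inverts it; moreover $A$ is self-centralizing, since it is abelian and contains $ij \neq 1$, so $\Cen(A) = \Cen(ij) = A$. As $T \leq N_G(\Cen(ij)) = N_G(A)$, the subgroup $A$ is normal in $T$, hence $A\,(T \cap \Cen(i)) = (T \cap \Cen(i))\,A$ is a subgroup of $T$ and the inclusion $A(T\cap\Cen(i)) \subseteq T$ is immediate. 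It therefore suffices to prove $T \subseteq A(T\cap\Cen(i))$, i.e. that every $t \in T$ can be written as $t = hc$ with $h \in T\cap\Cen(i)$ and $c \in A$.

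The key step, and the one I expect to be the main obstacle, is the claim that $i^t \in \ell_{ij}$ for every $t \in T$. Here I would exploit the inversion action on $A$: since $i$ inverts $A$ and $t$ normalizes $A$, a direct computation shows that $i^t$ also inverts $A$ (for $c \in A$ one checks $c^{i^t} = c^{-1}$ using that $tct^{-1} \in A$). Consequently the product $i\,i^t$ centralizes $A$, so $i\,i^t \in \Cen(A) = A$, and therefore $i^t \in iA = \ell_{ij}$, using $A = i\ell_{ij}$. The whole point of this maneuver is that the inversion characterization of $A$ converts the normalizing hypothesis on $T$ into the statement that $T$ moves $i$ within its own line $\ell_{ij}$.

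Once this is established, the decomposition follows from unique $2$-divisibility. Since $i$ and $i^t$ both lie in $\ell_{ij}$, we have $i\,i^t \in i\ell_{ij} = A$, and unique $2$-divisibility of $A$ furnishes a unique $c \in A$ with $c^2 = i\,i^t$. Writing $c = ik$ with $k \in \ell_{ij}$ (possible as $c \in A = i\ell_{ij}$) gives $c^2 = i\,i^k$, so $i^k = i^t$ and hence $i^c = i^k = i^t$. Then $i^{tc^{-1}} = (i^t)^{c^{-1}} = (i^c)^{c^{-1}} = i$, so $h := tc^{-1} \in \Cen(i)$, and clearly $h \in T$ because $t, c \in T$. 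Thus $t = hc \in (T\cap\Cen(i))A = A(T\cap\Cen(i)) = \Cen(ij)(T\cap\Cen(i))$, completing the argument. The remaining computations (that $i^t$ inverts $A$ and that $c = ik$ yields $i^c = i^k$) are routine manipulations with the involutions $i, k$ and the abelian group $A$, so no genuine difficulty arises beyond the inversion argument of the second paragraph.
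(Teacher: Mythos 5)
Your proof is correct, and it takes a genuinely different route from the paper's. The paper starts from the global decomposition $G = iJ\,\Cen(i)$ of \cref{prop:decomposition}, writes $t = ikg$ with $k \in J$ and $g \in \Cen(i)$, and then invokes the line geometry: $t$ normalizes $\lambda = \ell_{ij}$ because $N_G(\Cen(ij)) = N_G(\lambda)$, and a point-reflection carrying $\lambda$ to a line meeting it must lie on $\lambda$ (\cref{line_lemma} together with \cref{lem:basic_properties}), which forces $k \in \lambda$ and hence $ik \in \Cen(ij)$. You instead argue entirely inside $A = \Cen(ij)$: since $t$ normalizes $A$ and $i$ inverts it, $i^t$ also inverts $A$, so $i\,i^t$ centralizes $A$ and lands in $A$ because $A$ is self-centralizing; unique $2$-divisibility then produces $c \in A$ with $i^c = i^t$, whence $tc^{-1} \in T \cap \Cen(i)$. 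All the steps check out, including the normality of $A$ in $T$ that converts $(T\cap\Cen(i))A$ into $A(T\cap\Cen(i))$. What your argument buys is independence from the mock-hyperbolic machinery: it uses only that $\Cen(ij)$ is an abelian, uniquely $2$-divisible, self-centralizing subgroup of $iJ$ normalized by $T$ and inverted by $i$ (all supplied by \cref{lem:involutions in Frob} and \cref{lem:odd_lines}), so it is more self-contained and would transfer to any group with those local hypotheses; the paper's version is shorter once the line lemmas are in place and makes the geometric content (that $T$ moves $i$ within $\ell_{ij}$) explicit. One small remark: the fact that $i$ inverts $\Cen(ij)$ is established in the proof of \cref{lem:involutions in Frob} rather than in its statement, but it also follows at once from $\Cen(ij) \subseteq iJ$.
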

\begin{proof}
  Note that $G$ can be decomposed as $G = iJ\Cen(i)$ and put $\lambda = \ell_{ij}$.
  Given $t \in T$, we can write $t =ikg$ for (unique) elements $k \in J$, $g \in \Cen(i)$. Then
  \[ \lambda = \lambda^t = \lambda^{kg}. \]
  In particular, $i \in \lambda^k \cap \lambda$. If $\lambda^k \cap \lambda = \{i\}$, then $k=i \in \lambda$. If $\lambda^k = \lambda$, then $k \in \lambda$ by part a) of \cref{lem:basic_properties}.
  Therefore $t = ikg \in \Cen(ij)(T \cap \Cen(i))$.
\end{proof}

\begin{proposition}
Let $H<G$ be a connected Frobenius group of finite Morley rank and odd type and $J$ its set of involutions.
If $i \neq j$ are two distinct involutions in $J$, then
\[ N_G(\Cen(ij)) \cap Q^{\cdot 2} = \Cen(ij). \]
\end{proposition}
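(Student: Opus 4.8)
\section*{Proof proposal}

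The plan is to prove the two inclusions of $N_G(\Cen(ij))\cap J^{\cdot 2}=\Cen(ij)$ separately, with all the content in ``$\subseteq$''. For ``$\supseteq$'', note that $i\in\ell_{ij}$ (since $(ij)^i=(ij)^\inv$), so by \cref{lem:odd_lines} and \cref{lem:basic_properties}~b) we have $\Cen(ij)=i\,\ell_{ij}=\ell_{ij}^{\cdot 2}\subseteq iJ\subseteq J^{\cdot 2}$, while trivially $\Cen(ij)\subseteq N_G(\Cen(ij))$. For ``$\subseteq$'', fix $\sigma\in N_G(\Cen(ij))\cap J^{\cdot 2}$ and write $\sigma=ab$ with $a,b\in J$; we may assume $a\neq b$, as otherwise $\sigma=1$. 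Since $N_G(\Cen(ij))=N_G(\ell_{ij}^{\cdot 2})=N_G(\ell_{ij})$ by \cref{lem:basic_properties}~f), it suffices to show $a,b\in\ell_{ij}$, for then $ab\in\ell_{ij}^{\cdot 2}=\Cen(ij)$ by \cref{lem:basic_properties}~c). Writing $\lambda=\ell_{ij}$ and $C=\Cen(ij)=\lambda^{\cdot 2}$, the goal becomes: an element of $J^{\cdot 2}$ normalizing $C$ already centralizes $C$.

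Next I would set up the relevant decomposition. Applying the preceding lemma to $T=N_G(C)$ gives $N_G(C)=C\cdot(N_G(C)\cap\Cen(i))$, and since $\sigma_0:=ij\in J^{\cdot 2}\setminus\{1\}$, Proposition 11.18 of \cite{borovik-nesin} yields $\Cen(\sigma_0)\cap\Cen(i)=\{1\}$, so $C\cap\Cen(i)=\{1\}$ and hence $N_G(C)=C\rtimes D$ with $D=N_G(C)\cap\Cen(i)$. Writing $\sigma=cd$ with $c\in C$, $d\in D$, the claim reduces to $d=1$. Here I would use commuting transitivity: as $C$ is abelian, conjugation by $\sigma$ and by $d$ agree on $C$, so $\Cen(\sigma)\cap C$ is exactly the fixed-point set of $d$ on $C$; and if this set contains some $\tau\neq 1$, then $\tau$ and $\sigma$ are commuting nonidentity elements of $J^{\cdot 2}$, whence $\Cen(\sigma)=\Cen(\tau)=\Cen(\sigma_0)=C$ by the partition in \cref{lem:involutions in Frob}, giving $\sigma\in C$ as desired. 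Note moreover that $\sigma$ is never an involution here: a product of two distinct commuting involutions would lie in $J^{\cdot 2}\cap\Cen(a)=\{1\}$ by \cref{lem:translations have no fixed point}~b), so no two distinct involutions commute. Thus everything comes down to producing a nontrivial fixed point of $d$ on $C$, equivalently ruling out $d\neq 1$ acting fixed-point-freely.

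This last point is the main obstacle. The natural idea — that the involution $a$ inverts $\sigma=ab$ and should therefore reflect $C$ and force a fixed point — does not apply directly, because $a$ need not normalize $C$ (indeed $a\in N_G(C)$ is essentially the conclusion). I expect this to be the crux, and I would resolve it exactly as in the quasidirect-product computation of \cref{prop:quasidirect_normalizer}: pass to square roots in the uniquely $2$-divisible set $iJ$ (\cref{lem:involutions in Frob}) in order to rewrite the condition $\sigma\in N_G(C)$ on $u=ia$ and $v=ib$, and then invoke the malnormality of the complement $H=\Cen(i)$, so that an intersection of the form $H\cap H^{g}$ is forced to be trivial, concluding $ia,ib\in C$, i.e. $a,b\in\lambda$. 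This is the only step that uses malnormality rather than the formal line axioms, and it is precisely where the Frobenius hypothesis genuinely enters.
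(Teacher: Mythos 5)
Your reduction is sound as far as it goes: the inclusion $\supseteq$, the identification $N_G(\Cen(ij))=N_G(\ell_{ij})$, the decomposition $N_G(\Cen(ij))=\Cen(ij)\rtimes D$ with $D=N_G(\Cen(ij))\cap\Cen(i)$ (this is exactly the lemma preceding the proposition in the paper), and the observation that a nontrivial fixed point of $d$ on $\Cen(ij)$ finishes the argument via the partition of $J^{\cdot 2}\setminus\{1\}$ into centralizers of translations. But the step you yourself flag as the crux is only asserted, not carried out, and the analogy with \cref{prop:quasidirect_normalizer} does not transfer. There, the subgroup whose malnormality is invoked is the Frobenius complement $H$, which coincides with $\lambda_0^{\cdot 2}$: the computation produces a nontrivial element of $H\cap H^{b^{1/2}}$ and malnormality forces $b^{1/2}\in H=\lambda_0^{\cdot 2}$, which is the desired conclusion. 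In the odd-type situation the line-square is $\ell_{ij}^{\cdot 2}=\Cen(ij)$, which is \emph{not} malnormal --- by \cref{normalizer_index} it has infinite index in its own normalizer --- but only a TI-subgroup, so the analogous intersection argument would return $b^{1/2}\in N_G(\Cen(ij))$ and the argument becomes circular; and the genuinely malnormal subgroup $\Cen(i)$ does not occupy the structural position that $H$ has in that computation, so no intersection $\Cen(i)\cap\Cen(i)^g$ arises from the hypothesis $ab\in N_G(\Cen(ij))$. That malnormality plus the formal line axioms cannot suffice is visible elsewhere in the paper: for sharply $2$-transitive groups the identity $N_G(\Cen(\sigma))\cap J^{\cdot 2}=\Cen(\sigma)$ is precisely planarity of the maximal near-field, which must be \emph{assumed} separately in \cref{cor:mock hyperbolic reflection from 2-sharp} even though the point stabilizer there is malnormal.

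The paper closes this gap with finite Morley rank machinery rather than malnormality. Assuming some $a\in\bigl(N_G(\Cen(ij))\cap J^{\cdot 2}\bigr)\setminus\Cen(ij)$, it forms the solvable group $T=\Cen(ij)\rtimes A$ where $A=\Cen(a)\cap N_G(\Cen(ij))$ is abelian by \cref{lem:involutions in Frob}; the preceding lemma exhibits a second complement $T\cap\Cen(i)$, and the conjugacy of complements in solvable groups of finite Morley rank (Theorem 9.11 of Borovik--Nesin) makes $A$ conjugate to a subgroup of $\Cen(i)$. Since $a\neq 1$ is a translation and $\Cen(a)\cap\Cen(k)=\{1\}$ for every involution $k$, this is a contradiction. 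Some appeal of this kind to the solvable theory of groups of finite Morley rank is the ingredient missing from your sketch.
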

\begin{proof}
Assume there is $a \in (N_B(\Cen(ij)) \cap J^{\cdot 2} ) \setminus \Cen(ij)$ and consider the group $A = \Cen(a) \cap N_B(\Cen(ij))$. Note that $A$ is abelian and hence $H = \Cen(ij) \rtimes A$ is a solvable subgroup of $N_B(\Cen(ij))$. By the previous lemma we have $H = \Cen(ij) \rtimes (H \cap \Cen(i))$. Now Theorem 9.11 of \cite{borovik-nesin} implies that $A$ and $H \cap \Cen(i)$ are conjugate. This is impossible.
\end{proof}

\begin{theorem}\label{thm:n=2k+1}
Let $H<G$ be a connected Frobenius group of Morley rank $n$ and odd type and let $J$ be the set of all involutions in $G$.
\begin{enumerate}
\item $J$ forms a mock hyperbolic reflection space and all lines in $J$ are infinite.
\item Choose $\Lambda$ such that $(J,\Lambda)$ is a generic mock hyperbolic reflection space such that all lines are of Morley rank $k$ and set $n = \MR(J)$. If $n \leq 2k+1$, then $G$ splits.
\end{enumerate}
\end{theorem}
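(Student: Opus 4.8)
My plan here is to assemble the preceding lemmas. Conditions a) and b) of \cref{prop:partial_mock hyperbolic reflection} have already been verified for $J$ with the lines $\ell_{ij}=\{k\in J:(ij)^k=(ij)^\inv\}$. To upgrade this to a \emph{partial} mock hyperbolic reflection space I would invoke \cref{lem:mock hyperbolic reflection_equivalences}: since $i\in\ell_{ij}$ we have $\ell_{ij}^{\cdot 2}=i\ell_{ij}=\Cen(ij)$ by \cref{lem:odd_lines}, and $N_G(\ell_{ij})=N_G(\ell_{ij}^{\cdot 2})=N_G(\Cen(ij))$ by \cref{lem:basic_properties} f); the proposition immediately preceding the theorem then gives $N_G(\ell_{ij})\cap J^{\cdot 2}=\Cen(ij)=\ell_{ij}^{\cdot 2}$, which is exactly condition (ii) of \cref{lem:mock hyperbolic reflection_equivalences}. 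Since $\ell_{ij}$ is defined for \emph{every} pair $i\neq j$ and contains the two distinct points $i,j$, all lines exist, so $(J,\Lambda)$ is a mock hyperbolic reflection space. Finally, a line is infinite because it is in definable bijection with $\Cen(ij)$, and the centralizer of a nontrivial translation in a connected group of finite Morley rank of odd type is infinite; this is the one place in (a) where the finite Morley rank structure (ruling out a finite odd‑order centralizer) is genuinely used.

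\textbf{Part (b), single‑line case.} First I would record the clean fact that, for the involution $i$ lying in the complement $H$, one has $\Cen(i)=H$: malnormality forces $\Cen(i)\leq H$ (if $g$ centralizes $i$ then $i\in H\cap H^g$, so $g\in H$), while uniqueness of the involution in $H$ forces $H\leq\Cen(i)$. Now if $\Lambda$ consists of a single line, then \cref{prop:BHNeumann} makes $iJ$ an abelian normal subgroup and gives $G=iJ\rtimes\Cen(i)=iJ\rtimes H$, i.e.\ $H<G$ splits.

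\textbf{Part (b), the main case.} It remains to exclude more than one line. Assume $\Lambda$ has more than one line. Then \cref{thm:rank_ineq} gives $\MR(J)\geq 2k+1$, so with the hypothesis $\MR(J)\leq 2k+1$ we get $\MR(J)=2k+1$ and hence the connected normal subgroup $N$ of \cref{thm:rank_ineq}, with $\MR(N)=2\MR(J)-k$, $N\approx S$, and $\MR(N\cap\Cen(i))=\MR(J)-k$ for \emph{every} involution $i$. The crux is then a counting contradiction inside $N$. On the one hand, as $i$ ranges over $J$ the sets $(N\cap\Cen(i))\setminus\{1\}$ are pairwise disjoint, because distinct involutions satisfy $\Cen(i)\cap\Cen(j)=\{1\}$; hence each nontrivial element of their union $D$ lies in a unique $\Cen(i)$, so $\MR(D)=\MR(J)+(\MR(J)-k)=2\MR(J)-k=\MR(N)$ and $D$ is generic in $N$. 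Every element of $D$ centralizes, hence fixes, an involution. On the other hand $N\approx S$ and each nontrivial element of $S$ is a translation, hence fixed‑point‑free, so the fixed‑point‑free elements of $N$ are \emph{also} generic. Since $N$ is connected, it has Morley degree $1$ and cannot contain two disjoint generic subsets, a contradiction. Thus $\Lambda$ is a single line and $G$ splits by the previous paragraph.

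\textbf{Main obstacle.} The conceptual point is that one must \emph{exclude} the boundary case $\MR(J)=2k+1$ with more than one line rather than try to read off a Frobenius kernel: the subgroup $N$ produced by \cref{thm:rank_ineq} meets $\Cen(i)=H$ in positive rank $\MR(J)-k\geq 1$, so it is strictly larger than any complement of $H$ and cannot itself serve as the kernel. The real work is therefore in turning the precise ranks $\MR(N)=2\MR(J)-k$ and $\MR(N\cap\Cen(i))=\MR(J)-k$ into the contradiction above; the disjointness of the pieces $N\cap\Cen(i)$ (so that their union has full rank in $N$) together with the degree‑one argument is what closes the case. A secondary technical point, as noted, is the infinitude of lines in (a).
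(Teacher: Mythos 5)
Your proof is correct and follows essentially the same route as the paper: part (a) by combining \cref{lem:odd_lines}, \cref{lem:basic_properties} and the normalizer proposition with \cref{lem:mock hyperbolic reflection_equivalences}, and part (b) by reducing to the single-line case via \cref{prop:BHNeumann} and \cref{thm:rank_ineq} and then contradicting $N \approx S$ in the boundary case $n=2k+1$. The only cosmetic difference is that you establish genericity of $\bigcup_{i\in J}(N\cap\Cen(i))$ in $N$ by a direct rank count using $\MR(N\cap\Cen(i))=n-k$ together with malnormality, whereas the paper invokes its lemma on generic unions of conjugates applied to the connected Frobenius group $N\cap\Cen(i)<N$; both arguments are valid.
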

\begin{proof} 
a) We first show that $J$ forms a mock hyperbolic reflection space. We already know that conditions a) and b) of \cref{prop:partial_mock hyperbolic reflection} are satisfied.
Fix a line $\lambda = \ell_{ij}$. Then $\lambda^\squared = i\lambda$ by \cref{lem:basic_properties} and hence $\lambda^\squared = \Cen(ij)$ by \cref{lem:odd_lines}. Therefore $J$ forms a mock hyperbolic reflection space by \cref{lem:mock hyperbolic reflection_equivalences}.

Moreover, $\Cen(ij)$ is infinite by Proposition 1.1 of \cite{involutions-in-degenerate} and therefore all lines in $J$ must be infinite.

b) Note that if the mock hyperbolic reflection space $J$ consists of a single line, then $H<G$ splits by \cref{prop:BHNeumann}. Hence by \cref{thm:rank_ineq} we may assume $n = 2k+1$. Then again by \cref{thm:rank_ineq} $B$ has a connected normal subgroup $N$ of rank $2n-k$ such that $N \approx S$ where
\[ S = \{ \sigma \in J^{\cdot 2} \setminus \{ 1 \} : \ell_\sigma \text{ exists} \} \cup \{ 1 \} \]
is the set of translations.
Recall that $\MR(S) = 2n-k$ and $\Mdeg(S) = 1$.

On the other hand $N \cap \Cen(i) < N$ is a connected Frobenius group of finite Morley rank and hence
$\bigcup_{i \in Q} N \cap \Cen(i) \subseteq N$ is a generic subset of $N$. This contradicts $N \approx S$.
\end{proof}

As a direct consequence we get the following known corollary (which also follows from Lemma 11.21 and Theorem 11.32 of \cite{borovik-nesin}).
\begin{corollary}
Let $H<G$ be a connected Frobenius group of finite Morley rank of odd type. If $G$ has a non-trivial abelian normal subgroup, then $G$ splits.
\end{corollary}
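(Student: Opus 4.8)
The plan is to obtain the splitting from \cref{prop:BHNeumann}. By \cref{thm:n=2k+1}(a) the set $J$ of involutions of $G$ forms a mock hyperbolic reflection space, so \cref{prop:BHNeumann} applies with $Q = J$. There, condition (c) requires an abelian normal subgroup $A \not\subseteq \bigcap_{i\in J}\Cen(i)$, while condition (h) asserts that $iJ$ is an abelian normal subgroup with $G = iJ \rtimes \Cen(i)$ for every involution $i$. Thus it suffices to check that a nontrivial abelian normal subgroup meets the hypothesis of (c), and then to recognise (h) as the splitting of the Frobenius group.

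For the first point I would show $\bigcap_{i\in J}\Cen(i) = 1$. As $G$ is connected of odd type, $J$ is infinite, so I may fix distinct involutions $i \neq j$. If $a \neq 1$ centralised both $i$ and $j$, then $a$ would centralise the nontrivial translation $ij \in J^{\cdot 2}\setminus\{1\}$, giving $1 \neq a \in \Cen(ij)\cap\Cen(i)$; but Proposition 11.18 of \cite{borovik-nesin} (stated just before \cref{lem:involutions in Frob}) yields $\Cen(ij)\cap\Cen(i) = 1$, a contradiction. Hence $\Cen(i)\cap\Cen(j) = 1$, and a fortiori $\bigcap_{i\in J}\Cen(i) = 1$. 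Any nontrivial abelian normal subgroup $A$ therefore satisfies $A \not\subseteq \bigcap_{i\in J}\Cen(i)$ trivially, so condition (c) holds and \cref{prop:BHNeumann}(h) is available.

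It remains to read the Frobenius splitting off condition (h). Applying it with $i = i_0$, the involution contained in $H$, gives $G = i_0 J \rtimes \Cen(i_0)$ with $i_0 J$ abelian and normal; it only remains to identify $\Cen_G(i_0)$ with $H$. Since $i_0$ is the unique involution of $H$ it is central in $H$, and malnormality forces $\Cen_G(i_0) \subseteq H$: if $g$ centralises $i_0$ then $i_0 \in H \cap H^g$, whence $g \in H$. Thus $\Cen_G(i_0) = H$ and $G = i_0 J \rtimes H$ is the desired splitting $G = N \rtimes H$ with $N = i_0 J$. The argument is short given \cref{prop:BHNeumann}; the only steps needing care are the two auxiliary identifications $\bigcap_{i\in J}\Cen(i) = 1$ and $\Cen_G(i_0) = H$, and I expect the latter---matching the abstract complement $\Cen(i_0)$ produced by (h) with the prescribed Frobenius complement $H$---to be the main point to get right, although it follows immediately from malnormality together with the uniqueness of the involution in $H$.
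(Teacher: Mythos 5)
Your proof is correct and follows exactly the route the paper intends: the paper's own proof is the one-line remark that the corollary ``follows directly from \cref{prop:BHNeumann}'', and you have simply supplied the details it leaves implicit (verifying condition (c) via $\Cen(ij)\cap\Cen(i)=1$, and identifying $\Cen(i_0)=H$ by malnormality to read off the Frobenius splitting from condition (h)). Both auxiliary verifications are sound.
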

\begin{proof}
This follows directly from \cref{prop:BHNeumann}.
\end{proof}

\begin{proposition}
Let $H<G$ be a connected non-split Frobenius group of finite Morley rank and odd type and let $(J, \Lambda)$ be the associated mock hyperbolic reflection space. If generic lines have Morley rank $1$, then $G$ is a non-split sharply $2$-transitive group of characteristic $\neq 2$.
\end{proposition}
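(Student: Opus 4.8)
The plan is to prove that the conjugation action of $G$ on its involution set $J$ is sharply $2$-transitive of characteristic $\neq 2$. First I would normalise the set-up: by \cref{rem: basic generic} I may pass to the subfamily of lines of Morley rank exactly $k=1$, so that $(J,\Lambda)$ is a generic mock hyperbolic reflection space with $\MR(J)=n$, $\Mdeg(J)=1$ and $k=1$; since $G$ is non-split it is not a single line by \cref{prop:BHNeumann}, so $\Lambda$ has more than one line and the machinery of Section~5 applies. Two of the three defining conditions are immediate. The action is \emph{free on pairs}: for distinct $i,j\in J$ any element centralising both centralises $ij\in J^{\cdot 2}\setminus\{1\}$, and $\Cen(ij)\cap\Cen(i)=\{1\}$ by Proposition~11.18 of \cite{borovik-nesin}, so $\Cen(i)\cap\Cen(j)=\{1\}$. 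The characteristic is $\neq 2$: if $j\in\Cen(i)\cap J$ with $j\neq i$ then $ij\in\Cen(i)\cap J^{\cdot 2}=\{1\}$ by \cref{prop:decomposition}, forcing $i=j$; hence $\Cen(i)\cap J=\{i\}$, so $i$ has a unique fixed point in its conjugation action. It therefore remains to prove $2$-transitivity, i.e. that $\Cen(i)$ acts transitively on $J\setminus\{i\}$.

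The heart of the matter — and the step I expect to be the main obstacle — is the rank equality $\MR(\Cen(i))=n$. Since $\Cen(i)$ acts \emph{freely} on $J\setminus\{i\}$ (by the sharpness just established), orbit-stabiliser gives $\MR(\Cen(i))\leq n$, and transitivity of $G$ on $J$ yields $\MR(G)=\MR(J)+\MR(\Cen(i))=n+\MR(\Cen(i))$. I would rule out $\MR(\Cen(i))\leq n-1$ as follows. In that case $\MR(G)\leq 2n-1$; but $\mu(T_\perf)\subseteq G$ and the Section~5 lemma gives $\MR(\mu(T_\perf))\geq 2n-k=2n-1$, so in fact $\MR(G)=2n-1$ and $\MR(\mu(T_\perf))=2n-k$ exactly. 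Then \cref{prop:normal_subgroup} produces a definable connected normal subgroup $N$ with $\MR(N)=2n-1$, $N\approx S$, and $\MR(N\cap\Cen(i))=n-1$. Exactly as in the proof of \cref{thm:n=2k+1}, $N\cap\Cen(i)$ is a proper connected Frobenius subgroup of $N$, so $\bigcup_{i\in J}(N\cap\Cen(i))$ is generic in $N$; this contradicts $N\approx S$, because translations meet involution-centralisers trivially (\cref{prop:decomposition}). Hence $\MR(\Cen(i))=n$, equivalently $\MR(G)=2n$.

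Finally I would deduce full transitivity cheaply from the rank equality. As $\Cen(i)$ acts freely on $J\setminus\{i\}$ with $\MR(\Cen(i))=n$, the orbit-stabiliser formula shows that \emph{every} orbit has Morley rank exactly $n$. Since $J\setminus\{i\}$ has rank $n$ and $\Mdeg(J)=1$, two distinct such orbits would be disjoint rank-$n$ subsets of a degree-$1$ set, forcing $\Mdeg(J\setminus\{i\})\geq 2$, which is absurd. Therefore there is a single orbit and $\Cen(i)$ is transitive on $J\setminus\{i\}$. Combined with the freeness on pairs from the first paragraph, $G$ acts sharply $2$-transitively on $J$; the characteristic is $\neq 2$ by the fixed-point computation, and $G$ is non-split by hypothesis. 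The delicate point throughout is the rank estimate of the second paragraph, where the genericity bookkeeping of Section~5 (the bound on $\MR(\mu(T_\perf))$, \cref{prop:normal_subgroup}, and the translation-versus-centraliser dichotomy of \cref{thm:n=2k+1}) must be assembled correctly; the transitivity upgrade is then essentially free.
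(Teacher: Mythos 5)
Your proof is correct, and its skeleton is the same as the paper's: establish that pairs of distinct involutions have trivial stabiliser and that involutions have a unique fixed point, then show $\MR(\Cen(i))=n=\MR(J)$ by squeezing $\MR(G)$ between $2n$ and $2n$, and finally upgrade the free action to a regular one using $\Mdeg(J)=1$. The only place you genuinely diverge is the lower bound $\MR(G)\geq 2n$. The paper gets it in one line: the set $S$ of translations has Morley rank $2n-k=2n-1$ by \cref{rem: basic generic}, and $S\setminus\{1\}$ is disjoint from the generic subset $\bigcup_{g\in G}(\Cen(i)\setminus\{1\})^g$ of $G$ (translations are fixed-point-free, by \cref{prop:decomposition}), so $S$ cannot be generic and $\MR(G)>2n-1$. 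You instead suppose $\MR(\Cen(i))\leq n-1$, force $\MR(\mu(T_\perf))=2n-k$, invoke \cref{prop:normal_subgroup} to manufacture a normal subgroup $N\approx S$ of rank $2n-1$ (which under your hypothesis is all of $G$), and then run the very same disjointness contradiction inside $N$. That detour is sound --- the standing hypotheses of Section~5 do hold here, since lines are infinite and non-splitting rules out a single line via \cref{prop:BHNeumann} --- but it is avoidable: the contradiction you reach at the end is exactly ``$S$ is generic in $G$ yet disjoint from the generic union of point stabilisers,'' and that can be derived directly from $\MR(S)=2n-k$ without passing through $T_\perf$ or the normal subgroup. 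Your final transitivity step (all orbits of the free $\Cen(i)$-action have rank $n$, and degree one forbids two of them) is precisely the intended justification of the paper's closing sentence.
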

\begin{proof}
Set $n = \MR(J)$.
The set of translations has Morley rank $2n-1$ and is not generic in $G$. On the other hand, $\Cen(i)$ acts on $J \setminus \{ i \}$ without fixed points. Therefore $\MR(\Cen(i) \leq 2n$. Hence $G = iJ\Cen(i)$ must have Morley rank $2n$ and $\Cen(i)$ has Morley rank $n$. This implies that $\Cen(i)$ acts regularly on $J \setminus \{ 1 \}$ and hence $G$ is a sharply $2$-transitive group.
\end{proof}

\begin{remark}
We will see in \cref{cor:simplicity} that the group in the above proposition must in fact be simple.
\end{remark}

\begin{proposition}
Let $H<G$ be a connected Frobenius group of Morley rank at most $10$ and odd type. Then either $H<G$ splits or $G$ is a simple non-split sharply $2$-transitive group of Morley rank $8$ or $10$.
\end{proposition}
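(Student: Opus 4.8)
The plan is to dispatch the ``splits'' alternative for free and spend all the effort on the non-split case, where I intend to pin down the numerical invariants exactly. So suppose $H<G$ does not split and let $J$ be its conjugacy class of involutions. By \cref{thm:n=2k+1} the set $J$ is a mock hyperbolic reflection space with only infinite lines, and choosing $\Lambda$ as in \cref{rem: basic generic} I may assume every line has Morley rank $k\ge 1$. Put $n=\MR(J)$ and $h=\MR(\Cen(i))$ for a fixed involution $i$. Since every $g\in G$ has a unique decomposition $g=ijh$ with $j\in J$, $h\in\Cen(i)$ (\cref{lem:translations have no fixed point}), the multiplication map gives a definable bijection $J\times\Cen(i)\to G$, so $\MR(G)=n+h$. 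Finally, because $G$ does not split, the splitting criterion in \cref{thm:n=2k+1} forces $n\ge 2k+2$.

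The first substantive step is a lower bound $h\ge n-k$, which I would extract directly from the perfect-triple machinery: the image $\mu(T_\perf)$ is a subset of $G$, and the bound $\MR(\mu(T_\perf))\ge 2n-k$ established earlier yields $n+h=\MR(G)\ge\MR(\mu(T_\perf))\ge 2n-k$. Combining this with $n\ge 2k+2$ gives $\MR(G)\ge 2n-k\ge 3k+4$, so the hypothesis $\MR(G)\le 10$ forces $3k+4\le 10$, i.e. $k\le 2$. Thus only the cases $k=1$ and $k=2$ survive, and the whole proof reduces to eliminating $k=2$ and identifying $k=1$.

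The hard part, and the step I expect to be the real obstacle, is ruling out $k=2$. Here every inequality above must be an equality: $\MR(G)=10$, $n=6$, $h=4$, and in particular $\MR(G)=2n-k$. Since $\MR(S)=2n-k$ by \cref{rem: basic generic}, the set $S=J^{\cdot 2}$ of translations then has full rank $10$ in the connected (hence degree-one) group $G$, i.e. $S$ is generic. On the other hand, applying the genericity lemma for connected Frobenius groups to $X=H\setminus\{1\}$ (an $H$-normal set of rank $\MR(H)=h$) shows that the union of conjugates $\bigcup_{b\in G}H^b=\bigcup_{i\in J}\Cen(i)$ is also generic in $G$. Two generic subsets of a degree-one group intersect in a generic set, so $S\cap\bigcup_{i}\Cen(i)$ should have rank $10$; but $S\cap\Cen(i)\subseteq J^{\cdot 2}\cap\Cen(i)=\{1\}$ for each $i$ by \cref{lem:translations have no fixed point}, whence $S\cap\bigcup_i\Cen(i)=\{1\}$ has rank $0$. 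This contradiction eliminates $k=2$, so $k=1$.

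It then remains to read off the conclusion in the case $k=1$. The non-split hypothesis together with the earlier analysis of rank-one lines shows that $G$ is a non-split sharply $2$-transitive group of characteristic $\neq 2$, which is simple by \cref{cor:simplicity}. For such a group $\Cen(i)$ acts regularly on $J\setminus\{i\}$, so $h=n$ and $\MR(G)=2n$; together with $n\ge 2k+2=4$ and $2n=\MR(G)\le 10$ this leaves exactly $n\in\{4,5\}$, that is $\MR(G)\in\{8,10\}$, as claimed. The delicate point throughout is the exact rank bookkeeping in the $k=2$ elimination: one must be certain that $\MR(S)=2n-k$, $\MR(G)=n+h$ and $h\ge n-k$ hold on the nose so that the coincidence $\MR(S)=\MR(G)$ genuinely forces $S$ generic, after which the disjointness $J^{\cdot 2}\cap\Cen(i)=\{1\}$ closes the argument cleanly.
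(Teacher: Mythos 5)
Your proof is correct and follows essentially the same route as the paper: the inequalities $n\ge 2k+2$ and $\MR(G)>2n-k$ force $\MR(G)>3k+4$, hence $k=1$, after which the rank-one-lines proposition, \cref{cor:simplicity}, and the evenness of $\MR(G)=2n$ give $8$ or $10$. The only cosmetic difference is that the paper invokes the strict bound $\MR(G)>2n-k$ up front (translations are not generic since they miss the generic set $\bigcup_{i\in J}\Cen(i)$ except at $1$), whereas you use the non-strict bound and then re-derive exactly this disjointness argument to kill the boundary case $k=2$; the content is the same.
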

\begin{proof}
Assume $G$ does not split.
Suppose the set of involutions has Morley rank $n$ and the lines in the associated generic mock hyperbolic reflection space have rank $k$. Since the set of translations is not generic in $G$ we have $\MR(G) > 2n-k$. Moreover, we know $n > 2k+1$ and $k \geq 1$. This shows $\MR(G) > 2(2k+2)-k = 3k+4$. Hence if $k \geq 2$, then $\MR(G) > 10$. Therefore if $\MR(G) \leq 10$, then $k=1$ and $\MR(G) > 7$. The previous proposition and the remark show that $G$ is a simple sharply $2$-transitive group and hence $\MR(G)$ must be an even number, thus $\MR(G)$ is either $8$ or $10$.
\end{proof}

\subsection{Frobenius groups of odd type with nilpotent complement}
Delahan and Nesin showed that a sharply $2$-transitive group of finite Morley rank of characteristic $\neq 2$ with nilpotent point stabilizer must split (Theorem 11.73 of \cite{borovik-nesin}). We will show that the same is true for a Frobenius group of odd type if the lines in the associated mock hyperbolic reflection geometry are strongly minimal or if there is no interpretable bad field of characteristic $0$.

We fix a connected Frobenius group $H<G$ of finite Morley rank of odd type and we denote the set of involutions by $J$.
By \cref{thm:n=2k+1} $J$ forms a mock hyperbolic reflection space with infinite lines. Recall that $H = \Cen(i)$ if $i$ is the unique involution in $H$. If $\lambda$ is a line containing $i$, then $N_G(\lambda) = \lambda^\squared \rtimes N_H(\lambda)$ is a split Frobenius group.

\begin{lemma} \label{normalizer_index}
  If $i \neq j$ are involutions, then $\Cen(ij)$ has infinite index in $N_G(\Cen(ij))$. In particular, $N_{\Cen(i)}(\ell_{ij})$ is infinite.
\end{lemma}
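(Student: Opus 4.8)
The two assertions are two readings of the same fact. Writing $\lambda=\ell_{ij}$, recall $\lambda^{\cdot 2}=\Cen(ij)$ (by \cref{lem:basic_properties} together with \cref{lem:odd_lines}), so $N_G(\Cen(ij))=N_G(\lambda)$; and for $i\in\lambda$ this normalizer splits as $N_G(\lambda)=\Cen(ij)\rtimes N_{\Cen(i)}(\lambda)$ (the split Frobenius decomposition already recorded above). Hence $\MR(N_G(\Cen(ij)))=\MR(\Cen(ij))+\MR(N_{\Cen(i)}(\lambda))$, and it suffices to prove the strict inequality $\MR(N_G(\Cen(ij)))>\MR(\Cen(ij))$: infiniteness of $N_{\Cen(i)}(\lambda)$ follows at once. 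I would establish this inequality by a genericity count inside the connected group $G$.

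The first ingredient is that the conjugates of $A:=\Cen(ij)$ form a trivial-intersection family. Indeed $A$ is self-centralizing, since by \cref{lem:involutions in Frob} the sets $\{\Cen(a)\setminus\{1\}:a\in J^{\cdot 2}\setminus\{1\}\}$ partition $J^{\cdot 2}\setminus\{1\}$; thus a nontrivial element of $A\cap A^{g}$ forces $A=\Cen(a)=A^{g}$. As the $A^{g}$ are pairwise disjoint off $1$ and each has rank $\MR(A)$, the union $\bigcup_{g}A^{g}$ has Morley rank $(\MR(G)-\MR(N_G(A)))+\MR(A)$. Moreover each $A^{g}$ is a conjugate of $\Cen(ij)\subseteq J^{\cdot 2}$, so $\bigcup_{g}A^{g}\subseteq J^{\cdot 2}$.

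The second, and I expect the main, ingredient is that $J^{\cdot 2}$ is \emph{not} generic in $G$. For this I would use that the elliptic set $E=\bigcup_{k\in J}(\Cen(k)\setminus\{1\})$ is generic: by malnormality the centralizers $\Cen(k)$ meet pairwise trivially, so $E$ is a disjoint union of rank $\MR(J)+\MR(\Cen(i))=\MR(G)$ via the decomposition $G=iJ\,\Cen(i)$ of \cref{prop:decomposition} (equivalently, $E$ is generic by the earlier lemma that conjugates of an $H$-normal full-rank subset of the complement fill $G$ generically). On the other hand every nontrivial element of $J^{\cdot 2}$ is a translation and acts without fixed points on $J$, because $\Cen(a)\cap\Cen(k)=\{1\}$ for $a\in J^{\cdot 2}\setminus\{1\}$; hence $J^{\cdot 2}\setminus\{1\}$ is disjoint from $E$. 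Since $G$ is connected, the complement of the generic set $E$ is non-generic, so $\MR(J^{\cdot 2})<\MR(G)$.

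Combining the two ingredients gives $(\MR(G)-\MR(N_G(A)))+\MR(A)=\MR\!\left(\bigcup_{g}A^{g}\right)\le \MR(J^{\cdot 2})<\MR(G)=(\MR(G)-\MR(N_G(A)))+\MR(N_G(A))$, which forces $\MR(A)<\MR(N_G(A))$, as required. The delicate point is the non-genericity of $J^{\cdot 2}$: it rests on the dichotomy between fixed-point-free translations and elliptic elements together with the connectedness (degree one) of $G$, and this is precisely where the Frobenius hypothesis enters. The trivial-intersection property of the $\Cen(ij)$, supplied by the partition in \cref{lem:involutions in Frob}, is the other step that must be handled carefully; everything else is bookkeeping with Morley rank.
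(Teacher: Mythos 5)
Your proof is correct and takes essentially the same route as the paper: both derive the conclusion by showing that otherwise $\bigcup_{g\in G}\Cen(ij)^g\subseteq J^{\cdot 2}$ would be generic in $G$, which is impossible because the disjoint elliptic set $\bigcup_{k\in J}\Cen(k)$ is already generic and translations are fixed-point-free. You merely make explicit the TI-property of the $\Cen(ij)$ and the resulting rank count that the paper's one-line argument leaves implicit.
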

\begin{proof}
  Otherwise $\bigcup_{g \in G}\Cen(ij)^g \subseteq J^{\cdot 2}$ would be generic in $G$. This is impossible since $\bigcup_{i \in J} \Cen(i)$ is generic in $G$ and the elements of $J^\squared$ do not have fixed points. Therefore $\Cen(ij)$ has infinite index in $N_G(\Cen(ij))$.
  
Now $N_G(\Cen(ij)) = N_G(\ell_{ij})$ and $N_G(\ell_{ij}) \cap J = \ell_{ij}$ forms a mock hyperbolic reflection space (consisting of one line). Therefore $N_G(\Cen(ij)) = i\ell_{ij}N_{\Cen(i)}(\ell_{ij})$ and thus $N_{\Cen(i)}(\ell_{ij})$ is infinite.
\end{proof}

If the point stabilizer in a sharply $2$-transitive group of characteristic $\neq 2$ with planar maximal near-field contains an element $g \not \in \{1,i \}$ such that $g$ normalizes all lines containing $i$, then by \cite{near-domains} the sharply $2$-transitive group splits. We are going to prove a similar result for Frebenius groups of finite Morley rank of odd type.

If $A$ is a group, then we write $A^* = A \setminus \{ 1 \}$.
\begin{lemma}
Let $\lambda$ be a line containing $i \in J$ and fix a definable solvable subgroup $A \leq N_{\Cen(i)}(\lambda)$. Then $A^*i\lambda \cup \{1\} = A^\lambda$.
\end{lemma}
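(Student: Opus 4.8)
The plan is to make the asserted equality completely explicit by computing the conjugates $a^{k}$ for $k\in\lambda$. First I would fix $\lambda=\ell_{ij}$ and set $T=i\lambda$. By \cref{lem:basic_properties} and \cref{lem:odd_lines} we have $T=\lambda^\squared=\Cen(ij)$, which by \cref{lem:involutions in Frob} is a uniquely $2$-divisible abelian group inverted by $i$; since $A\le N_{\Cen(i)}(\lambda)=N_{\Cen(i)}(\lambda^\squared)$ (\cref{lem:basic_properties} f), $A$ normalizes $T$. As $i\in\lambda$ and $i(i\lambda)=\lambda$, every $k\in\lambda$ has the form $k=i\sigma$ with $\sigma=ik\in T$, and $\sigma$ runs through all of $T$. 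Finally, because $N_G(\lambda)=\lambda^\squared\rtimes N_{\Cen(i)}(\lambda)$ is a split Frobenius group, $A$ acts freely on $T$, i.e.\ $\Cen_T(a)=\{1\}$ for every $a\in A^*$; note also $A\cap T\subseteq\Cen(i)\cap\Cen(ij)=\{1\}$.

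Next I would record the basic computation. For $a\in A$ and $k=i\sigma\in\lambda$, using $a\in\Cen(i)$, that $i$ inverts $T$, and that $T$ is abelian, one gets
\[ a^{k}=a^{i\sigma}=a^{\sigma}=\sigma^\inv a\sigma=a\,\sigma^{-a}\sigma. \]
Hence $A^\lambda=\{1\}\cup\bigcup_{a\in A^*}a\,\phi_a(T)$, where $\phi_a\colon T\to T$ is given by $\phi_a(\sigma)=\sigma^{-a}\sigma$. Since $\phi_a(T)\subseteq T=i\lambda$, the inclusion $A^\lambda\subseteq A^*i\lambda\cup\{1\}$ is immediate; the point $1$ comes from the conjugate $1^{k}=1$ of $1\in A$ and lies in no coset $aT$ with $a\ne 1$ because $A\cap T=\{1\}$, which is exactly why the term $\cup\{1\}$ is needed.

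For the reverse inclusion it suffices to show that $\phi_a$ is surjective for every $a\in A^*$: then $a\,\phi_a(T)=aT$, and as the distinct cosets $aT$ ($a\in A^*$) exhaust $A^*i\lambda$ we are done. Writing $T$ additively, $\phi_a=1-a$ is a definable endomorphism of the abelian group $T$, with kernel $\Cen_T(a)=\{1\}$ by freeness, so $\phi_a$ is injective. I would then invoke finite Morley rank: an injective definable endomorphism has image of full Morley rank, so $\phi_a(T)$ has finite index in $T$, and its restriction to the connected component is an automorphism, giving $\phi_a(T)\supseteq T^0$.

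The main obstacle is to upgrade this to $\phi_a(T)=T$, i.e.\ to kill a possible nontrivial finite cokernel $F=T/T^0$. The clean route is to note that $T=\Cen(ij)$ is divisible, hence connected ($T=T^0$), which makes the rank argument immediately conclusive. Failing that, I would argue on $F$ directly: the automorphism induced by $a$ has no nonzero fixed points on $F$ — which follows from $\Cen_T(a)=\{1\}$ together with the vanishing of $H^1(\langle a\rangle,T^0)$ for the divisible module $T^0$ via the exact sequence of $1\to T^0\to T\to F\to 1$ — and a fixed-point-free automorphism of a \emph{finite} abelian group makes $1-a$ bijective. Combining surjectivity onto $T^0$ with bijectivity on the finite quotient $F$ yields $\phi_a(T)=T$, completing the reverse inclusion and hence the equality $A^*i\lambda\cup\{1\}=A^\lambda$. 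I expect this control of the finite part (equivalently, the connectedness of the translation group $T$) to be the only genuinely delicate step; everything else is the explicit conjugation identity above.
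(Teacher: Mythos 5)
Your argument is essentially correct, but it takes a genuinely different route from the paper. The paper's proof is two lines: it observes that $H=\lambda^{\cdot 2}\rtimes A$ is a \emph{solvable} Frobenius group of finite Morley rank and invokes the structure theorem for such groups (Theorem 11.32 of Borovik--Nesin), which gives the covering $H=\lambda^{\cdot 2}\cup\bigcup_{j\in\lambda}A^j$ directly; the stated identity is an immediate reformulation. You instead reprove the relevant instance of that covering by hand: you reduce the statement to surjectivity of $\phi_a=1-a$ on $T=\lambda^{\cdot 2}$ and get injectivity from malnormality. This is more self-contained and makes the mechanism visible, at the cost of the connectedness bookkeeping that the black box hides.

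Two small repairs to your step 4. First, your ``clean route'' asserts that $\Cen(ij)$ is divisible; the paper only establishes \emph{unique $2$-divisibility}, which does not imply divisibility, so you cannot conclude $T=T^0$ this way. Second, the cohomological fallback is shakier than you suggest: if $a$ has finite order, $H^1(\langle a\rangle,T^0)$ need not vanish merely because $T^0$ is divisible, so the fixed-point claim on $F=T/T^0$ is not justified as written. Fortunately neither detour is needed: $\phi_a$ is an injective \emph{definable} endomorphism of the abelian group $T$, so its image is a definable subgroup of the same Morley rank and degree; same rank forces finite index $m$, and then $\Mdeg(T)=m\cdot\Mdeg(\phi_a(T))$ forces $m=1$. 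Hence $\phi_a$ is surjective outright, and your reverse inclusion follows with no discussion of $T^0$, $F$, or $H^1$. (Alternatively, your own observation that $\phi_a$ is surjective on $T^0$ already yields $F^a=0$ directly: if $\phi_a(\sigma)\in T^0$, pick $\tau\in T^0$ with $\phi_a(\tau)=\phi_a(\sigma)$ and use injectivity to get $\sigma=\tau\in T^0$.) With that fix your proof is complete and independent of Theorem 11.32.
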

\begin{proof}
Note that $H = \lambda^{\cdot 2} \rtimes A$ is a solvable Frobenius group of finite Morley rank. By Theorem 11.32 of \cite{borovik-nesin} we have $H = \lambda^{\cdot 2} \cup \bigcup_{j \in \lambda} A^j$. This proves the lemma.
\end{proof}

\begin{proposition} \label{prop:splitting}
Let $\Lambda$ be a set of lines on $J$ such that $(J,\Lambda)$ forms a generic mock hyperbolic reflection space. Suppose there exists a definable infinite solvable normal subgroup $A \trianglelefteq \Cen(i)$ such that $A \leq \bigcap_{ \lambda \in \Lambda_i } N_{\Cen(i)}(\lambda)$. Then $H<G$ splits. 
\end{proposition}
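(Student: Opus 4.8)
The plan is to produce a nontrivial definable abelian normal subgroup of $G$ and then invoke \cref{prop:BHNeumann} (equivalently, the corollary that such a subgroup forces a connected odd-type Frobenius group to split). First I would reduce to the case that $A$ is connected abelian: replacing $A$ by the connected component of the last nontrivial term of its derived series yields an infinite definable connected abelian subgroup that is characteristic in $A$, hence still normal in $\Cen(i)$, and still contained in $N_{\Cen(i)}(\lambda)$ for every $\lambda \in \Lambda_i$. So I may assume throughout that $A$ is connected abelian.

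The heart of the argument is to assemble the local data supplied by the preceding lemma into a global normal subgroup. For each $\lambda \in \Lambda_i$ that lemma gives $\bigcup_{j \in \lambda} A^j = A^* i\lambda \cup \{1\}$; taking the union over $\Lambda_i$ and using that $\bigcup_{\lambda \in \Lambda_i} i\lambda = \bigcup_{\lambda \in \Lambda_i} \lambda^{\cdot 2}$ is a generic subset $D \subseteq iJ$ of Morley rank $n$, I obtain
\[ \bigcup_{j\,:\,\ell_{ij} \in \Lambda} A^j \;=\; A^* D \cup \{1\}. \]
One checks that the product map $A \times D \to G$ is injective: if $xd = x'd'$ with $x,x' \in A \subseteq \Cen(i)$ and $d,d' \in D$, then $x'^{-1}x = d'd^{-1} \in \Cen(i) \cap J^{\cdot 2} = \{1\}$ forces $d = d'$. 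Hence this set has Morley rank $n + \MR(A)$, and since $\ell_{ij}$ exists for generically many $j$ it is $\approx$-equal to the genuinely $G$-normal set $\bigcup_{g \in G} A^g$. Therefore $M := \langle A^G \rangle$ is a definable connected normal subgroup of $G$ with $\MR(M) \ge n + \MR(A)$. Malnormality of $\Cen(i)$ is inherited by $M$, so $A \le M \cap \Cen(i)$ is a Frobenius complement of $M$; since the complement acts freely on $J \setminus \{i\}$ we have $\MR(M \cap \Cen(i)) \le \MR(\Cen(i)) \le n < n + \MR(A) \le \MR(M)$, so this complement is proper and $M$ is itself a connected Frobenius group normal in $G$.

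Finally I would feed $M$ back into the geometry to force $\Lambda$ to consist of a single line, which by \cref{prop:BHNeumann} is exactly the desired splitting. The natural route is an appeal to the structure of the smaller Frobenius group $M$ together with the rank theory already developed: if $M \ne G$ one splits $M$, observes that its Frobenius kernel is characteristic in $M$ hence normal in $G$, and uses the rank inequality and the absence of a generic projective plane (\cref{no-plane}) to extract a nontrivial abelian normal subgroup of $G$; if instead $M = G$, the estimate $\MR(M) \ge n + \MR(A)$ combined with $\MR(\Cen(i)) \le n$ and \cref{normalizer_index} must be played against \cref{thm:n=2k+1} to contradict the assumption of more than one line. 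This last passage is the main obstacle: converting the purely local, line-by-line control furnished by the preceding lemma into a global statement about the translation set, and extracting abelianness from it, has to be carried out without appealing to the (open) structure theory of Frobenius kernels in finite Morley rank. It is precisely here that the mock hyperbolic reflection geometry, rather than general group theory, must carry the argument.
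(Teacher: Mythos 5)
Your construction of the normal subgroup is essentially the paper's: the identity $\bigcup_{\lambda\in\Lambda_i}A^\lambda=A^*i\lambda\cup\{1\}$ summed over $\Lambda_i$, the injectivity of $A\times D\to G$ via $\Cen(i)\cap J^{\cdot 2}=\{1\}$, the resulting rank $n+\MR(A)$, and the passage to a definable normal subgroup $M\approx A^G$ all appear there (the paper uses $N=\Stab^\approx(A^G)$ and \cref{stabilizer_lemma} rather than Zilber's indecomposability theorem, but that is immaterial). The problem is the endgame, which you explicitly leave open and for which the routes you sketch do not work: ``if $M\neq G$ one splits $M$'' begs the question, and nothing in your setup extracts an abelian normal subgroup of $G$. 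The paper closes the argument with two observations you are missing. First, since $i$ is central in $\Cen(i)$ one may enlarge $A$ so that $i\in A$ (this is precisely why the paper keeps $A$ solvable instead of reducing to the connected abelian case); then $J=i^J\subseteq A^J=A^G\subseteq N$, hence $J^{\cdot 2}\subseteq N$ and $\MR(N)\geq\MR(J^{\cdot 2})=2n-k$. Second, $A$ acts freely on $\lambda\setminus\{i\}$ for every $\lambda\in\Lambda_i$ (because $\Cen(i)\cap\Cen(j)=1$ for $j\neq i$), so $\MR(A)\leq k$. Combined with $\MR(N)=n+\MR(A)$ this gives $2n-k\leq n+k$, i.e.\ $n\leq 2k$, which contradicts \cref{prop:ineq} unless $\Lambda$ consists of a single line; in that case \cref{prop:BHNeumann} yields the splitting. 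No appeal to the structure of Frobenius kernels is needed.

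A secondary point: your reduction to connected abelian $A$ is both unnecessary (the preceding lemma is stated for solvable subgroups, and solvability is all the paper uses) and slightly off as written, since the last nontrivial term of the derived series of an infinite definable solvable group can be finite, so its connected component can be trivial; one should first pass to $A^0$, where connectedness of derived subgroups guarantees the last nontrivial term is infinite. More importantly, insisting on a connected $A$ works against the need to arrange $i\in A$ described above.
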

\begin{proof}
We may assume that all lines in $\Lambda$ have Morley rank $k$. Since $i$ is central in $\Cen(i)$, we may also assume that $i \in A$.

Now set $J_i = \bigcup_{ \lambda \in \Lambda_i} \lambda =  \{ j \in J \setminus \{i \} : \ell_{ij} \text{ exists} \} \cup \{ i \}$.
By the previous lemma we have $A^*i\lambda \cup \{1\} = A^\lambda$ for all $\lambda \in \Lambda_i$.
Hence we have
\[ A^* iJ_i \cup \{1\} = \bigcup_{\lambda \in \Lambda_i}A^*i\lambda \cup \{1\} 
  = \bigcup_{\lambda \in \Lambda_i}A^\lambda = A^{J_i}. \]
Therefore
\[ A^*iJ \cup \{1\} \approx A^*iJ_i \cup \{1\} = A^{J_i} \approx A^J = A^{\Cen(i)J} = A^G. \]
Put $N = \Stab^\approx(A^G)$. Then $A \leq N$ and $N \trianglelefteq G$ is a normal subgroup. Hence $A^G \leq N$.
Now \cref{stabilizer_lemma} implies that $A^G \approx N$. 
Note that 
\[ \MR(N) = \MR(J) + \MR(A). \]
Moreover, $J \subseteq N$, therefore $J^{\cdot 2} \subseteq N$ and thus $\MR(N) \geq 2n-k$. Note that $A$ acts fixed point free on any line $\lambda \in \Lambda_i$ and therefore $\MR(A) \leq k$. In conclusion
\[ n-k \leq \MR(N) - \MR(J) = \MR(A) \leq k \]
and therefore $n \leq 2k$. Now \cref{prop:ineq} implies that $H<G$ splits.
\end{proof}

\begin{corollary}
Let $H<G$ be a connected Frobenius group of finite Morley rank and odd type. If $H$ is a minimal group, i.e., if $H$ does not contain an infinite proper definable subgroup, then $H<G$ splits.
\end{corollary}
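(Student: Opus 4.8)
The plan is to verify the hypotheses of \cref{prop:splitting} with the choice $A = \Cen(i)$, where $i$ denotes the unique involution of $H$, so that $H = \Cen(i)$. First I would invoke \cref{thm:n=2k+1}, which tells us that the set $J$ of involutions forms a mock hyperbolic reflection space with infinite lines, and then fix, as in \cref{rem: basic generic}, a subfamily $\Lambda$ of lines of some common Morley rank $k$ so that $(J,\Lambda)$ is a \emph{generic} mock hyperbolic reflection space. It then suffices to exhibit a definable, infinite, solvable $A \trianglelefteq \Cen(i)$ lying inside $\bigcap_{\lambda \in \Lambda_i} N_{\Cen(i)}(\lambda)$.

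The point is that minimality of $H$ forces every relevant definable subgroup to collapse onto $H$. For any line $\lambda = \ell_{ij} \in \Lambda_i$, \cref{normalizer_index} gives that $N_{\Cen(i)}(\lambda)$ is infinite; since it is a definable subgroup of the minimal group $H$, it must equal $H$. Hence $H$ normalizes every line through $i$ and $\bigcap_{\lambda \in \Lambda_i} N_{\Cen(i)}(\lambda) = H$. In particular the candidate $A$ has no choice but to be $H$ itself, because the only infinite definable subgroup of $H$ is $H$.

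It therefore remains to know that $H$ is solvable. Here I would invoke Reineke's theorem: a connected group of finite Morley rank with no proper infinite definable subgroup is abelian (see \cite{borovik-nesin}). Equivalently, one may use that every infinite group of finite Morley rank contains an infinite definable abelian subgroup, which by minimality must be all of $H$. Either way $H$ is abelian, hence solvable, so $A = \Cen(i)$ is a definable, infinite, solvable normal subgroup of $\Cen(i)$ contained in $\bigcap_{\lambda \in \Lambda_i} N_{\Cen(i)}(\lambda)$, and \cref{prop:splitting} yields that $H < G$ splits.

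The only non-formal ingredient is the abelianity of a minimal connected group of finite Morley rank (Reineke); granting this, the argument is a direct application of \cref{normalizer_index} and \cref{prop:splitting}, with minimality used precisely to force each line-normalizer, and hence the required normal subgroup $A$, to be all of $H$. I expect this last abelianity input to be the crux, since everything else is bookkeeping against results already established in the excerpt.
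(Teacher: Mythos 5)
Your proposal is correct and follows essentially the same route as the paper: both use \cref{normalizer_index} plus minimality to force $N_{\Cen(i)}(\lambda) = \Cen(i)$ for every $\lambda \in \Lambda_i$, note that $H$ is abelian (the paper leaves the Reineke-type input implicit where you spell it out), and then apply \cref{prop:splitting} with $A = H$.
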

\begin{proof}
The assumptions and \cref{normalizer_index} imply that $N_{\Cen(i)}(\lambda) = \Cen(i)$ holds for all $i$ in $J$ and $\lambda \in \Lambda_i$. If $H = \Cen(i)$, then $H = \bigcap_{\lambda \in \Lambda_i} N_H(\lambda)$ and $H$ is abelian. 
Therefore \cref{prop:splitting} implies that $H<G$ splits.
\end{proof}

We can use Zilber's Field Theorem to find interpretable fields in Frobenius groups of odd type. 

\begin{proposition}[Theorem 9.1 of \cite{borovik-nesin}] \label{prop:field-theorem}
Let $G = A \rtimes H$ be a group of finite Morley rank where $A$ and $H$ are infinite definable abelian subgroups and $A$ is $H$-minimal, i.e., there are no definable infinite $H$-invariant subgroups. Assume that $H$ acts faithfully on $A$. Then there is an interpretable field $K$ such that $A \cong K_+$, $H \leq K^*$, and $H$ acts by multiplication.
\end{proposition}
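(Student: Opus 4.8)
The statement is Zilber's Field Theorem, so the plan is to manufacture $K$ as a ring of definable endomorphisms of $A$ that contains the action of $H$. Writing $A$ additively, each $h \in H$ acts as a definable endomorphism $\eta_h$ of the abelian group $A$, and since $A$ is abelian these endomorphisms can be added pointwise inside the interpretable ring $\mathrm{End}(A)$ of definable endomorphisms of $A$. I would let $R$ be the subring of $\mathrm{End}(A)$ generated by $\{\eta_h : h \in H\}$ together with $\bbZ\cdot\id$. Because $H$ is abelian we have $\eta_h\eta_{h'} = \eta_{hh'} = \eta_{h'}\eta_h$, so $R$ is commutative. The first technical point is that $R$ is interpretable: using the descending chain condition on definable subgroups available in finite Morley rank, the additive subgroups generated by boundedly many products of the $\eta_h$ stabilise, so that $R$ is a definable subring.

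Next I would run a Schur's-Lemma argument driven by $H$-minimality. Since $A$ is infinite and $H$-minimal, $A$ is connected, its connected component being a definable $H$-invariant subgroup of full Morley rank. As $R$ is commutative and contains the action of $H$, for any $r \in R$ both $\ker r$ and the image $r(A)$ are definable $H$-invariant subgroups of $A$. For $0 \neq r$, $H$-minimality forces $\ker r$ to be finite and $r(A)$ to have full rank, hence $r(A) = A$ by connectedness; in particular $R$ has no zero divisors. Thus $R$ is an infinite commutative integral domain of finite Morley rank. The passage from such a domain to a field is the second technical point: for $0 \neq r \in R$ multiplication by $r$ is an injective definable endomorphism of the additive group of $R$, hence surjective on its connected component, and a standard argument in the finite Morley rank setting (resting again on the descending chain condition and connectedness) then yields a multiplicative inverse for $r$, so that $R$ is a field. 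Write $K = R$.

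Finally I would read off the module structure. Now $A$ is a definable vector space over the interpretable field $K$, and every $K$-subspace is in particular $H$-invariant, so $A$ is $K$-minimal. Fixing $0 \neq a \in A$, the subspace $Ka$ is definable and infinite, hence of full rank, hence equal to $A$ by connectedness; therefore $A$ is one-dimensional and $A \cong K_+$. Under this identification $K$ acts by multiplication, and the faithfulness of the $H$-action embeds $H$ into $K^*$ acting by multiplication, as required. The main obstacle is not the Schur's-Lemma bookkeeping but the two finiteness-of-rank inputs: securing interpretability of the generated ring $R$, and upgrading the commutative domain $R$ to a field; both are special to the finite Morley rank context and have no analogue for abstract groups.
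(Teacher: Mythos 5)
The paper offers no proof of this proposition at all: it is Zilber's Field Theorem, quoted verbatim with a citation to Theorem 9.1 of Borovik--Nesin, so the only comparison available is with the cited source. Your argument is precisely the standard proof of that theorem --- generate a commutative interpretable ring $R$ of definable endomorphisms from the $H$-action, use $H$-minimality as a Schur's-Lemma device to show every nonzero element of $R$ has finite kernel and full image (hence $R$ is an infinite definable integral domain), upgrade the domain to a field by the usual rank-and-degree argument, and read off one-dimensionality of $A$ over $K$ --- and it is sound in outline. The one imprecise step is the interpretability of $R$: the sets $R_n$ of $n$-term sums $\sum \epsilon_i\eta_{h_i}$ are not subgroups of any ambient definable group (and $\mathrm{End}(A)$ is not itself definable), so the descending chain condition cannot be applied to them as you state; the standard repair is to fix $a\in A\setminus C_A(H^0)$ (possible by faithfulness) and apply Zilber's indecomposability theorem, or a rank-and-degree count, to the orbit $Ha$ to bound the number of summands needed, after which $r\mapsto r(a)$ identifies $R$ with an interpretable subset of $A$. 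With that substitution your proof is the one in the cited reference.
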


Let $\lambda = \ell_{ij}$ be a line. Then $N_{\Cen(i)}(\lambda)$ is infinite and acts on $\lambda^\squared = \Cen(ij)$ by conjugation. Take a minimal subgroup $A \leq N_{\Cen(i)}(\lambda)$. Since the action of $A$ on $\Cen(ij)$ has no fixed points we can find an infinite $A$-minimal subgroup $B \leq \Cen(ij)$ on which $A$ acts faithfully. Moreover, $B$ must be abelian because $\Cen(ij)$ is an abelian group. Hence by \cref{prop:field-theorem} there is an interpretable field $K$ such that $B \cong K_+$, $A \leq K^*$, and $A$ acts by multiplication.

In particular, if the line $\lambda$ is strongly minimal, then $K$ is strongly minimal and $A \cong K^*$.

If $A$ is a proper subgroup of $K^*$, then $K$ is a bad field, i.e., an infinite field of finite Morley rank such that $K^*$ has a proper infinite definable subgroup. By \cite{bad-fields} bad fields of characteristic $0$ exist. However, it follows from work of Wagner \cite{wagner-fields} that if $\chara(K) \neq 0$, then $K^*$ is a good torus, i.e., every definable subgroup of $K^*$ is the definable hull of its torsion subgroup. We refer to \cite{good-tori} for properties of these good tori.

\begin{theorem}
Let $H<G$ be a connected Frobenius group of finite Morley rank and odd type. Fix $\Lambda$ such that $(J,\Lambda)$ is a generic mock hyperbolic reflection space.
Moreover, assume that $H$ has a definable nilpotent normal subgroup $N$ such that $N \cap N_H(\lambda)$ is infinite for all $\lambda \in \Lambda_i$.

If all lines in $\Lambda$ are strongly minimal or if $G$ does not interpret a bad field of characteristic $0$, then $H<G$ splits.
\end{theorem}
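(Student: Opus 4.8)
The plan is to reduce everything to \cref{prop:splitting}: it suffices to produce a \emph{single} definable infinite solvable subgroup $A \trianglelefteq \Cen(i)$ with $A \leq \bigcap_{\lambda \in \Lambda_i} N_{\Cen(i)}(\lambda)$, since then $H<G$ splits. Throughout write $H = \Cen(i)$ and recall that all lines are $H$-conjugate, that $H$ acts transitively on the set $\Lambda_i$ of lines through $i$, and that $N \trianglelefteq H$ with $N \cap N_H(\lambda)$ infinite for every $\lambda \in \Lambda_i$. Replacing $N$ by its connected component (still nilpotent, still normal in $H$, still meeting each $N_H(\lambda)$ infinitely) I may assume $N$ is connected.

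First I would fix one line $\lambda = \ell_{ij} \in \Lambda_i$ and choose a minimal infinite definable subgroup $A \leq N \cap N_H(\lambda)$. Being minimal and connected, $A$ is abelian, and since nontrivial elements of $H$ have no fixed points on $\Cen(ij) \setminus \{1\}$, the group $A$ acts freely on $\lambda^\squared = \Cen(ij)$. Applying \cref{prop:field-theorem} exactly as in the discussion preceding the statement, I obtain an interpretable field $K$, an $A$-minimal subgroup $B \cong K_+$ of $\Cen(ij)$, and an embedding $A \hookrightarrow K^*$ realising the action as multiplication. The role of the two hypotheses is to force $A$ to be a decent torus: if the lines are strongly minimal then $K$ is strongly minimal, so $K^*$ has no proper infinite definable subgroup and $A \cong K^*$; if $G$ interprets no bad field of characteristic $0$, then in characteristic $0$ again $A \cong K^*$ (a proper infinite definable subgroup of $K^*$ would exhibit a bad field), while in characteristic $p$ the group $K^*$, and hence its infinite definable subgroup $A$, is a good torus by Wagner's theorem. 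In every case $A$ is an infinite decent torus contained in $N$.

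The key step is then to upgrade $A$ to a subgroup normal in $H$. Here I use that the torsion subgroup of a connected nilpotent group of finite Morley rank is central; being central it is abelian and therefore has only finitely many elements of any given order. In particular $A = d(\mathrm{tor}(A)) \leq Z(N)$, where $d(\cdot)$ is the definable hull. Since $H$ is connected and normalises $N$, it acts on the finite set of elements of $N$ of each fixed order with finite orbits, hence fixes every torsion element of $N$ pointwise; in particular it fixes $\mathrm{tor}(A)$ pointwise and so normalises $A = d(\mathrm{tor}(A))$. Thus $A \trianglelefteq H$. Transitivity of $H$ on $\Lambda_i$ now finishes the geometry: for any $\mu = \lambda^h \in \Lambda_i$ we have $A = A^h \leq N_H(\lambda)^h = N_H(\mu)$, so $A$ normalises every line through $i$. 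Hence $A$ is an infinite definable abelian (in particular solvable) normal subgroup of $H$ lying in $\bigcap_{\mu \in \Lambda_i} N_H(\mu)$, and \cref{prop:splitting} yields that $H<G$ splits.

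I expect the main obstacle to be the torus-theoretic input of the middle steps, namely verifying cleanly that both alternative hypotheses force the minimal subgroup $A$ to be a decent torus and quoting the correct structural facts (centrality and order-finiteness of torsion in connected nilpotent groups, together with the good-torus behaviour of $K^*$ in positive characteristic). Once $A \trianglelefteq H$ is established, the final reduction to \cref{prop:splitting} is immediate.
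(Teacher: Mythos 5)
Your overall strategy --- produce a single infinite definable abelian $A \trianglelefteq \Cen(i)$ normalizing every line through $i$ and invoke \cref{prop:splitting} --- is the right one and matches the paper's. But there is a genuine gap where you pass from one line to all of them: you assert that $H=\Cen(i)$ acts transitively on $\Lambda_i$ and conclude $A = A^h \leq N_H(\lambda)^h = N_H(\lambda^h)$. That transitivity is not available here. A line through $i$ is $i\,\Cen(ij)$ for a translation $ij$, and in a general connected Frobenius group of odd type the centralizers of translations need not form a single conjugacy class; this is special to the sharply $2$-transitive situation, where the nontrivial translations are all conjugate. \cref{line_lemma} only tells you that transitivity of $\Cen(i)$ on $\Lambda_i$ is equivalent to transitivity of $G$ on $\Lambda$, which is equally unjustified. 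The paper replaces this step by a rigidity argument: it works with the unique maximal good torus $T$ of $N$ (central in $N$ by the nilpotent structure theory, hence normal in $H$ by conjugacy of maximal good tori), checks via the field argument that $N_H(\lambda)\cap T$ is infinite for \emph{every} $\lambda\in\Lambda_i$, and then uses the fact that a definable family of definable subgroups of a good torus is finite. Passing to a generic $G$-invariant subfamily $\Lambda'\subseteq\Lambda$ makes $N_H(\lambda)\cap T$ constant and $H$-invariant, and \cref{prop:splitting} is applied to $(J,\Lambda')$. Some substitute of this kind for the missing transitivity is indispensable.

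A second, more local problem: your normality argument for $A$ rests on the claims that the torsion of a connected nilpotent group of finite Morley rank is central and contains only finitely many elements of each order. Both fail in general for the bounded-exponent part of $N$ (a connected non-abelian unipotent group is all torsion and not central; an infinite elementary abelian $p$-group has infinitely many elements of order $p$). What is true, and suffices for your $A$, is the decomposition $N = D * C$ with $D$ divisible and $C$ of bounded exponent, both characteristic: your $A$ is connected and divisible, hence lies in $D$, and the torsion of $D$ is central in $N$ and a finite direct sum of Pr\"ufer groups, so has finitely many elements of each order; then $H$, being connected, centralizes $\mathrm{tor}(A)$ and normalizes $A = d(\mathrm{tor}(A))$. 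So this step is repairable with the correct citation, but repairing it does not close the transitivity gap above.
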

\begin{proof}
We may assume that $N$ is connected.
Let $T$ be a maximal good torus in $N$. As a consequence of the structure of nilpotent groups of finite Morley rank (Theorem 6.8 and 6.9 of \cite{borovik-nesin}) $T$ must be central in $N$. By Theorem 1 of \cite{good-tori} any two maximal good tori are conjugate. Therefore $T$ is the unique maximal good torus in $N$. Since a connected subgroup of a good torus is a good torus, the assumptions (and the previous discussion) imply that $N_H(\lambda) \cap T$ is infinite for all lines $\lambda \in \Lambda_i$. By Lemma 2 of \cite{good-tori} the family $\{ N_H(\lambda) \cap T : \lambda \in \Lambda_i \}$ is finite. Hence after replacing $\Lambda$ by a generic subset $\Lambda' \subseteq \Lambda$ we may assume that $\{ N_H(\lambda) \cap T : \lambda \in \Lambda_i \}$ consists of a unique infinite abelian normal subgroup of $H$. Now \cref{prop:splitting} implies that $H<G$ splits.
\end{proof}

\subsection{Frobenius groups of degenerate type}
We now use mock hyperbolic spaces to study Frobenius groups of finite Morley rank and degenerate type. 
A geometry with similar properties, but defined on the whole group, was used by Fr\'econ in  his result on the non-existence of bad groups of Morley rank $3$.

\begin{lemma}
Let $H<G$ be a connected Frobenius group of Morley rank $n$ and of degenerate type. Suppose the Frobenius complement $H$ is abelian and of Morley rank $k$. Then $n \geq 2k+1$ and if $n = 2k+1$, then $G$ contains a definable normal subgroup $N$ of Morley rank $k+1$.
\end{lemma}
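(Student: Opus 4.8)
The plan is to reduce this to the rank inequality for generic mock hyperbolic reflection spaces (\cref{thm:rank_ineq}) by passing to the extended group $\calG$ constructed in \cref{sec:frobenius_groups}, and then to descend the normal subgroup produced there from $\calG$ back to $G$. First, since $G$ is of degenerate type it contains no involutions, so by \cref{no-involutions} it is uniquely $2$-divisible. As $H$ is abelian, the construction of \cref{sec:frobenius_groups} applies: set $L = (G,\otimes)$, $\calA = G \times \langle \epsilon \rangle$, and $\calG = L \rtimes_Q \calA$. Because $G$ is uniquely $2$-divisible of finite Morley rank, the square-root map, and hence $\otimes$ together with all precession maps, are definable, so $\calG$ is interpretable in $G$ and thus of finite Morley rank. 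By \cref{lem:quasidirect_involutions} the involutions $J = G \times \{\epsilon\}$ form a single conjugacy class of Morley rank $n$ and Morley degree $1$ (using that $G$ is connected), the lines of $\Lambda$ have Morley rank $k = \MR(H)$, and $\iota = (1,\epsilon)$ satisfies $\Cen_\calG(\iota) = 1 \times \calA \cong G \times \langle \epsilon \rangle$. By \cref{thm:frobenius_mock hyperbolic reflection}, $(J,\Lambda)$ is a partial mock hyperbolic reflection space.

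Next I verify that $(J,\Lambda)$ is in fact a \emph{generic} mock hyperbolic reflection space. Since $J$ is a single conjugacy class it suffices to check the genericity condition at $\iota$. Using that the lines through $\iota$ are exactly the sets $H^g \times \{\epsilon\}$, the set $\{j \in J : \ell_{\iota j} \in \Lambda\}$ equals $\big(\bigcup_{g \in G} H^g \setminus \{1\}\big) \times \{\epsilon\}$. Taking $X = H \setminus \{1\}$, which is $H$-normal of Morley rank $k$ because $H$ is abelian, the lemma showing that $\bigcup_{b \in G} X^b$ is generic in $G$ for such $X$ gives that $\bigcup_{g} H^g$ is generic, whence the above set is generic in $J$. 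Moreover $k < n$, since a proper definable subgroup of a connected group has strictly smaller Morley rank; hence the lines (of rank $k$) cannot cover $J$ (of rank $n$) and $\Lambda$ contains more than one line. Assuming $k \ge 1$ the lines are infinite (the case $k=0$, where $H$ is finite, is immediate from the Epstein--Nesin splitting theorem and makes both conclusions trivial). Now \cref{thm:rank_ineq} applies and yields $n \ge 2k+1$.

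Finally, suppose $n = 2k+1$. Then \cref{thm:rank_ineq} produces a definable connected normal subgroup $M \trianglelefteq \calG$ with $\MR(M \cap \Cen_\calG(\iota)) = n - k = k+1$. Since $M$ is normal in $\calG$, the intersection $M \cap \Cen_\calG(\iota)$ is normal in $\Cen_\calG(\iota) = 1 \times \calA \cong G \times \langle\epsilon\rangle$, and therefore its connected component $N := (M \cap \Cen_\calG(\iota))^0$ is a definable connected normal subgroup of $\Cen_\calG(\iota)$ of Morley rank $k+1$. As $G$ is connected, $N$ is contained in $(\Cen_\calG(\iota))^0 = G \times \{1\}$, the canonical copy of $G$ inside $\calG$; being normalized by that copy, $N$ corresponds to a definable connected normal subgroup of $G$ of Morley rank $k+1$, as required.

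The routine ingredients are the interpretability of $\calG$ and the fiber count behind genericity. The step I expect to require the most care is the final descent: one must carefully track the identification $\Cen_\calG(\iota) \cong G \times \langle\epsilon\rangle$ and check that normality of $M$ in $\calG$ genuinely transfers to normality of $N$ in the \emph{original} $G$ rather than only in $\calG$, while confirming that the rank bookkeeping delivers exactly $k+1$.
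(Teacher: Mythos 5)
Your proof is correct and follows the paper's own argument: pass to the quasidirect product $\calG = L \rtimes_Q \calA$, use the genericity of $\bigcup_{g\in G} H^g$ to see that the involutions form a generic mock hyperbolic reflection space with infinite lines of rank $k$ and more than one line, and apply \cref{thm:rank_ineq}. The paper leaves the final descent of the normal subgroup from $\calG$ back to $G$ implicit; your explicit treatment via $N = (M \cap \Cen_\calG(\iota))^0 \leq (\Cen_\calG(\iota))^0 \cong G$ (using $Z(G)=1$ and the interpretability of $\calG$) is exactly the intended step and is carried out correctly.
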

\begin{proof}
Note that $G$ is uniquely $2$-divisible and hence $a \otimes b = a^\half b a^\half$ defines a K-loop structure on $G$. Let $L = (G, \otimes)$ denote the corresponding K-loop and set $\calA = G \times \langle \epsilon \rangle < \Aut(L)$ where $\epsilon$ is given by inversion. Now let $\calG$ be the quasidirect product $\calG = L \rtimes_Q \calA$.

By \cref{thm:frobenius_mock hyperbolic reflection} the involutions $J$ in $\calG$ form a partial mock hyperbolic reflection space and since $\bigcup_{g \in G} H^g \subseteq G$ is a generic subset of $G$ the involutions must form a generic mock hyperbolic reflection space.
Moreover, $\MR(J) = n$ and each line has Morley rank~$k$. 
Now the lemma follows from \cref{thm:generic_mock hyperbolic reflection}.
\end{proof}

\begin{theorem}
Let $H<G$ be a connected Frobenius group of Morley rank $n$ and of degenerate type. Suppose the Frobenius complement $H$ is abelian and of Morley rank $k$. Then $n \geq 2k+1$.

If $n = 2k+1$, then $G$ splits as $G = N \rtimes H$ for some definable connected normal subgroup $N$ of Morley rank $k+1$.
Moreover, if $N$ is solvable, then there is an interpretable field $K$ of characteristic $\neq 2$ such that $G = K_+ \rtimes H$, $H \leq K^*$, and $H$ acts on $K_+$ by multiplication.
\end{theorem}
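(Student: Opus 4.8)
The plan is to feed the theorem's hypotheses into the machinery already assembled and then add two genuinely group-theoretic steps: promoting the normal subgroup to a complemented one, and reading off the field. The inequality $n\ge 2k+1$ and, in the boundary case $n=2k+1$, the existence of a definable connected normal subgroup $N\trianglelefteq G$ with $\MR(N)=k+1$ are exactly what the preceding lemma delivers (through the quasidirect construction $\calG$ and \cref{thm:rank_ineq}). So I would fix this $N$ at the outset and record the rank data $\MR(N)=k+1$, $\MR(H)=k$, $\MR(G)=2k+1$.

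To obtain the splitting $G=N\rtimes H$ I would first show that $N\cap H$ is finite. It is a definable $H$-invariant subgroup of $H$, and since $N$ is normal the conjugation map $G\times (N\cap H)\setminus\{1\}\to N$, $(g,x)\mapsto x^{g}$, takes values in $N$. By malnormality of $H$, any $z$ conjugate to a nontrivial element of $H$ satisfies $\MR(\{g\in G:z^{g}\in H\})=\MR(H)=k$ (if $z\in H\setminus\{1\}$ and $z^{g}\in H$, then $z\in H\cap H^{g^{\inv}}$, forcing $g\in H$), so every fibre of this map has rank $k$. Hence the image has rank $\MR(N\cap H)+\MR(G)-k=\MR(N\cap H)+k+1$, and being contained in $N$ it has rank $\le k+1$; therefore $\MR(N\cap H)=0$, and a short connectedness argument upgrades this to $N\cap H=1$. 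Consequently $\MR(NH)=\MR(N)+\MR(H)=2k+1=\MR(G)$, and as $NH$ is a subgroup of the connected group $G$ of full rank we get $NH=G$, i.e.\ $G=N\rtimes H$.

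With the splitting in hand, $H$ acts freely on $N\setminus\{1\}$: if $1\ne h\in H$ fixed some $1\ne x\in N$, then $x\in\Cen(h)\subseteq H$ by malnormality, contradicting $N\cap H=1$. In particular $H$ acts faithfully on every nontrivial $H$-invariant subgroup of $N$, and since $G$ is of degenerate type it is uniquely $2$-divisible by \cref{no-involutions}, whence $N$ is; this is what forces $\chara K\ne 2$ at the end. Assuming now $N$ solvable, the product $G=N\rtimes H$ is solvable, and I would take $A$ to be the last nontrivial term of the derived series of $N$ (abelian, connected, characteristic, hence $H$-invariant), pass by the descending chain condition to an $H$-minimal infinite definable subgroup $A_0\le A$, and apply Zilber's Field Theorem \cref{prop:field-theorem} to the faithful action of $H$ on $A_0$, obtaining an interpretable field $K$ with $A_0\cong K_+$ and $H\le K^{*}$ acting by multiplication. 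It then remains to see that $A_0$ exhausts $N$: freeness makes every $H$-orbit on $N\setminus\{1\}$ of rank $k$, so the orbit space has rank $\MR(N)-k=1$, and any proper infinite $H$-invariant connected subgroup would have rank exactly $k$ (rank $\ge k$ by freeness, $<k+1$ by properness) and, via the homomorphism $x\mapsto x^{\inv}x^{h}$ on the abelian pieces, force $H$ to act freely on a rank-$1$ quotient, which is impossible once $k\ge 2$. Hence $N\cong K_+$ and $G=K_+\rtimes H$ with $H\le K^{*}$ acting by multiplication.

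The hard part, where I expect to spend the real effort, is this last step: proving that the solvable kernel $N$ is abelian and that it admits no proper infinite $H$-invariant connected subgroup, i.e.\ controlling the corank-one fixed-point-free configuration. The homomorphism-and-rank argument disposes of $k\ge 2$ cleanly, but the small-rank cases (notably $\MR(G)=3$) are delicate; there I would fall back on the known structure theory of connected solvable Frobenius groups of finite Morley rank (Borovik--Nesin, Theorem~11.32), which already identifies such a kernel as the additive group of an interpretable field with the complement embedded in its multiplicative group.
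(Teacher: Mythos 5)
Your treatment of the inequality and of the splitting $G = N \rtimes H$ is essentially the paper's own argument: the same quasidirect-product/rank-inequality input produces $N$, the same fibre count over the conjugates of $N\cap H$ gives $\MR(N\cap H)=0$ (the paper then observes that $(N\cap H)<N$ is a connected Frobenius group, so $N\cap H$ is connected and hence trivial), and the same genericity argument gives $NH=G$. Up to that point the proposal is correct.

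The gap is in the last part, and you have located it but not closed it. The paper's route to ``$N$ is abelian and $H$-minimal'' runs entirely through the absence of involutions: since $G$ is uniquely $2$-divisible by \cref{no-involutions}, no nontrivial element is conjugate to its inverse (if $x^g=x^\inv$ then $(gx)^2=g^2$, so $gx=g$), hence no nontrivial conjugacy class $u^G$ and no nontrivial orbit $a^H$ can be generic in a connected definable subgroup. Combined with nilpotence of the solvable kernel (Borovik--Nesin 11.59) and $\Cen(u)\leq N$ for $u\in N\setminus\{1\}$, this forces $\MR(\Cen(u))\geq k+1$, i.e.\ $N$ abelian; and applied to a putative proper infinite $H$-invariant connected $A\leq N$ (necessarily of rank $k$, as you correctly note) it makes $a^H$ generic in $A$ and produces an involution, a contradiction. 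Your substitute for this second step --- passing to the rank-one quotient $N/A$ and claiming $H$ acts freely there --- does not work: the stabiliser in $H$ of a coset $xA$ is $\{h\in H: x^\inv x^h\in A\}$, which there is no reason to be trivial; indeed $K_+^2\rtimes K^*$ with $K$ strongly minimal and $K^*$ acting diagonally meets all your rank constraints with $k=1$, has a proper $H$-invariant $A$ of rank $k$, and $H$ acts freely on $N/A$ without any contradiction. Only the presence of involutions excludes such configurations, so a correct argument must invoke degenerate type at exactly this point, which yours never does. The fallback to Borovik--Nesin 11.32 for small rank yields splitting and fullness of the solvable group $N\rtimes H$, not the identification $N\cong K_+$ with $H\leq K^*$, so it does not repair the argument; and since you derive abelianness of $N$ from the same unproven nonexistence claim (applied to the last term of the derived series), that step is open as well.
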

\begin{proof}
By the previous lemma we may assume $n = 2k+1$. Then $G$ contains a definable normal subgroup $N$ of rank $k+1$ and we may assume that $N$ is connected.

Note that 
$\MR( \bigcup_{g \in G}(N \cap H)^g) = k+1 + \MR(N \cap H)$ and $\MR(N) = k+1$. Therefore $N \cap H$ must be finite. If $N \cap H$ is non-trivial, then $(N \cap H)<N$ is a connected Frobenius group and hence $N \cap H$ must be connected. Therefore $N \cap H = \{ 1 \}$. 

The semidirect product $N \rtimes H$ has rank $2k+1$ and hence is generic in $G$. Therefore $G = N \rtimes H$ splits.

Now assume that $N$ is solvable. Then $N$ is nilpotent by Theorem 11.59 of \cite{borovik-nesin} and hence $\Cen(u) \leq N$ for all $u \in N \setminus \{ 1 \}$ (\cite{borovik-nesin} Exercise on page 215). Note that $u^G$ cannot be generic in $N$ because $G$ does not contain involutions. Therefore $\MR(u^G) \leq k$ and hence $\MR(\Cen(u)) \geq k+1$. Therefore $\Cen(u) = N$ and hence $N$ is abelian.

We now show that $N$ is $H$-minimal: Let $A \leq N$ be a $H$-invariant subgroup. We may assume that $A$ is connected. Given $a \in A \setminus \{ 1 \}$ we have $\Cen(a) \cap H = \{1\}$ and therefore $a^H \subseteq A$ has rank $k$. If $A$ has rank $k$, then $a^H$ is generic in $A$ and therefore $A$ must contain an involution. This is a contraddiction.
Therefore $A = \{1\}$ or $A = N$ and hence $N$ is $H$-minimal.

By \cref{prop:field-theorem} there must be an interpretable field $K$ such that $N = K_+$, $H \leq K^*$, and $H$ acts on $N$ by multiplication.
\end{proof}

\section{Sharply 2-transitive groups of finite Morley rank}

Let $G$ be a sharply $2$-transitive group of finite Morley rank with $\chara(G) \neq 2$ and let $Q$ denote the set of involutions in $G$. By \cref{cor:mock hyperbolic reflection from 2-sharp} (or by \cref{thm:n=2k+1}) the set $Q$ forms a mock hyperbolic reflection space.

We set $n = \MR(Q)$ and $k = \MR(\Cen(ij))$ for involutions $i \neq j\in Q$. Note that $k$ does not depend on the choice of $i$ and $j$ and $k=n$ if and only if $G$ is split. By \cite{borovik-nesin}  Prop.~11.71 we have $0<2k<n$ and we will improve this inequality below.

Since $G$ acts sharply $2$-transitively on $Q$, it is easy to see that $\MR(G) = 2n$ and $\MR(Q^{\cdot 2}) = 2n-k$. Moreover, $G$ and $\Cen(ij)$ have Morley degree 1 by Lemma 11.60 of~\cite{borovik-nesin}.  


\begin{proposition}\label{prop:subgroup generated by Q^2} 
	\begin{enumerate}
		\item The set $iQ$ is indecomposable for all $i \in Q$.
		\item $\langle Q^{\cdot 2} \rangle$ is a definable connected subgroup. In particular, there is a bound $m$ such that any $g\in \langle Q^{\cdot 2} \rangle$ is a product of at most $m$ translations.
	\end{enumerate}
\end{proposition}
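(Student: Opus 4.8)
The plan is to derive part (a) from the observation that $iQ$ is a union of connected subgroups all passing through $1$, and then to feed this into Zilber's indecomposability theorem to obtain part (b).

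First I would record the decomposition of $iQ$ into subgroups. For a line $\lambda = \ell_{ij} \in \Lambda_i$ the description of lines following \cref{lem:geometry} gives $\ell_{ij} = i\,\Cen(ij)$, so by \cref{lem:basic_properties} we have $\lambda^{\cdot 2} = i\lambda = \Cen(ij)$. By \cref{prop:partition} the set $\lambda^{\cdot 2} = iQ \cap jQ$ is contained in $iQ$ precisely when $i \in \lambda$, and the sets $\lambda^{\cdot 2} \setminus \{1\}$ with $\lambda \in \Lambda_i$ partition $iQ \setminus \{1\}$. Hence $iQ = \bigcup_{\lambda \in \Lambda_i} \lambda^{\cdot 2}$ is a union of definable subgroups, each containing $1$. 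Each $\lambda^{\cdot 2} = \Cen(ij)$ is abelian by \cref{lem:basic_properties} and has Morley degree $1$ by Lemma 11.60 of \cite{borovik-nesin}, hence is connected and therefore indecomposable.

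Then I would prove the indecomposability of $iQ$ by a soft argument. Fix a definable subgroup $H \leq G$. For each $\lambda \in \Lambda_i$ the connected subgroup $\lambda^{\cdot 2}$ is indecomposable and contains $1$, so it is either contained in a single coset of $H$ — which, since it contains $1$, must be $H$ itself — or it meets infinitely many cosets of $H$. If every $\lambda^{\cdot 2}$ lies in $H$, then $iQ = \bigcup_\lambda \lambda^{\cdot 2} \subseteq H$ lies in a single coset; otherwise some $\lambda^{\cdot 2} \subseteq iQ$ already meets infinitely many cosets of $H$, and then so does $iQ$. In either case $iQ$ satisfies the defining dichotomy, so (being definable, as $iQ = iJ$) it is indecomposable. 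This is the whole content of part (a): the decomposition into connected subgroups through the common point $1$ makes indecomposability essentially formal.

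For part (b), note that $i \cdot i = 1 \in iQ$ for every $i \in Q$, so $\{iQ : i \in Q\}$ is a family of definable indecomposable subsets of $G$ each containing $1$. Zilber's indecomposability theorem then yields that $\langle iQ : i \in Q\rangle$ is a definable connected subgroup of $G$, equal to a finite product $(i_1Q)^{\varepsilon_1}\cdots(i_mQ)^{\varepsilon_m}$ with $\varepsilon_r \in \{\pm 1\}$. Since $Q^{\cdot 2} = \bigcup_{i \in Q} iQ$, this subgroup is exactly $\langle Q^{\cdot 2}\rangle$, which is therefore definable and connected. Finally, both $iQ \subseteq Q^{\cdot 2}$ and $(iQ)^{-1} = Qi \subseteq Q^{\cdot 2}$, so the finite product expresses every element of $\langle Q^{\cdot 2}\rangle$ as a product of at most $m$ elements of $Q^{\cdot 2}$, i.e.\ of at most $m$ translations. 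I expect the main bookkeeping point to be this last identification of $\langle iQ : i \in Q\rangle$ with $\langle Q^{\cdot 2}\rangle$ and the conversion of the Zilber product into the uniform bound $m$; the indecomposability step and the application of Zilber's theorem should otherwise go through without difficulty.
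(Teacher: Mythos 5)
Your proof is correct, and the overall strategy coincides with the paper's: decompose $iQ$ as the union of the connected abelian subgroups $\Cen(ij)$, conclude that $iQ$ is indecomposable, and feed this into Zilber's indecomposability theorem. The one place you genuinely diverge is in how indecomposability of the union is verified. The paper first reduces to $\Cen(i)$-invariant definable subgroups $H$ (using that $iQ$ is normalized by $\Cen(i)$), so that by transitivity of $\Cen(i)$ on $Q \setminus \{i\}$ the dichotomy ``$\Cen(ij) \leq H$ or $H \cap \Cen(ij)$ of infinite index'' holds uniformly in $j$. You observe that no such uniformity is needed: if even one piece $\Cen(ij)$ is not contained in $H$, then by connectedness (here both arguments rely on $\Mdeg(\Cen(ij)) = 1$, via Lemma 11.60 of \cite{borovik-nesin}) that single piece already meets infinitely many cosets of $H$, so any union of definable connected subgroups through $1$ is indecomposable. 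This is slightly more general and more self-contained, since it makes no use of the $\Cen(i)$-action and dispenses with the reduction to invariant subgroups. Part (b) is handled the same way in both proofs, up to the cosmetic difference that the paper applies Zilber's theorem to the single set $iQ$ after noting $\langle Q^{\cdot 2} \rangle = \langle iQ \rangle$, whereas you apply it to the whole family $\{ iQ : i \in Q \}$; your closing bookkeeping ($(iQ)^\inv = Qi \subseteq Q^{\cdot 2}$, hence the bound $m$ on the number of translations) is accurate.
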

\begin{proof}
	a) Fix an involution $i \in Q$. The set $iQ$ is normalized by $\Cen(i)$, hence it suffices to check indecomposability for $\Cen(i)$-normal subgroups. If $H \leq G$ is a $\Cen(i)$-normal subgroup of $G$, then either $\Cen(ij) \leq H$ for all $j \in Q \setminus \{i\}$ or $H\  \cap\ \Cen(ij)$ has infinite index in $\Cen(ij)$ for all $j \in Q \setminus \{i\}$. Therefore the set $iQ = \bigcup_{j \in Q \setminus \{i\}} \Cen(ij)$ is indecomposable.
	
	b)  Since $\langle Q^{\cdot 2} \rangle = \langle iQ \rangle$, this follows from  Zilber's indecomposability theorem using~(a).
\end{proof}

\begin{remark}\label{rem:Rips-Tent}
By Proposition~\ref{prop:subgroup generated by Q^2} (b) it is easy to see that the non-split examples of sharply 2-transitive groups of characteristic $0$ constructed in \cite{rips-tent} do not have finite Morley rank.
\end{remark}

\begin{lemma}\label{lem:product with involution}
For any $g\in G\setminus Q$ the set $\{i\in Q\colon gi \mbox{ has a fixed point}\}$ is generic in $Q$.
\end{lemma}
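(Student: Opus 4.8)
The plan is to work inside the sharply $2$-transitive group $G$ of finite Morley rank with $\chara(G) \neq 2$, using the identification of the action of $G$ on $X$ with its conjugation action on the set $Q$ of involutions. Recall $\MR(G) = 2n$ where $n = \MR(Q)$. Fix $g \in G \setminus Q$. The statement ``$gi$ has a fixed point'' means, under the $G$-equivariant identification $i \mapsto \fix(i)$, that the product $gi$ either is the identity or is an involution (equivalently, lies in $Q \cup \{1\}$, since elements with a fixed point in a sharply $2$-transitive group of characteristic $\neq 2$ are precisely $1$ and the involutions). So the target set is $F = \{ i \in Q : gi \in Q \cup \{1\}\}$, and I must show $\MR(F) = n$.

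\textbf{Main approach via a rank computation.}
First I would dispose of the trivial contribution: $gi = 1$ happens for at most one $i$, so it is negligible. The real content is to estimate $\MR(\{ i \in Q : gi \in Q \})$. My plan is to count the fibers of the multiplication map $\mu : Q \times Q \to G$, $(i,j) \mapsto ij$. This map has domain of rank $2n = \MR(G)$, and its image $Q^{\cdot 2}$ has rank $2n - k < 2n$, so the generic fiber has positive rank; but I do not want the generic fiber, I want to understand which group elements $g$ arise as $g = i^{-1}(ij) = \dots$ Instead, reparametrize: consider the set $\{ i \in Q : gi \in Q\}$ directly. Writing $j = gi$, this is exactly the set of $i \in Q$ with $j := gi \in Q$, i.e. the set of $i$ such that $g = ji^{-1} = ji$ for some $j \in Q$. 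Thus $F \setminus \{1\text{-case}\}$ is in bijection with $\{(j,i) \in Q \times Q : ji = g\} = \mu^{-1}(g)$ via projection, and so $\MR(F) = \MR(\mu^{-1}(g))$.

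\textbf{Reducing to a generic fiber estimate.}
So the claim becomes: for every $g \in G \setminus Q$, the fiber $\mu^{-1}(g)$ over $g$ in $Q \times Q$ has rank $n$. Now $\mu(Q \times Q) = Q^{\cdot 2}$ has rank $2n - k$ and Morley degree $1$, while $Q \times Q$ has rank $2n$; by the fiber dimension theorem the \emph{generic} fiber has rank $2n - (2n-k) = k$. That gives the wrong number ($k$, not $n$) and shows that the fibers I care about are \emph{not} the generic ones over $Q^{\cdot 2}$. The resolution is that $g \in G \setminus Q$ need not lie in $Q^{\cdot 2}$ at all; the condition ``$gi$ has a fixed point'' forces $gi \in Q$, which is a strong constraint, and I expect $F$ to be a proper-rank piece rather than generic in $Q$. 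This tells me my bijection is too naive, and the honest statement must be proved differently — most likely by showing $F$ is generic as the complement of a small bad set.

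\textbf{The approach I would actually carry out, and the main obstacle.}
I would instead argue as follows. Since $G$ acts sharply $2$-transitively, for $i \in Q$ the element $gi$ has a fixed point iff $gi$ fixes some point of $X$, iff there is $x \in X$ with $x^{gi} = x$, iff $x^g = x^i$ for some $x$. As $i$ ranges over $Q$, the map $i \mapsto \fix(i)$ is a bijection $Q \to X$, and $x^i$ for the involution $i$ with $\fix(i) = y$ is the reflection of $x$ through $y$. So the question reduces to: for generically many $y \in X$, does there exist $x$ with $x^g$ equal to the reflection of $x$ through $y$? I would fix $g$ and consider the definable map $X \to X$, $x \mapsto x^g$, together with the reflection structure, and count the rank of the set of $y$ admitting such an $x$. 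The key geometric input is that for fixed $g \notin Q$, the graph $\{(x, x^g) : x \in X\}$ has rank $n$, and intersecting it with the rank-$n$ family of ``reflection'' relations (parametrized by $y \in Q$) inside $X \times X$ (rank $2n$) should force, for generically many $y$, a nonempty — hence positive-dimensional or at least nonempty — solution set in $x$. The main obstacle I anticipate is controlling this intersection uniformly in $y$: I must rule out that the solution set is nonempty only on a small (rank $< n$) set of $y$, which amounts to showing that the map sending $x$ to the midpoint-type datum of $(x, x^g)$ is generically finite-to-one and dominant onto $Q$. I would establish dominance by a rank count: the source $\{x\}$ has rank $n$, the target $Q$ has rank $n$, and the fibers are finite because a point $x$ together with $x^g$ determines at most boundedly many $y$ with $x^i = x^g$ (in fact at most one, by uniqueness of midpoints, \cref{lem:involutions in Frob}(3)). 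Hence the image is generic in $Q$, which is exactly the assertion. The delicate point, and the step I expect to be hardest, is verifying that the midpoint datum genuinely lands in $Q$ (not a degenerate locus) for generic $x$, which is where the hypothesis $g \notin Q$ must be used to exclude the fixed-point-free behavior forcing the image into a thin set.
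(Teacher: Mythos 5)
Your second approach is, in outline, the paper's own: the set in question is precisely the image of the midpoint map $j \mapsto i_j$, where $i_j \in Q$ is the unique involution swapping $j$ and $j^g$ (then $j^{gi_j} = (j^g)^{i_j} = j$, so $gi_j$ has a fixed point; conversely any $i$ with $x^{gi}=x$ is the midpoint of $x$ and $x^g$). Since the source has Morley rank $n$ and $\Mdeg(Q)=1$, it remains to show this map is generically finite-to-one. This is exactly where your argument has a genuine gap: you justify ``the fibers are finite'' by the uniqueness of midpoints, but uniqueness of the midpoint of $(x,x^g)$ only says that the map $x \mapsto i_x$ is single-valued, i.e.\ well-defined; it says nothing about the fiber over a fixed $i\in Q$, which is the set $\{x : x^g=x^i\}=\fix(gi)$ --- the very set whose non-emptiness is at issue. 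The correct argument, and the \emph{only} place the hypothesis $g\notin Q$ enters, is: if $i_j=i_k=:i$ for $j\neq k$, then $g$ and $i$ both carry the pair $(j,k)$ to $(j^g,k^g)$, so $g=i\in Q$ by sharp $2$-transitivity, a contradiction; hence the midpoint map is injective and its image has rank $n$. (Equivalently: $gi\neq 1$ since $g\notin Q$, and a nontrivial element of a sharply $2$-transitive group fixes at most one point, so each fiber has at most one element.) You instead locate the use of $g\notin Q$ in ``verifying that the midpoint datum genuinely lands in $Q$'', but the midpoint always lies in $Q$ by condition b) of \cref{prop:partial_mock hyperbolic reflection}; that is not where the hypothesis is needed, and as written the finite-to-one step is unproved.

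A secondary error: your first reduction asserts that the elements of $G$ with a fixed point are precisely $1$ and the involutions, so that ``$gi$ has a fixed point'' would mean $gi\in Q\cup\{1\}$. This is false: the elements with a fixed point form $\bigcup_{j\in Q}\Cen(j)$, and each point stabilizer $\Cen(j)$ has Morley rank $n$, far larger than $Q\cup\{1\}$ (what is true, by \cref{lem:basic}, is only that nontrivial \emph{translations} are fixed-point free). You abandon this route when the rank count comes out wrong, but the reason it fails is this mischaracterization of the fixed-point condition, not the ``naivety of the bijection''; diagnosing it correctly would have led you directly to the centralizer condition $j^{gi}=j$ and hence to the paper's one-line argument.
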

\begin{proof}
Let $g\in G$. For any $j\in Q$ there is a unique $i_j\in Q$ swapping $j$ and $j^g$. Then $gi_j$ centralizes $j$, so has a fixed point.
If $i_j=i_k$ for some $j\neq k\in Q$, then by sharp 2-transitivity it follows that $g=i_j=i_k\in Q$.
Hence for  $g\notin Q$, the $i_j, j\in Q$, are pairwise distinct and hence $\{i_j, j\in Q\}$ has Morley rank $n$.
\end{proof}

Let $\mu: G^3\longrightarrow G$ be the multiplication map, i.e., $\mu(g_1,g_2,g_3)=g_1g_2g_3$.

\begin{lemma}\label{lem:Q^3}
	We have $\MR(Q^{\cdot 3})>\MR(Q^{\cdot 2})$. 
\end{lemma}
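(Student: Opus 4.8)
The plan is to compute $\MR(Q^{\cdot 3})$ through the multiplication map
\[ f\colon Q^{\cdot 2}\times Q\longrightarrow G,\qquad (t,i)\longmapsto ti, \]
whose image is exactly $Q^{\cdot 3}=Q^{\cdot 2}\cdot Q$. Fixing one involution $i$, the set $Q^{\cdot 2}\cdot i$ lies in $Q^{\cdot 3}$ and has rank $\MR(Q^{\cdot 2})=2n-k$, so already $\MR(Q^{\cdot 3})\geq 2n-k>n$. I would then argue by contradiction: assume $\MR(Q^{\cdot 3})=\MR(Q^{\cdot 2})=2n-k$. The domain of $f$ has rank $(2n-k)+n=3n-k$, hence by additivity of Morley rank the generic fibre of $f$ has rank $(3n-k)-(2n-k)=n$.

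The fibre of $f$ over $g$ is $\{(gi,i):i\in Q,\ gi\in Q^{\cdot 2}\}$, which projects bijectively onto $\{i\in Q : gi\in Q^{\cdot 2}\}$ (note $t=gi^{\inv}=gi$ is forced). Thus, under the assumption, $\{i\in Q : gi\in Q^{\cdot 2}\}$ has rank $n$ for generic $g\in Q^{\cdot 3}$, and since $Q$ has Morley degree $1$ this set is generic in $Q$. Because $\MR(Q^{\cdot 3})\geq 2n-k>n=\MR(Q)$, a generic $g\in Q^{\cdot 3}$ lies outside $Q$, so it suffices to derive a contradiction by showing that for every $g\in G\setminus Q$ the set $\{i\in Q : gi\in Q^{\cdot 2}\}$ is \emph{not} generic in $Q$.

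This last point is the crux, and it is where the geometric input enters. By \cref{lem:basic} every nontrivial translation in $Q^{\cdot 2}$ acts without fixed points, so
\[ \{i\in Q : gi\in Q^{\cdot 2}\}\subseteq \{i\in Q : gi \text{ has no fixed point}\}\cup\{g^{\inv}\}. \]
On the other hand, \cref{lem:product with involution} states precisely that for $g\notin Q$ the set $\{i\in Q : gi \text{ has a fixed point}\}$ is generic in $Q$; as $Q$ has Morley degree $1$, its complement $\{i\in Q : gi \text{ has no fixed point}\}$ has rank $<n$, and adjoining the single element $g^{\inv}$ leaves the rank unchanged. Hence $\{i\in Q : gi\in Q^{\cdot 2}\}$ has rank $<n$, contradicting its genericity and forcing $\MR(Q^{\cdot 3})>\MR(Q^{\cdot 2})$.

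I expect the main obstacle to be the structural identification of translations as the fixed-point-free elements, since that is exactly what allows \cref{lem:product with involution} to be applied in contrapositive form; the remaining rank bookkeeping (the fibre dimension of $f$ and the repeated use of $\Mdeg(Q)=1$ to pass between ``rank $n$'' and ``generic in $Q$'') is routine.
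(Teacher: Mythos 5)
Your proof is correct and follows essentially the same route as the paper's: both fibre-count a multiplication map onto $Q^{\cdot 3}$ and use \cref{lem:product with involution} together with the fixed-point-freeness of nontrivial translations (\cref{lem:basic}) to show the fibre over any $\alpha \in Q^{\cdot 3}\setminus Q$ has rank $<n$. The only cosmetic difference is that you parametrize by $Q^{\cdot 2}\times Q$ rather than by $Q\times Q\times Q$, which absorbs the rank-$k$ redundancy in writing a translation as a product of two involutions that the paper accounts for explicitly via $\MR(\mu^{-1}(\alpha))=\MR(X_\alpha)+k$.
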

\begin{proof}
	Note that $\MR(Q^{\cdot 3})=\MR(iQ^{\cdot 3})\geq \MR(Q^{\cdot 2})>\MR(Q)=n$ and hence $Q$ is not a generic subset of $Q^{\cdot 3}$.
	
	For $\alpha \in Q^{\cdot 3}$ we let  $X_\alpha = \{i \in Q :  i\alpha \in Q^{\cdot 2} \}$ be the set of all involutions $i$ such that $i\alpha$ is a translation.
	By Lemma~\ref{lem:product with involution} and Remark~\ref{lem:basic} $MR(X_\alpha)<n$ for all $\alpha\in Q^{\cdot 3}\setminus Q$.
	
	Let $\MR(Q^{\cdot 3})=2n-k+l$ for some $l\geq 0$. There is a generic set of $\alpha \in Q^{\cdot 3} \setminus Q$ such that $\MR(\mu^{-1}(\alpha) )= n+k-l$. Set $X = X_\alpha$ for such an $\alpha \in Q^{\cdot 3} \setminus Q$. If $irs = \alpha$, then $\MR(\{ j \in Q: rs \in jQ\}) = k$ and hence $\MR(\mu^{-1}(\alpha))= \MR(X) + k$. Therefore we have $\MR(X) =n-l$ and hence $l\geq 1$ by \cref{lem:product with involution}.
\end{proof}

\begin{corollary} \label{cor:simplicity} Let $G$ be a non-split sharply $2$-transitive group of finite Morley rank.
	If the lines are strongly minimal, then $G$ is simple and a counterexample to the Cherlin-Zilber Conjecture.
\end{corollary}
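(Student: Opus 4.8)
### Proof Proposal

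\begin{proof}[Proof proposal for \cref{cor:simplicity}]
The plan is to combine the rank machinery developed for mock hyperbolic reflection spaces with the counting lemmas proved just above. Since $G$ is a non-split sharply $2$-transitive group of finite Morley rank with strongly minimal lines, the set $Q$ of involutions forms a mock hyperbolic reflection space by \cref{cor:mock hyperbolic reflection from 2-sharp}, with $k = \MR(\Cen(ij)) = 1$. First I would establish that $\langle Q^{\cdot 2}\rangle$ is a definable connected normal subgroup of $G$: normality is immediate because $Q$ is a conjugacy class, so $Q^{\cdot 2}$ is a normal subset, and definable connectedness is exactly \cref{prop:subgroup generated by Q^2}(b). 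The crux is then to show that $\langle Q^{\cdot 2}\rangle = G$ and that $G$ has no other proper nontrivial definable normal subgroup.

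The key step is to pin down $\MR(\langle Q^{\cdot 2}\rangle)$. By \cref{prop:subgroup generated by Q^2}(b) there is a bound $m$ so that every element of $\langle Q^{\cdot 2}\rangle$ is a product of at most $m$ translations, so $\langle Q^{\cdot 2}\rangle = Q^{\cdot 2m}$ for some fixed $m$. Now \cref{lem:Q^3} gives the strict inequality $\MR(Q^{\cdot 3}) > \MR(Q^{\cdot 2}) = 2n-1$, and I would iterate this: the same counting argument (using \cref{lem:product with involution} to control the fibers $X_\alpha$) should show that each further factor of $Q$ strictly increases the rank until it saturates at $\MR(G) = 2n$. Concretely, $\langle Q^{\cdot 2}\rangle$ is normal and contains $Q^{\cdot 2}$ of rank $2n-1$, so it has rank either $2n-1$ or $2n$; if it had rank $2n-1$ it would be a proper definable subgroup containing all translations, forcing $Q^{\cdot 2}$ to be (almost) a normal subgroup, which by \cref{thm:neumann} (equivalently \cref{prop:BHNeumann}) would make $G$ split, contradicting our hypothesis. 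Hence $\MR(\langle Q^{\cdot 2}\rangle) = 2n = \MR(G)$, and by connectedness of $G$ (which acts sharply $2$-transitively, so is connected) we get $\langle Q^{\cdot 2}\rangle = G$.

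For simplicity proper, suppose $N \trianglelefteq G$ is an infinite definable normal subgroup. Since all involutions are conjugate, either $N$ contains some involution $i$ — hence all of $Q$, hence $Q^{\cdot 2}$, hence $N \supseteq \langle Q^{\cdot 2}\rangle = G$ — or $N$ contains no involution and is therefore uniquely $2$-divisible by \cref{no-involutions}. In the latter case $N$ is normalized by every involution $i \in Q$, which acts on $N$ as a fixed-point-free involutory automorphism (fixed points would violate \cref{lem:basic}), so by \cref{prop:neumann_abelian} $N$ is abelian and inverted by $i$. An infinite abelian normal subgroup then puts us in case (c) of \cref{prop:BHNeumann}, forcing $\Lambda$ to consist of a single line and $G$ to split — again a contradiction. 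The main obstacle I anticipate is the rank-saturation argument for $\langle Q^{\cdot 2}\rangle$: while \cref{lem:Q^3} gives one strict increase, ruling out the intermediate possibility $\MR(\langle Q^{\cdot 2}\rangle) = 2n-1$ cleanly is where the non-splitting hypothesis must be fed in via \cref{prop:BHNeumann}, and care is needed to see that a proper definable normal subgroup of rank $2n-1$ containing $Q^{\cdot 2}$ genuinely forces $Q^{\cdot 2}$ to be a subgroup. Finally, since a simple group of finite Morley rank that is sharply $2$-transitive and non-split cannot be algebraic, $G$ is a counterexample to the Cherlin–Zilber (Algebraicity) Conjecture.
\end{proof}
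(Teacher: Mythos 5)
Your simplicity argument follows the paper's route in outline ($\langle Q^{\cdot 2}\rangle=G$ via \cref{lem:Q^3} and \cref{prop:subgroup generated by Q^2}, then a normal-subgroup analysis), but two of your justifications fail as written. First, to rule out $\MR(\langle Q^{\cdot 2}\rangle)=2n-1$ you argue that such a subgroup would force $Q^{\cdot 2}$ to be ``(almost) a normal subgroup'' and then invoke \cref{thm:neumann}; but a proper definable subgroup of rank $2n-1$ containing $Q^{\cdot 2}$ as a generic subset does not make $Q^{\cdot 2}$ itself a subgroup, so Neumann's criterion does not apply. No detour (and no iteration) is needed: since $\langle Q^{\cdot 2}\rangle$ is normal and contains $Q^{\cdot 2}$, the set $Q\cdot\langle Q^{\cdot 2}\rangle$ is a single coset, so $Q^{\cdot 3}$ lies in a coset of $\langle Q^{\cdot 2}\rangle$, and \cref{lem:Q^3} gives $\MR(\langle Q^{\cdot 2}\rangle)\geq\MR(Q^{\cdot 3})>2n-1$, hence $=2n=\MR(G)$ and $\langle Q^{\cdot 2}\rangle=G$ by connectedness. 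Second, your claim that an involution $i$ acts fixed-point-freely on an involution-free normal subgroup $N$ ``by \cref{lem:basic}'' is unjustified: that remark only gives $J^{\cdot 2}\cap\Cen(i)=\{1\}$, and there is no reason a priori that $N\subseteq J^{\cdot 2}$. The clean argument, which also makes the whole involution-free case and the appeal to \cref{prop:BHNeumann} unnecessary, is that any $1\neq h\in N$ fails to centralize some $j\in Q$ (since $\bigcap_{j\in Q}\Cen(j)=1$), and then $h^{-1}h^{j}=(h^{-1}jh)j$ is a nontrivial translation in $N$; since the nontrivial translations form a single conjugacy class in characteristic $\neq 2$, $N$ contains all of $Q^{\cdot 2}$ and hence equals $G$.

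The genuine gap is the second assertion of the corollary: you conclude with ``a simple group of finite Morley rank that is sharply $2$-transitive and non-split cannot be algebraic'' with no argument, and this is precisely what has to be proved for $G$ to be a counterexample to the Cherlin--Zilber conjecture. The paper supplies the missing step: if $G$ were a simple algebraic group over an algebraically closed field $K$, then either its $K$-rank is at least $2$ and a maximal torus contains two distinct commuting involutions, or it is $\PSL_2(K)$, which also contains commuting involutions (e.g.\ $x\mapsto -1/x$ and $x\mapsto -x$); either way this contradicts $J^{\cdot 2}\cap\Cen(i)=\{1\}$ from \cref{lem:basic}. Without some such argument the proposal only proves simplicity, not that $G$ contradicts the conjecture.
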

\begin{proof}
 Since $\MR(Q^{\cdot 2})=2n-1<\MR(Q^{\cdot 3})=\MR(iQ^{\cdot 3})=\MR(G)=2n$ simplicity follows from Lemma~\ref{lem:Q^3} and Proposition~\ref{prop:subgroup generated by Q^2}.
 
 Assume towards a contradiction that  $G$ is an algebraic group over an algebraically closed field $K$. If the $K$-rank of $G$ is at least 2, then the torus contains commuting involutions, contradicting Remark~\ref{lem:basic} (iii). If the $K$-rank of $G$ is $1$, then $G$ is isomorphic to $PSL_2(K)$ and also contains commuting involutions, e.g. $x\mapsto -\frac{1}{x}$ and $x\mapsto -x$ are commuting involutions in $PSL_2(K)$.
\end{proof}

Note that a sharply $2$-transitive group of finite Morley rank in characteristic different from 2 is not a bad group in the sense of Cherlin since for any translation $\sigma\in Q^{\cdot 2}$ the group $N_G(\Cen(\sigma))=\Cen(\sigma)\rtimes N_{\Cen(\sigma)}(\Cen(\sigma))$ is solvable, but not nilpotent.

If $G$ is a sharply $2$-transitive group of finite Morley rank and $\chara(G)\neq 2$ with $n, k$ and $Q$ be as before, then by \cref{thm:n=2k+1} $G$ splits if $n\leq 2k+1$. Thus, we obtain:

\begin{corollary}
	If $G$ is a sharply $2$-transitive group, \ $\MR(G) = 6$, then $G$ is of the form $\mathrm{AGL}_1(K)$ for some algebraically closed field $K$ of Morley rank $3$.
\end{corollary}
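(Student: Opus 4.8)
The plan is to first pin down the ranks and force $G$ to split, and then to recover the field structure. Since it was shown above that $\MR(G)=2n$ for $n=\MR(Q)$, the hypothesis $\MR(G)=6$ gives $n=3$. The lines are infinite, so $k:=\MR(\Cen(ij))\geq 1$ for distinct $i,j\in Q$. If $G$ did not split we would have $k<n$, and Proposition~11.71 of \cite{borovik-nesin} gives $2k<n=3$, forcing $k=1$ and hence $n=2k+1$; but then \cref{thm:n=2k+1} (in its $n\leq 2k+1$ formulation) applies and yields that $G$ splits, a contradiction. Thus $G$ splits.

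Next I would read off the split structure. By \cref{thm:neumann} the set $A:=Q^{\cdot 2}$ of translations is an abelian subgroup, so condition (g) of \cref{prop:BHNeumann} holds, and hence $A=iQ$ for any $i\in Q$, $A$ is normal, and $G=A\rtimes\Cen(i)$. Counting ranks, $\MR(A)=\MR(iQ)=\MR(Q)=3$, and the unique decomposition of \cref{prop:decomposition} gives $\MR(\Cen(i))=\MR(G)-\MR(A)=3$. Writing $H=\Cen(i)$, the group $H$ is connected (it has Morley degree $1$, acting regularly on the degree-$1$ set $Q\setminus\{i\}$), it acts on $A$ by conjugation faithfully (a nontrivial $h\in H$ fixing a translation $\sigma$ would lie in $\Cen(\sigma)\cap\Cen(i)=\{1\}$), and it acts regularly on $A\setminus\{1\}\cong Q\setminus\{i\}$; in particular every nontrivial $H$-invariant subgroup of $A$ contains the whole orbit $A\setminus\{1\}$, so $A$ is $H$-minimal.

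With $A$ abelian, $H$-minimal, and $H$ abelian (see below) acting faithfully, the concluding step is Zilber's Field Theorem (\cref{prop:field-theorem}): it produces an interpretable field $K$ with $A\cong K_+$ and $H\leq K^{*}$ acting by multiplication. Since $H$ already acts transitively on $A\setminus\{1\}\cong K^{*}$, we get $H=K^{*}$, whence $G=K_+\rtimes K^{*}=\mathrm{AGL}_1(K)$. An infinite field of finite Morley rank is algebraically closed, and $\MR(K)=\MR(K_+)=3$, which is the assertion.

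The one genuine obstacle is that \cref{prop:field-theorem} requires the acting group $H$ to be abelian, i.e. that the near-field underlying the split group $G$ is in fact a commutative field. I would establish this by analysing the connected complement $H$, which has Morley rank $3$, a unique involution (central in $\Cen(i)$, acting on $A$ by inversion), and acts fixed-point-freely on $A$. A non-abelian such $H$ would either involve a non-solvable section, forcing a rank-$3$ copy of $\PSL_2$ or $\mathrm{SL}_2$ over an algebraically closed field, which is excluded because these groups contain either several involutions or fixed-point-bearing elements, incompatible with the uniqueness of the involution and the freeness of the action; or it would be solvable and non-nilpotent, which the free action rules out together with Fr\'econ's non-existence of bad groups of Morley rank $3$ \cite{frecon} and the tight rank budget. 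Verifying this commutativity is where the real work lies; once $H$ is known to be abelian, the remaining steps are routine.
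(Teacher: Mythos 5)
There is a genuine gap, in fact two. First, your argument silently assumes $\chara(G)\neq 2$ throughout: the identity $\MR(G)=2n$ for $n=\MR(Q)$, the mock hyperbolic reflection space structure on $Q$, Proposition~11.71 of \cite{borovik-nesin}, and \cref{thm:n=2k+1} are all only available in the paper under that hypothesis (in characteristic $2$ involutions have no fixed points, the identification of $X$ with $J$ fails, and $\MR(G)=2n$ need not hold). The corollary as stated makes no assumption on the characteristic, so the case $\chara(G)=2$ must be treated separately; the paper does this by invoking the splitting theorem of Alt{\i}nel--Berkman--Wagner, then observing that the point stabilizers have Morley rank $3$ and contain no involutions, hence are solvable by Fr\'econ's theorem, at which point Corollary~11.66 of \cite{borovik-nesin} identifies $G$ as $\mathrm{AGL}_1(K)$. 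Your proof covers none of this.

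Second, even in characteristic $\neq 2$, after you obtain splitting (which you do correctly, essentially as the paper does, via \cref{thm:n=2k+1}), your route to $\mathrm{AGL}_1(K)$ hinges on the complement $H=\Cen(i)$ being abelian so that Zilber's Field Theorem applies, and you concede that ``verifying this commutativity is where the real work lies.'' The sketch you offer --- ruling out a non-solvable rank-$3$ section and then a solvable non-nilpotent one --- is not a proof: it presupposes the classification of connected non-solvable groups of Morley rank $3$ and leaves the solvable non-abelian case to ``the tight rank budget.'' This is precisely the content that the paper outsources to the literature by citing \cite{altinel-berkman-wagner} for the conclusion that a split sharply $2$-transitive group of finite Morley rank is standard. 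So your proposal is not wrong in outline for the odd-characteristic case, but it replaces a citation with an unfinished argument and omits an entire case of the statement.
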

\begin{proof}
If $\chara(G)\neq 2$, then by \cref{thm:n=2k+1} $G$ splits and the result follows from ~\cite{altinel-berkman-wagner}. If $\chara(G)=2$, then $G$ is split by~\cite{altinel-berkman-wagner} and any point stabilizer has Morley rank 3. Since the point stabilizers do not contain involutions, they are solvable by~\cite{frecon}. Now the result follows from~\cite{borovik-nesin}, Cor.~11.66. 
\end{proof}

\section{Further remarks}
A finite uniquely $2$-divisible K-loop is the same as a finite B-loop in the sense of Glauberman \cite{B-loops}. As a consequence of Glauberman's $Z^*$-Theorem \cite{Z-star} finite B-loops are solvable. 
Following Glauberman we say that a K-loop $L$ is \emph{half-embedded} in some group $G$ if it is isomorphic to a K-loop arising from a uniquely $2$-divisible twisted subgroup of $G$ as in \cref{prop:twisted_subgroup}. B-loops and uniquely $2$-divisible K-loops can always be half-embedded in some group and that group can be chosen to be finite if the loop is finite (Theorem~1 and Corollary 1 of \cite{B-loops}).
This allows us to restate Glauberman's result for twisted subgroups:

\begin{proposition}[\cite{Z-star}]
Let $G$ be a group and let $L \subseteq G$ be a finite uniquely $2$-divisible twisted subgroup. Then $\langle L \rangle$ is solvable.
\end{proposition}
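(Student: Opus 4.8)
The plan is to recognise $L$ as a finite B-loop and then quote Glauberman's solvability theorem; the only genuine work lies in making the dictionary precise and in controlling the group $\langle L\rangle$ inside an arbitrary ambient group.

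First I would apply \cref{prop:twisted_subgroup} to the twisted subgroup $L\subseteq G$: the operation $a\otimes b=a^\half b a^\half$ makes $L$ into a K-loop $(L,\otimes,1)$, and integer powers computed in $(L,\otimes)$ agree with those taken in $G$. Since $L$ is finite and uniquely $2$-divisible, $(L,\otimes)$ is a finite B-loop, and the inclusion $L\subseteq G$ is, by definition, a half-embedding whose associated enveloping group is exactly $\langle L\rangle$. Thus the statement to be proved is precisely that the group generated by a half-embedded finite B-loop is solvable.

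Second, I would invoke the chain of facts recalled just before the statement. By Theorem 1 and Corollary 1 of \cite{B-loops}, the finite B-loop $(L,\otimes)$ also admits a half-embedding into a \emph{finite} group carrying an involution $\tau$ that inverts the corresponding twisted subgroup; there this twisted subgroup consists of odd-order elements inverted by $\tau$. Glauberman's $Z^*$-theorem \cite{Z-star} then forces $\tau$ to be isolated, and hence the finite enveloping group to be solvable. This is the precise meaning of the assertion that a finite B-loop is solvable.

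The main obstacle, and the point where care is needed, is the transfer from this standard \emph{finite} enveloping group to the group $\langle L\rangle$ sitting inside the arbitrary (possibly infinite) group $G$: a priori two half-embeddings of the same B-loop could generate different, even infinite, groups. To close the gap I would use the B-loop correspondence to pin down the enveloping group. One first checks that every element of $L$ has finite odd order—unique $2$-divisibility rules out involutions, while finiteness of the set $L$ forces finite, hence odd, order (using the agreement of powers from \cref{prop:twisted_subgroup})—so the generating set of $\langle L\rangle$ consists of torsion elements whose products are constrained by the twisted-subgroup and loop relations. The content of Corollary 1 of \cite{B-loops} is then that the enveloping group of a finite B-loop is itself finite, so that $\langle L\rangle$ is a finite group generated by the twisted subgroup $\cong(L,\otimes)$; applying \cite{Z-star} to it directly yields that $\langle L\rangle$ is solvable. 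The delicate step is exactly this finiteness and well-definedness of the enveloping group across different half-embeddings, which is where Glauberman's machinery, rather than any elementary group-theoretic bound, does the real work.
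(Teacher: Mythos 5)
The paper itself gives no argument for this proposition: it is presented as a restatement of Glauberman's theorem via the identification of finite uniquely $2$-divisible K-loops with finite B-loops, so your first two steps (turning $L$ into a finite B-loop by \cref{prop:twisted_subgroup} and quoting the consequence of the $Z^*$-theorem) follow exactly the intended route, and you are right to flag the passage from the loop back to the particular subgroup $\langle L\rangle\leq G$ as the delicate point. The problem is that the way you close that gap does not work. Corollary~1 of \cite{B-loops} only says that a finite B-loop admits \emph{some} half-embedding into a finite group; it does not say that the enveloping group of a half-embedding is finite in general, nor that it is well defined. Indeed it cannot be well defined: replacing $\langle L\rangle$ by a central extension and lifting $L$ along a section yields another half-embedding of the same B-loop with a strictly larger enveloping group, so no uniqueness statement about ``the'' enveloping group can control the given $\langle L\rangle\leq G$. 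As written, the step ``so that $\langle L\rangle$ is a finite group'' is unjustified.

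What you actually need is that $\langle L\rangle$ is finite (or at least reduces to a finite quotient by a central subgroup) for \emph{every} half-embedding, and this requires an argument. For instance: with $d_{a,b}$ as in \cref{prop:twisted_subgroup} one has $b^\half a^\half=d_{a,b}(a\otimes b)^\half$, and since squaring is a bijection of $L$ this gives $L\cdot L\subseteq D\cdot L$ for $D=\langle d_{a,b}:a,b\in L\rangle$; as conjugation by each $d_{a,b}$ induces the automorphism $\delta_{a,b}$ of $(L,\otimes)$, the set $L$ is normalized by $D$ and hence $\langle L\rangle=L\cdot D$. Consequently $\Cen_{\langle L\rangle}(L)$ is central in $\langle L\rangle$ and of finite index, the quotient embedding into $\mathrm{Sym}(L)$. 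Every $a\in L$ has finite odd order (all powers $a^{n+2}=a\,a^n\,a$ lie in $L$, so $\langle a\rangle\subseteq L$ is finite, and injectivity of squaring on $L$ rules out even order), so by Schur's theorem the derived subgroup of $\langle L\rangle$ is finite while the abelianization is a finitely generated torsion abelian group, whence $\langle L\rangle$ is finite; alternatively one can pass to the finite central quotient $\langle L\rangle/\Cen_{\langle L\rangle}(L)$, whose image of $L$ is again a uniquely $2$-divisible twisted subgroup, apply Glauberman there, and pull solvability back through the central extension. Some such reduction must be supplied before the citation of \cite{Z-star} finishes the proof.
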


As a consequence finite mock hyperbolic spaces must consist of a single line:

\begin{proposition} \label{prop:finite-mock-hyperbolic}
Suppose $Q$ forms a finite mock hyperbolic reflection space in a group $G$. Then $Q$ consists of a single line. 
\end{proposition}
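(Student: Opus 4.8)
The plan is to reduce the statement to the already-established structural results, principally \cref{prop:BHNeumann} and the Glauberman-type solvability result just quoted. The key object to study is the translation set $Q^{\cdot 2}$, and the goal is to show it is a subgroup of $G$; by the equivalence g) $\iff$ a) in \cref{prop:BHNeumann}, this immediately forces $Q$ to consist of a single line. Since $Q$ is finite, each line $\lambda$ is a finite uniquely $2$-divisible twisted subgroup-like object: more precisely, $\lambda^{\cdot 2}$ is a finite uniquely $2$-divisible abelian group by \cref{lem:basic_properties} d), and fixing an involution $i\in Q$, the set $iQ = Q^{\cdot 2}$ (recall $iQ$ and $Q^{\cdot 2}$ coincide in the single-line case, but in general $iQ \subseteq Q^{\cdot 2}$) is a finite uniquely $2$-divisible twisted subgroup of $G$ by \cref{lem:translations have no fixed point} a).

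First I would observe that $iQ$ is a uniquely $2$-divisible twisted subgroup: it contains $1 = i\cdot i$, it is closed under inversion since $(ia)^{\inv} = a i = i\, i^a \in iQ$ using that $i^a \in Q$, and it satisfies $(ia)(ib)(ia) \in iQ$ by a direct check, while unique $2$-divisibility is exactly \cref{lem:translations have no fixed point} a). Then the restated Glauberman result applies to $L = iQ$, giving that $\langle iQ\rangle = \langle Q^{\cdot 2}\rangle$ is solvable.

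Next I would exploit solvability to produce a nontrivial abelian normal subgroup and invoke c) of \cref{prop:BHNeumann}. Let $N = \langle Q^{\cdot 2}\rangle$; this is a normal subgroup of $G$ since $Q^{\cdot 2}$ is a normal subset (being built from the conjugacy class $Q$). As $N$ is a nontrivial solvable group, its last nontrivial term $A$ in the derived series is a characteristic abelian subgroup of $N$, hence a nontrivial abelian \emph{normal} subgroup of $G$. It remains to verify the hypothesis $A\not\subseteq \bigcap_{i\in Q}\Cen(i)$ of condition c): if $A$ centralized every involution then $A$ would centralize $Q$ and hence the whole class-generated subgroup, but $A\le N = \langle Q^{\cdot 2}\rangle$ is nontrivial and $Q^{\cdot 2}\cap\Cen(i) = \{1\}$ by \cref{lem:translations have no fixed point} b), so a nontrivial element of $A\cap Q^{\cdot 2}$ cannot centralize the relevant involutions. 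Feeding this $A$ into \cref{prop:BHNeumann} c) $\implies$ a) yields that $\Lambda$ consists of a single line.

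The main obstacle I anticipate is the verification that the abelian normal subgroup $A$ genuinely violates the degeneracy condition $A\subseteq\bigcap_i\Cen(i)$, rather than collapsing into the "bad" intersection where \cref{prop:BHNeumann} c) does not apply. The cleanest route is probably to avoid the derived-series argument entirely and instead show directly that $Q^{\cdot 2}$ is a \emph{subgroup}: since $N = \langle Q^{\cdot 2}\rangle$ is solvable and finite, and each maximal torus/translation-centralizer $\lambda^{\cdot 2} = \Cen(\sigma)\cap Q^{\cdot 2}$ is abelian with the partition property from \cref{prop:partition}, one argues that a finite solvable group with such a partition by centralizers of translations and admitting a fixed-point-free involution must have its translation set closed under multiplication. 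This is where the finiteness is essential and where one must be careful; I would lean on the solvability to induct on $|G|$ or to locate a minimal normal (elementary abelian) subgroup meeting $Q^{\cdot 2}$ nontrivially, then transfer back to condition c) of \cref{prop:BHNeumann} as above.
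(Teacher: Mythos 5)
Your overall strategy coincides with the paper's: show that $iQ$ is a finite uniquely $2$-divisible twisted subgroup, apply the restated Glauberman theorem to conclude that $\langle iQ\rangle=\langle Q^{\cdot 2}\rangle$ is solvable and normal in $G$, extract a nontrivial abelian normal subgroup, and finish with \cref{prop:BHNeumann}. Up to and including the solvability and normality of $N=\langle Q^{\cdot 2}\rangle$ your argument is correct, and your route to normality (via $Q^{\cdot 2}$ being a normal subset) is a legitimate alternative to the paper's decomposition $G=iQ\,\Cen(i)$ with $\Cen(i)\leq N_G(\langle iQ\rangle)$.

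The gap is exactly where you yourself flagged the main obstacle: the verification that the abelian normal subgroup $A$ (the last nontrivial term of the derived series of $N$) satisfies $A\not\subseteq\bigcap_{i\in Q}\Cen(i)$. Your argument tacitly assumes $A\cap Q^{\cdot 2}\neq\{1\}$, but $A$ is only known to lie in $\langle Q^{\cdot 2}\rangle$; its elements may all be products of several translations without any nontrivial one being a translation, so \cref{lem:translations have no fixed point} b) gives no contradiction. Indeed, if $A\subseteq\bigcap_{i\in Q}\Cen(i)$ then $A$ centralizes $Q$ and hence is central in $N$, a perfectly consistent configuration in which condition c) of \cref{prop:BHNeumann} simply does not apply. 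The paper's fix is a short reduction you did not make: first replace $G$ by $G/K$, where $K=\bigcap_{i\in Q}\Cen(i)$ is the kernel of the conjugation action on $Q$. Since $Q^{\cdot 2}\cap\Cen(i)=\{1\}$, distinct elements of $Q$ remain distinct modulo $K$ and $Q$ still forms a mock hyperbolic reflection space in the faithful quotient; there any nontrivial normal subgroup automatically fails to be contained in $\bigcap_{i\in Q}\Cen(i)=\{1\}$, so condition c) applies. Your proposed fallback (induction on $|G|$, or locating a minimal normal subgroup meeting $Q^{\cdot 2}$) is left as a sketch and is unnecessary once this reduction is in place.
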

\begin{proof} We may assume that $G$ acts faithfully on $Q$. Let $i \in Q$ be an involution. Then it is easy to see that $iQ$ is a finite uniquely $2$-divisible twisted subgroup in $G$. Therefore $\langle iQ \rangle$ is solvable. Moreover, $\Cen(i) \leq N_G(\langle iQ \rangle )$ and $G$ can be decomposed as $G = iQ\Cen(i)$. Therefore $\langle iQ \rangle$ is a solvable normal subgroup of $G$. It follows that $G$ contains a non-trivial abelian normal subgroup. Now \cref{prop:BHNeumann} implies that $Q$ consists of a single line.
\end{proof}

In the context of groups of finite Morley rank we do not know if every uniquely $2$-divisible K-loop of finite Morley rank can be definably half-embedded into a group of finite Morley rank.
The following would be a finite Morley rank version of Glauberman's theorem:

\begin{conjecture} \label{conj:twisted_subgroup}
Let $G$ be a connected group of finite Morley rank with a definable uniquely $2$-divisible twisted subgroup $L$ of Morley degree $1$ such that $G = \langle L \rangle$. Then $G$ is solvable.
\end{conjecture}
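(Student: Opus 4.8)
The plan is to transport Glauberman's finite proof—solvability of $\langle L\rangle$ for a finite B-loop $L$, via the $Z^*$-theorem—into the finite-Morley-rank category, with the $Z^*$-theorem replaced by the reflection-space machinery of this paper. The key reduction is that it suffices to produce a single nontrivial \emph{definable abelian normal subgroup} $A\trianglelefteq G$. Indeed, the image $\bar L$ of $L$ in $\bar G=G/A$ is again a definable uniquely $2$-divisible twisted subgroup of Morley degree $1$ generating $\bar G$ (images of twisted subgroups are twisted, and both unique $2$-divisibility and generation are preserved), while $\MR(\bar G)<\MR(G)$; so $\bar G$ is solvable by induction on Morley rank, and together with $A$ abelian this forces $G$ solvable. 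Thus a minimal counterexample is a connected, non-solvable $G=\langle L\rangle$ possessing \emph{no} nontrivial definable abelian normal subgroup, and the whole task becomes: exhibit one.

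To do so I would attach to $L$ a conjugacy class of involutions exactly as in \cref{sec:frobenius_groups}. Using \cref{prop:twisted_subgroup}, $L$ carries the K-loop structure $a\otimes b=a^{1/2}ba^{1/2}$, its precession maps $\delta_{a,b}$ being conjugations by the definable elements $d_{a,b}$; let $\epsilon$ invert $L$ and form the quasidirect product $\calG=L\rtimes_Q\calA$ with $\calA=\calD(L)\times\langle\epsilon\rangle$ ($\epsilon$ centralizes $\calD(L)$ since $\delta_{a^{-1},b^{-1}}=\delta_{a,b}$). Since $\calD(L)$ is realized by conjugation inside $G$, the group $\calG$ is interpretable in $G$ and hence of finite Morley rank, and by \cref{lem:quasidirect_involutions} its involutions form the single conjugacy class $J=L\times\{\epsilon\}$ on which $\calG$ acts with unique midpoints. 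After equipping $J$ with the canonical family of lines and verifying conditions a)--c), one obtains a (generic) mock hyperbolic reflection space with $\MR(J)=\MR(L)=\MR(G)$.

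The goal is then to show that $J$ collapses to a \emph{single line}. By \cref{prop:BHNeumann} this is equivalent to the abelian subgroup $A_0=\iota J$ being normal in $\calG$; pulling $A_0$ back through the identification of the $L$-part of $\calG$ with the K-loop $L$ (whose integer powers agree with those in $G$ by \cref{prop:twisted_subgroup}), and using that $A_0$ is invariant under $\calD(L)$ and under the $G$-action, would yield the sought nontrivial definable abelian normal subgroup of $G$, contradicting minimality. In the finite world this collapse is precisely \cref{prop:finite-mock-hyperbolic}, whose proof rests on Glauberman's theorem. To run it here I would analyse the line normalizers $N_\calG(\lambda)=\lambda^{\cdot 2}\rtimes N(\lambda)$, the centralizer partition of \cref{prop:partition}, and the indecomposability of the sets $iQ$, aiming either at a degenerate reflection geometry or at an interpretable field forcing algebraicity.

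The main obstacle is exactly this last step, and it is structural rather than incidental. The rank inequalities of \cref{thm:rank_ineq} only give $n>2k$; they do not force the space onto a single line, so something strictly stronger—a genuine finite-Morley-rank substitute for the $Z^*$-theorem, asserting that the inverting involution $\iota$ is central modulo a torsion-free core—is needed to produce the abelian normal subgroup. A connected uniquely $2$-divisible $G=\langle L\rangle$ is of degenerate type, and ruling out non-solvable degenerate configurations is essentially the bad-group problem, which remains open beyond the low-rank and abelian-complement cases settled by Fr\'econ and Wagner. I therefore expect that a complete proof must await either such a $Z^*$-theorem in finite Morley rank or a strengthening of \cref{prop:BHNeumann} to the merely generic setting; the strategy above describes the natural line of attack, consistent with the statement being recorded here as a conjecture.
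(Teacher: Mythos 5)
This statement is labelled a \emph{conjecture} in the paper: the authors give no proof of it, only evidence (it reduces to Glauberman's theorem in the finite case, and \cref{prop:strongly-minimal-twisted} settles the strongly minimal case by showing $G$ is $2$-nilpotent). So there is no proof of the paper's to compare yours against, and your proposal is not a proof either: you say so yourself in the final paragraph. The essential gap is exactly where you locate it --- after building the quasidirect product $\calG$ and its involution class $J$, nothing in the paper forces $J$ onto a single line. \cref{thm:rank_ineq} only yields $n>2k$ (or a normal subgroup when $n=2k+1$), and \cref{prop:finite-mock-hyperbolic} is powered by the $Z^*$-theorem, for which no finite-Morley-rank substitute is known. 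Naming that obstruction does not close it.

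Two subsidiary points in the parts you do assert deserve more care. First, the reduction to ``no nontrivial definable abelian normal subgroup'' is not airtight: if $A$ is finite (hence central, as $G$ is connected) the Morley rank of $G/A$ does not drop, so the induction stalls; and for infinite $A$ the image $\bar L$ need not obviously remain \emph{uniquely} $2$-divisible (surjectivity of squaring passes to the quotient, injectivity does not) nor of Morley degree $1$. Second, the line system: in \cref{sec:frobenius_groups} the lines are the conjugates of $H\times\{\epsilon\}$ for the Frobenius complement $H$, and malnormality of $H$ is what drives \cref{prop:quasidirect_normalizer} and hence condition c). For a bare twisted subgroup $L$ there is no such $H$, so you would have to manufacture $\Lambda$ from scratch and verify a)--c) by other means; ``after equipping $J$ with the canonical family of lines'' hides real work. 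None of this is fatal to the strategy as a research programme, but as it stands the proposal establishes nothing beyond what the paper already records as open.
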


Note that this conjecture would imply the Feit-Thompson Theorem for connected groups of finite Morley rank: If $G$ is a connected group of finite Morley rank of degenerate type, then $G$ is uniquely $2$-divisible and hence \cref{conj:twisted_subgroup} (applied to $L = G$) would imply that $G$ is solvable.

Moreover, it would imply that Frobenius groups of finite Morley rank split: For Frobenius groups of degenerate type this would follow from solvability. If $G$ is a connected Frobenius group of finite Morley rank and odd type with involutions $J$ and lines $\Lambda$, then it suffices to show that $G$ has a non-trivial definable solvable normal subgroup (in that case $G$ has a non-trivial abelian normal subgroup and hence splits by \cref{prop:BHNeumann}). Note that $iJ$ is a uniquely $2$-divisible twisted subgroup. If $G$ is sharply $2$-transitive, then \cref{prop:subgroup generated by Q^2} shows that $\langle iJ\rangle$ is definable and connected and hence should be solvable by \cref{conj:twisted_subgroup}. 

For the general case consider the family $\calF_i = \{ \Cen(ij)^0 : j \in J \setminus \{i \} \}$. By Zilber's indecomposability theorem the subgroup $N = \langle H : H \in \calF_i \rangle$ is definable and connected. Moreover, it is easy to see that $N \cap iJ$ must be generic in $iJ$ and $N$ must be normalized by $\Cen(i)$. Therefore $N$ must be a normal subgroup of $G = iJ\Cen(i)$ and clearly $N = \langle N \cap iJ \rangle$. Hence \cref{conj:twisted_subgroup} would imply that $N$ is solvable.

If Frobenius groups of odd and degenerate type split, then \cref{rem:even-type} shows that Frobenius groups of even type also split.

If the twisted subgroup in the statement of \cref{conj:twisted_subgroup} is strongly minimal, then we show that $G$ must be $2$-nilpotent:

\begin{proposition} \label{prop:strongly-minimal-twisted}
Let $G$ be a connected group of finite Morley rank with a definable strongly minimal uniquely $2$-divisible twisted subgroup $L$ such that $G = \langle L \rangle$. Then $G$ is $2$-nilpotent.
\end{proposition}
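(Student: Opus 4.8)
The plan is to put the K-loop structure of \cref{prop:twisted_subgroup} to work: writing $a \otimes b = a^\half b a^\half$, the set $L$ becomes a uniquely $2$-divisible K-loop $(L,\otimes,1)$, and each precession map $\delta_{a,b} \in \calD(L)$ is realized as conjugation by $d_{a,b} = b^\half a^\half (a^\half b a^\half)^\ihalf \in G$. The decisive reduction is to show that strong minimality forces the loop to be an abelian group, i.e. $\calD(L) = \id$. Everything afterwards is soft.

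To establish $\calD(L) = \id$ I would pass to the left multiplication group $M = \mathrm{LMlt}(L) = \langle \lambda_a : a \in L\rangle$, where $\lambda_a(x) = a \otimes x$, acting on $L$. Since $L$ is strongly minimal (hence connected) and $\{\lambda_a\}$ is a definable family, Zilber's indecomposability theorem makes $M$ a definable connected group of finite Morley rank acting faithfully and transitively on the strongly minimal set $L$, with point stabilizer $M_1 = \calD(L)$ and $\MR(M) = 1 + \MR(\calD(L))$. If $\calD(L)$ is finite, then $M$ is a connected group of Morley rank $1$, hence abelian, and an abelian transitive permutation group is regular; therefore $M_1 = \calD(L) = \id$ and $(L,\otimes)$ is an abelian group. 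The substantive case is $\calD(L)$ infinite, where $M$ is a connected group of rank $\geq 2$ acting transitively on a strongly minimal set; here the classification of such actions forces $M$ to be generically $2$-transitive and thus of $\mathrm{AGL}_1(K)$-, $\PSL_2(K)$- or $\PGL_2(K)$-type. The affine case contradicts $\calD(L) \neq \id$, since there the loop is the additive group $K_+$ and is associative, while the projective cases must be ruled out using that $L$ is uniquely $2$-divisible: the involutions in $\PSL_2(K)$ or $\PGL_2(K)$ would induce involutions in $L$ or in a relevant stabilizer, contradicting unique $2$-divisibility. I expect this exclusion to be the main obstacle, and it is precisely where the uniquely $2$-divisible hypothesis is indispensable.

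Granting $\calD(L) = \id$, every $d_{a,b}$ acts trivially on $L$, hence centralizes $L$; since $G = \langle L\rangle$ this means $d_{a,b} \in Z(G)$. Passing to $\bar G = G/Z(G)$, the relation $\bar d_{a,b} = 1$ reads $\overline{b^\half}\,\overline{a^\half} = \overline{(a \otimes b)^\half}$, and because $\otimes$ is commutative the right-hand side is symmetric in $a$ and $b$, so $\overline{b^\half}\,\overline{a^\half} = \overline{a^\half}\,\overline{b^\half}$. As $a$ ranges over $L$ the elements $a^\half$ range over all of $L$ by unique $2$-divisibility, so the images $\overline{a^\half}$ exhaust $\bar L$ and commute pairwise. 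Since $\bar G = \langle \bar L\rangle$, this shows $\bar G$ is abelian, i.e. $G$ is nilpotent of class at most $2$.

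Finally, $G = \langle L\rangle$ is connected, again by Zilber's indecomposability theorem applied to the connected generating set $L$, so $G$ is a connected nilpotent group of finite Morley rank. By the structure theory of connected nilpotent groups (as in Theorems 6.8 and 6.9 of \cite{borovik-nesin}), the torsion of $G$ is central and its Sylow $2$-subgroup $S$ splits off as a central direct factor, giving $G = O_{2'}(G) \times S$ with $S$ a $2$-group and $O_{2'}(G)$ a normal $2$-complement. This is exactly the assertion that $G$ is $2$-nilpotent.
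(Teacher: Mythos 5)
Your overall strategy is the same as the paper's: reduce the proposition to showing that $(L,\otimes)$ is an abelian group, and obtain this from Hrushovski's classification of groups of finite Morley rank acting faithfully and transitively on a strongly minimal set (Theorem 11.98 of \cite{borovik-nesin}). The paper applies that theorem to the quasidirect product $\calG = L \rtimes_Q T$ with $T = N_G(L)/\Cen(L)$, which need not be connected and so requires a separate argument in the rank-one case; you apply it to the left multiplication group $M = \langle \lambda_a : a \in L\rangle$, which is connected, and you replace the citation of Kiechle for ``abelian loop implies class-two nilpotent'' by a direct computation with the elements $d_{a,b}$ modulo the centre. Both variations are fine in principle (and the central-quotient computation is a nice self-contained substitute), but note that Zilber's indecomposability theorem only yields $M$ as a definable connected group after $M$ has been realized inside an interpretable group of finite Morley rank: you should first form $\calG = L \rtimes_Q T$ as in \cref{prop:quasidirect_product} and identify $M$ with $\langle L \times \{\id\}\rangle \leq \calG$, where $L \times \{\id\}$ is strongly minimal, hence indecomposable.

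The genuine gap is the exclusion of the projective (rank-three) case, which you yourself flag as the main obstacle. The mechanism you propose --- involutions of $\PSL_2(K)$ or $\mathrm{PGL}_2(K)$ ``inducing involutions in $L$ or in a relevant stabilizer, contradicting unique $2$-divisibility'' --- does not work: unique $2$-divisibility is a property of the twisted subgroup $L$, not of the acting group, and an involutory automorphism of a uniquely $2$-divisible K-loop is no contradiction at all (inversion $x \mapsto x^\inv$ is always one; the paper uses exactly this automorphism $\epsilon$ in Section 4). Two correct exclusions are available. The paper's: in the rank-three case the point stabilizer $M_1 = \calD(L)$ would act $2$-transitively on $L \setminus \{1\}$, but $\calD(L) \leq \Aut(L,\otimes)$ and automorphisms preserve loop inversion, so no automorphism carries a pair $(x, x^\inv)$ to a pair $(x',y')$ with $y' \neq (x')^\inv$; such pairs exist because $L$ is infinite and, by unique $2$-divisibility, $x \neq x^\inv$ for $x \neq 1$. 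Alternatively: every nontrivial left translation $\lambda_a$ is fixed-point-free on $L$ (if $a \otimes x = x$ then $a = 1$ by the loop axioms), whereas every element of $\mathrm{PGL}_2(K)$ over an algebraically closed field fixes a point of $\bbP^1(K)$. Two smaller points: in this paper ``$2$-nilpotent'' means nilpotent of class at most two (this is what the cited identity $[[a,b],c]=1$ delivers), which you already establish before your final paragraph, so the closing discussion of normal $2$-complements answers a different question; and in the affine case the clean way to finish is to note that fixed-point-freeness forces $\{\lambda_a\} \subseteq K_+$, whence $\delta_{a,b} = \lambda_{a\otimes b}^\inv \lambda_a \lambda_b \in K_+ \cap M_1 = \{1\}$, contradicting the assumption that $\calD(L)$ is infinite.
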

\begin{proof}
Let $x \otimes y = x^\half y x^\half$ be the corresponding K-loop structure on $L$.
If $(L,\otimes)$ is an abelian group, then \cite[Theorem 6.14, part (3)]{kiechle} implies $[[a,b],c] = 1$ for all $a,b,c \in L$ and therefore $G = \langle L \rangle$ must be $2$-nilpotent.
Therefore it suffices to show that $(L,\otimes)$ is an abelian group.

Put $T = N_G(L)/\Cen(L)$. Then $T \leq \Aut((L,\otimes))$ and we may consider the quasidirect product $\calG = L \rtimes_Q T$.
As stated in \cref{prop:quasidirect_product} the group $\calG = L \rtimes_Q T$ acts transitively and faithfully on $L$ by
\[ (a,\alpha)(x) = a \otimes \alpha(x) \]
and $T$ is the stabilizer of $1 \in L$.
Note that $L' = L \times \{ 1 \}$ is a uniquely $2$-divisible twisted subgroup of $\calG$. Hence $a \otimes' b = a^\half b a^\half$ defines a K-loop structure on $L'$. By \cite[Theorem 6.15]{kiechle} the K-loops $(L,\otimes)$ and $(L', \otimes')$ are isomorphic. Therefore it suffices to show that $(L',\otimes')$ is an abelian group.

Hrushovski's analysis of groups acting on strongly minimal sets (Theorem 11.98 of \cite{borovik-nesin}) shows that $\MR(\calG) \leq 3$. Moreover, if $\MR(\calG) = 3$, then $T$ acts sharply $2$-transitively on $L \setminus \{1\}$ which is impossible since $T$ is a group of automorphisms of $(L,\otimes)$.

If $\MR(\calG) = 2$, then $L \rtimes_Q T$ is a standard sharply $2$-transitive group $K_+ \rtimes K^*$ (and the corresponding permutation groups coincide). Since $L'$ acts without fixed points and the fixed point free elements of $K_+ \rtimes K^*$ are precisely the elements of $K_+$, $L'$ is contained in $K_+$. Therefore $\otimes'$ agrees with the group structure on $K_+$ and hence $(L', \otimes')$ is an abelian group.

Now assume $\MR(\calG) = 1$. We argue similarly to the proof of \cite[Lemma 5, part (v)]{B-loops}.

Consider the finite twisted subgroup $L'' = \{ a\calG^0 : a \in L' \}$ of $\calG/\calG^0$. Since $L'$ is uniquely $2$-divisible the map $L'' \rightarrow L'', a \mapsto a^2$ is surjective and hence a bijection since $L''$ is finite. Hence we may define a K-loop structure $x\calG^0 \otimes'' y \calG^0 = x^\half yx^\half \calG^0$ on $L''$. The natural map $L' \rightarrow L''$ is a surjective homomorphism from $(L', \otimes')$ to $(L'', \otimes'')$ with kernel $L' \cap \calG^0$. 

In particular, $L' \cap \calG^0$ is a normal subloop of $L'$. Since $L'/(L' \cap \calG^0)$ is finite and $\Mdeg(L') = 1$ this implies $L' = L' \cap \calG^0$ and hence $L' \subseteq \calG^0$. The group $\calG^0$ is strongly minimal and thus abelian. Therefore $\otimes'$ agrees with the group structure on $\calG^0$ and therefore $(L', \otimes')$ is an abelian group.
\end{proof}

The proof of \cref{prop:strongly-minimal-twisted} in fact shows the following:
\begin{corollary} \label{cor:strongly-minimal-loop}
Let $G$ be a group of finite Morley rank and let $L \subseteq G$ be a definable uniquely $2$-divisible twisted subgroup of $G$.
\begin{enumerate}
\item If $\Mdeg(L) = 1$, then $L \subseteq G^0$.
\item If $L$ is strongly minimal, then the associated K-loop $(L,\otimes)$ is an abelian group and hence $\langle L \rangle$ is $2$-nilpotent (without assuming that $\langle L \rangle$ is definable).
\end{enumerate}
\end{corollary}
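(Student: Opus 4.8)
The plan is to observe that the proof of \cref{prop:strongly-minimal-twisted} already contains both assertions, once one isolates the two places where it used neither the connectedness of the ambient group nor the definability of the subgroup generated by $L$. Part (a) is precisely the final paragraph of that proof, read for an arbitrary definable uniquely $2$-divisible twisted subgroup of Morley degree $1$; part (b) is the whole argument, with part (a) plugged in at the rank-one step.

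For part (a) I would argue as follows. Since $G$ has finite Morley rank, $G^0$ has finite index, so $L'' = \{\, aG^0 : a \in L \,\}$ is a finite twisted subgroup of $G/G^0$. As $L$ is uniquely $2$-divisible the square map on $L''$ is surjective, hence bijective by finiteness, so $L''$ is itself uniquely $2$-divisible and carries the K-loop operation $xG^0 \otimes'' yG^0 = x^\half y x^\half G^0$; here one checks $(aG^0)^\half = a^\half G^0$ using uniqueness of square roots in $L''$. The projection $\pi \colon L \to L''$ is then a surjective homomorphism of K-loops with kernel $L \cap G^0$, and for each $a \in L$ the fibre over $\pi(a)$ is the loop-translate $a \otimes (L \cap G^0)$, which is the image of $L \cap G^0$ under the definable bijection $x \mapsto a \otimes x$ and hence has the same Morley rank and degree as $L \cap G^0$. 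Thus $L$ is a disjoint union of finitely many fibres of equal rank; since $\Mdeg(L) = 1$, there can be only one fibre, forcing $L = L \cap G^0 \subseteq G^0$.

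For part (b) I would replay the proof of \cref{prop:strongly-minimal-twisted}, keeping track of the hypotheses actually used. It suffices to show that $(L,\otimes)$ is an abelian group: Kiechle's identity (Theorem 6.14(3) of \cite{kiechle}) then gives $[[a,b],c] = 1$ for all $a,b,c \in L$, so every commutator of generators of $\langle L \rangle$ is central, whence $\langle L \rangle$ is nilpotent of class at most $2$ — a purely group-theoretic deduction requiring no definability. To prove commutativity I would set $T = N_G(L)/\Cen(L) \leq \Aut((L,\otimes))$ (the inclusion holds because conjugation by a normalising element commutes with square roots in $L$), note $\calD(L) \subseteq T$ since each precession map is realised by conjugation by some $d_{a,b} \in N_G(L)$, and form the interpretable quasidirect product $\calG = L \rtimes_Q T$, acting faithfully and transitively on the strongly minimal set $L$ with point stabiliser $T$. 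Transferring along the isomorphism $(L,\otimes) \cong (L',\otimes')$ for $L' = L \times \{1\}$, it is enough to show $(L',\otimes')$ is abelian. Hrushovski's analysis (Theorem 11.98 of \cite{borovik-nesin}) bounds $\MR(\calG) \leq 3$, and I would treat the three cases exactly as in \cref{prop:strongly-minimal-twisted}: rank $3$ is excluded because $T$ would act sharply $2$-transitively while consisting of loop automorphisms (the $T$-equivariant squaring map $p \mapsto p \otimes p$ obstructs this); in rank $2$ the group $\calG$ is a standard $K_+ \rtimes K^*$ and the fixed-point-free $L'$ lands in the abelian $K_+$; and in rank $1$ I would apply part (a) to $L' \subseteq \calG$ to get $L' \subseteq \calG^0$, which is connected of rank one, hence abelian, so $\otimes'$ is its group law.

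The main obstacle I anticipate is making sure the case analysis is legitimately free of the connectedness assumption under which Hrushovski's theorem is usually stated: I must verify that the rank bound $\MR(\calG) \leq 3$ and the structural descriptions are invoked correctly — in particular that the rank-one case is closed using part (a) rather than an unjustified appeal to connectedness of $\calG$ — and that the fibre count in part (a) rests only on $\Mdeg(L)=1$ and the finiteness of $G/G^0$, not on any indecomposability of $G$.
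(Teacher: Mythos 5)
Your proposal is correct and follows essentially the same route as the paper, whose proof of this corollary is literally the observation that the proof of \cref{prop:strongly-minimal-twisted} already establishes it: part (a) is the final paragraph of that proof (the finite quotient $L''=\{aG^0:a\in L\}$, the induced K-loop homomorphism with kernel $L\cap G^0$, and the degree count forcing a single fibre), and part (b) is the full argument with part (a) supplying the rank-one case in place of any connectedness of $\calG$. Your fleshing-out of the fibre count via the definable bijections $x\mapsto a\otimes x$, and your identification of exactly where connectedness and definability of $\langle L\rangle$ are not needed, match the paper's intent precisely.
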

In particular, if $(L, \otimes)$ is a strongly minimal uniquely $2$-divisible K-loop such that $L$ can be definably half-embedded into a group of finite Morley rank, then $(L, \otimes)$ is an abelian group.

\begin{question}
This sugests the following two questions:
\begin{enumerate}
\item Suppose $G$ and $L$ satisfy the assumptions of \cref{prop:strongly-minimal-twisted}. Must $G$ be abelian?
\item Is every strongly minimal (uniquely $2$-divisible) K-loop an abelian group?
\end{enumerate}
\end{question}

\bibliographystyle{plain}
\bibliography{bibliography}

\end{document}